\newcommand{\pairing}[1]{{\left\langle {#1} \right\rangle}}
\newcommand{\dd}[2]{d_{#1}^{[#2]}}
\newcommand{\ff}[2]{f_{#1}^{[#2]}}
\newcommand{\ls}{{\ld  S}}
\newcommand{\frm}{\mathfrak{f}}
\newcommand{\frmL}{\frm_{\operatorname{L}}}
\newcommand{\frmR}{\frm_{\operatorname{R}}}
\newcommand{\Sym}{{\operatorname{Sym}}}
\newcommand{\sot}[1]{^{\otimes {#1}}}
\def\rev{{^{\operatorname{rev}}}}
\def\tP{{\tilde{P}}}
\def\tnu{\tilde{\nu}}
\title{Generalizations of Frobenius-Schur indicators from Kuperberg invariants}
\author{Liang Chang}
\address{Nankai University, School of Mathematical Sciences and LPMC, Tianjin, China}
\email{changliang996@nankai.edu.cn}
\author{Siu-Hung Ng}
\address{Louisiana State University, Department of Mathematics, Baton Rouge, LA 70803}
\email{rng@math.lsu.edu}
\author{Yilong Wang}
\address{Beijing Institute of Mathematical Sciences and Applications (BIMSA), Beijing, China}
\email{wyl@bimsa.cn}
\begin{document}
\begin{abstract}
We introduce an approach to produce gauge invariants of any finite-dimensional Hopf algebras from the Kuperberg invariants of framed 3-manifolds. These invariants are generalizations of Frobenius-Schur indicators of Hopf algebras. The computation of Kuperberg invariants is based on a presentation of the framed 3-manifold in terms of Heegaard diagram with combings satisfying certain admissibility conditions.  We provide framed Heegaard diagrams for two infinite families of small genus 3-manifolds, which include all the lens spaces, and some homology spheres. In particular, the invariants of the lens spaces $L(n,1)$ coincide with the higher Frobenius-Schur indicators of Hopf algebras. We compute the Kuperberg invariants of all these framed 3-manifolds, and prove that they are invariants of the tensor category of representations of the underlying Hopf algebra, or simply gauge invariants. 
\end{abstract}
\maketitle

\tableofcontents

\section{Introduction}\label{sec:intro}
The 2nd Frobenius-Schur (FS-) indicators $\nu_2(V)$ for complex irreducible representations $V$ of a finite group $G$ were introduced more than a century ago \cite{FS1} for studying the fundamental group of closed non-orientable surfaces $\Sigma$, namely
\[
|\Hom(\pi_1(\Sigma), G)| =\frac{1}{|G|^{\mathcal{X}(\Sigma)-1}}\sum_{V \in \Irr(G)} (\nu_2(V) \dim(V))^{\mathcal{X}(\Sigma)} 
\]
where $\Irr(G)$ denotes the set of complex irreducible representations of $G$, and $\mathcal{X}(\Sigma)$ is the Euler characteristic of $\Sigma$. The Frobenius-Schur Theorem asserts that $\nu_2(V) \in \{0,1,-1\}$ for any irreducible representation $V$ and that $\nu_2(V)=1$ (resp.~$-1$) if $V$ admits a nondegenerate $G$-invariant symmetric (resp.~skew symmetric) bilinear form on $V$, and 0 otherwise. The notion of Frobenius-Schur (FS-)indicators was first generalized to any finite-dimensional representation of semisimple Hopf algebras \cite{LM00}, and the corresponding Frobenius-Schur theorem was established therein. Around the same time, some generalizations of the 2nd FS-indicator were introduced for $C^*$-fusion categories \cite{FGSV99,FS03}, semisimple quasi-Hopf algebras \cite{MN05}, and pivotal tensor categories \cite{NS07}. 

Higher FS-indicators $\nu_n(V)$ for any finite-dimensional representation $V$ of a semisimple Hopf algebra $H$ and positive integer $n$ were also introduced in \cite{LM00}, and were studied extensively in \cite{KSZ}.  It was discovered in \cite{MN05, NS08} that the $n$-th indicators, for any positive integer $n$,  are invariants of the tensor category of representations of semisimple (quasi-)Hopf algebras. More precisely, if $H$ and $H'$ are semisimple (quasi-)Hopf algebras over $\BC$ such that there exists an equivalence of tensor categories $\FF:\Rep(H) \to \Rep(H')$, then $\nu_n(V) = \nu_n(\FF(V))$ for all $V \in \Rep(H)$. In particular, if the tensor categories of complex representations of finite groups $G_1$, $G_2$ are equivalent, then any representation of $G_1$ has the same indicators as its image in $\Rep(G_2)$ under the equivalence. The same statement holds for semisimple Hopf algebras. 

The FS-indicator $\nu_n(H)$ of the regular representation of $H$ admits many interesting interpretations in various contexts. For example, if $H = \BC G$ for a finite group $G$, then 
\[
\nu_n(H) = \#\{x  \in G\mid x^n=1\}\,.
\]
In particular, $\nu_2(\BC G)$ is the number of involutions in $G$. In general, for any semisimple Hopf algebra $H$ over $\BC$, 
\[
\nu_2(H) = \Tr(S)
\]
where $S$ is the antipode of $H$ (cf.\ \cite[p.19]{KSZ}). Moreover, $\nu_n(H)/\dim(H)$ is the Reshetikhin-Turaev invariant of the lens space $L(n,1)$ associated to the modular fusion category $\Rep(D(H))$, where $D(H)$ is the Drinfeld double of $H$. If $\Rep(H)$ and $\Rep(H')$ are equivalent fusion categories, then $\Rep(D(H))$ and $\Rep(D(H'))$ are equivalent modular fusion categories (equipped with the canonical pseudounitary pivotal structure). Since the Reshetikhin-Turaev invariants of 3-manifolds are constructed from categorical data, it is immediate to see that  $\nu_n(H) = \nu_n(H')$. Thus, one can conclude that the FS-indicator $\nu_n(H)$ is an invariant of the tensor categories $\Rep(H)$. 

A formula for $\nu_n(H)$ was obtained in terms of the integral and antipode of a semisimple Hopf algebra $H$ for each positive integer $n$ in \cite[p.19]{KSZ}. However, the value of this formula, again denoted by $\nu_n(H)$, can be computed for any \emph{nonsemisimple} finite-dimensional Hopf algebra $H$ over any base field and each positive integer $n$. It was proved algebraically in \cite{KMN12} that $\nu_n(H) = \nu_n(H')$ if $\Rep(H)$ and $\Rep(H')$ are equivalent as tensor categories which are not necessarily semisimple. Throughout this paper, we will simply call a quantity $f(H)$ obtained from a finite-dimensional Hopf algebra $H$ a \emph{gauge invariant} if $f(H) = f(H')$ for any Hopf algebra $H'$ such that $\Rep(H)$ and $\Rep(H')$ are equivalent tensor categories. 

Analogous to the gauge invariance of $\nu_n(H)$ for any semisimple Hopf algebras $H$ over $\BC$ viewed as the Reshetikhin-Turaev invariant of lens space $L(n,1)$, one can consider the Hennings-Kauffman-Radford (HKR) invariants of closed oriented 3-manifolds \cite{Hennings1996, Kauffman1995}. The HKR invariant $Z_{\operatorname{HKR}}(M,H)$ of a 3-manifold $M$ is defined for any finite-dimensional unimodular ribbon factorizable Hopf algebra over $\BC$ (not necessarily semisimple), and is shown to be a gauge invariant of $H$ \cite[Thm.\ 2]{KC17}. In this paper, we explore the gauge invariance  of another type of 3-manifold invariants, called the \emph{Kuperberg invariants},  for any finite-dimensional Hopf algebra. 

Given a finite-dimensional Hopf algebra $H$, Kuperberg introduced in \cite{Kup96} a topological invariant $K(M, \frm, H)$ for any closed oriented 3-manifold $M$ equipped with a framing $\frm$. The 3-manifold $M$ can be considered as a mapping class of the boundary surface $\Sigma$ of some handlebody. The corresponding Heegaard diagram is a planar presentation of $\Sigma$ decorated with the so-called upper and lower gluing curves determined by the mapping class corresponding to $M$ (see Section \ref{sec:def-Kup} for details). The framing $\frm$ is depicted in terms of a 2-dimensional vector field, called a \emph{combing}, on the Heegaard diagram with a \emph{twist front} satisfying the \emph{admissible condition} \eqref{eq:adm}, which is crucial to guarantee the correctness of the combinatorial data needed for computing $K(M, \frm, H)$.  One can construct from these combinatorial data an element $w \in H^{\ot g}$ and $w^* \in {H^*}^{\ot g}$ in terms of the integrals, the antipode and the distinguished grouplike elements of $H$ and $H^*$, where $g$ is the genus of $\Sigma$. The invariant $K(M, \frm, H)$ is a scalar in the base field of $H$ obtained by evaluating $w$ against $w^*$, see \eqref{eq:KInv}. 
By \cite{Kup96}, the scalar $K(M, \frm, H)$ is independent of the choice of framed Heegaard diagrams of $(M, \frm)$. 
For example, it is shown in Corollary \ref{cor:Kup-and-nu} that the lens space $L(n,1)$ with a specific framing $\frm_n$ has the Kuperberg invariant given by

\[K(L(n,1), \frm_n, H) = \ld S\left(\sum_{(\Ld)}\Lda{1}\Lda{2}\cdots \Lda{n}\right)\]
for any finite-dimensional Hopf algebra $H$ with the antipode $S$, where $\ld \in H^*$ is a right cointegral and $\Ld \in H$ is a left integral such that $\ld(\Ld)=1$. 
Here $\sum_{(\Ld)}\Lda{1}\Lda{2}\cdots \Lda{n}$ is the product of the $n$-folded comultiplication of $\Ld$, or simply the $n$-th Sweedler power $\Ld^{[n]}$ of $\Ld$. When $H$ is ribbon factorizable over $\BC$, the Kuperberg invariant for some lens spaces were computed in \cite{CW13}.
This invariant of the framed 3-manifold $(L(n,1), \frm_n)$ for each positive integer $n$ coincides with the aforementioned Frobenius-indicator $\nu_n(H)$ for any finite-dimensional Hopf algebra $H$. 
In particular, the Kuperberg invariant $K(L(n,1), \frm_n, H)$ is a gauge invariant of $H$, and this alludes to the following question:

\begin{question}\label{q1}
For any given framed 3-manifold $(M, \frm)$, must the Kuperberg invariant $K(M, \frm, H)$ be gauge invariant for any finite-dimensional Hopf algebra $H$?
\end{question}
Inspired by the categorical invariance of the Frobenius-Schur indicator $\nu_{n,k}(V)$ for any positive integers $n,k$ and object $V$ in a spherical fusion category $\CC$, one can consider $\nu_{n,k}(\CC) = \sum_{V \in \Irr(\CC)} \dim(V)\nu_{n,k}(V)$, which is an invariant of spherical fusion categories \cite{NS07}.  When $\CC=\Rep(H)$ for any semisimple Hopf algebra $H$ over $\BC$, $\nu_{n,k}(\CC) = \nu_{n,k}(H)$, which is a gauge invariant for semisimple Hopf algebras over $\BC$. This observation motivates our investigation of Question \ref{q1} for the lens spaces $L(n,k)$ for any coprime positive integers $n,k$, which are 3-manifolds of genus $1$. We have investigated Question \ref{q1} for framed 3-manifolds of genus $\le 2$ and provide an affirmative answer for all framed 3-manifolds of genus 1.

\begin{thm}[Theorem \ref{t:genus_1}]\label{thm:1}
Let $M$ be a 3-manifold of genus 1, and $\frm$ an arbitrary framing on $M$. For any finite-dimensional Hopf algebra $H$, $K(M, \frm, H)$ is a gauge invariant of $H$.
\end{thm}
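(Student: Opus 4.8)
The plan is to reduce the statement to an explicit computation on lens spaces and then identify the resulting scalar as a categorical invariant. Every closed oriented 3-manifold of Heegaard genus one is a lens space $L(n,k)$ with $\gcd(n,k)=1$, with the degenerate cases $S^3$ and $S^1\times S^2$ recovered at $n=1$ and $n=0$. So I would first fix, for each pair $(n,k)$ and each framing $\frm$, a concrete framed Heegaard diagram on the torus, building on the diagrams constructed earlier in the paper. The Heegaard surface is a once-punctured torus carrying a single upper and a single lower gluing curve whose intersection pattern is governed by the element of the mapping class group $\SL_2(\BZ)$ presenting $L(n,k)$; the framing is encoded by a combing with a twist front satisfying the admissibility condition \eqref{eq:adm}.

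Second, I would evaluate Kuperberg's tensor network on this diagram. Since the genus is $1$, the invariant is the single scalar $w^*(w)$ with $w \in H$ and $w^* \in H^*$, and it takes the form $\ld(S^{a}(\cdots))$ built from a left integral $\Ld$, a right cointegral $\ld$, the antipode $S$, and the distinguished grouplike elements of $H$ and $H^*$ dictated by the combing. The expected shape generalizes the already-established identity $K(L(n,1), \frm_n, H) = \ld\, S(\Ld^{[n]}) = \nu_n(H)$: for general $(n,k)$ the crossing pattern should produce an $(n,k)$-analogue of the Sweedler power, matching the generalized indicator $\nu_{n,k}(H)$, while each unit change of framing inserts a controlled factor of $S^2$ and of the distinguished grouplikes $g\in H$, $\a\in H^*$. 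Carrying out this evaluation carefully---tracking exactly which comultiplications, multiplications, antipodes and grouplike insertions the admissible combing produces---is the bulk of the combinatorial work.

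Third, I would prove that the resulting expression is gauge invariant. The cleanest route is to recast the scalar as the categorical trace of an endomorphism of a suitable tensor power of the regular object in $\Rep(H)$, assembled solely from the monoidal structure, the duality (evaluation and coevaluation) maps, and the canonical natural isomorphisms---exactly the data preserved by a tensor equivalence $\FF:\Rep(H)\to\Rep(H')$. This extends the argument behind $\nu_n(H) = \nu_n(H')$ of \cite{KMN12} and the categorical interpretation of $\nu_{n,k}$ in \cite{NS07}. Because the integral, cointegral, antipode and distinguished grouplikes all appear through the canonical (co)evaluation and pivotal data of $\Rep(H)$, their combination in the computed formula is automatically matched across the equivalence.

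The main obstacle will be the framing dependence. For a fixed lens space there are infinitely many inequivalent framings, differing by twists, and each twist forces extra insertions of $S^2$ and of the distinguished grouplike elements into the formula. Individually these elements are not gauge invariant---indeed $g$ and $\a$ are not even categorically canonical---so the difficulty is to show that the framing-dependent factors assemble, via Radford's $S^4$-formula and the compatibility of $\ld$ with $S$, into a single expression that remains a categorical trace. I expect that the admissibility condition \eqref{eq:adm} is precisely what forces this cancellation, so the key lemma to isolate is that changing $\frm$ by one twist multiplies the tensor network evaluation by a gauge-invariant scalar, equivalently shifts it within the $\nu_{n,k}$ family; gauge invariance for all framings then follows from the base cases.
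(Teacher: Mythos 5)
Your reduction to lens spaces and your plan to evaluate the tensor network on explicit framed Heegaard diagrams match the paper's opening steps, but the two load-bearing claims in your proposal are both genuine gaps. First, your third step asserts that the computed scalar can be recast as a categorical trace ``assembled solely from the monoidal structure, the duality maps, and the canonical natural isomorphisms,'' so that invariance under a tensor equivalence is automatic. That is precisely what has to be proved, and it is not automatic: for a general finite-dimensional (possibly non-semisimple) Hopf algebra, the integral $\Ld$, the cointegral $\ld$, and the distinguished grouplike elements $g$, $\a$ are not morphisms in $\Rep(H)$ and are not part of its monoidal or duality data in any known canonical way; the categorical interpretation of $\nu_{n,k}$ via \cite{NS07} is available only in the semisimple pivotal setting. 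The paper deliberately avoids this route: it rewrites $K(L(n,k),\frmR,H)$ as $\Tr(S\circ P^{(n,-k)})$ via Radford's trace formula (Theorem \ref{t:KInd}) and then proves invariance under an arbitrary Drinfeld twist $F$ by a direct computation (Theorem \ref{t:Lnk-fR}), which consumes the entire machinery of Section \ref{sec:technical} on the iterated cocycles $F_n$ (Lemmas \ref{lem:trick-3}, \ref{l:t1}, \ref{l:t3}). Your proposal replaces this computation with an assertion that is essentially equivalent to the theorem itself.

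Second, your treatment of the framing dependence would fail. You treat all framings of a fixed lens space as differing ``by twists,'' each twist multiplying the invariant by a gauge-invariant scalar. That is true only for the degree direction $H^3(M,\BZ)$, where a unit change contributes $\a(g)^{\pm 1}$. Framings up to homotopy are classified by $H^1(M,\BZ/2\BZ)\times H^3(M,\BZ)$, and when $n$ is even, $H^1(L(n,k),\BZ/2\BZ)\cong \BZ/2\BZ$ is nonzero: framings in different spin classes are not related by any scalar factor at all. The paper's Taft algebra example makes this concrete: $K(L(4,1),\frmR,T(i))=0$ while $K(L(4,1),\frmL,T(i))=8(1-i)$, so one invariant cannot be a root-of-unity multiple of the other. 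Consequently ``gauge invariance follows from the base cases'' only if you exhibit a gauge-invariant framing in \emph{each} spin class. This is exactly why the paper constructs two diagram framings, $\frmL$ (for odd $k$) and $\frmR$ (for odd $n-k$), proves gauge invariance for each by separate computations (Theorems \ref{t:Lnk-fR} and \ref{t:Lnk-fL}), and then closes the argument with a geometric step your proposal has no counterpart for: showing that when $n$ is even, the anti-parallel locus of the two combings represents a nonzero class in $H_1(L(n,k),\BZ)$, so $\frmL$ and $\frmR$ realize the two distinct spin classes.
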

We also consider a family of genus 2 framed large Seifert manifolds $\BM_{m,n}$ with a fixed choice of framing $\frm_{m,n}$ parametrized by $(m,n) \in\BN^2$ which includes some classical 3-manifolds such as the Poincar\'e homology sphere, and we have proved
\begin{thm}[Theorem \ref{t:genus_2}]\label{thm:2}
For any $(m,n) \in \BN^2$ and finite-dimensional Hopf algebra $H$, the Kuperberg invariant $K(\BM_{m,n}, \frm_{m,n}, H)$ is a gauge invariant of $H$.
\end{thm}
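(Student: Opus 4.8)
The plan is to attack Theorem~\ref{t:genus_2} in three stages: first compute $K(\BM_{m,n}, \frm_{m,n}, H)$ as an explicit tensor contraction read off from the genus-$2$ framed Heegaard diagram; second, collapse that contraction into a closed Hopf-algebraic formula generalizing the genus-$1$ formula of Corollary~\ref{cor:Kup-and-nu}; and third, recognize the closed formula as a quantity intrinsic to the tensor category $\Rep(H)$. Since $\BM_{m,n}$ has genus $2$, the construction of Section~\ref{sec:def-Kup} produces a tensor $w \in H\ot H$ and a dual tensor $w^* \in H^*\ot H^*$, assembled from the left integral $\Ld$, the right cointegral $\ld$, the antipode $S$, and the distinguished grouplike elements $g \in H$ and $\a \in H^*$, with the pattern of comultiplications and antipodes dictated by the upper and lower gluing curves and by the twist front of the combing. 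I would first translate the diagram into such a contraction $K = \iprod{w, w^*}$ as in \eqref{eq:KInv}, using the admissibility condition \eqref{eq:adm} to fix all orientations and the twist-front contributions.

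Next I would simplify. The two parameters $m,n$ should enter through repeated passes of the gluing curves around the two handles, manifesting as iterated comultiplications of the integrals, i.e.\ as higher Sweedler powers $\Ld^{[k]}$ exactly as in the lens-space case. Using the standard integral/cointegral identities (the Fundamental Theorem on Hopf modules, the normalization $\ld(\Ld)=1$, the fact that $S(\Ld)$ is a right integral, and Radford's formula expressing $S^4$ as conjugation by $g$ and $\a$), together with the grouplike relations $\De(g)=g\ot g$ and the multiplicativity of $\a$, I expect the double contraction to reduce to an expression of the schematic form $(\ld\ot\ld)\big((S\ot S)\big(p_{m,n}(\Ld^{(1)},\Ld^{(2)})\big)\big)$, where $p_{m,n}$ is a word in the comultiplied copies of the two handle integrals, possibly decorated by $g$ and $\a$ — a two-handle generalization of the Sweedler power. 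A consistency check is that degenerating the diagram to genus $1$ must recover Theorem~\ref{t:genus_1}.

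The decisive third step is to show that this closed form is a gauge invariant. Here I would adapt the argument of \cite{KMN12}, where the genus-$1$ quantities $\nu_n(H)=\ld\big(S(\Ld^{[n]})\big)$ are shown to be intrinsic to $\Rep(H)$. The mechanism is that under a Drinfeld twist $J$ relating $H$ to a gauge-equivalent $H'$, the integral, cointegral, antipode, and distinguished grouplikes all transform in controlled ways, and the particular combination produced in Step~2 is arranged so that the twisting contributions cancel, leaving a scalar depending only on $\Rep(H)$ as a tensor category. Since the word $p_{m,n}$ is of exactly the same type as the one underlying $\nu_n$, the same transformation rules for $\Ld,\ld,S,g,\a$ apply, and gauge invariance follows as it does for the higher indicators.

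The main obstacle is Step~2: extracting a manageable closed form from the genus-$2$ contraction. Unlike the genus-$1$ case, the two families of gluing curves interleave, so the two integrals interact through both handles and the raw expression involves nested comultiplications of both tensor factors simultaneously; disentangling these into a single word $p_{m,n}$, and correctly tracking the antipodes together with the grouplike corrections forced by the nonsemisimplicity of $H$ (where $g$ and $\a$ are nontrivial), is where the real work lies. Once the closed form is in hand, the gauge-invariance argument of Step~3 should be essentially mechanical, transplanted from the higher-indicator computation of \cite{KMN12}.
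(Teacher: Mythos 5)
Your Stages 1 and 2 track the paper's actual proof: reading the contraction off the framed Heegaard diagram and, via Radford's trace formula, packaging it as $\Tr\bigl((S\ot S)\circ \Psi^{m,n}\bigr)$ for an explicit operator $\Psi^{m,n}$ on $H^{\ot 2}$ built from legs of the two comultiplied integrals, Sweedler powers $P^{(m-1)}$, $P^{(n-1)}$, and negative powers of $S$. But you have inverted where the difficulty sits. Extracting the closed form is routine bookkeeping with the rotation tables; the real content of the theorem is your Stage 3, which you dismiss as ``essentially mechanical, transplanted from \cite{KMN12}.'' It is not, and your stated mechanism is also off: under a Drinfeld twist $F$ the integral $\Ld$ and cointegral $\ld$ do not transform at all (since $H_F=H$ as an algebra, the same normalized pair serves both Hopf algebras); what changes is the comultiplication $\D_F=F\D F^{-1}$ and the antipode $S_F=uSu^{-1}$. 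Consequently the twisted trace, once expanded, is threaded through both tensor factors with copies of $F_n$, $F_n^{-1}$, $u$, $u^{-1}$, and one must show by hand that all of them cancel. There is no shortcut through ``recognizing the formula as intrinsic to $\Rep(H)$'' --- whether the Kuperberg invariant is always categorical is precisely the paper's open Question \ref{q1}, so invoking it begs the question.

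The deeper issue is that the genus-2 word is \emph{not} ``of exactly the same type'' as the one underlying $\nu_n$. Here two distinct integrals $\Ld^1,\Ld^2$ interleave across two separate cointegral evaluations, with $S^{-1}$ and $S^{-2}$ applied to legs of each, so cocycle factors generated inside one $\ld S(\cdots)$ factor must be transported past Sweedler powers and into the other factor before they can cancel. The genus-1 cancellation (Theorem \ref{t:Lnk-fR}) already consumes the full strength of the $F_n$-calculus of Lemma \ref{lem:trick-3}; for genus 2 the paper additionally requires Corollary \ref{c:t2}, identities such as
\begin{equation*}
\ld S\Bigl(xY(\Lda{1})P^{(n)}(\Lda{2})\Bigr)=\ld S\Bigl(Y\bigl(S^2(x_{(1)})\Lda{1}S(x_{(3)})\bigr)S^2(x_{(2)})P^{(n)}(\Lda{2})\Bigr)\,,
\end{equation*}
developed expressly so that elements can be slid around the integral legs adjacent to Sweedler powers. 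The verbatim KMN12 argument handles one integral against one cointegral and says nothing about how $F$-terms cross between the two factors of $H^{\ot 2}$; without Corollary \ref{c:t2} (or an equivalent device, which your proposal neither supplies nor identifies as necessary) the cancellation cannot be completed. This is a genuine gap at the decisive step of your plan.
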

Since the sequence $\{\BM_{m,n}\}_{m,n}$ contains lens spaces of the form $L(k,1)$ for $k \ge 3$ (see Remark \ref{rmk:Mmn}), the theorem justifies that $K(\BM_{m,n}, \frm_{m,n}, H)$ is a genus 2 generalization of Frobenius-Schur indicators of $H$. In particular, when $H$ is semisimple, $K(\BM_{m,n}, \frm_{m,n}, H)$ coincides with a special case of the genus 2 generalization of the FS-indicator called the topological indicator defined in \cite{Atfd2}.

In the proof of both Theorems \ref{thm:1} and \ref{thm:2}, we first obtain a framed Heegaard diagram of the framed 3-manifold under study. If the 3-manifold is of genus $g$, then the framed Heegaard diagram consists of $g$ upper curves, $g$ lower curves, a combing, and $g$ twist fronts satisfying the admissible condition (see Section \ref{sec:def-Kup}). From the diagram, we tabulate the combinatorial data of the angles made at the intersections among the upper curves, lower curves and twist fronts, which enables us to compute the Kuperberg invariant. Then using Radford's formula for traces \cite{Radford1976}, we rewrite the Kuperberg invariant as the trace of a linear operator $P_H: H^{\o g} \to H^{\o g}$. This operator $P_H$, which depends on $(M, \frm)$, can be defined for any finite-dimensional Hopf algebra $H$. 

The last step above plays a key role for establishing our major results. Let $H$ and $K$ be finite-dimensional Hopf algebras.  Recall from \cite{NS07} that $\Rep(H)$ and $\Rep(K)$ are equivalent tensor categories if and only if $K \cong H_F$ as Hopf algebras, where $H_F$ is the Drinfeld twist of $H$ by a \emph{2-cocycle} $F \in H^{\o 2}$ (cf. \eqref{eq:2-cocycle}). More precisely, $H_F=H$ as an algebra, with the comultiplication 
\[\Delta_F(h) = F \Delta(h) F^{-1}\,.\]
To prove that $K(M, \frm, H)$ is a gauge invariant of $H$, it suffices to show that 
\[
K(M, \frm, H) = K(M, \frm, H_F)
\]
for any 2-cocycle $F \in H^{\o 2}$. Since $H = H_F$ as vector spaces, we need to prove 
\begin{equation}\label{eq:0}
    \Tr(P_H) = \Tr(P_{H_F}).
\end{equation}
for any 2-cocycle $F \in H^{\o 2}$. 

In order to establish the equality \eqref{eq:0}, one needs to understand the properties of the sequence $\{F_n \}_{n\in \BN}$, associated to any 2-cocycle $F$, which is defined in the following way: $F_1 := 1$, $F_2 := F$ and $F_{n+1}= (1 \o F_n) (\id \o \Delta^n)(F)$ where $\Delta^n$ is the $n$-folded comultiplication.  An elaborated list of properties of the sequence $\{F_n\}_n$ are proved in Section \ref{sec:technical}, and these technical results are essential to the proofs of our main theorems.

Throughout this paper, Hopf algebras are always assumed to be finite-dimensional over an arbitrary field unless otherwise stated. The rest of this paper is organized as follows. In Section 2, we briefly review Hopf algebras, 2-cocycles and their Drinfeld twists. Some basic results on general Hopf algebras will be established in this section. In Section 3, we explain how the Kuperberg invariant of a framed 3-manifold $(M,\frm)$ of genus $g$ is defined. This encompasses how the combinatorial data is derived from a framed Heegaard diagram of $(M,\frm)$, and how this data is applied to construct the elements $w \in H^{\otimes g}$ and $w^* \in H^{*\o g}$  so that $K(M, \frm, H) = w^*(w)$. The examples of computations for the 3-manifolds $\BS^3$, $\BS^1 \times \BS^2$ and the quaternion orbifold $\BS^3/Q_8$ of $\BS^3$ are discussed in detail. In Section 4, we focus on the Kuperberg invariants of lens spaces $L(n,k)$. We start with a certain choice of framed Heegaard diagrams of $L(n,k)$ representing a framing $\frm_R$ to obtain an expression of the corresponding Kuperberg invariant $K(L(n,k),\frm_R,H)$ in terms of the integrals $\ld \in H^*$, $\Ld \in H$ and its antipode $S$ of a finite-dimensional Hopf algebra $H$. This expression is shown to be the trace of a linear operator $P_H : H \to H$, whose gauge invariance is established in Theorem \ref{t:Lnk-fR} by using the properties of $F_n$ for any 2-cocycle $F$ of $H$. By investigating the Kuperberg invariant for different spin classes and Hopf degrees of framings, we prove the gauge invariance of $K(L(n,k),\frm,H)$ for arbitrary framing $\frm$ in Theorem \ref{t:genus_1}.
We dedicate Section 5 to the proofs of all technical results on the sequence $\{F_n\}_n$ for any 2-cocycle $F$ of $H$. These results are used throughout this paper.  Inspired by the gauge invariance of $K(L(n,k), \frm, H)$ for any Hopf algebra $H$, we define a family of gauge invariants $\tilde\nu_{n,k}(H)$ for any positive coprime integers $n,k$. These gauge invariants of $H$ are generally different, but $\tilde\nu_{n,k}(H) = K(L(n,k), \frm, H)$ when $H$ is semisimple over $\BC$. Finally, in Section 7, we study a family of framed 3-manifolds $(\BM_{m,n}, \frm_{m,n})$ of genus 2, and prove the gauge invariance of the Kuperberg invariant $K(\BM_{m,n}, \frm_{m,n}, H)$ for any Hopf algebra $H$.

\section*{Acknowledgment}
We thank Shuang Ming, Dylan Thurston and Hao Zheng for fruitful discussions. L.\ C.\ is supported by NSFC Grant 12171249. S.-H.\ N.\ is partially support by the Simons Foundation MPS-TSM-00008039. Y.\ W.\ is supported by NSFC Grant 12301045, Beijing Natural Science Foundation Key Program Grant Z220002, and the BIMSA startup fund.

\section{Hopf algebras and their Drinfeld twists}\label{sec:alg-prelim}
In this paper, we consider finite-dimensional Hopf algebras over an arbitrary base field $\kk$ unless stated otherwise. We briefly recall some basic notions on general finite-dimensional Hopf algebras in this section. The readers are referred to \cite{Montgomery1993}, \cite{Sweedler1968} or \cite{Rad94} for more details. For the purpose of defining and computing the Kuperberg invariants in the later sections, we will point out the differences between the conventions in this paper and that in Kuperberg's paper \cite{Kup96}.

\subsection{Integrals of Hopf algebras}
Let $H$ be a finite-dimensional Hopf algebra over a field $\kk$ with multiplication $m: H \o H \to H$, comultiplication $\De: H \to H \o H$, counit $\e: H \to \kk$ and antipode $S \in \End_\kk(H)$. We will use the Sweedler notation $\De(h) = \sum_{(h)} h_{(1)} \o h_{(2)}$ for any $h \in H$ or even  $\De(h) =  h_{(1)} \o h_{(2)}$ with the summation suppressed. 

Since $H$ is finite-dimensional, its dual space $H^*$ is also a Hopf algebra with multiplication $\De^*$, identity $\e$, comultiplication $m^*$, counit given by evaluation at $1_H$ and antipode $S^*$.  An element $x \in H$ is called \emph{grouplike}  if $x \ne 0$ and $\Delta(x)=x \o x$. An element $\b \in H^*$ is grouplike if and only  $m^*(\b)(h \o k) = \b(h) \b(k)$ for $h, k\in H$, or equivalently, $\b$ is an algebra homomorphism. For any grouplike element $\b \in H^*$, we will simply denote its \emph{convolution} inverse in $H^*$ by $\b^{-1}$.

There  are natural left and right $H^*$-actions $\rightharpoonup$ and $\leftharpoonup$ on $H$ respectively given by $f \rightharpoonup  h := \sum_{(h)} h_{(1)} f(h_{(2)})$ and $h \leftharpoonup  f := \sum_{(h)} f(h_{(1)}) h_{(2)}$ for $f \in H^*$ and $h \in H$. Since $H \cong H^{**}$ as Hopf algebras via the canonical isomorphism of vector spaces $j: H \to H^{**}$, there are $H$-actions $\rightharpoonup$ and $\leftharpoonup$ on $H^*$, namely
$h \rightharpoonup f = \sum_{(f)} f_{(1)} f_{(2)}(h)$ and $f \leftharpoonup h = \sum_{(f)} f_{(1)}(h) f_{(2)}$.

The iterated comultiplications on $H$ are defined inductively by 
\begin{equation}\label{eq:coprod}
\D^{(0)} = \e \cdot 1_H\,,\ \De^{(1)}=\id_H\,,\ \De^{(2)}=\De\,,\ \De^{(n+1)}=(\id\o \De^{(n)})\circ\De \text{ for $n\geq 2$}\,.
\end{equation}
For simplicity, we will use the abused notation $\De^n$ for $\De^{(n)}$, and the suppressed Sweedler notation of $\De^n(h) = h_{(1)} \o \cdots \o h_{(n)}$ for any $h \in H$. 

A right (resp.~left) \emph{integral} in $H$ is an element $\Ld^R$ (resp.~$\Ld^L$) in $H$ such that for all $h \in H$, $\Ld^R h = \e(h) \Ld^R$ (resp.~$h\Ld^L=\e(h)\Ld^L$).
A right (resp.\ left) \emph{cointegral} of $H$ is an integral $\ld^R$ (resp.~$\ld^L$) in $H^*$, i.e., $h \leftharpoonup \ld^R =\ld^R(h)\cdot 1$ (resp.\ $(\ld^L \rightharpoonup h) = \ld^L(h)\cdot 1$) for all $h \in H$.

Since the space of left (resp.\ right) integrals in $H$, denoted by $\int^l_H$ (resp.\ $\int^r_H$), is 1-dimensional, any $\Ld^L \in \int^l_H\setminus\{0\}$ determines a \emph{distinguished grouplike element} $\a \in H^*$ by $\Ld^L h=\alpha(h)\Ld^L$ for all $h \in H$. We say that $H$ is \emph{unimodular} if $\alpha = \e$. Similarly, the distinguished grouplike element $g\in H$ is determined by 
\begin{equation}\label{eq:ldR}
h \rightharpoonup \ld^R=\ld^R(h)g \quad \text{for all }  h \in H\,.
\end{equation}
The celebrated Radford formula \cite{Radford1976} remarkably relates the antipode and these distinguished grouplike elements:
\begin{thm}[Radford \cite{Radford1976}]
Let $H$ be a finite-dimensional Hopf algebra over any field $\kk$ with antipode $S$. Then
\[
S^4 (h) =\a \rhu g h g^{-1} \lhu \a^{-1} \quad\text{ for all } h \in H,
\]
where $g \in H$ and $\a \in H^*$ are the distinguished grouplike elements.
\end{thm}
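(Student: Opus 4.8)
The plan is to deduce the $S^4$ formula from the Frobenius structure that a nonzero cointegral puts on $H$, together with the defining properties of the two distinguished grouplike elements. First I would record two facts available for any finite-dimensional Hopf algebra: the antipode $S$ is bijective, and the spaces of one-sided integrals and cointegrals are all one-dimensional (Larson--Sweedler). In particular $S$ carries a left integral $\Ld$ to a right integral and a right cointegral $\ld$ to a left cointegral, and these images differ from the originals only by scalars and by the actions of $\a$ and $g$. Pinning down exactly how they differ is the engine of the proof.

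The central object is the nondegenerate associative form $\langle a,b\rangle := \ld(ab)$ coming from a right cointegral $\ld$, which exhibits $H$ as a Frobenius algebra. Nondegeneracy is precisely the statement that $h \mapsto \ld(h\,\cdot\,)$ is a linear isomorphism $H \to H^*$, which follows from the fundamental theorem of Hopf modules. Such a form has a Nakayama automorphism $N$, the unique algebra automorphism with $\ld(ab) = \ld(bN(a))$ for all $a,b$. The heart of the argument is to compute $N$ explicitly: combining the cointegral identity that defines $g$ (equation \eqref{eq:ldR}) with the integral identity $\Ld h = \a(h)\Ld$ that defines $\a$, and repeatedly invoking the antipode axioms $\sum S(h_{(1)})h_{(2)} = \e(h)1 = \sum h_{(1)}S(h_{(2)})$, one expresses $N$ in terms of $S^2$ twisted by $g$ and $\a$, roughly of the shape $N(h) = g^{-1}\bigl(\a^{-1} \rhu S^{-2}(h) \lhu \a\bigr)g$.

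Once such a relation between $N$ and $S^{2}$ is in hand, the $S^4$ formula follows by iterating it. Passing the antipode through the form sends $\ld$ to $\ld \circ S$, which is again a (left) cointegral and therefore only a grouplike twist of $\ld$; feeding this back into the $S^2$-twist relation composes the two copies of $S^2$ into $S^4$, while the grouplike contributions from $\a$ and $g$ accumulate rather than cancel. Tracking these twists through the second application yields exactly $S^4(h)=\a \rhu g h g^{-1}\lhu \a^{-1}$, where the conjugation by $g$ and the left/right $\a$-twists sit in the positions prescribed by the statement.

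The main obstacle is the bookkeeping in the middle step. Everything hinges on getting the precise way $\a$ and $g$ enter when one commutes $S$ and $S^2$ past the cointegral, and on correctly matching the $\a$-twist (which originates from the left-integral/modular side, via $\Ld h = \a(h)\Ld$) against the conjugation by $g$ (which originates from the right-cointegral side, via \eqref{eq:ldR}). One must also verify throughout that $\a$ and $g$ are genuinely grouplike, so that the convolution inverse $\a^{-1}$ and the ordinary inverse $g^{-1}$ interact with $\rhu$ and $\lhu$ as algebra/coalgebra maps; a single side or inverse error anywhere will collapse $S^4$ onto the wrong twist.
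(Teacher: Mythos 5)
First, a point of comparison: the paper does not prove this theorem at all --- it is imported verbatim from Radford's 1976 paper as background --- so your argument has to be judged on its own merits rather than against an internal proof. The architecture you propose (the Frobenius form $\langle a,b\rangle=\ld(ab)$ from a right cointegral, its Nakayama automorphism $N$, and the fact that $\ld\circ S$ is again a cointegral up to a grouplike twist) is a genuine and well-documented route to the $S^4$ formula; indeed, the two lemmas it needs are exactly the Radford identities this paper later quotes: Theorem \ref{t:t2} (i), $\ld(ab)=\ld(S^2(b\lhu\a)a)$, which is precisely the statement that the Nakayama automorphism in your convention $\ld(ab)=\ld(bN(a))$ is $N(a)=S^{-2}(a)\lhu\a^{-1}$, and the twist identity $\ld\circ S^{-1}=g\rhu\ld$ of \eqref{eq:twisted-ld}.

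The genuine gap sits exactly where you place ``the heart of the argument.'' The displayed shape $N(h)=g^{-1}\bigl(\a^{-1}\rhu S^{-2}(h)\lhu\a\bigr)g$ is not a correct formula for the Nakayama automorphism, and the deviation is not cosmetic bookkeeping. The formula that can be derived directly from the integral axioms is $N(h)=S^{-2}(h)\lhu\a^{-1}$: no conjugation by $g$ and a single $\a$-twist. If your shape also held, uniqueness of $N$ would force $gkg^{-1}=\a^{-1}\rhu k\lhu\a^{2}$ for all $k$ --- an identity in which every occurrence of the antipode has cancelled --- and this already fails for the Taft algebras $T(\zeta)$ used elsewhere in this paper: with the normalizations consistent with Radford's formula, the left side scales $x$ by $\zeta$ and the right side by $\zeta^{-1}$, so they differ whenever $\zeta^2\neq 1$. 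The variant of $N$ that genuinely involves conjugation by $g$, namely $N(h)=g^{-1}S^{2}(\a^{-1}\rhu h)g$, is \emph{equivalent} to Radford's formula, so it cannot be ``computed'' as an input to the proof. In other words, $g$ must enter not through $N$ but through the cointegral-twist lemma, and there the precise variant is everything: one needs $\ld\circ S=\ld(g\,\cdot\,)$, and using instead the superficially similar right-multiplication twist (which differs by where $g$ sits and by a factor of $\a(g)$) makes the two copies of $g$ cancel in your ``iteration'' and outputs the false conclusion $S^4(h)=\a\rhu h\lhu\a^{-1}$. Your closing caveat correctly identifies this danger, but the proposal defers exactly these two derivations --- Theorem \ref{t:t2} (i) and the correct twist identity --- to routine bookkeeping, and they are the entire content of the theorem. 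As it stands, the proposal is an outline of a viable strategy whose one explicit formula is wrong; it is not yet a proof.
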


Since the cyclic group algebra of any grouplike element is a Hopf subalgebra of $H$, by the Nichols–Zoeller freeness theorem \cite{Nichols1989}, we have the orders $\alpha$ and $g$ are divisors of $\dim(H)$, and $\alpha(g) \in \kk$ is a $\dim(H)$-th root of unity.

It was proved  in \cite{Rad90} that $(H, \lhu)$ is free right $H^*$-module generated by $\Ld^L$ and hence $\ld^R(\Ld^L) \ne 0$. From now on, we choose $\ld^R \in \int^r_{H^*}$ and $\Ld^L \in \int^l_H$ such that 
\[\ld^R(\Ld^L) = 1\,,\] 
and call them a pair of \emph{normalized integrals for $H$}. The following lemma is well-known, and the statement will serve as a reference of our notation for the remainder of this paper.  The proofs are provided for the sake of completeness.
\begin{lemma}\label{lem:int} Suppose the integrals $\ld^R \in H^*$ and $\Ld^L \in H$ are normalized,  and that $g \in H$, $\a \in H^*$ are the distinguished grouplike elements.
Define 
\[\Ld^R := S(\Ld^L) \in H \quad\text{and}\quad \ld^L := \ld^R\circ S^{-1} \in H^*\,.\]
Then:
\begin{enumerate}[label=\rm{(\roman*)}]
\item $\Ld^R$ is a right integral of $H$, and $\ld^L$ is a left integral of $H^*$;
\item $\ld^R(\Ld^R) = \ld^L(\Ld^R) = 1$;
\item $\ld^L(\Ld^L) = \a(g)$.
\end{enumerate}
\end{lemma}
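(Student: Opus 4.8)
The plan is to reduce the whole lemma to two classical identities expressing $S^{\pm1}$ on the integral in terms of the modular function $\alpha$, and then to read off (i)--(iii) by short manipulations with the (co)integral relations. For (i) I would first verify that $\Ld^R=S(\Ld^L)$ is a right integral by a direct antipode computation: writing $h=S(S^{-1}(h))$ and using that $S$ is an algebra anti-homomorphism,
\[
\Ld^R h = S(\Ld^L)\,S(S^{-1}(h)) = S\big(S^{-1}(h)\,\Ld^L\big) = \e(S^{-1}(h))\,S(\Ld^L)=\e(h)\,\Ld^R,
\]
where the left-integral property of $\Ld^L$ and $\e\circ S^{-1}=\e$ are used. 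For the second half, that $\ld^L=\ld^R\circ S^{-1}$ is a left cointegral, I would run the dual computation: apply the right-cointegral relation $\sum\ld^R(k_{(1)})k_{(2)}=\ld^R(k)1$ to $k=S^{-1}(h)$, use that $S^{-1}$ reverses comultiplication (so $(S^{-1}(h))_{(1)}\otimes(S^{-1}(h))_{(2)}=S^{-1}(h_{(2)})\otimes S^{-1}(h_{(1)})$), and then apply $S$ to both sides; this lands exactly on $\sum h_{(1)}\ld^L(h_{(2)})=\ld^L(h)1$, the left-cointegral condition.

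The engine for (ii) and (iii) will be the two standard modular identities for the integral, $S(\Ld^L)=\sum\Ld^L_{(1)}\,\alpha(\Ld^L_{(2)})$ and $S^{-1}(\Ld^L)=\sum\alpha(\Ld^L_{(1)})\,\Ld^L_{(2)}$. Granting the first, the substantive equality in (ii) becomes a one-liner:
\[
\ld^R(\Ld^R)=\ld^R(S(\Ld^L))=\sum\ld^R(\Ld^L_{(1)})\,\alpha(\Ld^L_{(2)})=\alpha\Big(\textstyle\sum\ld^R(\Ld^L_{(1)})\Ld^L_{(2)}\Big)=\alpha\big(\ld^R(\Ld^L)1\big)=\alpha(1)=1,
\]
the middle step being the right-cointegral relation evaluated on $\Ld^L$. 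The other half of (ii) needs nothing extra: $\ld^L(\Ld^R)=\ld^R\big(S^{-1}(S(\Ld^L))\big)=\ld^R(\Ld^L)=1$.

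Part (iii) is then the mirror computation, using the $S^{-1}$ identity together with the defining property \eqref{eq:ldR} of the distinguished grouplike $g$, which on $\Ld^L$ reads $\sum\Ld^L_{(1)}\ld^R(\Ld^L_{(2)})=\ld^R(\Ld^L)\,g=g$. Explicitly,
\[
\ld^L(\Ld^L)=\ld^R(S^{-1}(\Ld^L))=\sum\alpha(\Ld^L_{(1)})\,\ld^R(\Ld^L_{(2)})=\alpha\Big(\textstyle\sum\Ld^L_{(1)}\ld^R(\Ld^L_{(2)})\Big)=\alpha(g).
\]
Thus all three parts follow quickly once the two modular identities are in hand.

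The real content, and the step I expect to be the main obstacle, is establishing those two identities $S^{\pm1}(\Ld^L)=(\text{$\alpha$ acting on one leg of }\Ld^L)$; these encode the modular theory of $H$ and are most cleanly obtained from the fundamental theorem of Hopf modules (or cited from the standard treatments of integrals). The delicate point there is pinning the scalar correctly: one must track which tensor leg $\alpha$ acts on, and the precise side in the definitions of $\alpha$ and of $g$ in \eqref{eq:ldR}. Reversing either orientation would replace $\alpha(g)$ by $\alpha(g^{-1})=\alpha(g)^{-1}$, so being faithful to the conventions fixed above for $\alpha$ and $g$ is exactly what makes (iii) come out as $\alpha(g)$ rather than its inverse.
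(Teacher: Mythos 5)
Your proposal is correct and takes essentially the same approach as the paper: all three parts are reduced to Radford's standard integral identities, which you (like the paper) cite rather than reprove, cf.\ \cite[Prop.~3]{Rad94}; part (i) is just the spelled-out version of the paper's one-line remark that $S$ is an algebra and coalgebra anti-automorphism, and your computation of (iii) coincides with the paper's line for line. The only variation is in (ii), where the paper moves $S$ through the cointegral via $\ld^R\circ S=\ld^R(g\,\cdot\,)$ and absorbs $g$ using the left-integral property $g\Ld^L=\e(g)\Ld^L$, while you move it onto the integral via $S(\Ld^L)=\Ld^L_{(1)}\a(\Ld^L_{(2)})$ and absorb $\a$ using the right-cointegral relation; these are equivalent formulas from the same cited proposition, so this is not a genuinely different route.
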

\begin{proof}
The first statement follows immediately from the fact that $S$ is an algebra and coalgebra anti-automorphism of $H$. By \cite[Prop.~3]{Rad94}, we have 
\[\ld^R(\Ld^R) = \ld^R(S(\Ld^L)) = \ld^R(g\Ld^L) = \e(g)\ld^R(\Ld^L) = 1\,.\]
The equality $\ld^L(\Ld^R) = 1$ is a direct consequence of the definition of $\Ld^R$ and $\ld^L$. Finally, we have
\[\ld^L(\Ld^L) = \ld^R(S^{-1}(\Ld^L)) =\a(\Ld_1^L) \ld^R(\Ld_2^L)=\alpha(g)\,. \qedhere\]
\end{proof}

From \eqref{eq:ldR} and Lemma \ref{lem:int}, we have $\ld^R \rightharpoonup \Ld^R = \ld^R(\Ld^R)g = g$,
which is consistent with the definition of the distinguished grouplike element of $H$ in \cite[p.~113]{Kup96}. Moreover, by definition, for any $h \in H$, we have 
\begin{equation}\label{eq:hLdR}
h\Ld^R = h \cdot S(\Ld^L) = S(\Ld^L \cdot S^{-1}(h)) = \alpha(S^{-1}(h)) S(\Ld^L) = \alpha^{-1}(h) \Ld^R\,.
\end{equation}
Therefore, our $\alpha$ coincides with the inverse of Kuperberg's convention  \cite[p.~113]{Kup96} for the distinguished grouplike element in $H^*$. 

For any integer $n$, define
\begin{equation}\label{eq:def-ld-n}
\Ld_{n-\frac{1}{2}} :=
\alpha^{-n}\rightharpoonup S(\Ld^L)
\quad\text{and}\quad
\ld_{n-\frac{1}{2}}:=g^n \rightharpoonup \ld^R\,.
\end{equation}
Then it is easy to derive the following equations from definition and \cite[Prop.~3]{Rad94}:
\begin{equation}\label{eq:twisted-ld}
\begin{split}
&\Ld_{-\frac{1}{2}} = \a^0 \rhu \Ld^R =\Ld^R = S(\Ld^L)\,,
\quad \Ld_{\frac{1}{2}} = \a^{-1} \rhu S(\Ld^L) = \a^{-1}\rhu(\a\rhu\Ld^L) = \Ld^L\,,\\
&\ld_{-\frac{1}{2}} = g^0 \rhu \ld^R = \ld^R\,,
\quad \ld_{\frac{1}{2}} = g \rhu \ld^R = \ld^R \circ S^{-1} = \ld^L\,.
\end{split}\end{equation}

We will simply write $\ld \in \int_{H^*}^r$ and $\Ld \in \int^l_H$ instead of $\ld^R$ and $\Ld^L$ respectively. However, the conventions stated in Lemma \ref{lem:int} remain. The following theorem on integrals, essentially proved in \cite{Rad94}, will be repeatedly used in many of our proofs in this paper. In particular, statement (v) of the following theorem is often called the \emph{Radford trace formula}.
\begin{thm}\label{t:t2} 
    Let $H$ be a finite-dimensional Hopf algebra over $\kk$,  $\ld \in \int^r_{H^*}$ and $\Ld \in \int_H^l$ be a pair of normalized integrals, i.e., $\ld(\Ld) = 1$. For any $a, b \in H$ and $\kk$-linear map $X \in \End_\kk(H)$, we have
\begin{enumerate}[label=\rm{(\roman*)}] 
     \item $\ld(ab) = \ld(S^2(b\lhu \alpha) a)$,
    \item $\ld S(ab) = \ld S(bS(S(a) \lhu \a))$,
    \item $\Lda{1} \o a\Lda{2} = S(a)\Lda{1}\o \Lda{2}$,
    \item $\Lda{1}a \o \Lda{2} = \Lda{1}\o \Lda{2} S(a \lhu \a)$,
    \item $\Tr(X) = \ld\left(S(\Lda{2})X(\Lda{1})\right) = \ld\left(S(X(\Lda{2}))\Lda{1}\right)$,
    \item $\ld S\left(aX\left(\Ld_{(1)}\right)\Ld_{(2)}\right) = \ld S\left(X\left(\Ld_{(1)}S(a)\right) \Ld_{(2)}\right)$.
\end{enumerate}
\end{thm}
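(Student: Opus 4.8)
The plan is to prove the six identities in dependency order: the two integral-sliding formulas (iii) and (iv) by elementary antipode manipulations, then the trace formula (v), then the modular identity (i), from which (ii) and (vi) follow at once. For (iii) I would comultiply the left-integral relation $a\Ld=\e(a)\Ld$ to get $\sum a_{(1)}\Lda{1}\o a_{(2)}\Lda{2}=\e(a)\,\Lda{1}\o\Lda{2}$, and then evaluate $\sum S(a_{(1)})a_{(2)}\Lda{1}\o a_{(3)}\Lda{2}$ in two ways: grouping $S(a_{(1)})a_{(2)}\o a_{(3)}=1\o a$ yields $\Lda{1}\o a\Lda{2}$, while applying the comultiplied relation to the last two legs yields $S(a)\Lda{1}\o\Lda{2}$. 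Identity (iv) is the mirror computation: starting from $\Ld a=\a(a)\Ld$ (the relation defining $\a$), I would evaluate $\sum \Lda{1}a_{(1)}\o\Lda{2}a_{(2)}S(a_{(3)})$ two ways, using $\sum a_{(1)}\o a_{(2)}S(a_{(3)})=a\o 1$ on one side and the comultiplied relation on the other; the twist $a\lhu\a=\sum\a(a_{(1)})a_{(2)}$ appears precisely because $\Ld$ is only a \emph{left} integral. Only coassociativity and the antipode axioms enter.

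For the Radford trace formula (v) I would first prove the reconstruction identity $\sum \Lda{1}\,\ld\!\left(S(\Lda{2})h\right)=h$ for all $h\in H$, which identifies $\{\Lda{1}\}$ and $\{\ld(S(\Lda{2})\,\cdot\,)\}$ as dual bases of $H$ relative to the nondegenerate Frobenius form $\ld$; this follows from the right-cointegral property $\sum\ld(h_{(1)})h_{(2)}=\ld(h)1$, identity (iii), and the normalization $\ld(\Ld)=1$. Expanding $\Tr(X)$ in this dual-basis pair yields $\Tr(X)=\ld(S(\Lda{2})X(\Lda{1}))$, and the second expression follows from the parallel reconstruction identity $\sum\ld\!\left(S(\Lda{1})h\right)\Lda{2}=h$, obtained the same way from the other cointegral relation.

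The heart of the matter is (i), $\ld(ab)=\ld(S^2(b\lhu\a)a)$, which asserts that the Nakayama automorphism of the Frobenius form $\ld$ is $b\mapsto S^2(b\lhu\a)$. I would prove it using Radford's structural identities \cite{Rad94,Radford1976}: the relation $S(\Ld)=\a\rhu\Ld$ (equivalently $S^{-1}(\Ld)=\Ld\lhu\a$, already used in the proof of Lemma \ref{lem:int}), the dual transformation law for how the right cointegral $\ld$ behaves under translation by the distinguished grouplikes, and the $S^4$-formula $S^4(h)=\a\rhu(g\,h\,g^{-1})\lhu\a^{-1}$; nondegeneracy of $\ld$ reduces the claim to comparing two functionals of $a$, which I would match by tracking the $\a$- and $S$-twists. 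Granting (i), identity (ii) follows by substituting $a\to S(a')$, $b\to S(b')$ and using $S(ab)=S(b)S(a)$. For (vi), rewriting its right-hand side by (iv) gives $\ld\!\left(S^2(S(a)\lhu\a)\,S(X(\Lda{1})\Lda{2})\right)$ while its left-hand side equals $\ld\!\left(S(X(\Lda{1})\Lda{2})\,S(a)\right)$; these coincide by (i) applied to the pair $\bigl(S(X(\Lda{1})\Lda{2}),S(a)\bigr)$, for every $X\in\End_\kk(H)$.

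The main obstacle I anticipate is (i): all the real content lies in pinning down the exact modular twist---distinguishing $S^2$ from $S^{-2}$ and $\a$ from $\a^{-1}$---and in reconciling this paper's conventions (which, as noted after \eqref{eq:hLdR}, invert Kuperberg's distinguished grouplike and differ from Radford's) with the cited results. Once the Nakayama automorphism is normalized correctly, (ii), (v), and (vi) are bookkeeping on top of (i), (iii), and (iv).
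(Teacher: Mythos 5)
Your elementary proofs of (iii) and (iv) are correct; your reduction of (ii) to (i), and of (vi) to (i) and (iv), reproduces exactly what the paper compresses into ``(ii) follows from (i)'' and ``(vi) is a consequence of (ii) and (iv)''; and treating (i) as Radford's modular formula to be quoted is no different from the paper, which cites \cite{Rad90, Rad94} for (i), (iii), (iv) and (v) wholesale, so modulo (i) your proposal is actually more self-contained than the paper's proof. Two corrections are needed in (v), one minor and one genuine. The minor one: in your first reconstruction identity, sliding $h$ out via (iii) (write $S(\Lda{2})h=S\bigl(S^{-1}(h)\Lda{2}\bigr)$ and use (iii) with $a=S^{-1}(h)$) leaves the constant $\sum\Lda{1}\,\ld(S(\Lda{2}))$, which equals $\ld(S(\Ld))\cdot 1$ by the right-cointegral relation applied to the element $S(\Ld)$; so the normalization you actually need is $\ld(S(\Ld))=1$, i.e.\ Lemma \ref{lem:int}(ii), not merely $\ld(\Ld)=1$.

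The genuine gap is the ``parallel reconstruction identity'' $\sum\ld(S(\Lda{1})h)\Lda{2}=h$: it is false, so the second equality in (v) is unproved as written. At $h=1$ its left-hand side is $\sum\ld(S(\Lda{1}))\Lda{2}$, which equals $g^{-1}$ (apply the defining relation \eqref{eq:ldR} of $g$ to $S(\Ld)$ and then apply $S^{-1}$), and $g\neq 1$ whenever $H^*$ is not unimodular, e.g.\ for the Taft algebras used later in the paper, which are self-dual and non-unimodular. Moreover, the trace formula this resolution would produce is $\Tr(X)=\sum\ld\bigl(S(\Lda{1})X(\Lda{2})\bigr)$ --- note the factors sit in the opposite order from the asserted $\ld\bigl(S(X(\Lda{2}))\Lda{1}\bigr)$ --- and at $X=\id$ the antipode axiom collapses it to $\e(\Ld)\,\ld(1)$, which is $0\neq\dim_\kk H$ for every non-semisimple $H$ in characteristic zero. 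The repair stays entirely inside your toolkit: the correct companion identity is $h=\sum\ld\bigl(S(h)\Lda{1}\bigr)\Lda{2}$. Indeed, (iii) gives $\sum S(h)\Lda{1}\o\Lda{2}=\sum\Lda{1}\o h\Lda{2}$; applying $\ld\o\id$ and using the right-cointegral relation $\sum\ld(\Lda{1})\Lda{2}=\ld(\Ld)1=1$ yields $\sum\ld(S(h)\Lda{1})\Lda{2}=h$. Writing $\De(\Ld)=\sum_i a_i\o b_i$, this exhibits $\id_H=\sum_i\ld\bigl(S(\,\cdot\,)a_i\bigr)\,b_i$, and your rank-one trace argument then gives $\Tr(X)=\sum_i\ld\bigl(S(X(b_i))a_i\bigr)=\ld\bigl(S(X(\Lda{2}))\Lda{1}\bigr)$, which is precisely the second expression in (v). With this one substitution your proof of (v), and hence the whole proposal, is complete.
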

\begin{proof} Statements (i), (iii), (iv) and (v) are proved in \cite{Rad90, Rad94}. Statement (ii) follows immediately from (i), and statement (vi) is a consequence of (ii) and (iv). 
\end{proof}

The $n$-th \emph{Sweedler power} $P^{(n)}(x)$ of an element $x \in H$ is defined by $P^{(n)}(x) = x_{(1)} \cdots x_{(n)}$ with $P^{(0)}= 1\cdot\e$. The statement (vi) of the preceding theorem can be generalized as follows, and will be used in Section \ref{sec:Kmn}.

\begin{cor} \label{c:t2} Assuming the conditions of Theorem \ref{t:t2}. For any positive integer $n$,  $x \in H$, and  linear operator $Y : H \to H$, we have
\begin{enumerate}[label=\rm{(\roman*)}]
    \item $\ld S\Big( xY(\Lda{1})P^{(n)}(\Lda{2})\Big) = 
    \lambda  S\left(Y\left(S^2(x_{(1)})\Lda{1}S(x_{(3)})\right)  S^2(x_{(2)}) P^{(n)}\left(\Lda{2}\right) \right)$\,,
    \item $\ld  S\Big( P^{(n-1)}( \Lda{1})Y(\Lda{2})x\Lda{3}\Big)= 
    \ld  S\Big(P^{(n-1)}(\Lda{1}) S^2(x_{(2)}) Y(S(x_{(1)}) \Lda{2} S^2(x_{(3)}))\Lda{3}\Big)$. 
\end{enumerate}
\end{cor}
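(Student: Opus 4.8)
The plan is to prove both (i) and (ii) by the same mechanism that produces Theorem \ref{t:t2}(vi), of which they are refinements: I would transport the single factor $x$ across the expression under $\ld S(-)$, using the integral relations (i)--(iv) of Theorem \ref{t:t2} together with coassociativity, and track the antipode twists carefully. The genuinely new features, compared with (vi), are the Sweedler-power block $P^{(n)}(\Lda{2})$ (resp.\ $P^{(n-1)}(\Lda{1})$) and the fact that $x$ is now forced to split into three tensorands.

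First I would isolate the absorption move behind (vi). Post-composing (iv) with $X\o\id$ and multiplying out gives, for any operator $X$ and any $c\in H$, the identity $X(\Lda{1}c)\Lda{2}=X(\Lda{1})\Lda{2}S(c\lhu\a)$ inside $H$; this is exactly what lets a factor be absorbed into the $X$-slot from the outside, and combined with the twisted cyclicity (ii) it reproves (vi) by taking $c=S(a)$. I would then attach the Sweedler power rigidly to one coproduct component via $\Lda{1}\o P^{(n)}(\Lda{2})=\Lda{1}\o P^{(n)}(\Lda{2})$ read off the iterated coproduct. The structural point I want to exploit is that (iii), (iv), (vi) are all relations about the \emph{outer} coproduct components of $\Ld$; I would apply them only to the $Y$-slot and to an adjacent free slot, never inside the block. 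This is precisely why $x$ never enters $P^{(n)}$ and the block moves as an inert unit, matching the shape of the claimed right-hand sides, where the Sweedler power stays pristine.

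For (ii) the lever is the bare slot $\Lda{3}$: starting from $P^{(n-1)}(\Lda{1})Y(\Lda{2})x\Lda{3}$, I would push $x$ across $\Lda{3}$ and into the $Y$-slot using the absorption identity on the pair $(\Lda{2},\Lda{3})$, while the inert block $P^{(n-1)}(\Lda{1})$ is untouched. A single absorption contributes the single-antipode piece ($S(x_{(1)})$ to the left of $\Lda{2}$ in (ii), and symmetrically $S(x_{(3)})$ to the right of $\Lda{1}$ in (i)), whereas the twisted (co)cyclicity of Theorem \ref{t:t2}(i)--(ii), which carries an $S^2$ together with a $\lhu\a$, accounts for the two $S^2$-factors; the $\a$ introduced by a rightward pass has to cancel against the $\a^{-1}$ introduced by the compensating leftward pass, leaving only clean powers of $S$, with Radford's $S^4$-formula controlling the overall antipode bookkeeping. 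Statement (i) I would then obtain either by the same transport after writing the expression in the two-operator form $\ld S\!\big(xY(\Lda{1})R(\Lda{2})\big)$ with $R=P^{(n)}$, or by reducing it to (ii).

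The hard part will be the bookkeeping rather than any conceptual step: confirming that the antipode powers assemble into exactly $S^2(x_{(1)})$, $S^2(x_{(2)})$, $S(x_{(3)})$ (resp.\ $S(x_{(1)})$, $S^2(x_{(2)})$, $S^2(x_{(3)})$), in the stated positions, and that every occurrence of the distinguished grouplike $\a$ picked up while dragging $x$ past a coproduct component cancels in pairs. Since $x$ is carried past two coproduct components of $\Ld$ while the counit collapses the unused slots, a threefold coproduct $\Delta^{(3)}(x)=x_{(1)}\o x_{(2)}\o x_{(3)}$ is automatically forced; the delicate point is to certify the exact antipode power on each of the three pieces and their left/middle/right placement, not merely that a threefold split appears.
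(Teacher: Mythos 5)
Your overall instinct is right---(i) and (ii) are indeed refinements of Theorem \ref{t:t2}(vi), proved by transporting $x$ with the integral relations, with a threefold splitting of $x$ and antipode cancellations at the end---but the central mechanism you propose does not exist, and the actual engine of the proof is missing. The relations (iii), (iv), (vi) of Theorem \ref{t:t2} are statements about a \emph{complete} two-sided splitting $\De(\Ld)=\Lda{1}\o\Lda{2}$: when you transport an element across such a splitting, it distributes, via its own iterated coproduct, over every leg of the receiving group. There is no ``local'' version acting on two adjacent legs of $\De^{m}(\Ld)$ while fixing the rest; for instance, what (iv) gives on a threefold coproduct is $\Lda{1}a_{(1)}\o\Lda{2}a_{(2)}\o\Lda{3}=\Lda{1}\o\Lda{2}\o\Lda{3}S(a\lhu\a)$, and there is no identity that moves $a$ between legs $2$ and $3$ alone. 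Consequently your key structural claim---that the relations can be applied ``only to the $Y$-slot and an adjacent free slot'' so that $x$ never enters $P^{(n)}$ and the block moves as an inert unit---is false. In any correct derivation (including the paper's) the Sweedler components of $x$ do land on the block's legs; the block comes out pristine only because those components cancel in adjacent pairs via the antipode axiom ($S(x_{(i)})x_{(i+1)}$ collapsing to a counit factor), not because they never arrive.

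Second, you have no mechanism for a block of arbitrary length $n$, and this is where the one genuinely conceptual step lives; it is not deferred bookkeeping. The paper proves the case $n=1$ directly (insert $S(\Ld_{(\ast)})\Ld_{(\ast)}$ by the antipode axiom to create a free leg, apply (vi) and (iii), cancel), and then inducts on $n$ using the convolution identity $Y(\Lda{1})P^{(n+1)}(\Lda{2})=(Y*\id)(\Lda{1})P^{(n)}(\Lda{2})$: one leg of the block is absorbed into the operator slot, the induction hypothesis is applied to $Y*\id$, and the single newly exposed contamination cancels by $S(x_{(4)})S^{2}(x_{(3)})=S\bigl(S(x_{(3)})x_{(4)}\bigr)\mapsto\e$. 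Without this (or an equivalent telescoping argument) your plan stalls: a one-shot application of (vi) to the whole expression yields $\ld S\bigl(Y(\Lda{1}S(x_{(n)}))\,\Lda{2}S(x_{(n-1)})\cdots\Lda{n}S(x_{(1)})\,\Lda{n+1}\bigr)$, with $x$ interleaved through all block legs, and getting from there to the stated right-hand side is precisely the problem, not a cleanup. Two further signs the route is off: statement (i) is not an identity for an arbitrary operator $R$ in place of $P^{(n)}$ (your ``two-operator form'' suggestion), since the argument genuinely uses that Sweedler powers absorb into convolutions with $\id$; and the $\a$-cancellations and Radford $S^{4}$-formula you invoke never occur---the paper's route uses only (iii) and (vi), which are $\a$-free, so there is nothing of that kind to cancel.
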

\begin{proof}
(i) For $n=1$, 
\[\begin{split}
 &\lambda S\left(x Y\left(\Lda{1}\right) \Lda{2}\right) = \lambda S\left(x Y\left(\Lda{1}\right)  S(\Lda{2}) \Lda{3} \Lda{4}\right)  \\  
=\, & \lambda S\left(Y\left(\Lda{1} S(x_{(3)})\right)  S\left(\Lda{2} S(x_{(2)})\right)  \Lda{3} S(x_{(1)}) \Lda{4}\right) \text{ by Theorem \ref{t:t2} (vi)}\\
=\,& \lambda S\left(Y\left(S^2(x_{(1)}) \Lda{1} S(x_{(5)})\right)  S(S^2(x_{(2)})\Lda{2}  S(x_{(4)})) S^2(x_{(3)})\Lda{3} \Lda{4} \right) \text{ by Theorem \ref{t:t2} (iii)}\\
=\,&\lambda S\left(Y\left(S^2(x_{(1)}) \Lda{1} S(x_{(3)})\right)  S^2(x_{(2)}) \Lda{2}\right) \text{ by the properties of the antipode}.
\end{split}\]
Assume the statement holds from some positive integer $n$. Note that
\[
\ld S\Big( xY(\Lda{1})P^{(n+1)}(\Lda{2})\Big) =   \ld S\Big( x(Y*\id)(\Lda{1})P^{(n)}(\Lda{2})\Big)  
\]
where $Y * \id$ is the convolution product of $Y$ and $\id_H$. By the induction assumption, we find
\[\begin{split}
 &\ld S\Big( xY(\Lda{1})P^{(n+1)}(\Lda{2})\Big)   =   \ld S\Big((Y*\id)(S^2(x_{(1)}) \Lda{1}S(x_{(3)}))S^2(x_{(2)})P^{(n)}(\Lda{2})\Big)  \\  
=\, & \ld S\Big((Y(S^2(x_{(1)}) \Lda{1}S(x_{(5)})) S^2(x_{(2)}) \Lda{2}S(x_{(4)})S^{2}(x_{(3)})P^{(n)}(\Lda{3})\Big) \\
=\,& \ld S\Big((Y(S^2(x_{(1)}) \Lda{1}S(x_{(3)})) S^2(x_{(2)}) P^{(n+1)}(\Lda{2})\Big).
\end{split}\]
Statement (ii) follows similarly by induction on $n$ where the case $n=1$ can be proved as follows: 
\[
\begin{split}
& \lambda S \left(Y\left(\Lda{1}\right) x\Lda{2}\right) = \lambda S\left(\Lda{1} S(\Lda{2}) Y\left(\Lda{3}\right) x \Lda{4}\right) \\
=\,&\lambda S\left(S(x_{(3)}) \Lda{1} S(S(x_{(2)})\Lda{2})    Y\left(S(x_{(1)}) \Lda{3}\right)   \Lda{4}\right) \text{ by Theorem \ref{t:t2} (iii)}\\
=\,& \lambda S\left(\Lda{1} S^2(x_{(3)}) S(\Lda{2}  S^2(x_{(4)})) S^2(x_{(2)})   Y\left(S(x_{(1)}) \Lda{3} S^2(x_{(5)})\right)   \Lda{4}\right)\text{ by Theorem \ref{t:t2} (vi)}\\
=\,& \lambda S \left(S^2(x_{(2)})   Y \left(S(x_{(1)}) \Lda{1} S^2(x_{(3)})\right)   \Lda{2}\right) \text{ by the properties of the antipode}. \qedhere
\end{split}
\]
\end{proof}

\subsection{Gauge transformation}
Before proceeding, we introduce some notations involving tensors.  Let $V$ be any finite-dimensional $\kk$-linear space. For any $n \ge 1$, we write an $n$-fold tensor $v \in V^{\ot n}$ as 
\[v = \sum_{i} v^{[1]}_{i} \ot v_{i}^{[2]} \ot \cdots \ot v^{[n]}_i\] 
or simply $v = v^{[1]}_i \ot \cdots \ot v^{[n]}_i$ with the summation notion suppressed. 
We may further abbreviate $v = v^{[1]}_i \ot \cdots \ot v^{[n]}_i$ as $v = \bigotimes_{t = 1}^n v^{[t]}_i$.  For instance, we write $v = v^{[1]}_i \ot \bigotimes_{t = 2}^{n-1} v^{[t]}_i \ot v_i^{[n]}$ to separate the first and last components from the others. Moreover, a permutation  $\s$ on $\{1,\dots, n \}$, denoted by  $\sigma\in \operatorname{Sym}_n$, can be considered as a linear automorphism on $V^{\ot n}$ given by 
\[\sigma v =  v^{[\s 1]}_{i} \ot v_{i}^{[\s 2]} \ot \cdots \ot v^{[\s n]}_i = \bigotimes_{t=1}^{n} v^{[\sigma t]}_i\,.\]

Now let $H$ be a finite-dimensional Hopf algebra over $\kk$. The iterative product of $H\sot{n}$ will be simply denoted by $m$, i.e., $m: H\sot{n} \to H$,  $m(v)=\prod_{t=1}^n v_i^{[t]}$ for any $v \in H\sot{n}$ (where the sum over $i$ will always be suppressed as before). We will use the convention $m(v) = v$ for any $v \in H$.   For $h \in H$, we denote $\De^n(h) = \bigotimes_{i=1}^n h_{(i)}$ by the Sweedler notation. In particular, $\De^{n+l}(h) = \bigotimes_{i=1}^n h_{(i)} \o \bigotimes_{i=n+1}^{n+l} h_{(i)}$ for any positive integers $n, l$.

A \emph{gauge transformation} of $H$ is an invertible element $F \in H^{\ot 2}$ such that $(\e \ot \id)F = (\id\ot\e)F = 1$. By changing the comultiplication of $H$ into \begin{equation*}
	\D_F(h) = F \D(h) F^{-1}\quad\text{for all $h \in H$}\,,
\end{equation*}
one obtains a quasi-Hopf algebra $H_F$ which is identical to $H$ as an algebra with the same counit $\e$ (see, for example, \cite[Chap.~XV]{Kas95}). This quasi-Hopf algebra $H_F$ is a Hopf algebra if and only if $F$ is a \emph{2-cocycle} (see \cite{KMN12}), which means $F$ is a gauge transformation such that the following equality holds
\begin{equation}\label{eq:2-cocycle}
1 \ot 1 = (1 \ot F)\cdot (\id \ot \D)(F) \cdot (\D \ot \id)(F^{-1}) \cdot (F^{-1} \ot 1)\,.   
\end{equation}
Let $F \in H \ot H$ be a 2-cocycle, and write $F = \ff{i}{1} \ot \ff{i}{2}$ with inverse $F^{-1} = \dd{i}{1} \ot \dd{i}{2}$. It can be easily derived from \eqref{eq:2-cocycle} that $u := \ff{i}{1}S(\ff{i}{2})$ is invertible with inverse $u^{-1} = S(\dd{i}{1})\dd{i}{2}$. As mentioned above, in this case, $H_F$ is a Hopf algebra with antipode
\begin{equation}\label{eq:S_F}
    S_F(h) := uS(h)u^{-1}\,,\quad\text{for all $h \in H$}\,.
\end{equation}
We call $H_F$ the (Drinfeld) twist of $H$ by the 2-cocycle $F$. The iterative comultiplication $\D_F^n$ of $H_F$ is the conjugation of $\D^n$ by $F_n \in H\sot{n}$, which is defined inductively as follows (cf. \cite{KMN12}): $F_1 = 1_H$ and 
\begin{equation}\label{eq:F-n-plus-1}
    F_{n+1}=(F_n\o 1)(\id \o \D^n)(F) \quad \text{for any integer }n \ge 1\,.
\end{equation}
The following lemma on $F_n$ was proved in \cite[Lem.\ 2.3]{KMN12}.
\begin{lemma}\label{lem:F-n-plus-1} 
Let $F$ be a 2-cocycle of a finite-dimensional Hopf algebra $H$. Then we have $F_{n+1}=(F_n\o 1)(\D^n \o \id)(F)$ and $\D_F^{n}=F_n\D^{n}F_n^{-1}$  for any integer $n \ge 1$. \qedhere
\end{lemma}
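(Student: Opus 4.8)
The plan is to prove the two identities by induction on $n$, handling the conjugation formula $\D_F^n=F_n\D^n F_n^{-1}$ first and then deducing the alternative recursion $F_{n+1}=(F_n\o1)(\D^n\o\id)(F)$, the latter being the step at which the $2$-cocycle condition \eqref{eq:2-cocycle} is genuinely used. For the conjugation formula, the cases $n=1,2$ are immediate from $F_1=1$, $F_2=F$ and $\D_F=F\D F^{-1}$. For the inductive step I would invoke the very recursion \eqref{eq:coprod} that defines the iterated coproduct, now applied to $H_F$, namely $\D_F^{n+1}=(\id\o\D_F^n)\circ\D_F$. Because $\id\o\D_F^n$ is an algebra homomorphism, it distributes over the product $\D_F(h)=F\,\D(h)\,F^{-1}$; substituting the inductive hypothesis $\D_F^n=F_n\D^n F_n^{-1}$ into each of the three factors inserts copies of $F_n$ into the appropriate tensor legs, and then $\D^{n+1}=(\id\o\D^n)\circ\D$ together with the recursion \eqref{eq:F-n-plus-1} for $F_{n+1}$ repackages everything as $F_{n+1}\D^{n+1}(h)F_{n+1}^{-1}$. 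This half of the argument uses only \eqref{eq:coprod} and \eqref{eq:F-n-plus-1}.

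To obtain the alternative recursion, I would exploit that $\D_F$ is genuinely coassociative — which is precisely the content of the $2$-cocycle condition — so that $\D_F^{n+1}$ may equally well be computed through the right-handed recursion $\D_F^{n+1}=(\D_F^n\o\id)\circ\D_F$ together with $\D^{n+1}=(\D^n\o\id)\circ\D$. Running the computation of the previous paragraph with this recursion produces $(F_n\o1)(\D^n\o\id)(F)$ in place of $F_{n+1}$, so both expressions conjugate $\D^{n+1}$ to $\D_F^{n+1}$. To upgrade this to an equality of elements I would run a second induction whose base case is the cocycle identity rewritten as $(1\o F)(\id\o\D)(F)=(F\o1)(\D\o\id)(F)$: applying the iterated coproduct $(\id\o\D^n\o\id)$ to this relation yields a one-step exchange identity in $H^{\o(n+2)}$, and combining it with the inductive hypothesis and the recursion \eqref{eq:F-n-plus-1} identifies the two recursions for $F_{n+1}$.

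I expect the second half to be the main obstacle, and the difficulty is organizational rather than conceptual. Two distinct conjugators present themselves for $\D_F^{n+1}$, and since the centralizer of $\D^{n+1}(H)$ in $H^{\o(n+1)}$ is in general nontrivial, their equality cannot be read off formally from the fact that they induce the same inner automorphism. One must instead track the exact placement of the nested copies of $F$ and $F_n$ and check that the cocycle identity \eqref{eq:2-cocycle}, pushed up from $H^{\o 3}$ to $H^{\o(n+2)}$ by an iterated coproduct, precisely accounts for transporting the extra factor of $F$ from one end of the tensor word to the other. Making this tensor-leg bookkeeping line up is the crux of the argument.
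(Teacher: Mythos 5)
There is nothing in the paper to compare your argument against: the lemma is stated with the citation \cite[Lem.~2.3]{KMN12} and no proof is reproduced, so your proposal has to stand on its own --- and it does. The first induction works exactly as you describe: $\id\o\D_F^n$ is an algebra map, so applied to $\D_F(h)=F\D(h)F^{-1}$ the inductive hypothesis turns the three factors into $F_{n+1}(1\o F_n^{-1})$, $(1\o F_n)\D^{n+1}(h)(1\o F_n^{-1})$, and $(1\o F_n)F_{n+1}^{-1}$, which telescope to $F_{n+1}\D^{n+1}(h)F_{n+1}^{-1}$; no cocycle condition enters. Your warning about the second identity is also exactly right: the two conjugators of $\D^{n+1}$ cannot be identified formally, since the centralizer of the image of $\D^{n+1}$ need not be trivial, and this is the subtlety a careless write-up would miss. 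The second induction you outline does close the gap, and it is the standard argument the citation points to: substitute the inductive hypothesis $F_n=(F_{n-1}\o 1)(\D^{n-1}\o\id)(F)$ into the defining recursion for $F_{n+1}$, observe that the two remaining factors are the images of $1\o F$ and $(\id\o\D)(F)$ under the algebra map $\id\o\D^{n-1}\o\id$, apply that map to the cocycle identity in the form $(1\o F)(\id\o\D)(F)=(F\o 1)(\D\o\id)(F)$, and then reassemble $F_n\o 1$ on the left using the defining recursion once more together with generalized coassociativity, $\D^n=(\id\o\D^{n-1})\circ\D=(\D^{n-1}\o\id)\circ\D$.

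Two small corrections to your bookkeeping, neither of which is a gap. First, to pass from $F_n$ to $F_{n+1}$ the map to apply to the cocycle identity is $\id\o\D^{n-1}\o\id$, landing in $H^{\o (n+1)}$, not $\id\o\D^{n}\o\id$ landing in $H^{\o (n+2)}$. Second, the detour through the right-handed recursion $\D_F^{n+1}=(\D_F^n\o\id)\circ\D_F$ (which indeed requires coassociativity of $\D_F$, i.e.\ the cocycle condition) is dispensable: once the element identity $F_{n+1}=(F_n\o 1)(\D^n\o\id)(F)$ is proved by the second induction, the conjugation statement for this expression is automatic from part one. Finally, note that your reading of the recursion is the intended one: as printed, \eqref{eq:F-n-plus-1} has $F_n\o 1$ where the introduction and Section \ref{sec:technical} have $1\o F_n$; the lemma only parses with the latter, which is what your proof uses.
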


Extending the notations of $F$ and $F^{-1}$, we will write $F_n = \ff{i}{1} \ot\cdots \ot \ff{i}{n}$ with inverse $F_n^{-1} = \dd{i}{1} \ot\cdots \ot\dd{i}{n}$. Together with the Sweedler notation, we find 
\begin{equation}\label{eq:D_F(h)}
    \D^n_F(h) = \ff{i}{1} h_{(1)} \dd{j}{1} \ot \cdots \ot \ff{i}{n}h_{(n)} \dd{j}{n} \quad\text{ for } h \in H.
\end{equation}

Let $\Rep(H)$ be the category of finite-dimensional $H$-modules over $\kk$. It is a finite tensor category in the sense of \cite{EO04}. Following \cite{KMN12}, we give the definition of gauge invariants as follows.

\begin{definition}
Two finite-dimensional Hopf algebras $H$ and $H'$ over a base field $\kk$ are called \emph{gauge equivalent} if $\Rep(H)$ is equivalent to $\Rep(H')$ as tensor categories. A quantity $f(H)$ assigned to each finite-dimensional Hopf algebra $H$ over $\kk$ is called a \emph{gauge invariant} if $f(H) = f(H')$ whenever $H$ is gauge equivalent to $H'$, i.e., $f(H)$ is an invariant of the tensor category $\Rep(H)$.
\end{definition}

It is shown in \cite{NS08} (see also \cite{Sch96,EG02tri}) that two finite-dimensional Hopf algebras $H$ and $H'$ over $\kk$ are gauge equivalent if and only if there exists a 2-cocycle $F \in H \ot H$ such that $H' \cong H_F$ as bialgebras. Hence, gauge invariants of Hopf algebras are those remain unchanged under Drinfeld twists. The goal of this paper is to prove that the Kuperberg invariants of certain framed 3-manifolds are gauge invariants of Hopf algebras.

\section{Framed 3-manifolds and their Kuperberg invariants}\label{sec:def-Kup}
In this section, we review the construction of the Kuperberg invariant for closed oriented framed 3-manifolds. Following \cite{Kup96}, the invariant is defined by a diagrammatic presentation of any framed 3-manifold, called a \emph{framed Heegaard diagram}.

\subsection{Framed 3-manifolds}
Let $M$ be a closed oriented 3-manifold. It is well-known (see, for example, \cite[Chap.~9]{Rol76}) that $M$ admits at least one \emph{Heegaard splitting}, which is a decomposition $M = B_1 \cup_f B_2$ into a union of two handlebodies $B_1$ and $B_2$ of the same genus, say $g$,  glued along their common boundary surface, say $\Sigma_g$, via a homeomorphism $f$. Such a splitting can be encoded by a Heegaard diagram $(\Sigma_g, \mu, \eta)$ of $M$, where $\mu=\{\mu_1,\ldots, \mu_g\}$ and $\eta=\{\eta_1,\ldots, \eta_g\}$ are two collections of disjoint non-separating simple closed curves (or circles) on $\Sigma_g$, and $g$ is called the genus of the Heegaard diagram. Here, the curves are used to indicate how the handles of $B_1$ and $B_2$ are attached which equivalently describe the homeomorphism $f$. 

To obtain $M$ from a Heegaard diagram $(\Sigma_g, \mu, \eta)$, first note that $\BS^3 \setminus \Sigma_g$ has two connected components whose closures are handlebodies. We arbitrarily call one of them the \emph{upper} handlebody and the other one \emph{lower}, and glue these two handlebodies along $\Sigma_g$. The gluing is made so that the meridians of upper handlebody are attached to the curves $\mu_i$ and the meridians of lower handlebody are attached to the curves $\eta_j$. These curves are called the upper curves and the lower curves correspondingly. 
It is well-known that any closed oriented 3-manifold can be constructed in this way \cite{Rol76}. Conversely, according to the Reidemeister-Singer Theorem \cite{Rei33, Sin33} (see also \cite[Thm.\ 4.1]{Kup91} for a simple proof), any two Heegaard diagrams of a 3-manifold are equivalent by a sequence of \emph{stabilization moves}. A stabilization move is a modification of a Heegaard diagram $(\Sigma_g, \mu, \eta)$ by the following procedure: Take the connected sum of $\Sigma_g$ with a torus at a disk in $\Sigma_g$ that is disjoint from all of its upper and lower curves, then add a pair of upper and lower curves at the attached torus along its meridian and longitude (see Figure \ref{fig:stab}). 
\begin{figure}
\centering
\includegraphics[width=350pt]{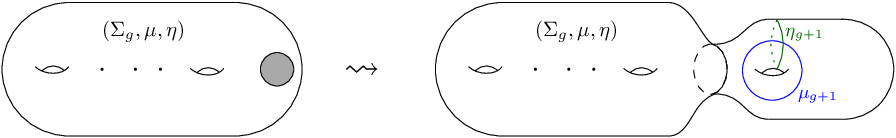}
\caption{The stabilization move.}
\label{fig:stab}
\end{figure}

We will use the following planar presentation of Heegaard diagrams (see, for example, \cite[Lec.~1]{Sav12}). Note that it suffices to indicate where the upper and lower curves are located on the surface $\Sigma_g$. We will draw $\Sigma_g$ minus a point as a plane with $2g$ open disks removed, whose boundaries are depicted by black circles in our presentation. It is understood, although not shown in the picture, that there are $g$ handles (above the plane) attached to the $g$ pairs of circles, which are called the \emph{attaching circles}. We arrange the diagram so that a pair of attaching circles joined by a handle is far away from the other pairs. The orientation of $M$ is induced by the standard orientation of $\BR^3$ where the Heegaard diagram sits as the $xy$-plane. 
Figure \ref{fig:L21-plain} is a Heegaard diagram of the lens space $L(2, 1)$ (see Section \ref{sec:lens-space} for definition), which is homeomorphic to $\mathbb{RP}^3$, where the two black circles represent the attaching circles of the handle, the upper curve is colored in blue, and the lower curve is colored in green.
\begin{figure}[ht]
\centering
\includegraphics[width=200pt]{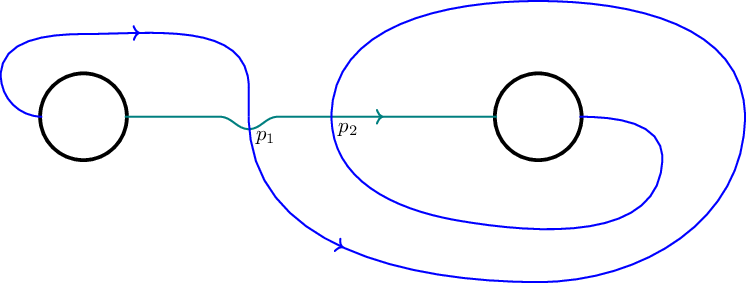}
\caption{A Heegaard diagram of $L(2,1)$.}
\label{fig:L21-plain}
\end{figure}

Equip $M$ with a Riemannian metric.  A \emph{combing} on $M$ is a unit-length tangent vector field up to homotopy, and a \emph{framing} $\frm$ on $M$ consists of three orthonormal combings. Combings and framings can be drawn on Heegaard diagrams of $M$. Given a Heegaard diagram $(\Sigma_g, \mu, \eta)$, a combing of $\Sigma_g$ is a vector field on $\Sigma_g$ with $2g$ singularities of index $-1$, one on each curve, and one singularity of index $+2$ disjoint from all curves. The singularity of index $-1$ on a curve is called the \emph{base point} of the curve and the two outward-pointing vectors are tangent to the curve. It is shown by Kuperberg \cite{Kup96} that a combing on $\Sigma_g$ can be extended to a combing of $M$, and any combing of $M$ is homotopic to an extension of some combing on $\Sigma_g$. 
In this paper, we use gray dashed lines with arrows in a Heegaard diagram to depict the flows of a combing on $\Sigma_g$ which is demonstrated as an example on the left part of Figure \ref{fig:combing}.

\begin{figure}[ht]
\centering
\includegraphics[width=0.5\linewidth]{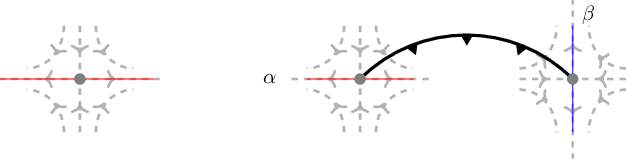}
\caption{\small The gray dashed lines with arrows represent the combing $b_1$, and the gray bullet on each curve stands for the base point. Left: $b_1$ points outwards from the base point along an upper or a lower curve. Right: $\alpha$ is a lower curve, $\beta$ is an upper curve. The black arc with small triangles represents the twist front.}
\label{fig:combing}
\end{figure}

To illustrate a framing $\frm$ on a Heegaard diagram $(\Sigma_g, \mu, \eta)$ of $M$, we start with a combing $b_1$ on $\Sigma_g$, which extends to a combing on $M$. Then, we describe a combing $b_2$ on $\Sigma_g$, which is orthogonal to $b_1$ in $\BR^3$ (not necessarily tangent to $\Sigma_g$), using \emph{twist fronts}, as instructed in \cite{Kup96}.
Twist fronts are arcs in the Heegaard diagram on which $b_2$ is normal to $\Sigma_g$ while pointing from the lower to the upper handlebody. A twist front is transversely oriented in the direction that $b_2$ rotates by the right-hand rule relative to $b_1$, indicated by small triangles based on the arc, for any local segment of the arc homotopic to an integral curve of $b_1$. When viewing from the upper handlebody,  the triangles point counterclockwise near the base point of the upper curve, and they point clockwise near that of the lower curve, see Figure \ref{fig:combing}. We then define a \emph{2-combed Heegaard diagram} of a 3-manifold $M$ to be a Heegaard diagram $(\Sigma_g, \mu, \eta)$ of $M$ that is decorated with a combing $b_1$ on $\Sigma_g$ and a twist front, see Figure \ref{fig:S3} for example.

As is mentioned above, the combing $b_1$ on $\Sigma_g$ extends to a combing on $M$. However, unlike $b_1$, the combing $b_2$ on the surface $\Sigma_g$ described by twist fronts may not be extended to a combing of 3-manifold $M$ orthogonal to $b_1$. 
The \emph{admissibility condition} is described as follows. Orient all the lower and upper curves arbitrarily. 
For each of these curves $c: [0,1] \to \Sigma_g$ with $c(0) = c(1)$, and any $t \in (0,1]$, let $\theta_c(p)$ denote the aggregated counterclockwise rotation of the tangent of $c$ relative to $b_1$ from the base point $c(0)$ to $p = c(t)$ in units of $1=360^\circ$. We denote $\theta_c(c(1))$ by $\theta_c$. Note that $\theta_c$ is always a half integer. Similarly, let $\phi_c(p)$ denote the aggregated right-handed rotation of $b_2$ around $b_1$ from the base point to $p$, and $\phi_c$ denote the total rotation when it is back to the base point. Alternatively, $\phi_c(p)$ is the aggregated number of signed crossings of $c$ with twist fronts: It is positive if the triangle on the twist front and the orientation of $c$ coincide at the intersection, and negative otherwise. 
The base point of $c$ contributes only half of the sign to $\phi_c$ when $c$ returns. By \cite{Kup96}, $b_2$ can be extended to a combing on $M$ if and only if it satisfies the following admissibility conditions:
\begin{equation}\label{eq:adm}
\text{$\theta_\mu=-\phi_\mu$ for each upper curve $\mu$, and $\theta_{\eta}=\phi_{\eta}$ for each lower curve $\eta$.}
\end{equation}
A 2-combed Heegaard diagram of $M$ satisfying the above admissibility condition is called a \emph{framed Heegaard diagram} of $M$. A framed Heegaard diagram $D$ affords a pair of orthogonal combings $(b_1, b_2)$ on $\Sigma_g$ that can be extended to $M$, and consequently yields a framing $\frm := (b_1, b_2, b_1\times b_2)$ on $M$. We call $\frm$ the \emph{diagram framing} associated to the framed Heegaard diagram $D$. Conversely, any framing on $M$ admits a (not necessarily unique) framed Heegaard diagram by restriction to a Heegaard surface.

\subsection{The Kuperberg invariant}
In \cite{Kup96}, for a fixed choice of Hopf algebra, Kuperberg defined a numerical invariant for framed 3-manifolds using framed Heegaard diagrams, and showed that this invariant only depends on the homeomorphism class of the framed 3-manifolds. In this section, we briefly recall the definition of this invariant.

Let $D$ be a framed Heegaard diagram of a 3-manifold $M$ consisting of upper and lower curves $\{\mu_1, ..., \mu_g\}$ and $\{\eta_1, ..., \eta_g\}$ respectively. Applying the stabilization move (Figure \ref{fig:stab}) if necessary, we may assume that $g \ge 1$. Let $I$ be the set of all intersection points between the lower and upper curves, and $n:=|I|$. For any $p \in I$, there exist unique $i,j$ such that  $p \in \eta_i \cap \mu_j$. If $p$ is the $x$-th intersection point starting from the base point of $\eta_i$, and the $y$-th point from the base point of $\mu_j$, then we write $l(p) = (i,x)$ and $u(p) = (j,y)$. It is immediate to see that $l, u : I \to \BN^2$ are injective maps. We order $I$ in two different ways via the lexicographical orderings of $l(I)$ and $u(I)$, and denote the corresponding ordered lists of intersection points by $I_*=\{p_1, \dots, p_{n}\}$ and $I^*=\{p^1, \dots, p^{n}\}$. Then we can define a permutation $\s_D \in \operatorname{Sym}_{n}$ on $\{1, \ldots, n\}$ via $p_{\sigma_{D}(r)} = p^{r}$ for all $1 \le r \le n$.  This in turn determines a linear isomorphism, again denoted by $\s_D: H^{\ot n} \to H^{\ot n}$, that permutes tensor components.

Let $T:H \to H$ be the linear map $T(x) := \alpha^{-1}(x_{(1)}) \cdot S^{-2}(x_{(2)}) \cdot \alpha(x_{(3)})$, where we use the suppressed Sweedler notation (see Section \ref{sec:alg-prelim}), $S$ is the antipode of $H$, and $\alpha \in H^*$ is the distinguished grouplike element  (cf.~\cite[p.~118]{Kup96}). Note that $T$ and $S$ commute.
The framed Heegaard diagram $D$ of $M$ gives rise to a generalization of the Sweedler power operator
\begin{equation}\label{eq:PMDH}
P(M, D, H) := \left(\bigotimes_{j=1}^{g} m_j \right)\circ \s_D \circ \left(\bigotimes_{r=1}^{n} S^{s_r}T^{t_r}\right)\circ\left(\bigotimes_{i=1}^{g} \D^{|\eta_i \cap I|}\right): H^{\ot g} \to H^{\ot g}\,,
\end{equation}
where for any $p_r \in I_*$, we define $s(p_r)$ and $t(p_r)$ by 
\begin{equation}\label{eq:sr-tr}
s(p_r) := 2(\theta_{\eta_i}(p_r)-\theta_{\mu_j}(p_r))+\dfrac{1}{2}\,,\quad 
t(p_r) :=\phi_{\eta_i}(p_r)-\phi_{\mu_j}(p_r)\,,
\end{equation}
and we may simply write $s_r$ and $t_r$ for $s(p_r)$ and $t(p_r)$ respectively. For $1\le j\le g$, $m_j: H^{\ot |\mu_j\cap I|} \to H$ is the multiplication (corresponding to the partition of $I^*$ by the upper curve $\mu_j$) . If a lower curve $\eta_i$ has no intersection with any upper curve, it is understood that $\D^{|\eta_i \cap I|}(\Ld_{\theta(\eta_{i})})=\e(\Ld_{\theta(\eta_{i})})1_{H}$, see \eqref{eq:coprod}.

Recall the conventions of $\Ld_{n-\frac{1}{2}}$ and $\ld_{n-\frac{1}{2}}$ defined in \eqref{eq:def-ld-n} for any integer $n$.

\begin{definition}\label{def:Kup}
Let $H$ be a finite-dimensional Hopf algebra over a field $\kk$, $M$ a 3-manifold with framing $\frm$ and $D$ a framed Heegaard diagram of $M$ representing $\frm$. The Kuperberg invariant of the framed manifold $(M, \frm)$ based on $H$ is defined to be the scalar
\begin{equation}\label{eq:KInv}
K(M, \frm, H) := \left\langle \bigotimes\limits_{j=1}^{g}\ld_{-\theta(\mu_j)}, \, P(M, D, H)\left(\bigotimes\limits_{i=1}^{g}\Ld_{\theta(\eta_i)}\right)\right\rangle \in \kk\,,
\end{equation}
where $g$ is the genus of $D$, $\{\mu_i\}_{i=1}^{g}$ and $\{\eta_i\}_{i=1}^{g}$ are upper and lower curves of $D$ respectively, $\ld \in \int^r_{H^*}$ and $\Ld \in \int^l_H$ are a pair of normalized integrals, and the bracket represents the evaluation of an element in $H^{\ot g}$ against an element in $(H^{*})^{\ot g}$.
\end{definition}

By the above definition, an easy way to get the formal expressions of the Kuperberg invariant can be described as follows. Using the notations above, for each lower curve $\eta_i$, we write $|\eta_i \cap I| = n_i$, $L^i := \Ld_{\theta(\eta_i)}$, and $\D^{n_{i}}(L^i) = L^i_{(1)} \ot \cdots \ot L^i_{(n_i)}$, and write 
\[
L = L^1_{(1)} \o \cdots \o L^1_{(n_1)} \o L^2_{(1)} \o \cdots \o L^1_{(n_2)} \o \cdots \o L^g_{(1)} \o \cdots \o L^g_{(n_g)} = L^{[1]} \o \cdots \o L^{[n]}\,.
\]
Now we go through the following process to write down the Kuperberg invariant:
\begin{enumerate}
\item Label the intersection points by the coproduct factors of the $L^i$'s according to the order of the point on the lower curves: If an intersection point $p$ satisfies $l(p) = (i,x)$, then we label $p$ on the diagram by $S^{s(p)}T^{t(p)}(L^i_{(x)})$, where the powers of $S$ and $T$ are given by \eqref{eq:sr-tr}. For example, the first intersection point $p$ on $\eta_i$ is labeled by $S^{s_p}T^{t_p}(L^i_{(1)})$.
\item The permutation $\sigma_D$ and $\bigotimes_{j=1}^g m_j$ is implemented as follows. For each upper curve $\mu_j$, starting from the base point, concatenate the labels on the intersection points on it according to its orientation, we will get a word of symbols $S^{s(p)}T^{t(p)}(L^i_{(x)})$ denoted by $W_{j}$. For example, suppose $|\mu_1\cap I|=k_1$ and the intersection points on $\mu_1$ are $p^1, ..., p^{k_1}$, then $W_1$ is $S^{s(p^1)}T^{t(p^1)}(L^{i_1}_{(x_1)})\cdots S^{s(p^{k_1})}T^{t(p^{k_1})}(L^{i_{k_1}}_{(x_{k_1})})$, where for $j=1, ..., k_1$, $(i_j,x_j)=l(p_{\sigma_D(j)})=l(p^{j})$.
\item Evaluate $\langle\ld_{-\theta(\mu_j)}, W_{j}\rangle$ for each $\mu_j$ and compute their products. Since $L$ is a sum of simple tensors, the Kuperberg invariant should be a summation of these products for each summand of $L$.
\end{enumerate}

\begin{convention}
In the remainder of this paper, we will always work with a pair of normalized integrals, so we will drop the superscripts for left/right (co)integrals of Hopf algebras. More precisely, for any finite-dimensional Hopf algebra $H$, we will always denote a  pair of normalized integrals for $H$ by $\ld \in \int^r_{H^*}$ and $\Ld \in \int^l_H$ such that $\ld(\Ld)=1$.
\end{convention}

The minimal genus of Heegaard diagrams of $M$ is called the genus of $M$. In the following, we give several examples of Kuperberg invariants of 3-manifolds of small genera.
\begin{exmp}\label{ex:S3}
The only 3-manifold of genus 0 is the 3-sphere $\BS^3$. In principle, we should be able to compute the Kuperberg invariant using a framed Heegaard diagram with neither handles nor upper/lower curves. However, to illustrate how to the Kuperberg invariant using the method described above, we apply the stabilization move once to get a framed Heegaard diagram as in Figure \ref{fig:S3}.
\begin{figure}[ht]
    \centering
    \includegraphics[width=250pt]{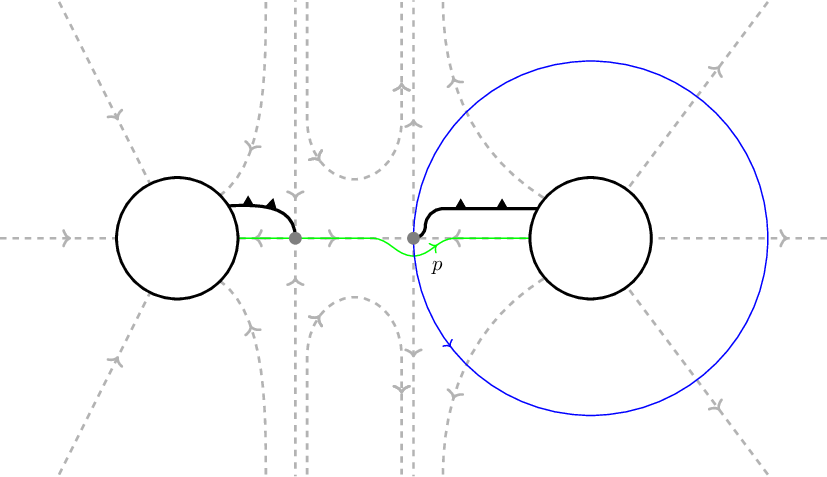}
    \caption{A genus 1 framed Heegaard diagram of $\BS^3$.}
    \label{fig:S3}
\end{figure}
In this Heegaard diagram, the two black circles represent the attaching circles for the handles. The upper curve $\mu$ is colored in blue, and the lower curve $\eta$ is colored in green. The base points of the curves are indicated by the gray bullets. We fix the choice of orientations on the upper and lower curves so that the lower curve goes to the right from its base point and the upper curve goes downwards from its base point. The combing information is encoded by the gray dashed lines (representing the combing $b_1$) and the twist fronts (the arc connecting the two base points, decorated with small triangles). The rotation numbers $\theta$ and $\phi$ at the intersection point $p$ and of the whole curves are given by
\[\begin{split}
& \theta_\eta(p) = \frac{1}{4}\,,\ \theta(\eta) = \frac{1}{2}\,,\ \phi_\eta(p) = 0 \,,\ \phi(\eta) = \frac{1}{2}\,,\\
& \theta_\mu(p) = 0\,,\ \theta(\mu) = \frac{1}{2}\,, \phi_\mu(p) = 0\,,\ \phi(\mu) = -\frac{1}{2}\,.
\end{split}\]
Since $\theta(\eta) = \phi(\eta)$ and $\theta(\mu) = -\phi(\mu)$, Figure \ref{fig:S3} is indeed a framed Heegaard diagram that extends to a framing, denoted by $\frm_0$, on $\BS^3$. Recall from \eqref{eq:coprod} and \eqref{eq:def-ld-n} that $\Delta^{(1)} = \id$, $\ld_{-\frac{1}{2}} = \ld$ and $\Ld_{\frac{1}{2}} = \Ld$, we have, for any finite-dimensional Hopf algebra $H$, 
\[K(\BS^3, \frm_0, H) = \left\langle\ld_{-\frac{1}{2}}, P(\Ld_{\frac{1}{2}})\right\rangle = \ld(S\D^{(1)}(\Ld)) = \ld(S(\Ld)) = 1\,,\]
where the last equality follows from Lemma \ref{lem:int}. 

Fix the above framing $\frm_0$, any other framing of $\BS^3$ (up to homotopy) can be identified with an element in the homotopy set $[\BS^3, \SO(3)]$, which, by \cite[Prop.~2.2]{Kup96}, is completely determined by its (Hopf) degree of the induced map from $H^3(\SO(3), \BZ)$ to $H^3(\BS^3, \BZ)$. According to the proof of \cite[Thm.~4.1]{Kup96}, for any Hopf algebra $H$ with distinguished grouplike elements $g \in H$ and $\alpha \in H^*$ (see \eqref{eq:ldR}), changing the Hopf degree of a framing of a 3-manifold by some $k \in \BZ$ changes its Kuperberg invariant by $\alpha(g)^{-k}$. 

We summarize the above discussions in the following proposition, which is known. The equality in the proposition can be found in \cite{Kup96}, we include it here for completeness and for a demonstration of the computation of Kuperberg invariants. The gauge invariance of $\alpha(g)$ has been proved in \cite[Thm.\ 3.10]{Shi15}.
\begin{prop}\label{prop:S3-inv}
Let $H$ be a finite-dimensional Hopf algebra over an arbitrary base field,  $g \in H$ and $\a \in H^*$ be distinguished grouplike elements. Then for any framing $\frm$ of $\BS^3$, there exists an integer $n \in \BZ$ such that 
\[K(\BS^3, \frm, H) = \alpha(g)^n\,.\]
In particular, $K(\BS^3, \frm, H)$ is a gauge invariant. \qed
\end{prop}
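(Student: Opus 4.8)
The plan is to reduce an arbitrary framing to the reference framing $\frm_0$ of Example \ref{ex:S3}, for which the computation there already gives $K(\BS^3, \frm_0, H) = \ld(S(\Ld)) = 1$ via Lemma \ref{lem:int}; I would emphasize that this normalization holds for \emph{every} finite-dimensional $H$ over an arbitrary field, since it rests only on the identity $\ld(S(\Ld)) = \ld(\Ld^R) = 1$ of that lemma. Fixing $\frm_0$ as a basepoint, any framing of $\BS^3$ is classified up to homotopy by an element of $[\BS^3, \SO(3)]$, and by \cite[Prop.~2.2]{Kup96} such a class is detected by a single integer, its Hopf degree $k \in \BZ$.

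Next I would invoke the dependence of the Kuperberg invariant on the framing: from the proof of \cite[Thm.~4.1]{Kup96}, altering the Hopf degree by $k$ multiplies the invariant by $\alpha(g)^{-k}$, where $g \in H$ and $\alpha \in H^*$ are the distinguished grouplike elements. Combined with the normalization above, this yields
\[
K(\BS^3, \frm, H) = \alpha(g)^{-k}\, K(\BS^3, \frm_0, H) = \alpha(g)^{-k}\,,
\]
so that setting $n := -k$ proves the displayed identity. The point to stress is that $n$ is the Hopf degree of $\frm$ relative to $\frm_0$, a purely topological datum independent of the chosen Hopf algebra.

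For the gauge-invariance claim I would then appeal to \cite[Thm.~3.10]{Shi15}, by which $\alpha(g)$ is itself a gauge invariant. If $H$ and $H'$ are gauge equivalent, their distinguished grouplike data satisfy $\alpha_H(g_H) = \alpha_{H'}(g_{H'})$, while the same integer $n$ is attached to $\frm$ regardless of the underlying Hopf algebra; hence $K(\BS^3, \frm, H) = \alpha_H(g_H)^{n} = \alpha_{H'}(g_{H'})^{n} = K(\BS^3, \frm, H')$, which is the assertion.

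The substance is wholly imported from the cited topological and algebraic results, so I expect the main obstacle to be conceptual bookkeeping rather than calculation: one must verify that the normalization $K(\BS^3, \frm_0, H) = 1$ is genuinely $H$-independent (supplied by Lemma \ref{lem:int}) and that the exponent $n$ depends only on $\frm$, not on the Heegaard diagram used to represent it (supplied by Kuperberg's classification of framings together with the diagram-independence of $K$). Once these two facts are fixed, both the formula and its gauge invariance follow immediately.
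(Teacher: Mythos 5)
Your proposal is correct and follows essentially the same route as the paper: compute $K(\BS^3, \frm_0, H) = \ld(S(\Ld)) = 1$ from the stabilized framed Heegaard diagram via Lemma \ref{lem:int}, classify framings by Hopf degree using \cite[Prop.~2.2]{Kup96}, apply the $\alpha(g)^{-k}$ degree-shift from \cite[Thm.~4.1]{Kup96}, and conclude gauge invariance from \cite[Thm.\ 3.10]{Shi15}. The only addition is your explicit remark that the exponent $n$ is a purely topological datum, which the paper leaves implicit but which is indeed needed for the gauge-invariance conclusion.
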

\end{exmp}

\begin{exmp}\label{ex:S2S1} We consider the genus one 3-manifold $\mathbb{S}^2 \times \mathbb{S}^1$ with a framed Heegaard diagram shown in Figure \ref{fig:S2S1}.  
\begin{figure}[ht]
    \centering
    \includegraphics[width=250pt]{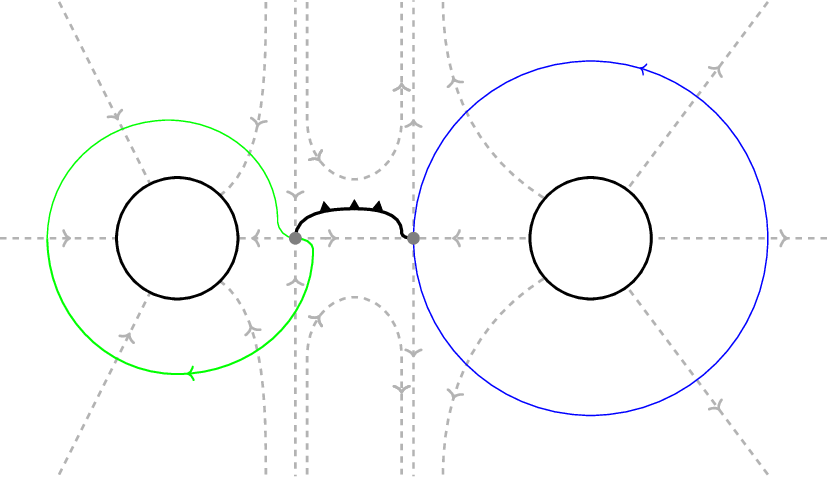}
    \caption{A framed Heegaard diagram of $\BS^2 \times \BS^1$.}
    \label{fig:S2S1}
\end{figure}
As in Example \ref{ex:S3}, the upper curve $\mu$ is colored in blue, and the lower curve $\eta$ is colored in green. Since there is no intersection point, we only have to record the rotation numbers $\theta$ and $\phi$ of the curves:
\[
\theta(\eta) = -\frac{1}{2} = \phi(\eta)\,,\ \theta(\mu) = \frac{1}{2} = -\phi(\mu) \,.
\]
In particular, Figure \ref{fig:S2S1} is a framed Heegaard diagram that determines a framing on $\BS^2\times \BS^1$, denoted by $\frm_1$. By definition, $\D^{(0)} = \e$, and there is no $S$ or $T$ term in $P$ (see \eqref{eq:PMDH}) as there is no intersection point. Therefore, by \cite[Prop.~2.13]{Mon00}, we have, for any finite-dimensional Hopf algebra $H$, 
\[K(\BS^2 \times \BS^1, \frm_1, H) = \pairing{\ld_{-\frac{1}{2}}, P(\Ld_{-\frac{1}{2}})} = \ld(\e(S(\Ld)) = \ld(1)\e(\Ld) = \Tr(S^2)\,,\]
see also \cite{Larson1988a}. 
In particular, by \cite{Larson1988a, Larson1988}, $H$ is semisimple and cosemisimple if and only if $K(\BS^2 \times \BS^1, \frm_1, H) \ne 0$. Note also that by Shimizu (see also \cite[Thm.~2.2]{KMN12}), $K(\BS^2 \times \BS^1, \frm_1, H)$ is a gauge invariant.

In general, we can restrict any framing $\frm$ of $\BS^2 \times \BS^1$ to the genus 1 Heegaard diagram of $\BS^2 \times \BS^1$ (i.e., Figure \ref{fig:S2S1} without the combing and the twist front) to get a framed Heegaard diagram $D_\frm$ of $\frm$ for $\BS^2 \times \BS^1$. Of course, the combing and the twist front on $D_\frm$ is determined by $\frm$, and using $D_\frm$, we can compute the Kuperberg invariant as follows. Assume that in $D_\frm$, we have $\theta(\mu) = -(a-\frac{1}{2})$ for some $a \in \BZ$ and $\theta(\eta) = -(b+1)-\frac{1}{2}$ for some $b \in \BZ$, then 
\[\begin{split}
& K(\BS^2\times\BS^1, \frm, H) = \pairing{\ld_{a-\frac{1}{2}},\e(\Ld_{-b-\frac{1}{2}})} = \pairing{g^a \rhu \ld, \e(\alpha^{b+1}\rhu S(\Ld))}\\ 
=& \pairing{g^a \rhu \ld, \e(\alpha^{b}\rhu \Ld)}
= \ld(g^a) \cdot \alpha^b(\Ld)\,.
\end{split}\]
By definition, for any $a \in \BZ$, we have $(\ld \ot \id)\D(g^a) = \ld(g^a) \cdot g^a = \ld(g^a)\cdot 1$. Since grouplike elements of $H$ are linearly independent, $\ld(g^a) = \ld(1) \delta_{g^a,1}$.  By duality,  $\alpha^b(\Ld) = \e(\Ld) \delta_{\a^b,\e}$, and so
\[
K(\BS^2\times\BS^1, \frm, H) = \ld(1)\e(\Ld) \delta_{g^a,1}\delta_{\a^b,\e}\,.
\]
If $H$ is semisimple and cosemisimple, then $g=1$ and $\a=\e$ and hence 
$K(\BS^2\times\BS^1, \frm, H) = \ld(1)\e(\Ld) =\Tr(S^2)=\dim(H) \in \kk$ by \cite{EG98,Larson1988a}, where $\kk$ is the base field of $H$. Otherwise, $\ld(1)\e(\Ld) =0 =\Tr(S^2)$ and so $K(\BS^2\times\BS^1, \frm, H) =\Tr(S^2)$. Now, we can present the following proposition concerning the Kuperberg invariant of $\BS^2 \times \BS^1$ as follows.
\begin{prop}\label{prop:S2S1-inv}
Let $H$ be a finite-dimensional Hopf algebra over an arbitrary base field. Then for any framing $\frm$ on $\BS^2 \times \BS^1$, 
\[K(\BS^2 \times \BS^1, \frm, H) =\Tr(S^2)\,,\]
where $S$ is the antipode of $H$. 
Consequently, $K(\BS^2 \times \BS^1, \frm, H)$ is a gauge invariant. \qed
\end{prop}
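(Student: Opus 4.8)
The plan is to reduce everything to the standard genus-one Heegaard diagram of $\BS^2\times\BS^1$, in which the single upper curve $\mu$ and single lower curve $\eta$ are disjoint, and to exploit the fact that the Kuperberg invariant then carries essentially no combinatorial content. First I would restrict an arbitrary framing $\frm$ to this Heegaard surface to obtain a framed Heegaard diagram $D_\frm$; such a restriction is automatically admissible, and by construction $I = \mu\cap\eta = \emptyset$. Consequently, in the operator \eqref{eq:PMDH} the permutation $\s_D$ is trivial, there are no factors $S^{s_r}T^{t_r}$, and the only comultiplication is $\D^{|\eta\cap I|} = \D^{(0)} = \e\cdot 1_H$. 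Hence $P(\BS^2\times\BS^1, D_\frm, H)(\Ld_{\theta(\eta)}) = \e(\Ld_{\theta(\eta)})\,1_H$, and by \eqref{eq:KInv},
\[
K(\BS^2\times\BS^1, \frm, H) = \pairing{\ld_{-\theta(\mu)},\, \e(\Ld_{\theta(\eta)})\,1_H} = \e(\Ld_{\theta(\eta)})\cdot \ld_{-\theta(\mu)}(1)\,.
\]

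The second step is to evaluate the two scalar factors using the twisted (co)integrals \eqref{eq:def-ld-n}. Writing the half-integer rotation numbers as $-\theta(\mu) = a - \frac{1}{2}$ and $\theta(\eta)$ in the form $n - \frac{1}{2}$ for integers $a,n$ determined by $\frm$, one gets $\ld_{-\theta(\mu)} = g^a\rhu\ld$ and $\Ld_{\theta(\eta)} = \a^{-n}\rhu S(\Ld)$, whence $\ld_{-\theta(\mu)}(1) = \ld(g^a)$ and $\e(\Ld_{\theta(\eta)}) = \a^{b}(\Ld)$ for a suitable integer $b$. Here I would use that $\ld$ is a right cointegral, so $\ld(x_{(1)})x_{(2)} = \ld(x)1_H$; applied to the grouplike $g^a$ this reads $\ld(g^a)g^a = \ld(g^a)1_H$, and linear independence of distinct grouplikes forces $\ld(g^a) = \ld(1)\,\delta_{g^a,1}$. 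Dually, $\a^b(\Ld) = \e(\Ld)\,\delta_{\a^b,\e}$. Combining these,
\[
K(\BS^2\times\BS^1, \frm, H) = \ld(1)\,\e(\Ld)\,\delta_{g^a,1}\,\delta_{\a^b,\e}\,.
\]

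The remaining, and genuinely delicate, point is to show that the two Kronecker deltas never actually affect the answer, so that the value is $\Tr(S^2)$ independently of the framing. I would invoke the Larson--Radford dichotomy \cite{Larson1988, Larson1988a}: either $\ld(1)\e(\Ld) = 0$, in which case $H$ is not simultaneously semisimple and cosemisimple and the whole product vanishes regardless of $a,b$; or $\ld(1)\e(\Ld)\neq 0$, in which case $H$ is semisimple and cosemisimple, forcing the distinguished grouplikes to be trivial ($g=1$, $\a=\e$) so that both deltas equal $1$. In either case $K(\BS^2\times\BS^1, \frm, H) = \ld(1)\e(\Ld)$. Since the framing $\frm_1$ treated in Example~\ref{ex:S2S1} already identifies this scalar with $\Tr(S^2)$, we conclude $K(\BS^2\times\BS^1, \frm, H) = \Tr(S^2)$ for every $\frm$, and gauge invariance is then immediate because $\Tr(S^2)$ is a gauge invariant by \cite[Thm.~2.2]{KMN12}. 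I expect this last dichotomy to be the main obstacle: the first two steps are purely formal, but the framing-independence hinges precisely on the fact that nontrivial grouplike contributions can only occur in the regime where the overall scalar already vanishes.
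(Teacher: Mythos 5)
Your proposal is correct and follows essentially the same route as the paper's own argument: restrict the arbitrary framing to the standard genus-one Heegaard diagram (no intersection points, so the invariant collapses to $\ld(g^a)\,\a^b(\Ld)$), use linear independence of grouplikes to produce the two Kronecker deltas, and then resolve the framing-dependence via the Larson--Radford dichotomy (semisimple and cosemisimple $\Leftrightarrow \ld(1)\e(\Ld)\neq 0$, which forces $g=1$, $\a=\e$), finally identifying the scalar with $\Tr(S^2)$. The paper's proof in Example \ref{ex:S2S1} proceeds through exactly these steps, including the same delta-function formula and the same case split.
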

\end{exmp}

\begin{exmp}\label{Ex:Q8}
Let $M=\BS^3/Q_8$ be the quotient manifold of $\BS^3$ by the action of the quaternion group $Q_8$. Note that $\BS^3$ is homeomorphic to $\operatorname{SU}(2)$, which  is the compact group of unit real quaternions, and the quaternion group $Q_8$ of order $8$ is a discrete subgroup of $\operatorname{SU}(2)$. So $\BS^3/Q_8$ is simply the cosets of $Q_8$ in $\operatorname{SU}(2)$. Figure \ref{fig:Q8} is a framed Heegaard diagram of $M$, which is obtained by switching the upper and lower handle bodies in the Heegaard diagram described in \cite[Fig.~7]{GH07}. We then decorate it with a combing and twist fronts.
\begin{figure}[ht]
    \centering
    \includegraphics[width=.5\linewidth]{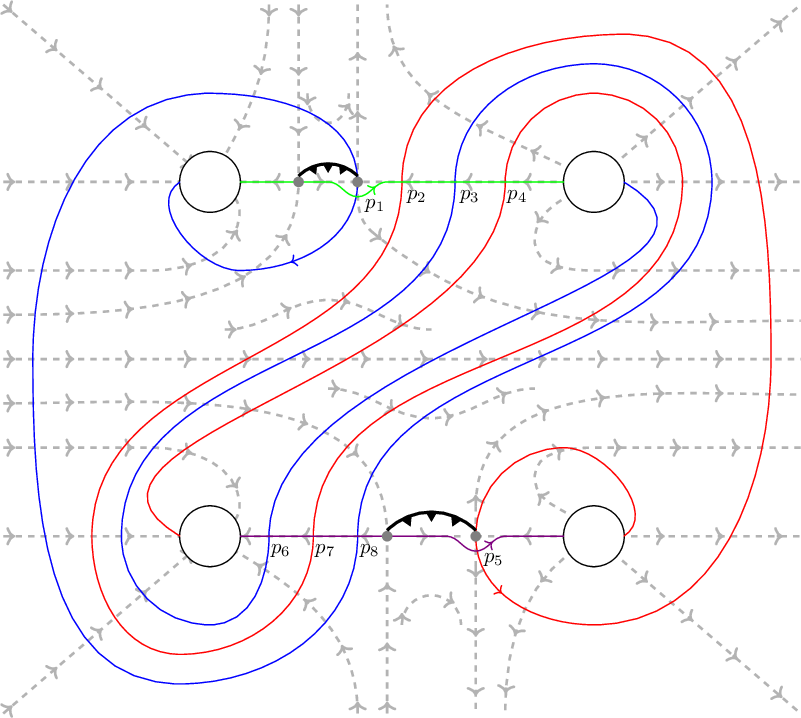}
    \caption{A framed Heegaard diagram of $M = \BS^3/Q_8$.}
    \label{fig:Q8}
\end{figure}
More precisely, in Figure \ref{fig:Q8}, the two pairs of black circles represent the attaching circles for handles, and the lower and upper curves are colored according to Table \ref{tbl:Q8}. The rotation numbers $\theta$ and $\phi$ of the $I$-points can be read from the diagram as we go along each curve from their base points following their orientations, which is recorded in Table \ref{tbl:Q8}. Note that since there is no intersection between the curves the twist fronts, and so the $T$-terms in \eqref{eq:sr-tr} are all identities.
\begin{table}[ht]
\begin{tabular}{|c|c|c||c|c|c|c|c|}
\hline
\multirow{6}{*}{Lower Curves} & \multirow{3}{*}{$\eta_1$ (green)} & $\eta_1 \cap I$ & $p_1$ & $p_2$ & $p_3$ & $p_4$ & Total\\
\cline{3-8}&& $\theta$ & $\frac{1}{4}$ & $\frac{1}{2}$ & $\frac{1}{2}$ & $\frac{1}{2}$ & $\frac{1}{2}$\\
\cline{3-8}& & $\phi$ & $0$ & $0$ & $0$ & $0$ & $\frac{1}{2}$\\
\cline{2-8}& \multirow{3}{*}{$\eta_2$ (violet)} & $\eta_2 \cap I$ & $p_5$ & $p_6$ & $p_7$ & $p_8$ & Total\\
\cline{3-8}& & $\theta$ & $\frac{1}{4}$ & $\frac{1}{2}$ & $\frac{1}{2}$ & $\frac{1}{2}$ & $\frac{1}{2}$\\
\cline{3-8}& & $\phi$ & $0$ & $0$ & $0$ & $0$ & $\frac{1}{2}$\\
\hline
\multirow{8}{*}{Upper Curves} & \multirow{4}{*}{$\mu_1$ (blue)} & $\mu_1 \cap I$ & $p_1$ & $p_6$ & $p_3$ & $p_8$ & Total\\
\cline{3-8} && $\theta$ & $0$ & $-\frac{3}{4}$ & $-\frac{1}{4}$ & $\frac{1}{4}$ & $-\frac{1}{2}$\\
\cline{3-8} && $\phi$ & $0$ & $0$ & $0$ & $0$ & $\frac{1}{2}$\\
\cline{3-8}& & $S$-term in \eqref{eq:sr-tr} & $S$ & $S^3$ & $S^2$ & $S$ & $ $\\
\cline{2-8}& \multirow{4}{*}{$\mu_2$ (red)} & $\mu_2 \cap I$ & $p_5$ & $p_2$ & $p_7$ & $p_4$ & Total\\
\cline{3-8} && $\theta$ & $0$ & $\frac{1}{4}$ & $-\frac{1}{4}$ & $-\frac{3}{4}$ & $-\frac{1}{2}$\\
\cline{3-8} && $\phi$ & $0$ & $0$ & $0$ & $0$ & $\frac{1}{2}$\\
\cline{3-8}& & $S$-term in \eqref{eq:sr-tr} & $S$ & $S$ & $S^2$ & $S^3$ & $ $\\
\hline
\end{tabular}
\caption{Colors and rotation numbers in Figure \ref{fig:Q8}.}
\label{tbl:Q8}
\end{table}

In particular, Figure \ref{fig:Q8} satisfies the admissibility condition for the rotation numbers, so it is a framed Heegaard diagram of $M$, denoted by $D$.  Its corresponding diagram framing is denoted by $\frm$. The element $\Ld_{H^{\otimes n}} = \Ld^{\otimes n}$ is a left integral of $H^{\otimes n}$. We use the simplified (and abused) notation to denote  $\Ld_{H^{\otimes n}}^{[1]} \ot \cdots \ot \Ld^{[n]}_{H^{\ot n}}$ as $\Ld^{1} \ot \cdots \ot \Ld^{n}$, so that different operators in \eqref{eq:PMDH} can be applied to $\Ld$ at different positions. By \eqref{eq:def-ld-n}, we have
\[ \Ld_{\theta(\eta_1)} = \Ld^1_{\frac{1}{2}} = \Ld^1\,,\text{and }
  \Ld_{\theta(\eta_2)} = \Ld^2_{\frac{1}{2}} = \Ld^2
  \]
and
\[\ld_{-\theta(\mu_1)} = \ld_{\frac{1}{2}} = g\rhu \ld\,,\text{and } \ld_{-\theta(\mu_2)} = \ld_{\frac{1}{2}} = g\rhu \ld\,.\]
Note that we do not have to distinguish the cointegrals in the evaluation of the Kuperberg invariant by \eqref{eq:KInv}.

The order of the intersection points are determined by our choice of orientation of the curves on the framed Heegaard diagram $D$ (Figure \ref{fig:Q8}). Using the notations in Section \ref{sec:def-Kup}, we have $I_* = \{p_1, ..., p_8\}$, $I^* = \{p^1, ..., p^8\} = \{p_1, p_6, p_3, p_8, p_5, p_2, p_7, p_4\}$, and the permutation $\sigma_D$ is $(2,6)(4,8)$ in cycle form. Moreover, $l(p_1) = (1,1)$, $l(p_6) = (2,2)$, $l(p_3) = (1, 3)$, $l(p_8) = (2,4)$, $l(p_5) = (2,1)$, $l(p_2) = (1,2)$, $l(p_7) = (2,3)$ and $l(p_4) = (1,4)$. Using these as indices and \eqref{eq:twisted-ld}, we have
\begin{equation}\label{eq:Q8-1}
\begin{split}
K(M, \frm, H) = &\ld\left(S(\Ld^{1}_{(1)})S^3(\Ld^{2}_{(2)})S^2(\Ld^{1}_{(3)})S(\Ld^{2}_{(4)})g\right) \cdot \ld\left(S(\Ld^2_{(1)})S(\Ld^1_{(2)})S^2(\Ld^2_{(3)})S^3(\Ld^1_{(4)})g\right)\\	
=&\ld\left(\Ld^2_{(4)}S(\Ld^1_{(3)})S^2(\Ld^2_{(2)})\Ld^1_{(1)}\right)\cdot\ld\left(S^2(\Ld^1_{(4)})S(\Ld^2_{(3)})\Ld^1_{(2)}\Ld^2_{(1)}\right)\,.
\end{split}\end{equation}
\end{exmp}

\section{Gauge invariance of the Kuperberg invariant of lens spaces}\label{sec:lens-space}

In this section, we study the gauge invariance of the Kuperberg invariants of framed 3-manifolds admitting Heegaard splittings of genus 1, which are called lens spaces. In a genus 1 Heegaard diagram of a lens space, there is only 1 lower curve and 1 upper curve. One can always fix the lower curve so that the lens space is determined (up to homeomorphism) by the homotopy class of the upper curve, which is parameterized by a pair of coprime integers $n, k\in \BZ$. Such a lens space is said to have type $(n,k)$ and is denoted by $L(n,k)$ (see \cite[Chap.~9]{Rol76}). In particular, we have homeomorphisms $L(1,k) \cong \BS^3$ for all $k \in \BZ$, $L(0,1) \cong \BS^2 \times \BS^1$, and  
\begin{equation} \label{eq:lens_iso}
    L(n,k) \cong L(n,-k) \cong L(-n,-k) \cong L(n,k+mn) \quad (\forall m \in \BZ)
\end{equation}
for all coprime integers $n,k$.

By Propositions \ref{prop:S3-inv} and \ref{prop:S2S1-inv}, the Kuperberg invariants of the lens spaces $\BS^3$ and $\BS^2 \times \BS^1$ are gauge invariants of the underlying Hopf algebras for any choice of framings. The main theorem of this section is an extension of these results to all lens spaces, which is stated as follows.

\begin{thm}\label{thm:Lnk}
Let $n$, $k$ be coprime integers, and $H$ a finite-dimensional Hopf algebra over a field $\kk$. For any framing $\frm$ on the lens space $L(n,k)$, the Kuperberg invariant $K(L(n,k),\frm, H)$ is a gauge invariant of $H$.
\end{thm}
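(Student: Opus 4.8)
The plan is to reduce the statement to a trace computation and then exploit the properties of the sequence $\{F_n\}$ established in Section~\ref{sec:technical}. By the homeomorphisms \eqref{eq:lens_iso} we may assume $n\ge 1$, and since the cases $n=0$ ($\BS^2\times\BS^1$) and $n=1$ ($\BS^3$) are already settled by Propositions~\ref{prop:S2S1-inv} and~\ref{prop:S3-inv}, we may assume $n\ge 2$. First I would fix a convenient framing: $L(n,k)$ has a genus $1$ Heegaard diagram with a single lower curve $\eta$ and a single upper curve $\mu$ realizing the $(n,k)$ slope, so that $\mu$ and $\eta$ meet in $n$ points and the two orderings $I_*$, $I^*$ differ by the permutation $\s_D$ of $\{0,1,\dots,n-1\}$ induced by multiplication by $k$ modulo $n$. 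I would draw such a diagram decorated with a combing and a single twist front chosen so that the admissibility condition \eqref{eq:adm} holds, calling the resulting diagram framing $\frmR$. Tabulating the rotation numbers $\theta$ and $\phi$ at each intersection point gives the exponents $s_r,t_r$ of \eqref{eq:sr-tr}, and feeding these into \eqref{eq:PMDH} and \eqref{eq:KInv} produces an explicit expression for $K(L(n,k),\frmR,H)$ in terms of $\ld$, $\Ld$, the antipode $S$, the operator $T$, and the distinguished grouplikes $g,\a$. For $k=1$ this should recover the $n$-th Sweedler power expression $\ld S(\Ld^{[n]})$ of the introduction.

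Next I would rewrite this scalar as a trace. Using the Radford trace formula (Theorem~\ref{t:t2}(v)) together with its generalization (Corollary~\ref{c:t2}), the evaluation $\langle \ld_{-\theta(\mu)}, P(\Ld_{\theta(\eta)})\rangle$ can be massaged into the form $\ld\!\left(S(\Lda{2})\,P_H(\Lda{1})\right)=\Tr(P_H)$ for an explicitly described linear operator $P_H\colon H \to H$ built from $\De^n$, the maps $S^{s_r}T^{t_r}$, the reordering $\s_D$, and the multiplication $m$. Crucially, $P_H$ is assembled purely from the structure maps of $H$, so the identical recipe applied to the twist $H_F$ yields an operator $P_{H_F}$ on the \emph{same} underlying vector space. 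Hence, by Definition~\ref{def:Kup}, gauge invariance for the framing $\frmR$ is equivalent to the identity $\Tr(P_H)=\Tr(P_{H_F})$ for every 2-cocycle $F\in H^{\o 2}$.

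The core of the argument is this last identity, which is the content of Theorem~\ref{t:Lnk-fR}. Here I would substitute the twisted structure maps $\De_F^{n}=F_n\De^{n}F_n^{-1}$ (Lemma~\ref{lem:F-n-plus-1}), $S_F=uSu^{-1}$ (see \eqref{eq:S_F}), and the twisted integrals and distinguished grouplikes into $P_{H_F}$, and then use cyclicity of the trace together with the technical properties of $\{F_n\}$ from Section~\ref{sec:technical} to show that all the $F_n$, $F_n^{-1}$ and $u$ factors either telescope or cancel, leaving exactly $\Tr(P_H)$. The generalized Radford identities of Corollary~\ref{c:t2} are precisely the tool that lets one commute the twist factors past the iterated antipodes and Sweedler powers. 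I expect this step to be the main obstacle: the exponents $s_r,t_r$ and the permutation $\s_D$ make the operator $P_H$ combinatorially involved for general $k$, and one must verify the cancellation of the $F_n$-factors uniformly in $(n,k)$ rather than just in the clean case $k=1$.

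Finally I would upgrade from $\frmR$ to an arbitrary framing $\frm$. Framings of the closed oriented $3$-manifold $L(n,k)$ form a torsor over $[L(n,k),\SO(3)]$, which by obstruction theory is classified by a spin class in $H^1(L(n,k);\BZ/2)$ together with a Hopf degree in $H^3(L(n,k);\BZ)\cong\BZ$. A change of Hopf degree by $m$ multiplies the Kuperberg invariant by $\a(g)^{-m}$, as in Example~\ref{ex:S3} and the proof of \cite[Thm.~4.1]{Kup96}, and $\a(g)$ is itself a gauge invariant by \cite[Thm.~3.10]{Shi15}, so gauge invariance is preserved under degree changes. Since $H^1(L(n,k);\BZ/2)$ is $\BZ/2$ when $n$ is even and trivial when $n$ is odd, only finitely many spin classes remain; I would exhibit a framed Heegaard diagram for a representative of each, recompute the associated operator $P_H$, and establish $\Tr(P_H)=\Tr(P_{H_F})$ for each by the argument of the previous paragraph. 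Combining the spin-class and Hopf-degree contributions yields gauge invariance of $K(L(n,k),\frm,H)$ for every framing $\frm$, which is exactly Theorem~\ref{t:genus_1}.
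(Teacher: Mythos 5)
Your overall strategy matches the paper's (explicit framed Heegaard diagram, Radford trace formula to get $\Tr(P_H)$, cancellation of the $F_n$-factors via Section~\ref{sec:technical}, then spin class plus Hopf degree to cover all framings), but there are two genuine gaps in the plan as written.

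First, you assume that for every coprime pair $(n,k)$ the standard genus-1 diagram can be decorated with a combing and a single twist front satisfying the admissibility condition \eqref{eq:adm}, giving one framing $\frmR$ to which the trace argument applies. This is false as stated: admissibility imposes parity constraints, and the paper's diagram $\frmR$ exists only when $n-k$ is odd, while the companion diagram $\frmL$ exists only when $k$ is odd (coprimality guarantees at least one of these holds, but not a fixed one). So you need \emph{two} families of diagrams, and for the odd-$k$ family the paper does \emph{not} redo the cocycle computation: it proves the duality $K(L(n,k),\frmL,H)=K(L(n,n-k),\frmR,H^{\op})$ (Proposition~\ref{p:duality}) and deduces gauge invariance from the $\frmR$ case applied to $H^{\op}$. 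Relatedly, your expression for the invariant contains $T$-operators; the paper's diagrams are arranged (via base point isotopy moves) so that curves meet twist fronts only at base points, making all $t_r=0$ — without this normalization the $F_n$-cancellation in the analogue of Theorem~\ref{t:Lnk-fR} would have to contend with distinguished grouplike factors that your outline does not account for.

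Second, and more seriously, your treatment of the even-$n$ case (where $H^1(L(n,k);\BZ/2\BZ)\cong\BZ/2\BZ$ gives two spin classes) says you would ``exhibit a framed Heegaard diagram for a representative of each,'' but you give no method for deciding which spin class a given framed diagram represents, hence no way to certify that your representatives actually cover both classes. This is the nontrivial topological content of the paper's Theorem~\ref{t:genus_1}: one must show that $\frmL$ and $\frmR$ lie in \emph{different} spin classes, which the paper does by comparing the characteristic classes of the two combings $b_1$, $b_1'$ through their anti-parallel locus $C$ (a closed curve on which $b_1=-b_1'$, built from a segment of the lower curve between the two right base points and a segment of the upper curve), and proving $[C]\neq 0$ in $H_1(L(n,k);\BZ)$ using \cite[Lem.~2.14]{Les15}. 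Without this step, the degree-and-spin reduction leaves one spin class potentially untouched by any framing whose invariant you have shown to be gauge invariant.
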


In light of \eqref{eq:lens_iso} and Propositions \ref{prop:S3-inv} and \ref{prop:S2S1-inv}, we only have to prove the theorem for $0 < k < n$, which is what we assume in the following.

We start with our presentation of a Heegaard diagram of $L(n,k)$ without combing information, which is shown in Figure \ref{fig:HD-Lnk}. 
Let $r$ be the remainder upon the division of $n$ by $k$. In Figure \ref{fig:HD-Lnk}, the blue lines represent the upper curve and the the horizontal green line is the lower curve. 
A strand labeled by $k$, $r$ or $k-r$ is understood as the corresponding number of parallel strands of the upper curve, and the box is used to denote the place where we separate the strands. The two black circles are the attaching circles of one handle. In particular, there are exactly $n$ strands of the upper curve between the attaching circles and passing through the lower curve, i.e., the upper curve and the lower curve have $n$ intersection points. As before, we denote the set of intersection points between the lower and upper curve by $I$, and an element in $I$ is called an $I$-point. We orient the lower and upper curves so that the lower curves goes from left to right, and the upper curve go through the lower curve from top to bottom. 

\begin{observation}\label{obs:pts}
When moving along the upper curve  with the above choice of orientation, there are $(k-1)$ $I$-points on the lower curve between any $I$-point to its next one on the upper curve.
\end{observation}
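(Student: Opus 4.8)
The plan is to read off the permutation induced by the upper curve directly from the planar diagram of Figure~\ref{fig:HD-Lnk} and to show it is precisely the ``shift by $k$'' permutation on the $n$ intersection points. First I would fix the bookkeeping: label the $I$-points $p_1, \dots, p_n$ from left to right along the oriented lower curve $\eta$, so the \emph{position} of an $I$-point is its index in $\{1, \dots, n\}$ (this agrees with the ordering $I_*$ induced by $l$). Observation~\ref{obs:pts} is then equivalent to the assertion that, writing $p'$ for the $I$-point following $p$ as one travels along the oriented upper curve $\mu$, the position of $p'$ differs from that of $p$ by exactly $k$ modulo $n$; ``$k$ apart cyclically'' is the same as ``$k-1$ intervening $I$-points on $\eta$,'' so this reformulation is all that is needed.

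Next I would trace the upper curve. Between the two attaching circles the $n$ strands of $\mu$ cross $\eta$ transversally from top to bottom, in the left-to-right order $p_1, \dots, p_n$. The nontrivial content is how $\mu$ reconnects these $n$ crossings into a single simple closed curve: after crossing $\eta$, each strand either runs through the handle (re-emerging at the opposite attaching circle) or returns by an arc lying in the plane, and the box, together with the labels $k$, $r$, $k-r$, prescribes exactly how the bundle of $n$ strands is split and rerouted. I would make the routing explicit by following $\mu$ from an arbitrary crossing $p$ and checking that one full excursion --- out through the handle and back around through the planar arcs --- returns $\mu$ to $\eta$ at the crossing whose left-to-right index is $k$ greater (cyclically). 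This is where the division $n = qk + r$ with $0 \le r < k$ enters: separating the strand bundle at the box into groups of sizes $r$ and $k-r$ (alongside full groups of size $k$) is precisely the combinatorial device making the net displacement equal $k$ for \emph{every} strand while keeping $\mu$ connected and the diagram planar.

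I expect the main obstacle to be exactly this last step: reading the strand routing off the figure without sign or off-by-one errors, and verifying the displacement is uniformly $+k \pmod n$ at every crossing, including the ``wrap-around'' crossings near the ends where the arcs close up through the outer region and the handle. To keep this honest I would either argue inductively over the strand groups determined by the box, or cross-check against the standard identification of the genus-$1$ Heegaard surface with $\BR^2/\BZ^2$ in which $\eta$ is a $(1,0)$-curve and $\mu$ a straight curve of slope determined by $(n,k)$: the crossings then sit at $x$-coordinates $\{\, kj \bmod n : 0 \le j < n \,\}$, and advancing one step along $\mu$ (that is, $j \mapsto j+1$) advances the lower-curve position by $k$, reproducing the claim. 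Finally I would note that $\gcd(n,k)=1$ forces this shift-by-$k$ map to be a single $n$-cycle, which is consistent with $\mu$ being one simple closed curve and confirms the count of $k-1$ intervening $I$-points.
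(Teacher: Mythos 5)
Your proposal is correct, and it actually supplies more than the paper does: the paper offers no proof of Observation~\ref{obs:pts} at all, treating it as something to be read directly off Figure~\ref{fig:HD-Lnk} (whose box and strand labels $k$, $r$, $k-r$ are designed precisely so that each strand of $\mu$ re-enters $\eta$ shifted $k$ positions to the right). Your first approach --- tracing the strand routing through the box and the handle --- is exactly this implicit reading of the figure, and you are right that its only risk is bookkeeping. Your second approach, via the flat model $\BR^2/\BZ^2$ with $\eta$ a $(1,0)$-curve and $\mu$ the straight $(k,n)$-curve, is the genuinely different and cleaner route: since the paper \emph{defines} $L(n,k)$ by the homotopy class of the upper curve, the crossings of the geodesic representative sit at $x$-coordinates $\tfrac{1}{n}(\ol{kj})$, and one step along $\mu$ shifts the position by $k$, giving the $k-1$ intervening points immediately, with the orientation conventions (hence the sign $+k$ rather than $-k$) fixed unambiguously. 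The one step worth making explicit in that argument is the bridge back to the figure: the diagram's upper curve meets $\eta$ in exactly $n$ points, which is the minimal geometric intersection number of the classes $(k,n)$ and $(1,0)$, so both curves are in minimal position and the standard fact that minimal-position representatives of fixed isotopy classes on the torus have the same cyclic intersection combinatorics transfers the straight-line computation to Figure~\ref{fig:HD-Lnk}. With that remark added, your argument is complete, and your closing consistency check (coprimality forces a single $n$-cycle, matching $\mu$ being connected) agrees with the permutation $\s_D(i)=\ol{k(i-1)}$ used later in Proposition~\ref{p:S-exponents}.
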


\begin{figure}[ht]
\centering
\includegraphics[width=0.65\textwidth]{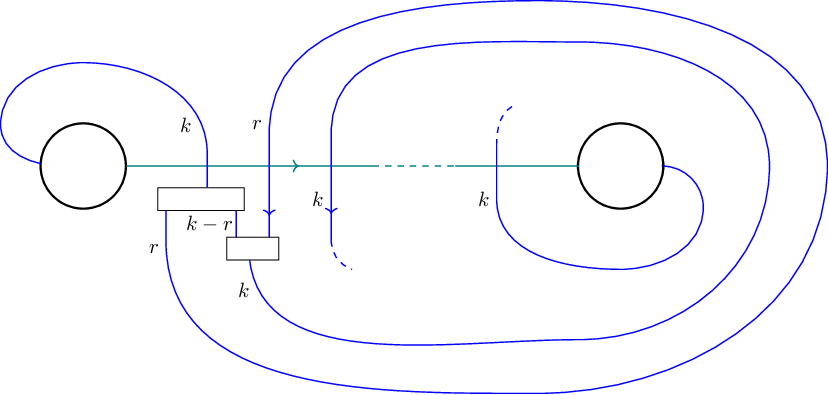}
\caption{A Heegaard diagram of $L(n,k)$.}
\label{fig:HD-Lnk}
\end{figure}

The rest of this section is organized as follows. In Section \ref{subsec:diag-frm}, we introduce two  diagram framings of $L(n,k)$ depending on the parity of $k$ and $n-k$ by explicitly presenting  2-combed Heegaard diagrams. Then, in Section \ref{subsec:gauge-Lnk}, we verify the admissibility condition of these 2-combed Heegaard diagrams, derive their corresponding Kuperberg invariants, and show their gauge invariance in Theorems \ref{t:Lnk-fR} and \ref{t:Lnk-fL}. Finally, combining the above results and an analysis of general framings on lens spaces in Section \ref{subsec:gen-frm}, we establish our main result on the gauge invariance of the Kuperberg invariant of any framed lens space in Theorem \ref{t:genus_1}.

\subsection{Special diagram framings of lens spaces}\label{subsec:diag-frm}
\subsubsection{The diagram framing \texorpdfstring{$\frmL$}{} for odd \texorpdfstring{$k$}{}}\label{subsubsec:fL-pic}
When $k$ is odd, let $k_{1} := \tfrac{k-1}{2}$, and consider the 2-combed Heegaard diagram demonstrated in Figure \ref{fig:fL-no-move}. 
As before, the blue strands represent the upper curve, and the green horizontal line represents the lower curve. It is understood that number of blue strands to the right of the one labelled by $r$ may vary and depends on $n$ and $k$. 
The gray dashed lines indicate the direction of the combing $b_{1}$ on the surface, with base points located at the two gray bullets. We will denote the left (resp.\ right) base point by $\xi_{1}$ (resp.\ $\xi_{2}$). 
In order to get a cleaner presentation, we put the labels of the base points on the vertical dashed lines connecting to them.
In Figure \ref{fig:fL-no-move}, we place the base points so that there is no $I$-point left to $\xi_{1}$, and there are $k_{1}$ $I$-points to the left of $\xi_{2}$, and we isotope the lower curve a little bit so that the $(k_{1}+1)$-th intersection point and $\xi_{2}$ are aligned vertically. The arc connecting the base points, decorated with small triangles, represents the twist front. 
We will show that Figure \ref{fig:fL-no-move} represents a framed Heegaard diagram of $L(n,k)$ when $k$ is odd, and the corresponding diagram framing is denoted by $\frmL$. 

Note that by definition, the formula for the Kuperberg invariant of $L(n,k)$ derived from Figure \ref{fig:fL-no-move} contains several powers of the operator $T$ due to the intersection of the upper curve with the twist front, which may result in computational difficulties in the study of its gauge invariance. 
So our idea is to changes Figure \ref{fig:fL-no-move} into a new framed Heegaard diagram Figure \ref{fig:fL-detail} via the local moves introduced in \cite{Kup96} so that the homotopy class of the diagram framings in these two diagrams are equal, so are their corresponding Kuperberg invariants. In Figure \ref{fig:fL-detail}, the twist front intersects with the upper curve only at the base point. Nevertheless, we will see later in the proof of our main result Theorem \ref{t:genus_1} that Figure \ref{fig:fL-no-move} is still useful.
\begin{figure}[ht]
    \centering
    \includegraphics[width=0.75\textwidth]{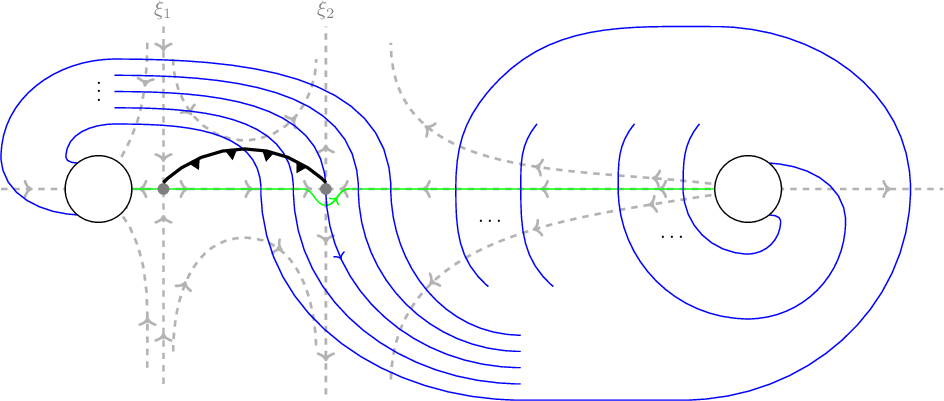}
    \caption{A framed Heegaard diagram of $L(n,k)$ when $k$ is odd. The diagram framing is denoted by $\frmL$. There are $k_1=\frac{k-1}{2}$ $I$-points to the left of $\xi_2$.}
    \label{fig:fL-no-move}
\end{figure}

In \cite[Sec.~2.2]{Kup96}, the \emph{base point isotopy move} among diagram framings of 3-manifolds was introduced. Roughly speaking, if two diagram framings on the Heegaard diagram of a 3-manifold are different by moving the base point of a lower (or upper) curve  along the curve across a $I$-point in such a way that the arc representing the twist front is moved along with the base point, then these two diagram framings are said to be different by a base point isotopy move. See Figure \ref{fig:base-pt-isotopy} for a local picture. It is shown in \cite[Thm.~4.1]{Kup96} that if two diagram framings of a 3-manifold are different by a base point isotopy move, then they represent two homotopic framings on the manifold. As can be seen from the Figure \ref{fig:base-pt-isotopy}, the base point isotopy move can  be described as moving a part of an upper (resp.\ a lower) curve across the base point of an intersecting lower (resp.\ upper) curve. 
\begin{figure}[ht]
\centering
\includegraphics[width=200pt]{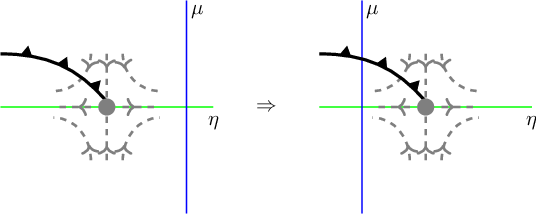}
\caption{The base point isotopy move.}
\label{fig:base-pt-isotopy}
\end{figure}

Now we perform the base point isotopy move to Figure \ref{fig:fL-no-move} by moving the base point $\xi_{1}$ along the lower curve to the right so there is no $I$-point between $\xi_{1}$ and $\xi_{2}$ horizontally. In this process, $\xi_1$ passes through $k_1$ $I$-points, and in particular, when $k=1$, we don't make any change as there is already no $I$-point left to $\xi_1$. The resulting 2-combed Heegaard diagram is demonstrated in Figure \ref{fig:fL-detail}. We will show that this 2-combed Heegaard diagram is a framed Heegaard diagram, and since it represents the same homotopy class of framing as Figure \ref{fig:fL-no-move}, we still denote the diagram framing in Figure \ref{fig:fL-detail} by $\frmL$, and we will use the formula of Kuperberg invariant associated to this diagram framing to prove its gauge invariance. 
\begin{figure}[ht]
\centering
\includegraphics[width=0.8\textwidth]{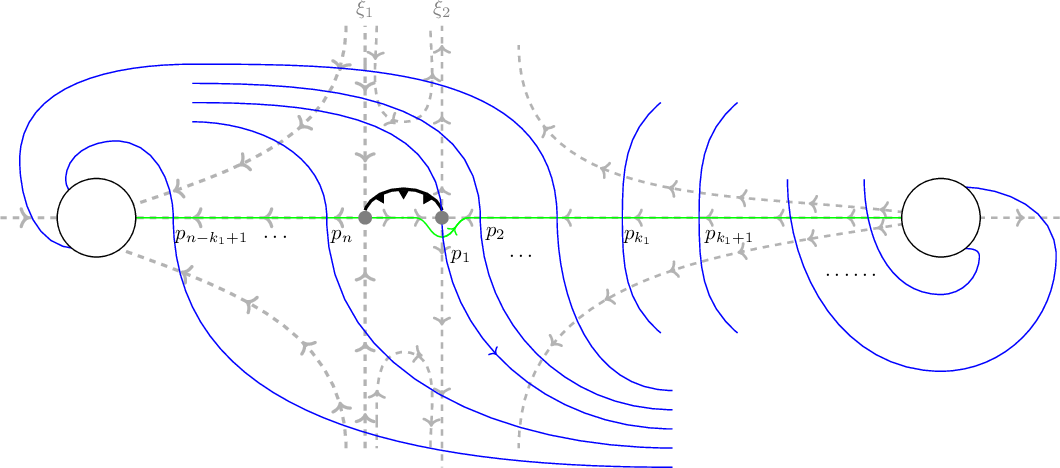}
\caption{When $k$ is odd, we move the base point of $\xi_{1}$ to the right $k_{1}$-times to get this framed Heegaard diagram of $\frmL$, where $k_{1} = \tfrac{k-1}{2}$.}
\label{fig:fL-detail}
\end{figure}

In Figure \ref{fig:fL-detail}, we label the $I$-points as follows: From left to right, the $I$-points to the left of $\xi_{1}$ are labeled by $p_{n-k_{1}+1}$, $p_{n-k_{1}+2}$, ..., $p_{n}$, and those to the right of $\xi_{1}$ are labeled by $p_{1}$, $p_{2}$, ..., $p_{n-k_{1}}$. We take $p_{1}$ to be the starting point of the lower curve, which is oriented towards right at $p_{1}$; and we take the base point of $\xi_{2}$ to be the starting point of the upper curve, at which the upper curve is oriented downwards. Note that by design, when $k=1$, we understand Figure \ref{fig:fL-detail} as having no $I$-point between $\xi_1$ and the left attaching circle, as $k_1 = 0$. We will show in Lemma \ref{lem:fL-diag} that Figure \ref{fig:fL-detail}, and hence Figure \ref{fig:fL-no-move}, is a framed Heegaard diagram.

\subsubsection{The diagram framing \texorpdfstring{$\frm_R$}{} for odd \texorpdfstring{$(n-k)$}{}}
\label{subsubsec:fR-pic}
When $n-k$ is odd, let $k_{0} := \tfrac{n-k-1}{2}$. There is another way to put a 2-combing on the Heegaard diagram (Figure \ref{fig:HD-Lnk}) of $L(n,k)$, depicted in Figure \ref{fig:fR-no-move}, which has the same color scheme as Figure \ref{fig:fL-no-move}. We will show later that Figure \ref{fig:fR-no-move} is a framed Heegaard diagram, and we denote the corresponding diagram framing by $\frmR$. We point out some notable differences between Figure \ref{fig:fR-no-move} and Figure \ref{fig:fL-no-move}: Firstly, in Figure \ref{fig:fR-no-move}, due to the parity of $(n-k)$, we put $\xi_{2}$ closer to the right attaching circle so that there are $k_{0}$ $I$-points to the right of $\xi_{2}$. Secondly, the twist front now connects $\xi_{1}$ and $\xi_{2}$ through the handle, and the small triangles on the twist front are pointing upwards. 
\begin{figure}[ht]
    \centering
    \includegraphics[width=0.7\textwidth]{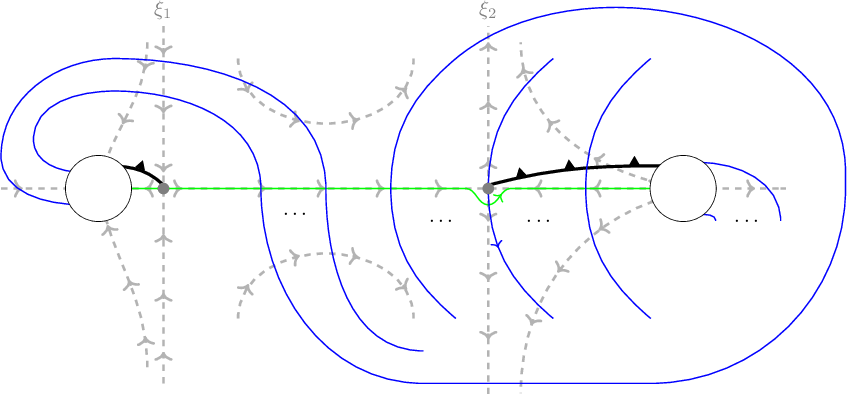}
    \caption{A framed Heegaard diagram on $L(n,k)$ when $(n-k)$ is odd. The diagram framing is denoted by $\frmR$. There are $k_0=\frac{n-k-1}{2}$ $I$-points tot he right of $\xi_2$.}
    \label{fig:fR-no-move}
\end{figure}

As in Figure \ref{fig:fL-no-move}, there are several intersections between the twist front and the upper curve in Figure \ref{fig:fR-no-move}, and we will isotope the upper curve and then perform the base point isotopy move to get an alternative framed Heegaard diagram Figure \ref{fig:fR-detail}, whose associated Kuperberg invariant takes a simpler form. 
We start by pulling $k_0$ strands of the upper curve through the right attaching circle in Figure \ref{fig:fR-no-move}, which is demonstrated in Figure \ref{fig:Lnk-R-move}. 
\begin{figure}[ht]
  \centering
  \includegraphics[width=0.8\textwidth]{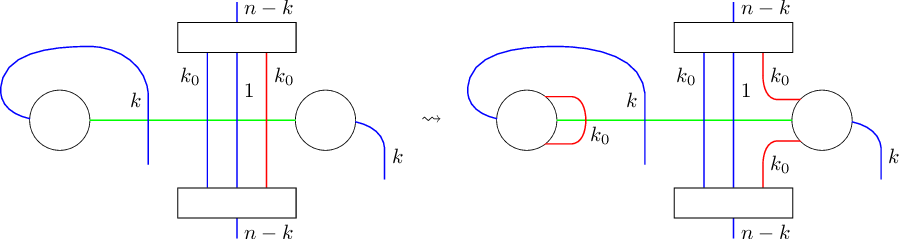}
  \caption{Pulling the right $k_{0}$ strands of the upper curve through the attaching circles.}
  \label{fig:Lnk-R-move}
\end{figure}
More precisely, near the lower curve, Figure \ref{fig:fR-no-move} without the 2-combing is depicted in the left hand side of Figure \ref{fig:Lnk-R-move}. We pull the right $k_0$ strands of the upper curve (colored in red) through the attaching circle on the right, then they will appear from the left attaching circle as illustrated in the right hand side of Figure \ref{fig:Lnk-R-move}.

Now we perform a base point isotopy move (cf.~Figure \ref{fig:base-pt-isotopy}) around the base point $\xi_{1}$ on Figure \ref{fig:fR-no-move}, so that the upper curve moves as described in Figure \ref{fig:Lnk-R-move}. The resulting diagram framing is still denoted by $\frmR$, and is demonstrated in Figure \ref{fig:fR-detail}. 
In Figure \ref{fig:fR-detail}, we label the $I$-points from left to right by $p_{1}$, $p_{2}$, ..., $p_{n}$, and we perturb the lower curve slightly to align $\xi_{2}$ and $p_{n}$ on the same vertical line. 
The $I$-point $p_{1}$ is the  starting point of the lower curve, which is oriented to the right from $p_{1}$; while the starting point of the upper curve is the base point $\xi_{2}$, and we orient the upper curve downwards at $\xi_{2}$. We will show in Proposition \ref{p:S-exponents} that Figure \ref{fig:fR-detail}, and hence Figure \ref{fig:fR-no-move}, is a framed Heegaard diagram.

\begin{figure}[ht]
  \centering
  \includegraphics[width=.7\textwidth]{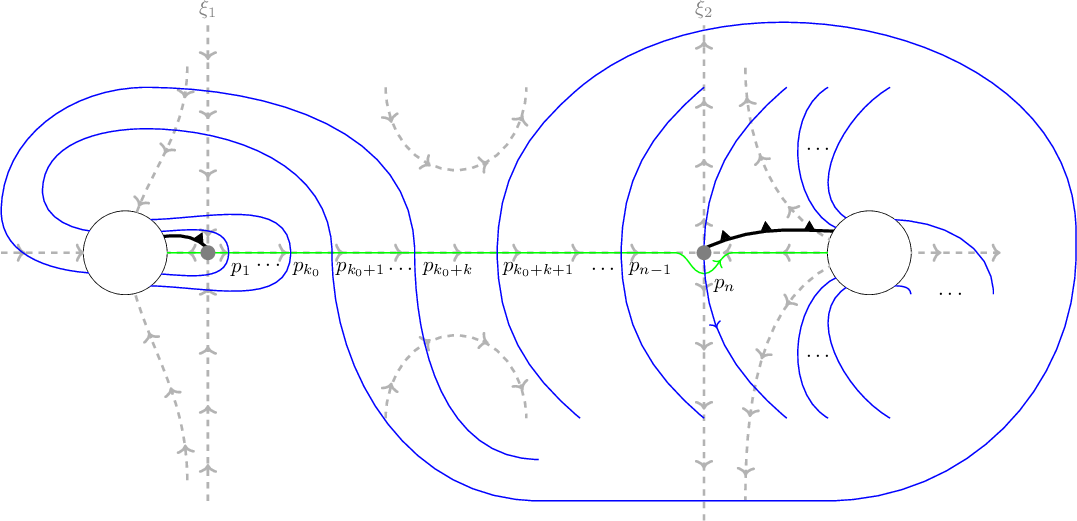}
  \caption{When $(n-k)$ is odd, we perform base point isotopy on Figure \ref{fig:fR-no-move} to get $\frmR$, where $k_{0} = \tfrac{n-k-1}{2}$.}
  \label{fig:fR-detail}
\end{figure}

\begin{remark}\label{rmk:k0}
When $n - k = 1$, $k_0 = 0$, we do not have to conduct any base point isotopy move. In this case, Figure \ref{fig:fR-no-move}, which is the same as Figure \ref{fig:fR-detail}, is already the framed Heegaard diagram for $(L(n,k), \frmR)$.
\end{remark}

\begin{exmp}
Following the above instructions, the diagram framing $\frm_L$ on $L(8, 3)$ is depicted as
\[
\includegraphics[width=0.6\textwidth]{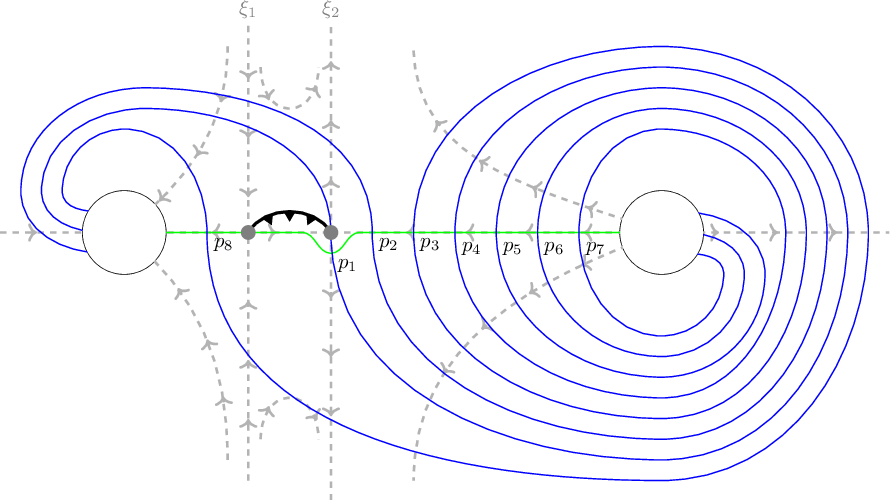}
\]
The rotation numbers of the framed Heegaard diagram of $\frmL$ are recorded in the following table.
\begin{center}
\begin{tabular}{|c|c|c|c|c|c|c|c|c|c|}
\hline
& $p_1$  & $p_2$ & $p_3$ & $p_4$ & $p_5$  & $p_6$ & $p_7$ & $p_8$ & total \\
\hline 
$\theta_\eta$ & $\frac{1}{4}$  & $\frac{1}{2}$ & $\frac{1}{2}$ & $\frac{1}{2}$ & $\frac{1}{2}$  & $\frac{1}{2}$ & $\frac{1}{2}$ & $\frac{1}{2}$  & $\frac{1}{2}$ \\
\hline 
$\theta_\mu$ & $0$ & $-\tfrac{3}{4}$ & $\tfrac{1}{4}$ & $\tfrac{1}{4}$ & $-\tfrac{3}{4}$ & $\tfrac{1}{4}$ & $\tfrac{1}{4}$ & $-\tfrac{3}{4}$ & $-\frac{1}{2}$\\
\hline
$S$-term  & $S$ & $S^3$ & $S$ & $S$ & $S^3$ & $S$ & $S$ & $S^3$ &  \tabularnewline
\hline
$\phi_\eta$ & 0 & 0 & 0 & 0 & 0 & 0 & 0 & 0 & $\frac{1}{2}$\\
\hline
$\phi_\mu$ & 0 & 0 & 0 & 0 & 0 & 0 & 0 & 0 & $\frac{1}{2}$\\
\hline
$T$-term  & $\id$ & $\id$ & $\id$ & $\id$ & $\id$ & $\id$ & $\id$ & $\id$ & \\
\hline
\end{tabular}\,.
\end{center}
The corresponding Kuperberg invariant for any finite-dimensional Hopf algebra $H$ is of the form (recall \eqref{eq:twisted-ld})
\begin{align*}
K(L(8, 3), \frm_L, H)=&(g\rightharpoonup \ld)(S(\Ld_{(1)})S(\Ld_{(4)})S(\Ld_{(7)})S^3(\Ld_{(2)})S^3(\Ld_{(5)})S^3(\Ld_{(8)})S(\Ld_{(3)})S(\Ld_{(6)}))\\
=&\ld(\Ld_{(6)}\Ld_{(3)}S^2(\Ld_{(8)})S^2(\Ld_{(5)})S^2(\Ld_{(2)})\Ld_{(7)}\Ld_{(4)}\Ld_{(1)})\,.
\end{align*}

The diagram framing $\frm_R$ of $L(8,3)$ is depicted as
\[
\includegraphics[width=0.6\textwidth]{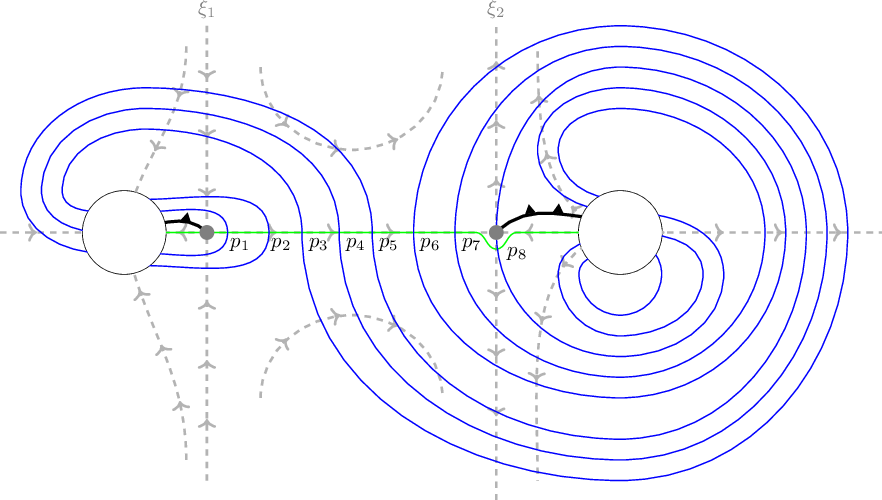}
\]
The rotation numbers of the framed Heegaard diagram for diagram $\frm_R$ are recorded in the following table.
\begin{center}
\begin{tabular}{|c|c|c|c|c|c|c|c|c|c|}
\hline
& $p_1$  & $p_2$ & $p_3$ & $p_4$ & $p_5$  & $p_6$ & $p_7$ & $p_8$ & total \\
\hline 
$\theta_\eta$ & $0$  & $0$ & $0$ & $0$ & $0$  & $0$ & $0$ & $\frac{1}{4}$  & $\frac{1}{2}$ \\
\hline 
$\theta_\mu$ & $\tfrac{3}{4}$ & $\tfrac{3}{4}$ & $-\tfrac{1}{4}$ & $-\tfrac{1}{4}$ & $-\tfrac{1}{4}$ & $\tfrac{3}{4}$ & $\tfrac{3}{4}$ & $0$ & $\frac{1}{2}$\\
\hline
$S$-term  & $S^{-1}$ & $S^{-1}$ & $S$ & $S$ & $S$ & $S^{-1}$ & $S^{-1}$ & $S$ &  \tabularnewline
\hline
$\phi_\eta$ & 0 & 0 & 0 & 0 & 0 & 0 & 0 & 0 & $\frac{1}{2}$\\
\hline
$\phi_\mu$ & 0 & 0 & 0 & 0 & 0 & 0 & 0 & 0 & $-\frac{1}{2}$\\
\hline
$T$-term  & $\id$ & $\id$ & $\id$ & $\id$ & $\id$ & $\id$ & $\id$ & $\id$ & \\
\hline
\end{tabular}\,.
\end{center}
Since $\ld_{-\frac{1}{2}}=\ld$, the corresponding Kuperberg invariant is of the following form:
\begin{equation}\label{eq:L83R}
\begin{split}
K(L(8, 3), \frm_R, H)=&\ld(S(\Ld_{(8)})S(\Ld_{(3)})S^{-1}(\Ld_{(6)})S^{-1}(\Ld_{(1)})S(\Ld_{(4)})S^{-1}(\Ld_{(7)})S^{-1}(\Ld_{(2)})S(\Ld_{(5)}))\\
=&\ld S\left(\Ld_{(5)}S^{-2}(\Ld_{(2)})S^{-2}(\Ld_{(7)})\Ld_{(4)}S^{-2}(\Ld_{(1)})S^{-2}(\Ld_{(6)})\Ld_{(3)}\Ld_{(8)}\right)\,.
\end{split}
\end{equation}
\end{exmp}

\subsection{Gauge invariance for the diagram framings \texorpdfstring{$\frmR$}{} and \texorpdfstring{$\frmL$}{}}\label{subsec:gauge-Lnk}
\subsubsection{Gauge invariance of \texorpdfstring{$K(L(n,k), \frmR, H)$}{} for odd\texorpdfstring{$(n-k)$}{}}
In this subsection, we assume that $(n-k)$ is odd and prove the gauge invariance of the Kuperberg invariant $K(L(n,k), \frmR, H)$ based on the 2-combed Heegaard diagram in Figure \ref{fig:fR-detail}, which will be denoted by $D$. We will first show that $D$ is a framed Heegaard diagram, and give explicit descriptions of the permutation $\s_D$ and the sequence $\{s_i\}_{i=1}^{n}$ in the definition of the Kuperberg invariant (see \eqref{eq:PMDH}, \eqref{eq:sr-tr} and \eqref{eq:KInv}).

\begin{convention}\label{conv:k0}
For the rest of the paper, we adopt the following conventions.
\begin{itemize}
\item An interval of the form $[a, 0]$ with $a>0$ is understood as the empty set. 
\item In tensor components of the form $\bigotimes_{i=1}^{k_0}$, when $k_0 = 0$, the corresponding tensor component is understood as an empty tensor.  
\end{itemize}
\end{convention}

\begin{prop} \label{p:S-exponents}
Let $n > k > 0$ be a pair of coprime integers such that $(n-k)$ is odd, and denote $k_0 = \tfrac{n-k-1}{2}$. Then the 2-combed Heegaard diagram $D$ in Figure \ref{fig:fR-detail} is a framed Heegaard diagram of $L(n,k)$ with $\theta_\mu = \theta_\eta = \phi_\eta = \tfrac{1}{2}$ and $\phi_{\mu} = -\tfrac{1}{2}$. The permutation $\s_D$ is given by $\s_D(i)=\ol {k\cdot (i-1)}$ for $i \in \{1, \dots,  n\}$, where $\ol a$ is the positive residue of $a$ modulo $n$. Moreover, the sequence $\{s_i\}_{i=1}^{n}$ satisfies the following recursive relations with  $s_n=1$:
\begin{enumerate}[label=\rm{(\roman*)}] 
\item If $1\leq i\leq k_0$, then $s_{\ol{i+k}}=s_i+2$;
\item If $k_0 + 1\leq i \leq n-k-1$, then $s_{\ol{i+k}}=s_i-2$;
\item If $n-k \leq i \leq n$, then $s_{\ol{i+k}}=s_i$. 
\end{enumerate}
In particular, $s_n=s_{n-k}=s_k=1$, $s_i$ is an odd integer for all $1 \le i \le n$, and these recursive relations completely determine $\{s_i\}_{i=1}^{n}$. 
\end{prop}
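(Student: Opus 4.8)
The plan is to read off all the combinatorial data of $D$ directly from Figure \ref{fig:fR-detail} and to verify the four assertions (framedness with the stated totals, the permutation, the recursion, and the ``in particular'' conclusions) in turn. Throughout I would use that $D$ is obtained from the Heegaard diagram of Figure \ref{fig:HD-Lnk} by an isotopy pulling $k_0$ strands through the handle followed by a base point isotopy move, so that $D$ still presents $L(n,k)$ and, by \cite[Thm.~4.1]{Kup96}, still represents $\frmR$. First I would establish framedness by computing the four total rotation numbers. After the base point isotopy the twist front meets each of $\mu$ and $\eta$ only at its base point, so every interior $p \in I$ contributes $\phi_\eta(p)=\phi_\mu(p)=0$; the base point contributes the prescribed half-sign, and the upward-pointing triangles of $\frmR$ give $\phi_\eta=\tfrac12$, $\phi_\mu=-\tfrac12$. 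The lower curve is horizontal except near $\xi_2$, so its tangent rotation accumulates only there, yielding $\theta_\eta=\tfrac12$; tracking the upper curve's tangent around the handle gives $\theta_\mu=\tfrac12$ (consistent with the $L(8,3)$ table). The admissibility condition \eqref{eq:adm}, namely $\theta_\eta=\phi_\eta$ and $\theta_\mu=-\phi_\mu$, then holds.

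Next I would identify $\s_D$. By Observation \ref{obs:pts}, traversing $\mu$ from one $I$-point to the next advances the position on $\eta$ by exactly $k$ modulo $n$. Since $\mu$ starts at $\xi_2$, aligned vertically with $p_n$, the first $I$-point of $I^*$ is $p_n=p_{\ol 0}$ (with the convention $\ol 0=n$), whence the $r$-th point of $I^*$ is $p_{\ol{k(r-1)}}$, i.e.\ $\s_D(r)=\ol{k(r-1)}$. Coprimality of $n,k$ guarantees that $i\mapsto\ol{i+k}$ is a single $n$-cycle, so $\s_D$ is a genuine permutation.

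The heart of the argument is the recursion. Writing $s(p_r)=2(\theta_\eta(p_r)-\theta_\mu(p_r))+\tfrac12$ from \eqref{eq:sr-tr}, I would compute the one-step difference along $\mu$,
\[
s_{\ol{i+k}}-s_i = 2\bigl[\bigl(\theta_\eta(p_{\ol{i+k}})-\theta_\eta(p_i)\bigr) - \bigl(\theta_\mu(p_{\ol{i+k}})-\theta_\mu(p_i)\bigr)\bigr].
\]
The lower-curve term vanishes except on the step crossing $\xi_2$: it equals $+\tfrac14$ when $\ol{i+k}=n$ (i.e.\ $i=n-k$) and $-\tfrac14$ when $i=n$. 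The upper-curve term is read off the three geometric regimes of the strands in Figure \ref{fig:fR-detail}: for $1\le i\le k_0$ the step runs along a strand pulled through the handle and its tangent makes a full negative turn, so $\theta_\mu(p_{\ol{i+k}})-\theta_\mu(p_i)=-1$ and $s_{\ol{i+k}}=s_i+2$; for $k_0+1\le i\le n-k-1$ the turn is positive, giving $s_{\ol{i+k}}=s_i-2$; and for $n-k\le i\le n$ the upper-curve rotation exactly matches the lower-curve contribution ($\pm\tfrac14$ or $0$), giving $s_{\ol{i+k}}=s_i$.

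Finally I would record the consequences. The base value $s_n=1$ follows from $\theta_\eta(p_n)=\tfrac14$ and $\theta_\mu(p_n)=0$; case (iii) at $i=n-k$ gives $s_{n-k}=s_n=1$, and at $i=n$ gives $s_k=s_{\ol{n+k}}=s_n=1$. Each step changes $s$ by $0$ or $\pm2$, so oddness of $s_n$ propagates, and since $i\mapsto\ol{i+k}$ is an $n$-cycle the recursion determines every $s_i$ from $s_n$. As a sanity check on the regimes I would note that cases (i) and (ii) each occur exactly $k_0$ times (using $n-k-1-k_0=k_0$) with opposite signs, so the total change around the cycle is $0$, consistently returning to $s_n=1$. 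The main obstacle is the honest bookkeeping of the upper-curve tangent rotations $\theta_\mu(p_{\ol{i+k}})-\theta_\mu(p_i)$ as the curve winds through the handle and the pulled-through strands; once the three regimes are correctly delineated from the diagram, the remaining steps are formal.
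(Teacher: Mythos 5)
Your overall strategy coincides with the paper's: read the rotation numbers off Figure \ref{fig:fR-detail}, split the steps along $\mu$ into three geometric regimes, and propagate from $s_n=1$. But as written your argument has a genuine ordering flaw centered on the point $p_{n-k}$ and the total rotation $\theta_\mu$. The quantities $\theta_\mu(p_i)$ are \emph{aggregated} rotations measured from the base point $\xi_2$, so for every step except one, $\theta_\mu(p_{\ol{i+k}})-\theta_\mu(p_i)$ is the turning of $\mu$ along an actual segment of the curve and can be read from a local picture. The exception is $i=n-k$: since $\sigma_D(1)=n$ and $\sigma_D(n)=n-k$, the point $p_{n-k}$ is the \emph{last} $I$-point on $\mu$ and $p_n$ is the \emph{first}, so the ``step'' from $p_{n-k}$ to $p_n$ wraps around through $\xi_2$, and $\theta_\mu(p_n)-\theta_\mu(p_{n-k})=-\theta_\mu(p_{n-k})$ is not the rotation of any segment (the rotation of the wrap-around segment is $\tfrac34$, not $\tfrac14$). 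Hence your claim that in regime (iii) ``the upper-curve rotation exactly matches the lower-curve contribution'' is not a locally readable fact at $i=n-k$; it is equivalent to $\theta_\mu(p_{n-k})=-\tfrac14$, which the paper derives by a counting argument: starting from $\theta_\mu(p_k)=-\tfrac14$, exactly $k_0$ of the intermediate steps increase $\theta_\mu$ by $1$ and exactly $k_0$ decrease it by $1$, so $\theta_\mu(p_{n-k})=\theta_\mu(p_k)$. You do state this count, but only as a ``sanity check'' at the end; in a correct proof it is load-bearing.

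The same issue undermines your opening framedness claim. You assert $\theta_\mu=\tfrac12$ by ``tracking the upper curve's tangent around the handle'' and by consistency with the $L(8,3)$ table, but $\theta_\mu=\theta_\mu(p_{n-k})+\tfrac34$, so this total cannot be known before $\theta_\mu(p_{n-k})$ is computed --- that is, before the regime analysis and the cancellation count have been carried out. (There is no a priori reason why $\theta_\mu$ must equal $-\phi_\mu$; that equality is exactly the admissibility condition \eqref{eq:adm} you are supposed to verify.) The repair is a reordering, which is precisely the paper's proof: establish $\sigma_D$ and $s_n=1$; do the local analysis for regimes (i), (ii) and for $i\in\{n-k+1,\dots,n\}$; use the $k_0$-versus-$k_0$ cancellation to obtain $\theta_\mu(p_{n-k})=-\tfrac14$, hence $s_{n-k}=s_k=s_n=1$ and case (iii) at $i=n-k$; and only then add the final $\tfrac34$ turn to conclude $\theta_\mu=\tfrac12=-\phi_\mu$ and admissibility. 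All of your ingredients are present, but in the order you state them the argument is circular.
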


\begin{proof}
By definition, to show that $D$ is a framed Heegaard diagram, it suffices to count the rotation numbers in Figure \ref{fig:fR-detail} and show that $\theta_\eta = \phi_\eta$ for the lower curve $\eta$ (green), and $\theta_\mu = -\phi_\mu$ for the upper curve $\mu$ (blue). 

We first analyze the rotation numbers of the lower curve $\eta$. Start from $\xi_1$ and go along $\eta$ to the right, it is easy to see that for $1 \le i \le n-1$, we have $\theta_{\eta}(p_i) = 0$, and $\theta_{\eta}(p_n) = \tfrac{1}{4}$. 
As we go through the handle and come back to $\xi_1$, the combing $b_1$ and $\eta$ are in opposite direction, so the total rotation number $\theta_\eta$ is equal to $\tfrac{1}{2}$. 
By design, there is no intersection between the twist front and $\eta$, and the direction of the twist front aligns with that of $\eta$ near $\xi_1$. Thus, we have $\phi_\eta = \tfrac{1}{2} = \theta_\eta$.

Now we study the rotation numbers of the upper curve $\mu$. We will compute $\theta_\mu(p_i)$ at the $I$-points, while finding patterns for $\s_D$ and $s_i = 2(\theta_\eta(p_i) - \theta_\mu(p_i))+\tfrac{1}{2}$. Reading directly from $D$, we find $p^1=p_n$ or $\s_D(1)=n$. Moreover, Observation \ref{obs:pts} implies that when traveling along $\mu$, we always go from $p_{i}$ to $p_{\overline{i+k}}$. Thus, $\s_D(i) = \ol{k\cdot (i-1)}$. At $p_n$, we have $\theta_{\mu}(p_n) = 0$. So we have 
\[s_n = 2(\theta_{\eta}(p_n) - \theta_{\mu}(p_n))+\frac{1}{2} = 2\cdot (\frac{1}{4}-0)+\frac{1}{2} = 1\,.\]
We have the following cases when computing $\theta_\mu(p_i)$ for $1 \le i \le n-1$. 

Starting from $p_n$, the next $I$-point we encounter is $p_{\ol{n+k}} = p_k$. The local picture is illustrated in Figure \ref{fig:fR-S-power-0} where only the relevant part of $\mu$ is drawn. 
We go downwards from $p_n$ and enter the handle at the attaching circle on the right, during which $\theta_\mu$ increases by $\tfrac{1}{2}$. Then we exit from the left attaching circle to reach $p_k$, and $\theta_\mu$ decreases by $\tfrac{3}{4}$. Therefore, $\theta_\mu(p_k) = \theta_{\mu}(p_n) + \tfrac{1}{2}-\tfrac{3}{4} = -\tfrac{1}{4}$, and we have 
\[s_{\ol{n+k}} = s_{k} = 2(\theta_\eta(p_k) - \theta_\mu(p_k)) + \frac{1}{2} = 2(0-(-\frac{1}{4})) + \frac{1}{2} = 1 = s_n\,.\]

\begin{figure}[ht]
    \centering
    \includegraphics[width=280pt]{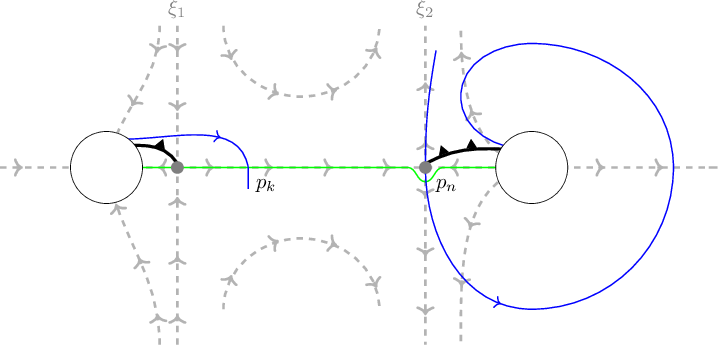}
    \caption{$\theta_\mu(p_{k}) = -\tfrac{1}{4}$.}
    \label{fig:fR-S-power-0}
\end{figure}

For $n-k+1 \leq i \le n-1$, we have $\ol{i+k} = i+k-n$, and we can calculate the change of $\theta_\mu$ using the local configuration shown in Figure \ref{fig:fR-S-power-1}. Precisely, in Figure \ref{fig:fR-S-power-1}, we first go downwards from $p_i$, and then enter the attaching circle on the right. This process increases $\theta_\mu$ by $\tfrac{3}{4}$. Then we exit from the left attaching circle to reach $p_{i+k-n}$, and $\theta_\mu$ decreases by $\tfrac{3}{4}$ in this part of $\mu$. So in this case, we have $\theta_\mu(p_{\ol{i+k}}) = \theta_{\mu}(p_i)$, and $\theta_{\eta}(p_{\ol{i+k}}) = \theta_{\eta}(p_i) = 0$, which means $s_{\ol{i+k}} = s_{i}$. 
\begin{figure}[ht]
    \centering
    \includegraphics[width=280pt]{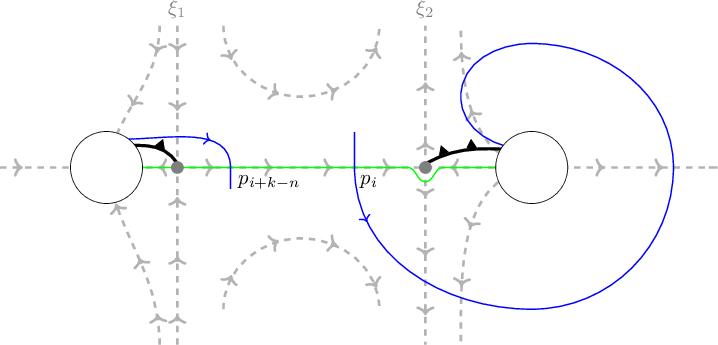}
    \caption{For $n-k+1 \leq i \le n-1$, $\theta_\mu(p_{\ol{i+k}}) = \theta_{\mu}(p_i)$.}
    \label{fig:fR-S-power-1}
\end{figure}

Summarizing the above two cases, we have shown that if $n-k+1\leq i \le n$, then $s_{\ol{i+k}} = s_i$. In particular, if $k = n-1$ or $k_0=0$, then we have $s_n = s_{n-1} = 1$, and $s_{i-1} = s_i$ for all $2 \le i\le n-1$, which means  $s_i = 1$ for all $1 \le i \le n$. Therefore, we have finished the proof of the proposition for $k = n-1$. So for the rest of the proof, we assume that $n-k > 1$, i.e., $k_0 \ge 1$.

For $1\leq i \leq k_0$, we have $\ol{i+k} = i+k$, and we use Figure \ref{fig:fR-S-power-2} to analyze the change of $\theta_\mu$ from $p_i$ to $p_{i+k}$. Start from $p_i$, we go downwards to enter the left attaching circle, and $\theta_\mu$ decreases by $\tfrac{3}{4}$. Then we exit the right attaching circle from the left, and re-enter the handle from right, and $\theta_\mu$ increases by $\tfrac{1}{2}$ in this process. Finally, we exit the left attaching circle from the left to reach $p_{i+k}$, during which $\theta_\mu$ decreases by $\tfrac{3}{4}$. So in this case, we have $\theta_{\mu}(p_{i+k}) = \theta_{\mu}(p_i) -\tfrac{3}{4}+\tfrac{1}{2}-\tfrac{3}{4} = \theta_{\mu}(p_i) - 1$, which implies that 
\[s_{\ol{i+k}} = 2(0-\theta_{\mu}(p_{i+k})) +\frac{1}{2} =  2(0-(\theta_{\mu}(p_{i})-1)) +\frac{1}{2} = s_{i} + 2\,.\]
\begin{figure}[ht]
    \centering
    \includegraphics[width=280pt]{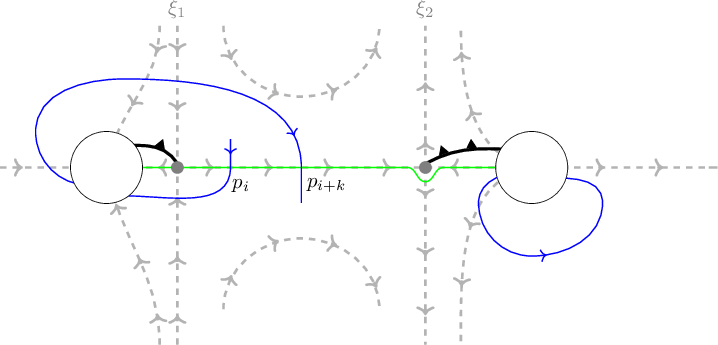}
    \caption{For $1 \le i \le k_0$, $\theta_{\mu}(p_{i+k})= \theta_{\mu}(p_i) - 1$.}
    \label{fig:fR-S-power-2}
\end{figure}

For $k_0+1 \leq i \leq n-k-1$, $\ol{i+k} = i+k$, and the local configuration of $\mu$ is depicted in Figure \ref{fig:fR-S-power-3}, where we go downwards from $p_i$ to reach $p_{i+k}$. Using the similar argument as above, we can easily see that $\theta_\mu(p_{i+k}) = \theta_{\mu}(p_i) + 1$, and so $s_{\ol{i+k}}=s_{i}-2$. 
\begin{figure}[ht]
    \centering
    \includegraphics[width=280pt]{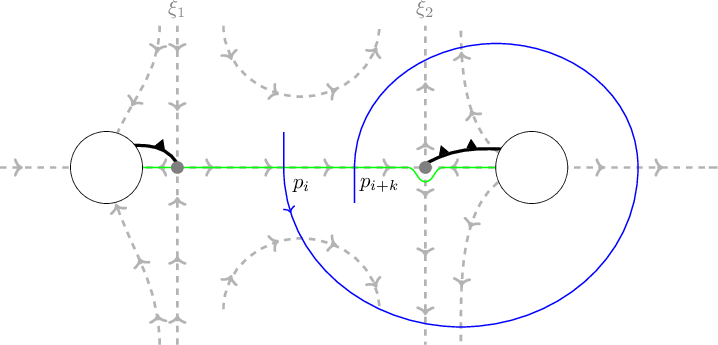}
    \caption{For $k_{0}+1 \le i \le n-k-1$, $\theta_{\mu}(p_{i+k})= \theta_{\mu}(p_i)+1$.}
    \label{fig:fR-S-power-3}
\end{figure}

Now we compute $\theta_{\mu}(p_{n-k})$ and $s_{n-k}$. 
By the expression of $\s_D$, when we start from $p_n$ and travel along $\mu$, the last $I$-point we encounter (before we go back to $\xi_2$) is $p_{n-k}$, as $k$ is assumed to be coprime to $n$. Since $\{\{1, \dots, k_{0}\}, \{k_{0}+1, \dots, n-k-1\}, \{n-k+1, \dots, n\}\}$ is a partition of $\{1, \dots, n\}\setminus \{n-k\}$, the recursive relations above to completely determine $\theta_\mu(p_i)$ and  $s_i$ for all $i \ne n-k$. Consequently, we can compute $\theta_{\mu}(p_{n-k})$ inductively starting from $\theta_\mu(p_k) = -\tfrac{1}{4}$. More precisely, following $\mu$, we go from $p_k$ to $p_{\ol{2k}}$ and all the way to $p_{\ol{n+(n-1)k}} = p_{n-k}$. Note that in this process, there are exactly $k_{0}$ points at which $\theta_\mu$ increases by $1$, and $k_{0}$ points at which $\theta_\mu$ decreases by $1$, and at the rest of the points $\theta_\mu$ remains unchanged, so we can see that 
\[\theta_\mu(p_{n-k}) = \theta_{\mu}(p_k) = -\tfrac{1}{4}\,,\quad\text{and}\quad 
s_{n-k} = s_k = s_n = 1\,.\]
Therefore, the sequence $\{s_i\}_{i=1}^{n}$ satisfies the relations (i)-(iii) in the statement, and it is uniquely determined by these recursive relations.

\begin{figure}[ht]
    \centering
    \includegraphics[width=280pt]{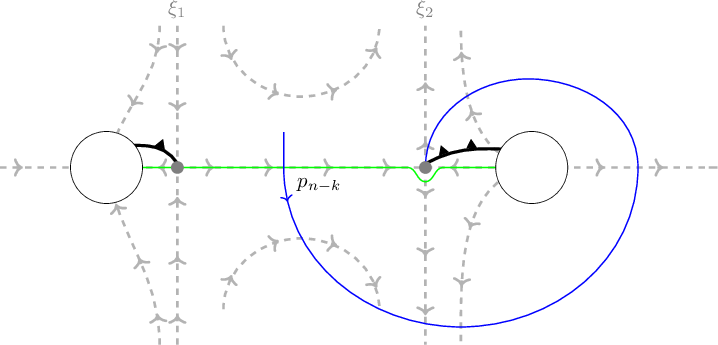}
    \caption{The total rotation number $\theta_\mu = \tfrac{1}{2}$.}
    \label{fig:fR-S-power-4}
\end{figure}
Finally, we use the Figure \ref{fig:fR-S-power-4} to compute the total rotation $\theta_\mu$ when we return to the base point $\xi_2$ following $\mu$. Going from $p_{n-k}$ downwards, we can see that $\theta_\mu$ increases by $\tfrac{3}{4}$, and so $\theta_\mu = \theta_\mu(p_{n-k}) + \tfrac{3}{4} = \tfrac{1}{2}$. By design, there is no intersection between $\mu$ and the twist front, and the direction of the twist front is opposite to the orientation of $\mu$, so $\phi_\mu = -\tfrac{1}{2} = -\theta_\mu$, and we can conclude that $D$ is a framed Heegaard diagram of $L(n,k)$ in this case as desired.
\end{proof}

Recall that the diagram framing associated to the framed Heegaard diagram $D$ in Figure \ref{fig:fR-detail} is denoted by $\frmR$. We  now derive the expression of the Kuperberg invariant $K(L(n,k),\frmR,H)$ for any finite-dimensional Hopf algebra $H$.

\begin{definition}\label{d:cp} Let $n$ be a positive integer and $l$ an integer coprime to $n$. 
\begin{enumerate}
    \item[(a)] We define $\s_{(n,l)} \in \Sym_n$ by $\s_{(n,l)}(i)=\ol{il}$ for $i \in \{1, \dots, n\}$. Since $\s_{(n,l)}(n)=n$, we also consider $\s_{(n,l)} \in \Sym_{n-1}$ by restriction. 
\item[(b)] Let $H$ be a finite-dimensional Hopf algebra, and $n > k >0$ an integer with $(n-k)$ odd. We define the $\kk$-linear operator $P^{(n,-k)}: H \to H$ as 
\[
P^{(n,-k)} := m \circ \s_{(n,-k)}\circ \left(\bigotimes_{i=1}^{n-1}S^{2 c_i}\right)\circ\Delta^{n-1}
\]
where the sequence $\{c_i\}_{i=1}^{n}$ is determined by the following recursive relations with $c_n=0$:
\begin{enumerate}[label=\rm{(\roman*)}] 
    \item If $1\leq i\leq k_0$, then $c_{i+k}=c_i+1$ where $k_0=\frac{n-k-1}{2}$;
    \item If $k_0+1\leq i \leq 2k_0$, then $c_{i+k}=c_i-1$;
     \item If $2k_0+1 \leq i \leq n$, then $c_{\ol{i+k}}=c_i$.
\end{enumerate}
\item[(c)] Finally, we define $\delta(i):=c_{\ol{i+k}} -c_{i}$ for $i=1, \dots, n$.
\end{enumerate}
\end{definition}

\begin{remark}
For any coprime integers $n,k$ with $(n-k)$ odd, it is obvious that the sequences $\{c_i\}_{i=1}^{n}$ above and $\{s_i\}_{i=1}^{n}$ in Proposition \ref{p:S-exponents} are related by $s_i = 2 c_i+1$ for $i=1, \dots, n$. In particular, $c_n = c_{n-k} =0$.
\end{remark}

\begin{thm}\label{t:KInd}
Let $H$ be any finite-dimensional Hopf algebra. For any coprime integers $n>k>0$ such that $(n-k)$ is odd, we have
\begin{align*}
K(L(n, k), \frm_R, H)=\Tr(S \circ P^{(n,-k)})\,.
\end{align*}
\end{thm}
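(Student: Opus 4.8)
The plan is to write both sides of the asserted identity as a single evaluation of the right cointegral $\ld$ against an ordered product of $S$-twisted coproduct components of $\Ld$, and then to match these two products factor by factor. Since this reduces the theorem to a purely combinatorial comparison of two permutations, the analytic input is only the Radford trace formula and coassociativity.

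First I would unwind the Kuperberg invariant attached to the framed Heegaard diagram $D$ of Figure \ref{fig:fR-detail}. As $L(n,k)$ has genus $g=1$, there is a single upper curve $\mu$ and a single lower curve $\eta$, and Proposition \ref{p:S-exponents} gives $\theta(\eta)=\theta(\mu)=\tfrac12$ together with $\phi_\eta(p_i)=\phi_\mu(p_i)=0$ at every $I$-point, so every exponent $t_r$ in \eqref{eq:sr-tr} vanishes and all $T$-factors in \eqref{eq:PMDH} are the identity. Using \eqref{eq:twisted-ld}, namely $\Ld_{\frac12}=\Ld$ and $\ld_{-\frac12}=\ld$, Definition \ref{def:Kup} collapses to
\[
K(L(n,k), \frmR, H) = \ld\!\left(\prod_{t=1}^{n} S^{s_{\s_D(t)}}\!\big(\Lda{\s_D(t)}\big)\right),
\]
where $\D^n(\Ld)=\Lda{1}\o\cdots\o\Lda{n}$, the permutation is $\s_D(t)=\ol{k(t-1)}$, and the exponents $s_i$ are those of Proposition \ref{p:S-exponents}, related to the $c_i$ of Definition \ref{d:cp} by $s_i=2c_i+1$. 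In particular the $t=1$ factor is $S^{s_n}(\Lda{n})=S(\Lda{n})$, since $\s_D(1)=n$ and $c_n=0$.

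Next I would compute the right-hand side through the Radford trace formula, Theorem \ref{t:t2}(v), applied to $X=S\circ P^{(n,-k)}$, which gives $\Tr(S\circ P^{(n,-k)}) = \ld\!\big(S(\Lda{2})\,S(P^{(n,-k)}(\Lda{1}))\big)$, where $\Lda{1},\Lda{2}$ now denote the two-fold coproduct of $\Ld$. Expanding Definition \ref{d:cp} yields $P^{(n,-k)}(x)=\prod_{t=1}^{n-1}S^{2c_{\s(t)}}(x_{(\s(t))})$ with $\s=\s_{(n,-k)}$. Taking $x=\Lda{1}$ and invoking coassociativity in the form $(\D^{n-1}\o\id)\circ\D=\D^n$, the components of $\D^{n-1}(\Lda{1})$ become $\Lda{1},\dots,\Lda{n-1}$ ($n$-fold indexing) while the leftover factor $\Lda{2}$ (two-fold) is identified with $\Lda{n}$ ($n$-fold). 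Applying $S$, which is an algebra anti-automorphism and hence reverses the order of the product, and using $2c_i+1=s_i$, one obtains
\[
\Tr\!\big(S\circ P^{(n,-k)}\big) = \ld\!\left(S(\Lda{n})\prod_{t=1}^{n-1} S^{s_{\s(n-t)}}\!\big(\Lda{\s(n-t)}\big)\right).
\]

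Finally I would match the two products. Both expressions begin with the common factor $S(\Lda{n})$, so it remains to compare the ordered products over $t=1,\dots,n-1$. On the Kuperberg side, after reindexing $t\mapsto t+1$, these factors are indexed by $\s_D(t+1)=\ol{kt}$; on the trace side they are indexed by $\s(n-t)=\ol{-(n-t)k}=\ol{tk}$, using $nk\equiv 0\pmod n$. Since $\s_{(n,-k)}(n-t)=\s_D(t+1)$ for every $t$ and the $S$-exponents in both schemes are governed by the same sequence $s_i=2c_i+1$, the two ordered products coincide factor by factor, which proves the equality. The step I expect to demand the most care is precisely this bookkeeping: tracking the order reversal induced by the antipode together with the change of index $u=n-t$, and verifying that this reversal is exactly compensated by the difference between the diagram permutation $\s_D(t)=\ol{k(t-1)}$ and the permutation $\s_{(n,-k)}(i)=\ol{-ik}$ in $P^{(n,-k)}$; the identity $\s_{(n,-k)}(n-t)=\s_D(t+1)$ is what makes the two schemes agree.
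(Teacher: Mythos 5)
Your proposal is correct and takes essentially the same route as the paper's proof: both reduce the diagram data via Proposition \ref{p:S-exponents}, use $s_i = 2c_i+1$ together with the anti-multiplicativity of $S$ to reverse the ordered product, and conclude with the Radford trace formula (Theorem \ref{t:t2}(v)). The only difference is organizational—the paper transforms $K(L(n,k),\frmR,H)$ through a single chain of equalities into $\ld\big(S(\Lda{2})\,(S\circ P^{(n,-k)})(\Lda{1})\big)$ and applies the trace formula last, whereas you expand both sides and match them in the middle via the identity $\s_{(n,-k)}(n-t)=\s_D(t+1)$, which the paper leaves implicit in its reindexing step.
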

\begin{proof}
By \eqref{eq:KInv}, the Kuperberg invariant is given by
\begin{equation*}
   K(L(n, k), \frm_R, H)=\ld_{-\theta(\mu)}\left(m\circ \s_D\circ \left(\bigotimes_{i=1}^nS^{s_i}\right)\circ\Delta^{n}(\Ld_{\theta(\eta)})\right)
\end{equation*} 
where $\ld \in \int^r_{H^*}$ and $\Ld \in \int^l_{H}$ are a pair of normalized integrals. Recall \eqref{eq:twisted-ld} that $\ld_{-\frac{1}{2}}=\ld$ and $\Ld_{\frac{1}{2}}=\Ld$, by Proposition \ref{p:S-exponents}, we have  
\begin{equation}\label{eq:fR-form}
\begin{aligned}
&K(L(n, k), \frm_R, H)
=\ld_{-\frac{1}{2}} \left(m\circ \s_D\circ \left(\bigotimes_{i=1}^n S^{s_i}\right) \circ \Delta^{n}(\Ld_{\frac{1}{2}})\right) \\
=&\ld \left(m\circ \s_D\circ \left(\bigotimes_{i=1}^n S^{s_i}\right) \circ \Delta^{n}(\Ld)\right) 
=\ld\left(S^{s_n}(\Ld_{(n)})S^{s_{k}}(\Ld_{(k)}) \cdots  S^{s_{n-k}}(\Ld_{(n-k)})\right)\\
=&\ld\left( S(\Ld_{(n)}) S\left(S^{2c_{n-k}}(\Ld_{(n-k)}) S^{2c_{\ol{n-2k}}}(\Ld_{(\ol{n-2k})}) \cdots S^{2 c_k}(\Ld_{(k)})\right)\right)\\
=&\ld\left( S(\Ld_{(2)})\left(S \circ P^{(n,-k)}\right)(\Ld_{(1)})\right)
=\Tr(S \circ P^{(n,-k)})\,,
\end{aligned} 
\end{equation}
where the last equality follows from Theorem \ref{t:t2} (v).
\end{proof}

Now we are ready to prove the gauge invariance of $K(L(n,k),\frmR, H)$.

\begin{thm}\label{t:Lnk-fR}
Let $H$ be a finite-dimensional Hopf algebra over a field $\kk$, $F \in H \o H$ a 2-cocycle of $H$, and $H_F$ the Drinfeld twist of $H$. For any coprime integers $n >k> 0$ such that $(n-k)$ is odd, denote by $P_F^{(n,-k)}$ the $\kk$-linear operator for $H_F$ defined in Definition \ref{d:cp}. Then
\[
\Tr(S \circ  P^{(n,-k)}) = \Tr(S_F \circ  P^{(n,-k)}_F)\,.
\]
In particular, 
\[
K(L(n, k), \frm_R, H) =  K(L(n, k), \frm_R, H_F)\,.
\]
\end{thm}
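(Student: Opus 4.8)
The plan is to reduce the asserted equality of Kuperberg invariants to the trace identity $\Tr(S\circ P^{(n,-k)}) = \Tr(S_F\circ P_F^{(n,-k)})$, and to prove the latter by expressing the operator $S_F\circ P_F^{(n,-k)}$ entirely in terms of the Hopf structure of $H$ and then invoking the Radford trace formula for $H$. Concretely, Theorem \ref{t:KInd} applied to $H$ and to $H_F$ gives $K(L(n,k),\frmR,H)=\Tr(S\circ P^{(n,-k)})$ and $K(L(n,k),\frmR,H_F)=\Tr(S_F\circ P_F^{(n,-k)})$, where in both cases the trace is the ordinary operator trace on the common vector space $H=H_F$. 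Since the Radford trace formula Theorem \ref{t:t2}(v), $\Tr(X)=\ld\!\left(S(\Lda{2})X(\Lda{1})\right)$, is valid for \emph{any} $\kk$-linear operator $X$ on $H$, I may compute the second trace using the integrals $\ld,\Ld$ and the antipode $S$ of $H$ itself, even though $X=S_F\circ P_F^{(n,-k)}$ is built from the twisted structure. Thus it suffices to show that $\ld\!\left(S(\Lda{2})\,(S_F\circ P_F^{(n,-k)})(\Lda{1})\right)$ equals $\ld\!\left(S(\Lda{2})\,(S\circ P^{(n,-k)})(\Lda{1})\right)$; this sidesteps any need for an explicit cointegral of $H_F$.

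Next I would substitute the transformation rules for the twisted data into the left-hand side. First, $\Ld$ is a left integral of $H_F$ as well, since $H_F=H$ as an algebra, and $m$ is shared; by Lemma \ref{lem:F-n-plus-1} one has $\Delta_F^{n-1}(\Lda{1})=F_{n-1}\,\Delta^{n-1}(\Lda{1})\,F_{n-1}^{-1}$, which inserts the factors $\ff{i}{t}$ and $\dd{j}{t}$ into the tensor legs. Second, writing $S_F=\operatorname{Ad}_u\circ S$ with $u=\ff{i}{1}S(\ff{i}{2})$ from \eqref{eq:S_F}, one checks $S_F^{2}=\operatorname{Ad}_a\circ S^2$ with $a=uS(u)^{-1}$, whence $S_F^{2c_i}=\operatorname{Ad}_{a_{c_i}}\circ S^{2c_i}$ for $a_{c}=a\,S^2(a)\cdots S^{2(c-1)}(a)$; thus each exponent $S_F^{2c_i}$ on the $i$-th leg contributes, beyond the honest $S^{2c_i}$, a further conjugation by $a_{c_i}$. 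After these substitutions and distributing the outermost $S_F=\operatorname{Ad}_u\circ S$, the left-hand side becomes the $H$-word $\ld\!\left(S(\Lda{2})(S\circ P^{(n,-k)})(\Lda{1})\right)$ \emph{decorated} with the extra factors arising from $F_{n-1}$, from the $a_{c_i}$, and from the outermost $u,u^{-1}$.

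The core of the argument is then to show that all of these decorations cancel. Here I would use the cocycle identity \eqref{eq:2-cocycle} together with the structural properties of the sequence $\{F_n\}$ established in Section \ref{sec:technical}, and the integral identities Theorem \ref{t:t2}(iii)--(iv) and Corollary \ref{c:t2}, to sweep the $F$-factors across the coproduct legs of the integral (using, e.g., $\Lda{1}a\o\Lda{2}=\Lda{1}\o\Lda{2}S(a\lhu\a)$). The point is that the recursion (i)--(iii) for $\{c_i\}$ in Definition \ref{d:cp} is engineered precisely so that, as one traverses the cyclic orbit of $\s_{(n,-k)}$, the difference $\delta(i)=c_{\ol{i+k}}-c_i$ governing the relative power of $S^2$ applied to adjacent $F$-factors matches the way the cocycle condition splits and recombines $F_{n-1}$ along that orbit; consequently the decorations telescope and collapse to $1$, leaving exactly the undecorated $H$-word. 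The base values $s_n=s_{n-k}=s_k=1$ from Proposition \ref{p:S-exponents} anchor this telescoping.

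The main obstacle I anticipate is exactly this bookkeeping: tracking which factors $\ff{i}{t}$, $\dd{j}{t}$ and which conjugator $a_{c_i}$ land on each tensor leg after the nontrivial permutation $\s_{(n,-k)}$ and the multiplication $m$, and proving rigorously that they cancel in the correct order. This is where the delicate identities for $\{F_n\}$ from Section \ref{sec:technical} do the real work; the remaining manipulations with $S$ and the integral are otherwise routine applications of Theorem \ref{t:t2} and Corollary \ref{c:t2}. Once the trace identity $\Tr(S\circ P^{(n,-k)})=\Tr(S_F\circ P_F^{(n,-k)})$ is established, the equality $K(L(n,k),\frmR,H)=K(L(n,k),\frmR,H_F)$ follows at once.
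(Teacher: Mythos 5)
Your proposal follows essentially the same route as the paper's proof: apply Radford's trace formula of $H$ itself to the twisted operator $S_F\circ P_F^{(n,-k)}$ (so no cointegral of $H_F$ is needed), conjugate the twisted coproduct by $F_{n-1}$ (the paper packages this as Lemma \ref{l:t1}), decompose $S_F^{2c}=\operatorname{Ad}_{Q_c}\circ S^{2c}$ (your $a$ and $a_c$ are exactly the paper's $Q=uS(u^{-1})$ and $Q_c$ from \eqref{eq:Qs}), and use the $\delta(i)$-recursion of Definition \ref{d:cp} so that $Q_{c_p}S^{2c_p}(Q_{\delta(p)})=Q_{c_{\ol{p+k}}}$ telescopes along the orbit of $\s_{(n,-k)}$, with the identities for $\{F_n\}$ in Section \ref{sec:technical} completing the cancellation. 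The only deviations are cosmetic: the paper anchors the telescoping with $c_n=c_{n-k}=0$ and $Q_0=1$ rather than the $s_i$-values, and the residual $F$-factors are removed via Theorem \ref{t:t2}(vi), Lemma \ref{lem:trick-3} and Lemma \ref{l:t3} rather than Corollary \ref{c:t2}, but the strategy and key ingredients coincide.
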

\begin{proof}
Write $F = \ff{i}{1} \o \ff{i}{2} \in H\o H$. Note that $H = H_F$ as $\kk$-algebras, so $P^{(n,-k)}_F \in \End_\kk(H)$. 
Recall from \eqref{eq:S_F} that $S_F(h)=u S(h) u^{-1}$ where $u = \ff{i}{1}S(\ff{i}{2})$. Let  $Q=uS(u^{-1})$, and define $Q_0=1$, $Q_{s+1}=Q_{s}S^{2s}(Q)$ for $s\in \mathbb{Z}$. In particular, $Q_{-1}=S^{-2}(Q^{-1})$. One can show directly by induction on $s \ge 0$ and $s \le -1$ the equalities:
\begin{equation}\label{eq:Qs}
S_F^{2s}(x)=Q_sS^{2s }(x)Q_{s}^{-1}, \, Q_{s+1}^{-1} Q_s = S^{2s}(Q^{-1}),\,  Q_{s-1}^{-1}Q_s =S^{2s-2}(Q)\, \text{ and }\, Q_{s-1}=Q_{s}S^{2s}(Q_{-1})
\end{equation} 
for $s\in \mathbb{Z}$. Now, by the Radford trace formula (Theorem \ref{t:t2} (v)), 
\begin{equation}\label{eq:tr-fr}
\Tr(S_F \circ  P^{(n,-k)}_F) =  \ld\left(S(\Ld_{(2)})  S_F(P^{(n,-k)}_F(\Ld_{(1)})) \right) \\
\end{equation} 
where  the integrals $\ld \in \int^r_{H^*}$ and $\Ld \in \int_H^l$ are normalized, i.e., $\ld(\Ld)=1$. 

We will use Lemma \ref{l:t1}, which is an adaptation from \cite{KMN12}, to simplify the right hand side of \eqref{eq:tr-fr}. More precisely, let $l = n-k$. Applying Lemma \ref{l:t1} for $Y = m \circ \s_{(n,-k)} \circ \bigotimes_{p=1}^{n-1} S_F^{2 c_p}$, we have
\begin{align*}
\Tr(S_F \circ  P^{(n,-k)}_F) =&\ld \circ S\Biggl(\dd{i}{1} \biggl(\prod_{p=1}^{n-1}S_F^{2c_{\ol{lp}}}(\ff{j}{\ol{lp}}\Lda{\ol{lp}}\dd{i}{\ol{lp}+1})\biggl)\ff{j}{n}\Lda{n}\Biggl) \\ 
=&\ld \circ S\Biggl(\prod_{p=1}^{n}S_F^{2c_{\ol{lp}}}\left( S_F^{2\delta(\ol{lp})}(\dd{i}{\ol{l(p-1)+1}}) \ff{j}{\ol{lp}}\Lda{\ol{lp}}\right) \Biggl)
\end{align*}
which follows from $c_{\ol{l(p-1)}} = c_{\ol{lp+k}} = c_{\ol{lp}}+\delta(\ol{lp})$. Note that $\delta(n)=c_n = 0$, which explains the last term of the preceding expression. By Definition \ref{d:cp} and \eqref{eq:Qs}, we have 
\begin{itemize}
\item $\delta(p) = 1$ and $Q_{c_{p}}S^{2c_{p}}(Q) = Q_{c_{p} + 1} = Q_{c_{\ol{p+k}}}$ if $1 \le p \le k_0$;
\item $\delta(p) = -1$ and $Q_{c_{p}}S^{2c_{p}}(Q_{-1}) = Q_{c_{p} - 1} = Q_{c_{\ol{p+k}}}$ if $k_0+1 \le p \le 2k_0$;
\item $\delta(p) = 0$ and $Q_{c_{p}}S^{2c_p}(Q_0) = Q_{c_{p}} = Q_{c_{\ol{p+k}}}$ if $2k_0+1 \le p \le n$,
\end{itemize} 
where $k_0 = \tfrac{n-k-1}{2}$. Therefore, for all $1 \le p \le n$, we have $Q_{c_{p}}S^{2c_{p}}(Q_{\delta(p)}) = Q_{c_{\ol{p+k}}}$. Let $Y' := \ld \circ S \circ m \circ \s_{(n,-k)}$. Then we can rewrite $\Tr(S_F \circ  P^{(n,-k)}_F)$ as 
\begin{align*}
= &\  Y' \Biggl(\bigotimes_{p=1}^n 
S_F^{2c_{p}}\!\!\left(\! S_F^{2\delta(p)}(\dd{i}{\ol{p+k+1}}) \ff{j}{p}\Lda{p}\!\right)\!\!
\Biggl) =Y' \Biggl(\bigotimes_{p=1}^{n}
Q_{c_{p}} S^{2c_{p}}(Q_{\delta(p)}S^{2\delta(p)}(\dd{i}{\ol{p+k+1}}) Q^{-1}_{\delta(p)}\ff{j}{p}\Lda{p})Q_{c_p}^{-1} \!\!\Biggl)\\ 
=&\ Y' \Biggl(\bigotimes_{p=1}^{n}
Q_{c_{\ol{p+k}}} S^{2c_{p}}(S^{2\delta(p)}(\dd{i}{\ol{p+k+1}}) Q^{-1}_{\delta(p)}\ff{j}{p}\Lda{p})Q_{c_p}^{-1} \Biggl)\\
=&\ \ld S \Biggl(\prod_{p=1}^{n} Q_{c_{\ol{l(p-1)}}} S^{2c_{\ol{lp}}}(S^{2\delta(\ol{lp})}(\dd{i}{\ol{l(p-1)+1}}) Q^{-1}_{\delta(\ol{lp})}\ff{j}{\ol{lp}}\Lda{\ol{lp}})Q_{c_{\ol{lp}}}^{-1}\Biggl)\\
=&\ \ld S \Biggl(\prod_{p=1}^{n}  S^{2c_{\ol{lp}}}(S^{2\delta(\ol{lp})}(\dd{i}{\ol{l(p-1)+1}}) Q^{-1}_{\delta(\ol{lp})}\ff{j}{\ol{lp}}\Lda{\ol{lp}})\Biggl) \quad\text{since $Q_{c_{\ol{0}}} = Q_0 = 1$}\\
=&\ \ld S\Biggl(\dd{i}{1}\ff{j}{l}\Lda{l}\prod_{p=2}^{n} S^{2c_{\ol{lp}}}\left(S^{2\delta({\ol{lp}})}(\dd{i}{\ol{l(p-1)+1}}) Q^{-1}_{\delta({\ol{lp}})}\ff{j}{{\ol{lp}}}\Lda{{\ol{lp}}}\right)\Biggl)\,.
\end{align*}

Applying Theorem \ref{t:t2} (vi), the last expression becomes
\begin{align*}
&\ld S\Biggl(\Lda{l} \biggl(\prod_{p=2}^{n} S^{2 c_{\ol{l(p-1)}}}\left(S(\dd{i}{1}\ff{j}{l})_{(\ol{l(p-1)})}\right) S^{2c_{\ol{lp}}}\left(S^{2\delta({\ol{lp}})}(\dd{i}{\ol{l(p-1)+1}}) Q^{-1}_{\delta({\ol{lp}})}\ff{j}{{\ol{lp}}}\Lda{{\ol{lp}}}\right)\biggl) \Biggl)\\
=& \ \ld S\Biggl(\Lda{l} \cdot
\prod_{p=2}^{n} S^{2c_{\ol{lp}}}\left(S^{2 \delta(\ol{lp})}\bigl(S(\dd{i}{1}\ff{j}{l})_{(\ol{l(p-1)})}\dd{i}{\ol{l(p-1)+1}}\bigl) Q^{-1}_{\delta({\ol{lp}})}\ff{j}{{\ol{lp}}}\Lda{{\ol{lp}}}\right)
\Biggl)\\
=&\ Y'\Biggl(
\bigotimes_{p=1}^{l-1} S^{2c_{p}}\left(S^{2 \delta(p)}(S(\dd{i}{1}\ff{j}{l})_{(p+k)}\dd{i}{p+k+1}) Q^{-1}_{\delta({p})}\ff{j}{{p}}\Lda{{p}}\right)
\o \Lda{l} \\
&\quad\quad \o \bigotimes_{q=1}^{k} S^{2c_{q+l}}\left(S(\dd{i}{1}\ff{j}{l})_{(q)}\dd{i}{q+1} \ff{j}{{l+q}}\Lda{{l+q}}\right) 
\Biggl)\,.
\end{align*}

By Lemma \ref{lem:trick-3} (i), we have 
\[F_n =  \left(\bigotimes_{p=1}^{l-1} \ff{j}{p}\right) \o (F_{k+1} \De^{k+1}(\ff{j}{l})) \text{ and } F_n^{-1} =  (\De^{k+1}(\dd{i}{1}) F_{k+1}^{-1}) \o \left(\bigotimes_{p=2}^{l}\dd{i}{p}\right)\,.\]
Moreover, since $c_p=c_{\ol{p+k}}$ for $n-k+1 \le p \le n$, $c_q=c_{q+l}$ for $1\le q \le k$ and $\Tr(S_F \circ  P^{(n,-k)}_F) $ equals to 
\begin{align*}
&\ Y'\Biggl(\biggl(\bigotimes_{p=1}^{l-1} S^{2c_{p}}\Bigl(S^{2 \delta(p)}\Bigl(S(\dd{i\,(1)}{1}\ff{j\, (1)}{l})_{(p+k)}\dd{i}{p+1}\Bigl) Q^{-1}_{\delta({p})}\ff{j}{{p}}\Lda{{p}}\biggl)
\o \Lda{l} \\
&\quad\quad \o \biggl(\bigotimes_{q=1}^{k} S^{2c_{q}}\Bigl(S(\dd{i\, (1)}{1}\ff{j\, (1)}{l})_{(q)}\dd{i\, (q+1)}{1}\ff{j\, (q+1)}{l}\Lda{q+l}\biggl) \Biggl) \\
=&\ Y'\Biggl(\biggl(\bigotimes_{p=1}^{l-1} S^{2c_{p}}\Bigl(S^{2 \delta(p)}\Bigl(S(\dd{i\,(l-p)}{1}\ff{j\, (l-p)}{l})\dd{i}{p+1}\Bigl) Q^{-1}_{\delta({p})}\ff{j}{{p}}\Lda{{p}}\biggl)
\o \Lda{l} \\
&\quad\quad \o \biggl(\bigotimes_{q=1}^{k} S^{2c_{q}}\Bigl(S(\dd{i\, (n-q)}{1}\ff{j\, (n-q)}{l})\dd{i\, (n-1+q)}{1}\ff{j\, (n-1+q)}{l}\Lda{l+q}\biggl) \Biggl) \\
= &\ Y'\Biggl(\biggl(\bigotimes_{p=1}^{l-1} S^{2c_{p}}(S^{2 \delta(p)}(S(\ff{j\, (l-p)}{l}) S(\dd{i\,(l-p)}{1})\dd{i}{p+1})
Q^{-1}_{\delta(p)} \ff{j}{p} \Lda{p})\biggl) \o \bigotimes_{p=l}^n S^{2 c_p}(\Lda{p}) \Biggl)\,,
\end{align*}
where the last equation follows from the antipode conditions and $(\id^{\o (n-1)} \o \e)(F_n)=F_{n-1}$. It follows from Lemma \ref{lem:trick-3} (v) that $\bigotimes_{p=1}^{l-1}S(\dd{i\,(l-p)}{1})\dd{i}{p+1}=\bigotimes_{p=1}^{l-1}S(\ff{t}{l-p}) u^{-1}$. Now the last expression equals
\begin{align*}
& Y'\Biggl(
\bigotimes_{p=1}^{l-1} S^{2c_{p}}
\left(S^{2 \delta(p)}(S(\ff{j\, (l-p)}{l}) S(\ff{t}{l-p}) u^{-1})
Q^{-1}_{\delta(p)} \ff{j}{p} \Lda{p}\right) \o 
\bigotimes_{p=l}^n S^{2 c_p}(\Lda{p})  
\Biggl)\\
=&\ Y'\Biggl(
\bigotimes_{p=1}^{l-1} S^{2c_{p}}\left(S^{2 \delta(p)}(S(\ff{t}{l-p}\ff{j\, (l-p)}{l}) u^{-1}) Q^{-1}_{\delta(p)} \ff{j}{p} \Lda{p}\right)
\o 
\bigotimes_{p=l}^n S^{2 c_p}(\Lda{p})  
\Biggl)\\
=&\ Y'\Biggl(
\bigotimes_{p=1}^{l-1} S^{2c_{p}}\left(S^{2 \delta(p)}(S(\ff{i}{2l-p-1})  u^{-1}) Q^{-1}_{\delta(p)} \ff{i}{p} \Lda{p}\right) \o 
\bigotimes_{p=l}^n S^{2 c_p}(\Lda{p}) \Biggl)\\
=&\ Y'\Biggl(
\bigotimes_{p=1}^{k_0} S^{2c_{p}}\left(S^3(\ff{i}{4k_0-p+1})S^2(u^{-1})Q^{-1} \ff{i}{p} \Lda{p}\right) 
\o \bigotimes_{p=k_0+1}^{2k_0} S^{2c_p}\left(S^{-1}(S(\ff{i}{p} )u^{-1} \ff{i}{4k_0-p+1})\Lda{p}\right)\\
&\quad\quad  \o \bigotimes_{p=l}^n S^{2 c_p}(\Lda{p})  \Biggl) 
\end{align*}
since $\delta(p)=1$ for $1 \le p \le k_0$,  $\delta(p)=-1$ for $k_0+1 \le p \le 2k_0$ and $Q_{-1}^{-1}=S^{-2}(Q)$. 
Repeatedly applying Lemma \ref{lem:trick-3} (iii) starting from $p=2k_0$ in the second term of the last expression, we find it equals to 
\begin{align*}
& Y'\Biggl(
\bigotimes_{p=1}^{k_0} S^{2c_{p}}\left(S^3(\ff{i}{2k_0-p+1})S^2(u^{-1})Q^{-1} \ff{i}{p} \Lda{p}\right) \o 
\bigotimes_{p=k_0+1}^{n} S^{2c_p}(\Lda{p}) \Biggl)  \\
=&\ Y'\Biggl(
\bigotimes_{p=1}^{k_0} S^{2c_{p}} \left(S^3(\ff{j}{k_0-p+2})S^2(u^{-1})Q^{-1} \ff{i}{p}\ff{j\,(p)}{1} \Lda{p}\right) \o 
\bigotimes_{p=k_0+1}^{n} S^{2c_p}(\Lda{p}) \Biggl) \text{ by Lemma \ref{lem:trick-3} (i)}\\
=&\ Y'\Biggl(
\bigotimes_{p=1}^{k_0} S^{2c_{p}}\left(S^2(\ff{j\,(p)}{1})S^3(\ff{j}{k_0-p+2})S^2(u^{-1})Q^{-1} \ff{i}{p} \Lda{p}\right) \o \bigotimes_{p=k_0+1}^{n} S^{2c_p}(\Lda{p}) \Biggl) \text{ by Lemma \ref{l:t3}.}
\end{align*}
It follows from the second equality of \eqref{eq:u-f-Sf-3} in Lemma \ref{lem:trick-3} (iv), we have
\begin{align*}
& \Tr(S_F\circ P_F^{(n,-k)}) 
=  Y'\Biggl(\bigotimes_{p=1}^{k_0} S^{2c_{p}}\biggl(S^2(\dd{j}{p} u)S^2(u^{-1})Q^{-1} \ff{i}{p} \Lda{p}\biggl) \o \bigotimes_{p=k_0+1}^{n} S^{2c_p}(\Lda{p})\Biggl) \\
=&\ Y'\Biggl(
\bigotimes_{p=1}^{k_0} S^{2c_{p}}\left(S^2(\dd{j}{p})Q^{-1} \ff{i}{p} \Lda{p}\right) \o \bigotimes_{p=k_0+1}^{n} S^{2c_p}(\Lda{p}) \Biggl) \\
=&\ Y'\Biggl(
\bigotimes_{p=1}^{k_0} S^{2c_{p}}(Q^{-1}_{(p)} \Lda{p}) \o \bigotimes_{p=k_0+1}^{n} S^{2c_p}(\Lda{p}) 
\Biggl) \text{ by \eqref{eq:f-DQ-Q-Sf} of Lemma \ref{lem:trick-3} (v)}\\
=&\ Y'\Biggl(
\bigotimes_{p=1}^{k_0} S^{2c_{p}}\left(S(u)_{(p)} S(\dd{i}{1})_{(p)}\dd{i\,(p)}{2} \Lda{p}\right) \o 
\bigotimes_{p=k_0+1}^{n} S^{2c_p}(\Lda{p})\Biggl) \text{ as $u^{-1} = S(\dd{i}{1})\dd{i}{2}$.}
\end{align*}
So by Lemma \ref{l:t3}, we have 
\begin{align*}
 &\Tr(S_F\circ P_F^{(n,-k)}) 
=\ Y'\Biggl(
\bigotimes_{p=1}^{k_0} S^{2c_{p}}\left(S^2(\dd{i\,(p)}{2})S(u)_{(p)} S(\dd{i}{1})_{(p)} \Lda{p}\right) \o 
\bigotimes_{p=k_0+1}^{n} S^{2c_p}(\Lda{p})\Biggl) \\
=&\ Y'\Biggl(
\bigotimes_{p=1}^{k_0} S^{2c_{p}}\biggl(\Bigl(S^2(\dd{i}{2})S(u) S(\dd{i}{1})\Bigl)_{(p)} \Lda{p}\biggl) \o \bigotimes_{p=k_0+1}^{n} S^{2c_p}(\Lda{p})\Biggl)  \text{ as $\dd{i}{1} u S(\dd{i}{2}) =1$}\\
=&\ Y'\Biggl(\bigotimes_{p=1}^{k_0} S^{2c_{p}}(\Lda{p}) \o \bigotimes_{p=k_0+1}^{n} S^{2c_p}(\Lda{p})\Biggl)\ = \Tr(S \circ P^{(n,-k)}) 
\end{align*}
This completes the proof of that $\Tr(S \circ P^{(n,-k)}) = K(L(n, k), \frm_R, H)$ is a gauge invariant. 
\end{proof}

\subsubsection{Gauge invariance of \texorpdfstring{$K(L(n,k), \frmL, H)$}{} for odd \texorpdfstring{$k$}{}} 
Let $n>k>0$ be coprime integers with $k$ odd. Similar to the previous subsection, we read the rotation numbers from the 2-combed Heegaard diagram in Figure \ref{fig:fL-detail}, show it is a framed Heegaard diagram, and study the gauge invariance of the associated Kuperberg invariants. 

We start with the rotation numbers $\theta$. For the lower curve $\eta$, it is easy to see that $\theta_\eta(p_1) = \tfrac{1}{4}$, $\theta_\eta(p_j) = \tfrac{1}{2}$ for all $2 \le j \le n$, and $\theta_\eta = \tfrac{1}{2}$.

We now obtain the change of rotation number from $p_i$ to the next $I$-point on the upper curve $\mu$. For $i = 1$,  we can see that $\theta_{\mu}(p_1) = 0$, and we have already had $\theta_{\eta} = \tfrac{1}{4}$, so $s_1 = 2(\tfrac{1}{4}-0)+\tfrac{1}{2} = 1$. As we go along $\mu$, the next $I$-point we encounter will be $p_{k+1}$. As is illustrated in Figure \ref{fig:fL-S-power-0}, we have $\theta_{\eta}(p_{k+1})=\tfrac{1}{2}$, and $\theta_{\mu}(p_{k+1})=\tfrac{1}{4}$, so $s_{k+1} = 2(\tfrac{1}{2}-\tfrac{1}{4})+\tfrac{1}{2}=1=s_1$. 
\begin{figure}[ht]
    \centering
    \includegraphics[width=280pt]{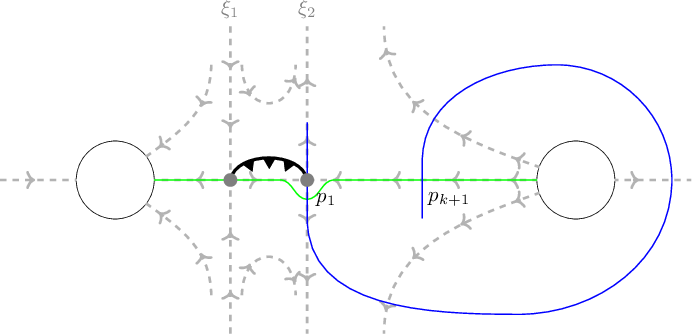}
    \caption{$\theta_\mu(p_1) = 0$, $\theta_{\mu}(p_{k+1}) = \tfrac{1}{4}$.}
    \label{fig:fL-S-power-0}
\end{figure}

For $2\leq i\leq n-k$, the local picture is illustrated in Figure \ref{fig:fL-S-power-1}, and it is immediately seen that $\theta_{\eta}(p_i) = \theta_{\eta}(p_{i+k})$ and $\theta_\mu(p_i) = \theta_\mu(p_{i+k})$, and consequently, $s_{i+k}=s_{i}$. 
\begin{figure}[ht]
    \centering
    \includegraphics[width=280pt]{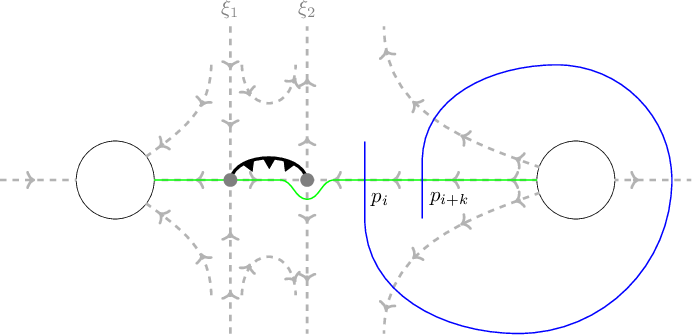}
    \caption{For $1 \le i \le n-k$, $\theta_{\mu}(p_i) = \theta_{\mu}(p_{i+k})$.}
    \label{fig:fL-S-power-1}
\end{figure}
Recall that when $k=1$, there is no $I$-point between $\xi_1$ and the left attaching circle, and by direct computation, we have $\theta_{\mu}(p_{n}) = \tfrac{1}{4} = \theta_{\mu}(p_{i})$ for all $2 \le i \le n-1$, and $\theta_{\eta}(p_n) = \tfrac{1}{2}$. Moreover, the total rotation number of the upper curve is $\theta_\mu=\tfrac{1}{2}$, so in this case, $s_1 = \cdots = s_n$.

Now we assume that $k \ge 3$, and denote $k_1 := \tfrac{k-1}{2}$. For $n-k+2\leq i \leq n-k_1$, the local picture from $p_{i}$ to $p_{\ol{i+k}}$ along $\mu$ is given in Figure \ref{fig:fL-S-power-2}. Reading from the local picture, we have $\theta_{\mu}(p_{\ol{i+k}}) = \theta_{\mu}(p_i)-1$, and $\theta_{\eta}(p_{\ol{i+k}}) = \theta_{\eta}(p_i)$, which implies that $s_{\ol{i+k}}=s_{i}+2$. 
\begin{figure}[ht]
    \centering
    \includegraphics[width=280pt]{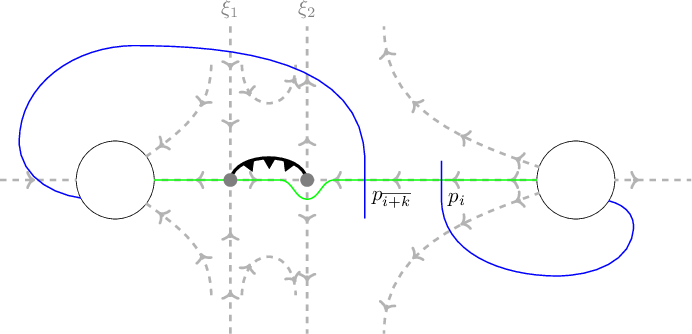}
    \caption{For $n-k+2\leq i \leq n-k_1$, $\theta_{\mu}(p_{\ol{i+k}}) = \theta_{\mu}(p_i)-1$.}
    \label{fig:fL-S-power-2}
\end{figure}

For $n-k_1+1 \leq i \leq n$, the local picture from $p_{i}$ to $p_{\ol{i+k}}$ along $\mu$ is given in Figure \ref{fig:fL-S-power-3}. Reading from it, we have $\theta_{\mu}(p_{\ol{i+k}}) = \theta_{\mu}(p_i)+1$, and $\theta_{\eta}(p_{\ol{i+k}}) = \theta_{\eta}(p_i)$, which implies that $s_{\ol{i+k}}=s_{i}-2$. 
\begin{figure}[ht]
    \centering
    \includegraphics[width=280pt]{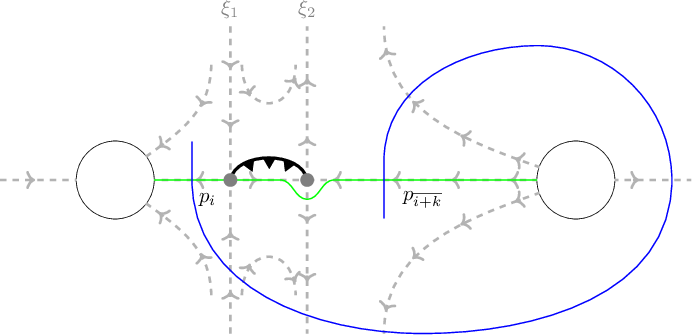}
    \caption{For $n-k_1+1 \leq i \leq n$, $\theta_{\mu}(p_{\ol{i+k}}) = \theta_{\mu}(p_i)+1$.}
    \label{fig:fL-S-power-3}
\end{figure}

Now we are left with $p_{n-k+1}$, which is the last $I$-point we visit on $\mu$ before going back to the base point $\xi_2$. Note that $J = \{\{p_2, ..., p_{n-k}\}, \{p_{n-k+2}, ..., p_{n-k_1}\}, \{p_{n-k_1+1}, ..., n\}\}$ is a partition of $I \setminus \{p_1, p_{n-k+1}\}$. Starting from $p_{k+1}$, we will go  through every point in $J$ and finally reach $p_{n-k+1}$ without passing $p_1$.  Moreover, by the discussions above, upon reaching the next $I$-point, the $\theta_\mu$-value increases by 1 for $k_1$ of the points in $J$, decreases by 1 for $k_1$ of the points in $J$ and remains unchanged for the rest of the points in $J$ (the reason we exclude $p_1$ from $J$ is because it does not follow this pattern). 
Therefore, the total change of $\theta_\mu$-value of the $I$-points when we go from $p_{1+k}$ to $p_{n-k+1}$ is 0, which means $\theta_\mu(p_{n-k+1}) = \theta_{\mu}(p_{1+k}) = \tfrac{1}{4}$, and so $s_{n-k+1} = s_{k+1} = s_1 = 1$. 

After reaching $p_{n-k+1}$, we will follow $\mu$ to go into the right attaching circle, passing through the vertical axis of $\xi_1$ in Figure \ref{fig:fL-detail}, and go back to $\xi_2$. In this process, the $\theta_\mu$-value decreases by $\tfrac{3}{4}$, i.e., the total rotation number $\theta_\mu$ is equal to $-\tfrac{1}{2}$. 

Finally, by design, we have $\phi_\eta(p_i) = \phi_\mu(p_i) = 0$ for all $1 \le i \le n$, and $\phi_\eta = \tfrac{1}{2} = \phi_\mu$. So we can see that $\theta_{\mu} = -\phi_{\mu}$, and $\theta_{\eta} = \phi_{\eta}$. 

We summarize the above discussions in the following lemma:

\begin{lemma}\label{lem:fL-diag}
Let $n>k \ge 1$ be coprime integers such that $k$ is odd. Then the 2-combed Heegaard diagram in Figure \ref{fig:fL-detail} is a framed Heegaard diagram. Moreover, the exponent $s_i$'s are computed recursively with $s_1= 1$ as follows:
\begin{enumerate}[label=\rm{(\roman*)}] 
\item If $1\leq i\leq n-k+1$, then $s_{\ol{i+k}}=s_i$;
\item If $n-k+2\leq i \leq n-\tfrac{k-1}{2}$, then $s_{\ol{i+k}}=s_i+2$;
\item If $n-\tfrac{k-1}{2}+1 \leq i \leq n$, then $s_{\ol{i+k}}=s_i-2$. \qed
\end{enumerate}
\end{lemma}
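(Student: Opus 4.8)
The plan is to mimic the computation carried out for $\frmR$ in Proposition \ref{p:S-exponents}: read off the two rotation numbers $\theta_\eta(p_i)$ and $\theta_\mu(p_i)$ at each intersection point directly from the local configurations in Figure \ref{fig:fL-detail}, convert them into the exponents $s_i$ through the defining relation $s_i = 2(\theta_\eta(p_i) - \theta_\mu(p_i)) + \tfrac12$ of \eqref{eq:sr-tr}, and then confirm admissibility \eqref{eq:adm} by computing the total rotations $\theta_\eta,\theta_\mu$ and comparing them with $\phi_\eta,\phi_\mu$.

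First I would treat the lower curve $\eta$, which is the easy half: since $\eta$ is oriented rightward from its base point and the combing $b_1$ opposes it only as it passes through the handle, one reads off $\theta_\eta(p_1) = \tfrac14$, $\theta_\eta(p_j) = \tfrac12$ for $2 \le j \le n$, and total $\theta_\eta = \tfrac12$; as $\eta$ never crosses the twist front and the two agree in direction near $\xi_1$, this gives $\phi_\eta = \tfrac12 = \theta_\eta$. For the upper curve $\mu$, Observation \ref{obs:pts} shows that travelling along $\mu$ one always passes from $p_i$ to $p_{\ol{i+k}}$, so starting at $p_1$ with $\theta_\mu(p_1) = 0$ (whence $s_1 = 1$) I would compute the increment of $\theta_\mu$ over each elementary step by inspecting the four local pictures, Figures \ref{fig:fL-S-power-0}--\ref{fig:fL-S-power-3}. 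This yields the three regimes of the recursion: $\theta_\mu$ is unchanged for $2 \le i \le n-k$ (relation (i)), decreases by $1$ for $n-k+2 \le i \le n - k_1$ (relation (ii)), and increases by $1$ for $n - k_1 + 1 \le i \le n$ (relation (iii)), where $k_1 = \tfrac{k-1}{2}$. The degenerate case $k = 1$ would be recorded separately, since then there is no $I$-point between $\xi_1$ and the left attaching circle and all $s_i$ equal $1$.

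The hard part will be the single leftover point $p_{n-k+1}$, which is governed by none of the three local patterns and so must be fixed by a global consistency argument. The plan is to observe that $\{p_2,\dots,p_{n-k}\}$, $\{p_{n-k+2},\dots,p_{n-k_1}\}$ and $\{p_{n-k_1+1},\dots,p_n\}$ partition $I \setminus \{p_1, p_{n-k+1}\}$, and that following $\mu$ from $p_{k+1}$ to $p_{n-k+1}$ visits precisely these points. Since the second and third blocks each contain exactly $k_1$ points, the $+1$ and $-1$ contributions to $\theta_\mu$ cancel, forcing $\theta_\mu(p_{n-k+1}) = \theta_\mu(p_{k+1}) = \tfrac14$ and hence $s_{n-k+1} = 1$; this balancing is exactly where coprimality of $n$ and $k$ enters, guaranteeing that $i \mapsto \ol{i+k}$ is a single $n$-cycle visiting each point once, so the recursion closes up consistently around the cycle. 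Finally, tracing $\mu$ from $p_{n-k+1}$ back to $\xi_2$ drops $\theta_\mu$ by $\tfrac34$ to give total $\theta_\mu = -\tfrac12$, while the twist front contributes $\phi_\mu = \tfrac12$; thus $\theta_\mu = -\phi_\mu$, the admissibility condition \eqref{eq:adm} is satisfied, and Figure \ref{fig:fL-detail} is a framed Heegaard diagram as claimed.
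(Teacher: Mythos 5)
Your proposal is correct and follows essentially the same route as the paper's own argument: reading $\theta_\eta$ and $\theta_\mu$ off the local configurations in Figures \ref{fig:fL-S-power-0}--\ref{fig:fL-S-power-3} to get the three regimes of the recursion, isolating the exceptional point $p_{n-k+1}$ and fixing its value by the same partition-and-cancellation argument (the two blocks of size $k_1=\tfrac{k-1}{2}$ contributing $+1$ and $-1$ cancel along the single $n$-cycle $i\mapsto \ol{i+k}$), and finally verifying admissibility via $\theta_\mu=-\phi_\mu=-\tfrac12$ and $\theta_\eta=\phi_\eta=\tfrac12$. No gaps to report.
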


Recall that the diagram framing associated to Figure \ref{fig:fL-detail} is denoted by $\frmL$.

\begin{prop} \label{p:duality}  
Let $n>k>0$ be  coprime integers such that  $k$ is odd. For any finite-dimensional Hopf algebra $H$, we have 
\[K(L(n, k),  \frm_L, H)=K(L(n, n-k),  \frm_R, H^{\op})\,.\]
\end{prop}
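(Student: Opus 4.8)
The plan is to rewrite both invariants as $\ld$ evaluated on a single tensor built from the divided powers $\Lda{1},\dots,\Lda{n}$ of a fixed left integral $\Ld$, and then to match these two tensors componentwise.

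First I would assemble the $H^{\op}$ dictionary. Since $\De_{\op}=\De$, the iterated coproduct is unchanged; $m_{\op}$ is the reversed product, and $S_{\op}=S^{-1}$. A left integral of $H^{\op}$ is a right integral of $H$, and $(H^{\op})^*$ coincides with $H^*$ as an algebra, so I may take as normalized pair $\Ld_{\op}=S(\Ld)$ and $\ld_{\op}=\ld$, which is normalized because $\ld(S(\Ld))=1$ by Lemma \ref{lem:int}. For $\frmR$ on $L(n,n-k)$ one has $\theta_\mu=\theta_\eta=\phi_\eta=\tfrac12$, $\phi_\mu=-\tfrac12$, and no crossing of the twist front (Proposition \ref{p:S-exponents}), so every $T$-power is trivial and $\ld_{\op,-1/2}=\ld$, $\Ld_{\op,1/2}=S(\Ld)$. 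Writing out \eqref{eq:KInv} for $H^{\op}$ and using $\De^n(S(\Ld))=S(\Lda{n})\o\cdots\o S(\Lda{1})$ (so the $i$-th tensorand is $S(\Lda{n+1-i})$) turns the right-hand side into $\ld\big(m_{\op}\circ\s_{D'}\circ\bigotimes_i S^{1-s'_i}\circ\omega\circ\De^n(\Ld)\big)$, where $\omega$ reverses the tensor order and $\{s'_i\}$ is the exponent sequence of Proposition \ref{p:S-exponents} for $L(n,n-k)$ (so that $k_0=\tfrac{k-1}2$).

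Next I would treat the left-hand side. For $\frmL$ on $L(n,k)$ one has $\theta_\mu=-\tfrac12$, $\theta_\eta=\tfrac12$ and again no $T$-powers (Lemma \ref{lem:fL-diag}), so by \eqref{eq:twisted-ld} $\ld_{-\theta(\mu)}=\ld_{1/2}=\ld\circ S^{-1}$ and $\Ld_{\theta(\eta)}=\Ld$. Because $S^{-1}$ is an algebra anti-automorphism, $\ld\circ S^{-1}\circ m=\ld\circ m_{\op}\circ(S^{-1})^{\o n}$; commuting $(S^{-1})^{\o n}$ through the permutation $\s_D$ and absorbing it into the $S$-powers yields
\[
K(L(n,k),\frmL,H)=\ld\Big(m_{\op}\circ\s_D\circ\bigotimes_i S^{s_i-1}\circ\De^n(\Ld)\Big),
\]
with $\s_D(i)=\ol{1+(i-1)k}$ (the upper curve starts at $\xi_2$, meets $p_1$ first, then advances by $k$ along $\eta$, cf.\ Observation \ref{obs:pts}) and $\{s_i\}$ the sequence of Lemma \ref{lem:fL-diag}.

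Finally I would prove the two matching identities and conclude. Comparing the $i$-th tensorands, the left side contributes $S^{s_{\s_D(i)}-1}(\Lda{\s_D(i)})$ and the right side $S^{1-s'_{\s_{D'}(i)}}(\Lda{n+1-\s_{D'}(i)})$, where $\s_{D'}(i)=\ol{(n-k)(i-1)}=\ol{-(i-1)k}$. A short residue computation gives (A) $\s_D(i)+\s_{D'}(i)=n+1$, which equates the divided-power indices. For the $S$-powers I must show (B) $s_j+s'_{n+1-j}=2$ for all $j$; I would prove this by showing that $u_j:=s_j+s'_{n+1-j}$ is constant along the common traversal $j\mapsto\ol{j+k}$: under $j\leftrightarrow n+1-j$ the $\frmL$-step corresponds to the $\frmR$-step $m\mapsto\ol{m+(n-k)}$, and the three piecewise ranges of Lemma \ref{lem:fL-diag} and Proposition \ref{p:S-exponents} line up so that the $\pm2$ increments cancel case by case; since $u_1=s_1+s'_n=1+1=2$, we get $u_j\equiv 2$. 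Then (A) and (B) make the two tensors in $H^{\o n}$ equal componentwise, so applying $m_{\op}$ and $\ld$ yields the same scalar, proving the identity. The main obstacle I expect is the combinatorial bookkeeping in (B): aligning the three range-cases of the $\frmL$- and $\frmR$-recursions under the reflection $j\mapsto n+1-j$ (and verifying the pinned indices $p_{n-k+1}$ and $p_k$), together with getting the $H^{\op}$ integral–antipode dictionary exactly right, in particular the order reversals hidden in $\De^n\circ S$ and in $m_{\op}$.
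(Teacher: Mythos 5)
Your proof is correct and follows essentially the same route as the paper's: both rest on the $H^{\op}$ dictionary ($S_{\op}=S^{-1}$, normalized integrals $S(\Ld)$ and $\ld$) combined with a case-by-case comparison of the $\frmL$- and $\frmR$-exponent recursions under the reflection $j \mapsto n+1-j$, and your identity (B), $s_j+s'_{n+1-j}=2$, is precisely the paper's identification $\tilde{c}_i = 2c_i$ (since $s'_i = 2c_i+1$). The only cosmetic differences are that the paper applies the op-dictionary to the $\frmL$ side (proving the equivalent statement with $H$ replaced by $H^{\op}$) and recognizes the resulting expression via the formula in Theorem \ref{t:KInd}, whereas you apply it to the $\frmR$ side and match the tensor components directly, with the permutation identity (A) made explicit rather than implicit in the reindexing.
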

\begin{proof} 
Let $\ld \in \int_{H^*}^r$ and $\Ld \in \int_H^l$ be normalized integrals for $H$, and recall from \eqref{eq:twisted-ld} that $\ld_{\frac{1}{2}}=\ld\circ S^{-1}$ and $\Ld_{\frac{1}{2}}=\Ld$. By definition, the Kuperberg invariant of $L(n,k)$ for the framing $\frm_L$ as
\begin{align*}
	K(L(n, k), \frm_L, H)=\ld_{\frac{1}{2}}\left(m\circ \tau_{n,k}\circ (\otimes_{i=1}^nS^{s_i})\circ\Delta^{(n)}(\Ld_{\frac{1}{2}})\right)=\ld S^{-1}\left(m\circ \tau_{n,k}\circ (\otimes_{i=1}^nS^{s_i})\circ\Delta^{(n)}(\Ld)\right)
\end{align*}
where $\tau_{n,k}$ be the permutation $\tau_{n,k}(i)=\ol{1+(i-1)k}$ for $i \in \{1,\dots, n\}$.                     

Note that $S(\Ld)$ is the left integral of $H^{\op}$ such that $\ld \circ S(\Ld) =1$. Thus, $\ld$ and $S(\Ld)$ are normalized  integrals for $H^{\op}$, and we find
\begin{align*}
K(L(n, k),  \frm_L, H^{\op})
=&\ld S(m^{\op}\circ \tau_{n,k}\circ (\otimes_{i=1}^nS^{-s_i})\circ\Delta^{(n)}(S(\Ld)))\\
=&\ld S(m^{\op}\circ \tau_{n,k}\circ (\otimes_{i=1}^nS^{-s_i+1}(\Ld_{(n+1-i)})))\\
=&\ld S(S^{-s_{n+1-k}+1}(\Ld_{(k)})\cdots S^{-s_{1+k}+1}(\Ld_{(n-k)})S^{-s_1+1}(\Ld_{(n)}))
\end{align*}
Let $\tilde{k}=n-k$, $\tilde{k}_0 =\frac{n-\tilde{k}-1}{2} =\frac{k-1}{2}$, and $\tilde{c}_i=-s_{n+1-i}+1$ for $1\leq i \leq n$. Then $\tilde{c}_n=-s_1+1=0$ and $\tilde{c}_i$'s satisfy the following properties.
\begin{enumerate}[label=\rm{(\roman*)}] 
\item  If $1\leq i\leq \tilde{k}_0$, then $n-\tfrac{k-1}{2}+1 \leq n+1-i \leq n$ and $s_{n+1-i+k-n}=s_{n+1-i}-2$. Thus, we have $\tilde{c}_{i+\tilde{k}}=\tilde{c}_i+2$.

\item If $\tilde{k}_0+1\leq i \leq 2 \tilde{k}_0$, then $n-k+2\leq n+1-i \leq n-\tfrac{k-1}{2}$ and $s_{n+1-i+k-n}=s_{n+1-i}+2$. Thus, $\tilde{c}_{i+\tilde{k}}=\tilde{c}_i-2$.

\item  If $n-\tilde{k}+1 \leq i \leq n$, then $1\leq n+1-i\leq n-k$ and $s_{n+1-i+k}=s_{n+1-i}$. Thus, $\tilde{c}_{\overline{i+\tilde{k}}}=\tilde{c}_i$.
\end{enumerate}
Comparing to the recursive relation of the sequence $\{2c_i\mid 1 \le i \le n\}$ for the pair $(n, \tilde{k})$ in Definition \ref{d:cp}, we can see that $\{\tilde{c}_i\mid 1\le i \le n\}$ satisfies the same recursive relations and initial condition, so $\tilde{c}_i=2c_i$ for all $1 \le i\le n$. Hence, 
\begin{align*}
K(L(n, k),  \frm_L, H^{\op}) 
=& \ld S(S^{\tilde{c}_{n-\tilde{k}}}(\Ld_{(n-\tilde{k})})\cdots S^{\tilde{c}_{\tilde{k}}}(\Ld_{(\tilde{k})})S^{\tilde{c}_n}(\Ld_{(n)}))\\
=& \ld S(S^{2c_{n-\tilde{k}}}(\Ld_{(n-\tilde{k})})\cdots S^{2c_{\tilde{k}}}(\Ld_{(\tilde{k})})S^{2c_n}(\Ld_{(n)}))
= K(L(n,\tilde{k}), \frm_R, H)
\end{align*}
by the fourth equality of \eqref{eq:fR-form} in the proof of Theorem \ref{t:KInd}.
\end{proof}
 Since $K(L(n, n-k), \frm_R, H^{\op})$ is a gauge invariant by Theorem \ref{t:Lnk-fR}, so is $K(L(n, k), \frm_L, H)$.
\begin{thm}\label{t:Lnk-fL}
Let $n>k>0$ be coprime integers such that  $k$ is odd. Then for any finite-dimensional Hopf algebra $H$, the Kuperberg invariant $K(L(n, k), \frm_L, H)$ is a gauge invariant of $H$. \qed
\end{thm}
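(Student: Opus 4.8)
The plan is to reduce the statement to the gauge invariance of the right-handed framing already established in Theorem~\ref{t:Lnk-fR}, using the duality of Proposition~\ref{p:duality}. That proposition records, for every finite-dimensional Hopf algebra $H$, the identity
\[
K(L(n,k), \frmL, H) = K(L(n, n-k), \frmR, H^{\op}),
\]
and it is available precisely because $k$ is odd. The pair $(n, n-k)$ is again an admissible lens-space parameter: $\gcd(n, n-k) = \gcd(n,k) = 1$ and $0 < n-k < n$. Moreover, the parity hypothesis needed to invoke Theorem~\ref{t:Lnk-fR} for $\frmR$ on $L(n, n-k)$ is that $n-(n-k) = k$ be odd, which is exactly our assumption. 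Hence Theorem~\ref{t:Lnk-fR} guarantees that $K(L(n, n-k), \frmR, -)$ is a gauge invariant of its Hopf-algebra argument.

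Next I would make the reduction to Drinfeld twists explicit. By the characterization of gauge equivalence recalled in Section~\ref{sec:alg-prelim}, it suffices to prove $K(L(n,k), \frmL, H) = K(L(n,k), \frmL, H_F)$ for an arbitrary $2$-cocycle $F \in H^{\ot 2}$. Applying the duality identity to both $H$ and $H_F$ converts this into
\[
K(L(n, n-k), \frmR, H^{\op}) = K(L(n, n-k), \frmR, (H_F)^{\op}).
\]
Since the $\frmR$-invariant is a gauge invariant, it now suffices to show that $H^{\op}$ and $(H_F)^{\op}$ are gauge equivalent.

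The one genuinely non-formal point, which I expect to be the main obstacle, is to recognize the opposite of a twist as a twist of the opposite. I would verify that whenever $F$ is a $2$-cocycle of $H$, its inverse $F^{-1}$ is a $2$-cocycle of $H^{\op}$, and that $(H_F)^{\op} \cong (H^{\op})_{F^{-1}}$ as bialgebras. The mechanism is that both bialgebras share the multiplication $m^{\op}$ and counit $\e$, while conjugation carried out in $(H^{\op})^{\ot 2}$ reverses the order of the tensor factors; consequently twisting $H^{\op}$ by $F^{-1}$ produces the comultiplication $F\Delta(h)F^{-1} = \Delta_F(h)$ computed with the ordinary product of $H$, matching the comultiplication of $(H_F)^{\op}$. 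The cocycle condition \eqref{eq:2-cocycle} for $F^{-1}$ over $H^{\op}$ then follows from that for $F$ over $H$ by reversing the factor order of the four-term product and inverting, since $\id \ot \Delta$ and $\Delta \ot \id$ are algebra maps and preserve inverses. Granting this identification, $(H_F)^{\op}$ is a Drinfeld twist of $H^{\op}$ and hence gauge equivalent to it, so the displayed equality of $\frmR$-invariants holds; tracing back through Proposition~\ref{p:duality} yields $K(L(n,k), \frmL, H) = K(L(n,k), \frmL, H_F)$, which is the asserted gauge invariance.
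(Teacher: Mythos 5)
Your proposal is correct and follows essentially the same route as the paper: the paper proves Theorem~\ref{t:Lnk-fL} in one line by combining Proposition~\ref{p:duality} with Theorem~\ref{t:Lnk-fR}, exactly your reduction. The only difference is that you spell out the step the paper leaves implicit—that $F^{-1}$ is a $2$-cocycle of $H^{\op}$ and $(H_F)^{\op}\cong (H^{\op})_{F^{-1}}$, so passing to opposites preserves gauge equivalence—which the paper itself records (and uses) at the start of Section~\ref{sec:technical}.
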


In \cite{Shi15}, the author used the extended Sweedler power to generalize the indicators defined in \cite{KMN12}:
\[
\nu_n(H) = \ld S(P^{(n)}(\Ld)) \text{ for $n \in \BZ$}\,,
\]
where $P^{(n)}$ is the $n$-th convolution power of the identity of $H$. In particular, for any positive integer $n$ and $x \in H$, $P^{(-n)}(x) = S(x_{(1)}) \cdots S(x_{(n)})$. We can view $\nu_n$ as special cases of the Kuperberg invariants in the following sense. Recall that for any nonnegative integer $n$, the lens space $L(n,n-1)$ is homeomorphic to $L(n,1)$ (cf. \eqref{eq:lens_iso}), then we can pull back $\frmR$ on $L(n,n-1)$ to obtain a framing on $L(n,1)$, denoted by $\frm_n$. When $n<0$, $L(n,1) \cong L(|n|, 1)$. By \cite{Kup96}, we can perform a spiral move to $\frmL$ and pull it back to from $L(|n|,1)$ to $L(n,1)$ to get another framing $\frm_n$ so that $K(L(n,1), \frm_n', H)=\alpha(g) K(L(|n|,1), \frmL, H)$ for all $H$. 

\begin{cor}\label{cor:Kup-and-nu}
For any integer $n$ and  finite-dimensional Hopf algebra $H$, we have 
\[
\nu_n(H) = K(L(n,1),\frm_n,H)=
\begin{cases}
K(L(n,n-1),\frmR,H) & \text{ if $n \ge 0$}\,,\\
\alpha(g) K(L(|n|,1),\frmL,H) & \text{ if $n < 0$}.
\end{cases}
\]
In particular, $\nu_n$ is a gauge invariant.
\end{cor}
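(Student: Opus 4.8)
My plan is to establish the displayed equalities first and read off gauge invariance at the end, since gauge invariance is the easy part: granting the equalities, $K(L(n,n-1),\frmR,H)$ is gauge invariant by Theorem~\ref{t:Lnk-fR}, $K(L(|n|,1),\frmL,H)$ is gauge invariant by Theorem~\ref{t:Lnk-fL}, and $\alpha(g)$ is gauge invariant by Proposition~\ref{prop:S3-inv} (see also \cite{Shi15}); a product of gauge invariants is again one. Thus all the content lies in identifying $\nu_n(H)$ with the relevant Kuperberg invariant, and I would organize this by the sign of $n$.

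For $n\ge 2$ I would simply specialize the computation in the proof of Theorem~\ref{t:KInd} to $k=n-1$. Then $n-k=1$ is odd and $k_0=\tfrac{n-k-1}{2}=0$, so Proposition~\ref{p:S-exponents} forces $s_i=1$ for every $i$, while the permutation becomes $\s_D(i)=\ol{(n-1)(i-1)}=\ol{1-i}$, listing the Sweedler indices in the reversed order $n,n-1,\dots,1$. Feeding this into \eqref{eq:fR-form} and using that $S$ is an algebra anti-homomorphism gives
\[
K(L(n,n-1),\frmR,H)=\ld\big(S(\Lda{n})S(\Lda{n-1})\cdots S(\Lda{1})\big)=\ld\,S\big(P^{(n)}(\Ld)\big)=\nu_n(H).
\]
The boundary values $n=0,1$ fall outside the range $n>k>0$ and I would treat them via the homeomorphisms $L(0,-1)\cong\BS^2\times\BS^1$ and $L(1,0)\cong\BS^3$ of \eqref{eq:lens_iso}: one has $\nu_0(H)=\e(\Ld)\ld(1)=\Tr(S^2)=K(\BS^2\times\BS^1,\frm,H)$ by Proposition~\ref{prop:S2S1-inv}, and $\nu_1(H)=\ld\,S(\Ld)=1=K(\BS^3,\frm_0,H)$ by Lemma~\ref{lem:int} and Example~\ref{ex:S3}.

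For $n<0$, write $n=-m$ with $m\ge 2$; here I would exploit duality rather than compute afresh. Proposition~\ref{p:duality} with $k=1$ gives $K(L(m,1),\frmL,H)=K(L(m,m-1),\frmR,H^{\op})$, which by the case above applied to $H^{\op}$ equals $\nu_m(H^{\op})$; unwinding the opposite structure (with left integral $S(\Ld)$, right cointegral $\ld$, and antipode $S^{-1}$, as in the proof of Proposition~\ref{p:duality}) yields $\nu_m(H^{\op})=\ld(\Lda{m}\cdots\Lda{1})$. On the other hand $\nu_{-m}(H)=\ld\,S(P^{(-m)}(\Ld))=\ld\big(S^2(\Lda{m}\cdots\Lda{1})\big)$. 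Hence everything reduces to the modular identity
\[
\ld\big(S^2(z)\big)=\alpha(g)^{-1}\,\ld(z)\qquad(z\in H),
\]
which I would prove by putting $a=1$ in Theorem~\ref{t:t2}(i) to get $\ld\circ S^2=\alpha^{-1}*\ld$ in the convolution algebra $H^*$, and then collapsing $\alpha^{-1}(z_{(1)})\ld(z_{(2)})$ through the cointegral relation $z_{(1)}\ld(z_{(2)})=\ld(z)\,g$ to $\alpha^{-1}(g)\ld(z)=\alpha(g)^{-1}\ld(z)$. This gives $\nu_{-m}(H)=\alpha(g)^{-1}\nu_m(H^{\op})=\alpha(g)^{-1}K(L(m,1),\frmL,H)$, and the spiral move of \cite{Kup96}—which rescales the Kuperberg invariant by the power of $\alpha(g)$ dictated by the change in Hopf degree (cf.\ Example~\ref{ex:S3})—is what transports $\frmL$ to a framing $\frm_n$ on $L(n,1)$ itself so that $K(L(n,1),\frm_n,H)=\nu_n(H)$.

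I expect the main obstacle to be precisely this bookkeeping of powers of $\alpha(g)$ in the case $n<0$: one must match the factor coming from $\ld\circ S^2$, the factor produced by passing to $H^{\op}$, and the factor introduced by the spiral move, and check that they combine into the single power appearing in the statement. Since $\alpha(g)$ can be a root of unity of order greater than $2$ (as for higher Taft algebras), these powers do not cancel automatically, so the Hopf-degree change of the spiral move must be pinned down carefully in the present conventions. By contrast, the case $n\ge 2$ is a direct, essentially computation-free specialization of Theorem~\ref{t:KInd}.
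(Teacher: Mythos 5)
Your case $n\ge 0$ is correct and essentially the paper's own route: for $k=n-1$ one has $k_0=0$, so Proposition \ref{p:S-exponents} forces $s_i=1$ for all $i$ while $\s_D$ reverses the Sweedler legs, and \eqref{eq:fR-form} collapses to $\ld S(P^{(n)}(\Ld))=\nu_n(H)$; your handling of the degenerate values $n=0,1$ via Propositions \ref{prop:S2S1-inv} and \ref{prop:S3-inv} is more explicit than the paper, which leaves those cases implicit, and the reduction of gauge invariance to Theorems \ref{t:Lnk-fR}, \ref{t:Lnk-fL} is unproblematic.

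The case $n<0$ is where your attempt does not close, and the gap is concrete. You derive $\nu_{-m}(H)=\alpha(g)^{-1}\nu_m(H^{\op})=\alpha(g)^{-1}K(L(m,1),\frmL,H)$, whereas the corollary asserts the factor $\alpha(g)$, and you flag this mismatch without resolving it. The paper never rederives this identity: its proof quotes Shimizu's relation $\nu_{-n}(H)=\alpha(g)\nu_n(H^{\op})$ from \cite{Shi15} as a black box and combines it with Theorem \ref{t:KInd}, Proposition \ref{p:duality}, and the construction of $\frm_n$. Crucially, the spiral move cannot absorb your discrepancy: the second displayed equality $K(L(n,1),\frm_n,H)=\alpha(g)K(L(|n|,1),\frmL,H)$ holds by the very definition of $\frm_n$ in the paragraph preceding the corollary, so the entire content lies in the first equality, which is exactly where your derivation and the quoted identity disagree by $\alpha(g)^{2}$. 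For what it is worth, your algebra appears internally consistent with the paper's stated conventions: Theorem \ref{t:t2}(i) with $a=1$, together with the relation $h_{(1)}\ld(h_{(2)})=\ld(h)\,g$ defining $g$ (cf.\ \eqref{eq:ldR}), does force $\ld\circ S^2=\alpha(g)^{-1}\ld$; one can confirm this on a Taft algebra $T(\zeta)$ with $\zeta$ of order at least $3$, where $\ld\circ S^2=\zeta^{-1}\ld$ while $\alpha(g)=\zeta$. So the mismatch most plausibly reflects a convention slip between this paper and the quoted form of Shimizu's identity (the paper itself warns that its $\alpha$ is the inverse of Kuperberg's). But adjudicating that question --- by checking Shimizu's conventions, or by re-orienting the spiral move so that $\frm_n$ carries the opposite degree shift --- is precisely the step your proposal leaves open; as a proof of the statement as written, the $n<0$ case is therefore incomplete.
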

\begin{proof}
Recall from \cite{Shi15} that $\nu_{-n}(H)=\alpha(g)\nu_n(H^{\op})$. The equalities follow from the fact that the Kuperberg invariant is an invariant of framed 3-manifolds up to homeomorphism (\cite[Thm.~4.1]{Kup96}), Theorems \ref{t:KInd}, \ref{t:Lnk-fR} and Proposition \ref{p:duality}.
\end{proof}

In view of Proposition \ref{p:duality}, Theorems \ref{t:Lnk-fL} and \ref{t:Lnk-fR}, we define the gauge invariant $\nu_{n,k}(H)$ for any finite-dimensional Hopf algebras $H$ and coprime integers $n > k >0$ as
\[
\nu_{n,k}(H):=\left\{\begin{array}{ll}
     K(L(n,n-k), \frm_R, H) & \text{if $k$ is odd;} \\
     K(L(n,n-k), \frm_L, H^{\op}) &  \text{if $k$ is even}.
\end{array}\right.
\]
Note that $\nu_{n,k}$ generalizes the $n$-th Frobenius-Schur indicator (see Corollary \ref{cor:Kup-and-nu}), namely, we have $\nu_n(H) = \nu_{n,1}(H)$. One could also define
\[\nu'_{n,k}(H):=\left\{\begin{array}{ll}
     K(L(n,k), \frm_L, H) & \text{if $k$ is odd;} \\
     K(L(n,k), \frm_R, H^{\op}) &  \text{if $k$ is even}.
\end{array}\right.\]
These gauge invariants satisfy the relation $\nu'_{n,k}(H) = \nu_{n,k}(H^{\op})$ for any coprime integers $n > k >0$ by Proposition \ref{p:duality}. When $H$ is defined over the complex number $\BC$, it is shown in \cite[Prop.\ 3.6]{Shi15} that $\ol{\nu_{n,1}(H)} = \nu_{n,1}(H^{\op}) = \nu'_{n,1}(H)$, and $\nu_{n,1}(H)$ is a cyclotomic integer. We do not know  whether similar properties hold for other pairs of coprime integers $n$, $k$. 
\begin{question}
Does the following  properties hold for any coprime positive integers $n,k$ for any 
finite-dimensional Hopf algebra $H$ over $\BC$?
\begin{enumerate}[label=\rm{(\roman*)}] 
\item $\ol{\nu_{n,k}(H)} = \nu_{n,k}(H^\op)$.
\item $\nu_{n,k}(H)$ is a cyclotomic integer.
\item $\nu_{n,k}(D(H)) = |\nu_{n,k}(H)|^2$.
\end{enumerate}
\end{question}
Note that (iii) follows from (i), cf.\ Corollary \ref{cor:DH}.

\subsection{Gauge invariance of \texorpdfstring{$K(L(n,k), \frm, H)$}{} with arbitrary framing \texorpdfstring{$\frm$}{}}\label{subsec:gen-frm}
In this subsection, we study the Kuperberg invariants of lens spaces with general framings and prove their gauge invariance. 
In order to do it, we start by giving a brief description of the homotopy classification of framings on a general 3-manifold $M$. 
Once a framing on $M$ is fixed, the set of framings up to homotopy can be identified with $[M, \SO(3)]$, the homotopy set of maps from $M$ to $\SO(3)$. Any map from $M$ to $\SO(3)$ can be viewed as an extension from the 0-skeleton $M^{0}$ of $M$ to $M = M^{3}$. By obstruction theory, up to homotopy, maps from $M^{1}$ to $\SO(3)$ which can be extended to $M^{2}$ are classified by $H^{1}(M, \pi_{1}\SO(3)) = H^{1}(M, \mathbb{Z}/2\mathbb{Z})$. Since $\pi_{2}\SO(3) = 0$, any of the extendable map above can be further extended to $M = M^{3}$, and up to homotopy, the extensions of such a map to the whole manifold $M$ are classified by $H^{3}(M, \pi_{3}\SO(3)) = H^{3}(M, \mathbb{Z})$. Therefore, each map from $M^2$ to $\SO(3)$ has $|H^{3}(M, \mathbb{Z})|$ extensions to $M$ up to homotopy, i.e., $[M, \SO(3)]=H^{1}(M, \mathbb{Z}/2\mathbb{Z}) \times H^{3}(M, \mathbb{Z})$. 
Relative to the fixed choice of framing, we call the $H^{1}(M, \mathbb{Z}/2\mathbb{Z})$-component of any framing its \emph{spin class}, and the $H^{3}(M, \mathbb{Z})$-component its \emph{degree}, see \cite[Sec.\ 2]{Kup96} for details.

Let $H$ be a finite-dimensional Hopf algebra. By \cite{Kup96}, when the degree of a framing $\frm$ of $M$ changes by +1, the corresponding Kuperberg invariant $K(M, \frm, H)$ gets a factor of $\alpha(g)$, which is a gauge invariant. Therefore, if $K(M, \frm, H)$ is a gauge invariant for some framing $\frm$, then $K(M, \frm', H)$ is also a gauge invariant for any framing $\frm'$ that has the same spin class as $\frm$. Consequently, in order to prove the gauge invariance of the Kuperberg invariant associated to an arbitrary framing on $M$, we only need to show that for any spin class, there is a framing $\frm$ on $M$ in that spin class such that $K(M, \frm, H)$ is a gauge invariant.

The spin class of the diagram framing $\frm$ of $M$ associated to a framed Heegaard diagram with combings $b_1$ and $b_2$ can be read from $b_1$ as follows. By the universal coefficient theorem and the Poincar\'e duality, we have the following split short exact sequence 
\[0 \to H^1(M, \BZ)\otimes \BZ/2\BZ \to H^1(M, \BZ/2\BZ) \to \Tor(H^2(M,\BZ), \BZ/2\BZ) \to 0\,,\]
and the spin class of $\frm$ is determined by its components in $H^1(M, \BZ)\ot \BZ/2\BZ$ and $\Tor(H^2(M,\BZ), \BZ/2\BZ)$ (``the torsion part''). In particular, if $M$ satisfies $H^{1}(M, \BZ) \cong H_{2}(M, \BZ) = 0$, then the spin class of $\frm$ is completely determined by its torsion part. In \cite{Kup96}, it is argued that the torsion part of $\frm$ is given by the characteristic class $c \in H^{2}(M, \BZ)$ of $b_1$ (by viewing $b_1$ as a map from $M$ to $S^{2}$). Moreover, the reason for $c \in \Tor(H^2(M,\BZ), \BZ/2\BZ)$ is that $2c$ is the obstruction of the extension of $b_1$ to $\frm$, which has to vanish. 

In this way, we are able to compare the spin classes of two diagram framings of a 3-manifold with vanishing second integral cohomology, and lens spaces can be easily shown to satisfy this condition (for example by the Mayer-Vietoris sequence associated to a genus 1 Heegaard diagram such as the one in Figure \ref{fig:HD-Lnk}). 
This enables us to obtain our main theorem on the gauge invariance of the Kuperberg invariants for lens spaces.

\begin{thm} \label{t:genus_1}
Let $n$, $k$ be coprime integers. For any framing $\frm$ of lens space $L(n, k)$, the Kuperberg invariant $K(L(n,k), \frm, H)$ is a gauge invariant of finite-dimensional Hopf algebras.
\end{thm}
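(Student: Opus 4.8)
The plan is to reduce the statement, for fixed coprime integers $0<k<n$, to the problem of realizing each \emph{spin class} of $L(n,k)$ by a framing whose Kuperberg invariant is already known to be gauge invariant. Fix a reference framing; by the homotopy classification $[L(n,k),\SO(3)]=H^1(M,\BZ/2\BZ)\times H^3(M,\BZ)$ recalled in Section \ref{subsec:gen-frm}, a framing is determined up to homotopy by its spin class and its degree. Since increasing the degree by $1$ multiplies the Kuperberg invariant by $\alpha(g)$ (by \cite[Thm.~4.1]{Kup96}, as used in Example \ref{ex:S3}), and $\alpha(g)$ is a gauge invariant by \cite[Thm.~3.10]{Shi15}, any two framings in the same spin class have Kuperberg invariants differing by a power of $\alpha(g)$. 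Hence gauge invariance of $K(L(n,k),\frm,H)$ for all $\frm$ follows once each spin class is shown to contain a single framing whose invariant is gauge invariant.

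I would then count the spin classes. Because $H_1(L(n,k),\BZ)\cong\BZ/n\BZ$ and $H^1(L(n,k),\BZ)\cong H_2(L(n,k),\BZ)=0$, the split exact sequence of Section \ref{subsec:gen-frm} identifies the spin classes with $\Tor(H^2(M,\BZ),\BZ/2\BZ)\cong\BZ/\gcd(n,2)\BZ$: one class when $n$ is odd, two when $n$ is even. Since $\gcd(n,k)=1$, at least one of $k$ and $n-k$ is odd, so at least one of the framings $\frmR$ (available when $n-k$ is odd, Theorem \ref{t:Lnk-fR}) and $\frmL$ (available when $k$ is odd, Theorem \ref{t:Lnk-fL}) exists with gauge-invariant Kuperberg invariant. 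When $n$ is odd there is a unique spin class, which is represented by whichever of $\frmR,\frmL$ exists, and the theorem is immediate.

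The remaining case $n$ even is where the real work lies: then $k$ and $n-k$ are both odd, so $\frmR$ and $\frmL$ are both defined on $L(n,k)$, and it suffices to prove that they represent the \emph{two different} spin classes. I would establish this by computing, following Kuperberg's prescription recalled in Section \ref{subsec:gen-frm}, the torsion part of the characteristic class $c\in H^2(M,\BZ)\cong\BZ/n\BZ$ of the surface combing $b_1$ in each of Figures \ref{fig:fR-detail} and \ref{fig:fL-detail}, and checking that the two results differ by the nonzero element of $\Tor(H^2(M,\BZ),\BZ/2\BZ)$. The diagrammatic shadow of this difference is already present in the rotation data computed earlier: the upper curve winds relative to $b_1$ with total rotation $\theta_\mu=+\tfrac12$ in Figure \ref{fig:fR-detail} (Proposition \ref{p:S-exponents}) but $\theta_\mu=-\tfrac12$ in Figure \ref{fig:fL-detail} (Lemma \ref{lem:fL-diag}), a discrepancy by an odd integer.

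The main obstacle is this last computation: converting the combinatorial combing data into honest characteristic classes in $H^2(L(n,k),\BZ)$ and proving their difference is the nontrivial $2$-torsion class. I expect to handle it by representing the generator of $H_1(L(n,k),\BZ/2\BZ)$ by an explicit loop --- for instance the core circle of one of the two handlebodies --- and comparing the two framings along this loop, where the difference of the resulting elements of $\pi_1(\SO(3))=\BZ/2\BZ$ detects the spin-class difference and should be made odd precisely by the $\theta_\mu$ discrepancy above. Granting distinctness of the two spin classes, the degree argument of the first paragraph promotes the gauge invariance of $K(L(n,k),\frmR,H)$ and $K(L(n,k),\frmL,H)$ to that of $K(L(n,k),\frm,H)$ for arbitrary $\frm$, completing the proof.
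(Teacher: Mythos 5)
Your reduction is exactly the one the paper uses, and it is correct: split framings into spin class and degree, absorb degree shifts into powers of $\alpha(g)$ (gauge invariant by \cite[Thm.\ 3.10]{Shi15}), identify the spin classes with $\Tor(H^2(L(n,k),\BZ),\BZ/2\BZ)$, note that odd $n$ has a single spin class represented by whichever of $\frmL$, $\frmR$ exists, and observe that for even $n$ both $\frmL$ and $\frmR$ exist, so it suffices to show they lie in distinct spin classes.

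The problem is that this last step is the actual mathematical content of the theorem, and your proposal does not contain it: you flag it yourself as ``the main obstacle'' and only describe a method you expect to work. Moreover, the heuristic you offer is not, as stated, a valid detector of the spin-class difference. The discrepancy $\theta_\mu=+\tfrac12$ (Proposition \ref{p:S-exponents}) versus $\theta_\mu=-\tfrac12$ (Lemma \ref{lem:fL-diag}) is a relative winding of the two combings along the upper curve $\mu$; but $\mu$ bounds a meridian disk in $L(n,k)$, so $[\mu]=0$ in $H_1$, and any cohomological difference class evaluates to zero on it. Turning that odd winding along a nullhomotopic curve into a nonzero pairing with the core circle --- your proposed loop --- is precisely the argument that is missing, and it also cannot be read off the figures you cite: Figures \ref{fig:fR-detail} and \ref{fig:fL-detail} have different underlying Heegaard diagrams (the upper curve was isotoped through the handle to produce Figure \ref{fig:fR-detail}), so their combings cannot be compared pointwise at all.

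For contrast, the paper closes this gap with a specific tool. It compares the combings $b_1$, $b_1'$ of Figures \ref{fig:fL-no-move} and \ref{fig:fR-no-move}, which do share the same underlying Heegaard diagram, with the left base points identified. By \cite[Lem.~2.14]{Les15}, the difference $c-c'$ of the characteristic classes is Poincar\'e dual to the anti-parallel locus $C=\{b_1=-b_1'\}$. The paper then locates $C$ explicitly: one arc $C_2$ runs along the lower curve between the two right base points $\xi_2$, $\xi_2'$ (where the two combings visibly point in opposite directions), and the other arc $C_1$ runs through the meridian disk bounded by $\mu$ and can be isotoped onto a segment of $\mu$; collapsing $C_2$ exhibits $[C]$ as the class of a proper subarc of $\mu$ closed up into a loop, which is nonzero in $H_1(L(n,k),\BZ)$. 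Hence the two spin classes differ. Your $\theta_\mu$ observation is essentially the shadow of the fact that $C$ must cross the meridian disk, but without Lescop's lemma (or an equivalent, fully executed obstruction-theoretic computation along the core circle) the proof is incomplete at its central point.
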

\begin{proof}
Denote $L(n,k)$ by $\BL$ for short. Since $H^1(\BL, \BZ) = 0$, $H^1(\BL,\BZ/2\BZ) \cong \Tor(H^2(\BL,\BZ), \BZ/2\BZ) \cong \Tor(\BZ/n\BZ, \BZ/2\BZ)$ by Poincar\'e duality.  We have the following two cases.

When $n$ is odd, $H^1(\BL, \BZ/2\BZ) = 0$. Therefore, up to degree, any framing $\frm$ on $\BL$ is homotopic to either $\frm_L$ (when $k$ is odd) or $\frm_R$ (when $k$ is even). Thus, up to a power of $\alpha(g)$, $K(\BL, \frm, H)$ is equal to $K(\BL, \frm_L, H)$ or $K(\BL, \frm_R, H)$, and we are done by Theorems \ref{t:Lnk-fR} and \ref{t:Lnk-fL}.

When $n$ is even, then $k$ has to be odd, and $H^1(\BL, \BZ/2\BZ)\cong \BZ/2\BZ$. In light of the above discussions, by Theorems \ref{t:Lnk-fR} and \ref{t:Lnk-fL}, it suffices to show that $\frm_L$ and $\frm_R$ have different spin classes. Denote the 2 combings in the framed Heegaard diagram for $\frmL$ by $b_1$ and $b_2$, and those for $\frmR$ by $b_1'$ and $b_2'$ respectively. To compare the spin classes of $\frm_L$ and $\frm_R$, we only need to compare the characteristic classes $c$, $c' \in H^2(\BL, \BZ)$ of $b_1$ and $b_1'$ respectively. The relative characteristic class $c-c'$ can be presented by its Poincare dual $[C]\in H_1(\BL, \mathbb{Z})$. According to \cite[Lem.\ 2.14]{Les15}, this homology class is represented geometrically by a closed curve $C \subset \BL$ on which $b_1=-b'_1$. Apparently, if $[C] \ne 0$ in $H_1(\BL, \mathbb{Z})$, then $\frmL$ and $\frmR$ have different spin classes in $H^1(\BL, \BZ/2\BZ)$. 

To find the $C$, we compare the framed Heegaard diagrams $D$ (resp.\ $D'$) with combing $b_1$ (resp.\ $b_1'$) in Figure \ref{fig:fL-no-move} (resp.\ Figure \ref{fig:fR-no-move}) for $\frmL$ (resp.\ $\frmR$). Note that $D$ and $D'$ have identical underlying Heegaard diagram without combings. For the reader's convenience, we present the information relevant to our discussions here in Figure \ref{fig:compare} below. Note that by design, our combings $b_1$ and $b_1'$ are completely determined by the positions of the base points up to homotopy, so we only draw the combings near the base points in the figure.
\begin{figure}[ht]
    \centering
    \includegraphics[width=0.7\linewidth]{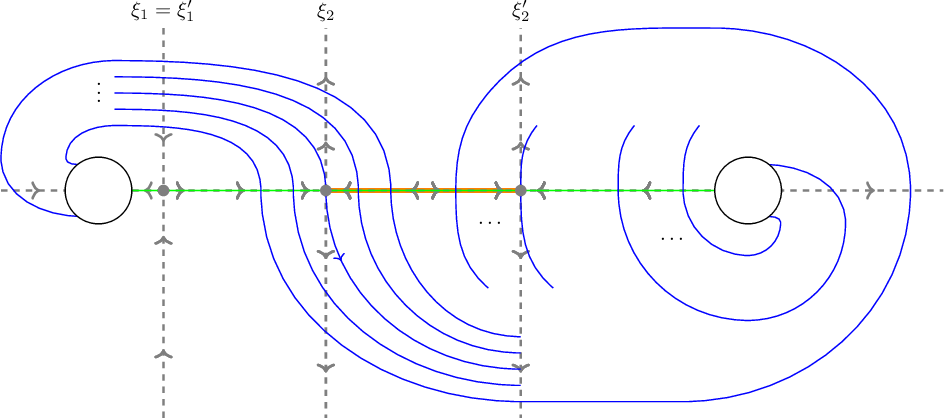}
    \caption{}
    \label{fig:compare}
\end{figure}
In Figure \ref{fig:compare}, we put $b_1$ and $b_1'$ in the same Heegaard diagram, and by construction, we place their left base points (labeled by $\xi_1$ in Figures \ref{fig:fL-no-move} and \ref{fig:fR-no-move}) at the same position, and we write $\xi_1=\xi_1'$ for the common left base point. 

Denote the right base point of $b_1$ by $\xi_2$ and that of $b_1'$ by $\xi_2'$. Then, by construction, there are $\tfrac{k-1}{2}$ $I$-points to the left of $\xi_2$ and $\tfrac{n-k-1}{2}$ $I$-points to the right of $\xi_2'$, which means there are $\tfrac{n}{2}$ $I$-points between $\xi_2$ and $\xi_2'$. Moreover, it can be immediately seen that along the segment of the lower curve $\eta$ connecting $\xi_2$ and $\xi_2'$ (colored in orange in Figure \ref{fig:compare}), $b_1$ is opposite to $b_1'$, so this part of $\eta$ is a segment $C_2$ of the \emph{anti-parallel locus} $C$. As is demonstrated in \cite{Kup96}, the other segment $C_1$ of $C$ lies in the disc $\mathbb{B}$ that the upper curve $\mu$ bounds in $\BL$. We can assume without loss of generality that $C_1$ is a segment in $\mathbb{B}$ connecting $\xi_2$ and $\xi_2'$. We then isotope $C_1$ to align with a segment of the upper curve $\mu$, and still denote that segment of $\mu$ by $C_1$. See the left part of Figure \ref{fig:euler} for an illustration of this situation. In this way, we get a closed curve $C_1 \cup C_2$ representing $[C] \in H_1(\BL, \BZ)$. See the right part of Figure \ref{fig:euler} for a picture in the Heegaard diagram of $\BL$.
\begin{figure}[ht]
    \centering
    \includegraphics[width=0.2\linewidth]{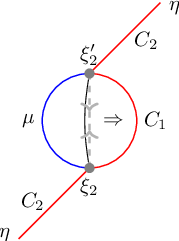}
    \qquad
    \includegraphics[width=0.5\linewidth]{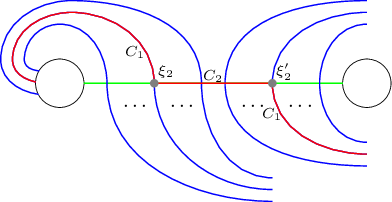}
    \caption{}
    \label{fig:euler}
\end{figure}

Now, to compute the homology class of $C$, we can perform an isotopy to $\BL$ so that $C_2$ is shrunk to a point that identifies $\xi_2$ and $\xi_2'$, and $C_1$ becomes a loop $\gamma$. Then we have $[C] = [\gamma] \in H_1(\BL,\BZ)$. Since $\mu$ bounds a disk in $\BL$, so $\gamma$ cannot bound a disk in $\BL$ because it is only a part of $\mu$. Therefore, $[C] = [\gamma] \ne 0 \in H_1(\BL,\BZ)$. In other words, the two framings $\frmL$ and $\frmR$ have different spin classes.
\end{proof}

\begin{cor}\label{cor:DH}
Let $H$ be a finite-dimensional Hopf algebra over a field $\kk$, and let $D(H)$ be the Drinfeld double of $H$. Let $n, k \in \BZ$ be a pair of coprime integers, and denote $L(n,k)$ by $\BL$.
\begin{enumerate}[label=\rm{(\roman*)}]
\item For any framing $\frm$ of $\BL$, $K(\BL, \frm, D(H)) = K(\BL, \frm, H) \cdot K(\BL, \frm, H^{\op})$.
\item The value $K(\BL, \frm, D(H))$ {is independent of the framing $\frm$ of $\BL$.}
\end{enumerate}
\end{cor}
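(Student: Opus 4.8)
The plan is to prove (i) by direct computation with the explicit Hopf structure of $D(H)$, and then to deduce (ii) from (i) together with the unimodularity of $D(H)$ and the duality of Proposition \ref{p:duality}. First I would record the structural data of $D(H)$ that enters the Kuperberg invariant. Writing $D(H) = (H^*)^{\cop} \o H$ as a coalgebra, the comultiplication, counit, integral and cointegral of $D(H)$ all factor through those of $H$ and $H^*$: the two-sided integral (which exists since $D(H)$ is unimodular) has the form $\ld \o \Ld$, and the cointegral has the form $\Ld \o \ld$ under the identification $D(H)^* = H \o H^*$, where $(\ld, \Ld)$ is a normalized pair for $H$. The two features that do \emph{not} factor are the smash-product multiplication and the antipode $S_{D(H)}$, both of which entangle the two tensor legs. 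I would also use that $D(H)$ is unimodular, so $\a_{D(H)} = \e$ and hence the operator $T_{D(H)}$ of Definition \ref{def:Kup} reduces to $S_{D(H)}^{-2}$.

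For (i) I would start from the trace description of $K(\BL, \frm, -)$: by Theorem \ref{t:KInd} (and its $\frmL$ counterpart through Proposition \ref{p:duality}) the invariant on a genus-$1$ manifold is $\Tr(S \circ P^{(n,-k)})$, an alternating composite of iterated coproducts, powers of $S$, a cyclic permutation and a multiplication, closed up against $\ld$. Substituting the factored coproduct and integral of $D(H)$, the coproduct/permutation/integral part splits cleanly into an $H$-leg and an $H^*$-leg; the work is to push the $D(H)$-multiplications and the powers of $S_{D(H)}$ through this splitting. This is exactly where the identities of Section \ref{sec:alg-prelim} are used: the smash-product cross-terms produce actions $\rhu, \lhu$ of one leg on the other, and the Radford-type relabelings of Theorem \ref{t:t2} (iii)--(vi) and Corollary \ref{c:t2} should let me move these actions past the integral so that the trace factors as $\Tr_{H}(\cdots)\cdot\Tr_{H^*}(\cdots)$. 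The first factor is $K(\BL, \frm, H)$; the second is the analogous invariant built from $H^*$, which I identify with $K(\BL, \frm, H^{\op})$ using $(H^*)^{\cop} = (H^{\op})^*$ together with the self-duality of the Kuperberg invariant under the $H \leftrightarrow H^*$ symmetry of a genus-$1$ diagram (swapping upper and lower curves). I expect the \emph{main obstacle} to be precisely this disentangling step: verifying that every cross-term generated by the antipode and the smash product of $D(H)$ is absorbed by the integral identities with no residual coupling between the two legs.

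For (ii) the degree dependence is immediate: by Example \ref{ex:S3} changing the degree of $\frm$ multiplies $K(\BL, \frm, D(H))$ by $\a_{D(H)}(g_{D(H)})$, and since $D(H)$ is unimodular this factor is $\e(g_{D(H)}) = 1$, so the invariant depends only on the spin class. When $n$ is odd, $H^1(\BL, \BZ/2\BZ) = 0$ gives a single spin class and we are done. When $n$ is even (forcing $k$ odd) there are two spin classes, represented by $\frmL$ and $\frmR$ as in the proof of Theorem \ref{t:genus_1}, and I would show $K(\BL, \frmL, D(H)) = K(\BL, \frmR, D(H))$ by combining (i) with Proposition \ref{p:duality} applied to both $H$ and $H^{\op}$: this rewrites $K(\BL, \frmL, D(H)) = K(\BL, \frmL, H)\,K(\BL, \frmL, H^{\op})$ as $K(L(n,n-k), \frmR, D(H))$, after which the homeomorphism $L(n,n-k) \cong L(n,k) = \BL$ of \eqref{eq:lens_iso} finishes the comparison.

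The one point requiring care here is the final topological step, since $L(n,n-k) \cong L(n,k)$ is orientation-reversing: the induced framing on $\BL$ need only agree with $\frmR$ \emph{up to degree and orientation}, and I would arrange that the degree ambiguity is harmless by the degree-independence already established, reducing the claim to the statement that the induced map preserves the spin class. Thus the skeleton of (ii) is clean (unimodularity for degree, one spin class when $n$ is odd, duality plus (i) when $n$ is even), with the orientation/spin bookkeeping under the lens-space homeomorphism being the only delicate verification.
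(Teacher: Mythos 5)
Your plan for part (i) misses the key idea and, as written, cannot be completed. The Drinfeld double $D(H)$ is \emph{not} isomorphic to $H \ot (H^{\op})^{*}$ as a Hopf algebra; by \cite{DT94} it is only \emph{gauge equivalent} to it, i.e., isomorphic after a nontrivial Drinfeld twist. So when you substitute the smash-product multiplication and $S_{D(H)}$ into the trace formula, the cross-terms entangling the two tensor legs are not artifacts that Radford-type identities can absorb: the residual coupling is exactly the 2-cocycle $F$ realizing that twist, and the assertion that it washes out of the trace \emph{is} the gauge invariance theorem. The step you flag as the ``main obstacle'' is therefore not bookkeeping but the entire content of the statement, and completing it honestly would amount to reproving Theorem \ref{t:genus_1} for this particular cocycle. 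The paper's proof of (i) is instead a one-liner from results already in hand: $D(H)$ is gauge equivalent to $H \ot (H^*)^{\cop} \cong H \ot (H^{\op})^{*}$, Theorem \ref{t:genus_1} gives $K(\BL,\frm,D(H)) = K(\BL,\frm,H \ot (H^{\op})^{*})$, and Kuperberg's multiplicativity under tensor products together with his invariance under duals (\cite[Sec.~5]{Kup96}) split this as $K(\BL,\frm,H)\cdot K(\BL,\frm,H^{\op})$. Nowhere in your (i) do you invoke gauge invariance, which is the one ingredient that makes the factorization true.

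Your part (ii) has the same skeleton as the paper's (unimodularity of $D(H)$ kills the degree dependence; odd $n$ has a single spin class), but the closing step for even $n$ is genuinely incomplete. After establishing $K(\BL,\frmL,D(H)) = K(L(n,n-k),\frmR,D(H))$, you transport $\frmR$ along the homeomorphism $L(n,n-k)\cong L(n,k)$ and need the transported framing to lie in the spin class of $\frmR$ on $\BL$; you acknowledge this but do not prove it, and the gap is not harmless: if the transported framing happened to lie in the class of $\frmL$, your chain of equalities collapses to the tautology $K(\BL,\frmL,D(H)) = K(\BL,\frmL,D(H))$ and proves nothing. The paper avoids transporting framings across this homeomorphism altogether: it evaluates the invariant in each of the two spin classes separately using (i) and Proposition \ref{p:duality}, pairing $\frmL$ realized on the model $L(|n|,\ol{k})$ with $\frmR$ realized on the model $L(|n|,\ol{-k})$, and in both cases the duality swaps one factor so that the answer is the identical product $K(L(|n|,\ol{k}),\frmL,H)\cdot K(L(|n|,\ol{-k}),\frmR,H)$. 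That symmetry of the product under $H \leftrightarrow H^{\op}$, not a spin-class computation for the lens-space homeomorphism, is what closes the argument; you should restructure your even-$n$ case accordingly.
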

\begin{proof}
(i) By \cite{DT94}, $D(H)$ is gauge equivalent to $H \ot (H^*)^{\cop} \cong H \ot (H^{\op})^{*}$, so by Theorem \ref{t:genus_1} and the multiplicativity of the Kuperberg invariant \cite[Sec.~5]{Kup96}, we have 
\[K(\BL, \frm, D(H)) = K(\BL, \frm, H) \cdot K(\BL, \frm, (H^{\op})^*) = K(\BL, \frm, H) \cdot K(\BL, \frm, H^{\op})\,,\]
where the second equality follows from the invariance of the Kuperberg invariant under taking duals of Hopf algebras, which is argued in  \cite[Sec.~5]{Kup96}.

(ii) As is argued in \cite{Kup96}, when we fix the spin class of $\frm$ and change its Hopf degree, the resulting Kuperberg invariant differs only by a power of $\alpha(g)$ of $H$. {Since $D(H)$ is unimodular, the Kuperberg invariant is independent of the degree of the framing $\frm$. It remains to show that it is also independent of the spin class of $\frm$. Since $H^1(\BL, \BZ/2\BZ) = 0$ for odd $n$, the framings $\frm$, $\frmL$, $\frmR$ belong to the same spin class of $\BL$ whenever they are well-defined.  In particular, we have
\[
K(\BL, \frm, D(H))=\begin{cases}
    K(L(|n|,\ol{k}), \frmL, D(H)) & \text{if $\ol{k}$ is odd};\\
    K(L(|n|,\ol{k}), \frmR, D(H)) & \text{if $\ol{-k}$ is odd},
\end{cases}
\]
where $\ol{k}$ again denotes the least positive residue of $k$ modulo $n$. 

Now, we assume $n$ is even. Then both $\ol k$ and $\ol {-k}$ are odd. If $\frm$ has the same spin class as $\frmL$, then
\[
K(\BL, \frm, D(H))= K(L(|n|,\ol{k}), \frmL, D(H)) = K(L(|n|,\ol{k}), \frmL, H) K(L(|n|,\ol{-k}), \frmR, H)
\]
by Proposition \ref{p:duality}. On the other hand, if $\frm$ and $\frmR$ are in the same spin class, then 
\[
K(\BL, \frm, D(H))= K(L(|n|,\ol{-k}), \frmR, D(H)) = K(L(|n|,\ol{-k}), \frmR, H) K(L(|n|,\ol{k}), \frmL, H)\,.
\]
Therefore, $K(\BL, \frm, D(H))$ is also independent of the spin class of $\frm$.
}

\end{proof}

\begin{exmp}
Let $n\ge 2$ be an integer. For any primitive $n$-th root of unity $\zeta \in \BC$, let $T(\zeta)$ be the Hopf algebra over $\BC$ with the algebra generated by $x, g$ subject to the relations:
\[
 x^n=0,\, g^n=1,\, gx=\zeta xg\,.
\]
The coalgebra structure and antipode of $T(\zeta)$ are given by
\[\begin{split}
&\De(g)=g\ot g, \,\quad \De(x)=x\ot g + 1 \ot x,\,  \\
&\e(g)=1,\,\, \e(x)=0,\,\, S(g)=g^{-1},\,\, S(x)=-xg^{-1}.
\end{split}\]
This Hopf algebra, known as the Taft algebra \cite{Taft1971}, has dimension $n^2$ and $\{x^i g^j \mid 0 \le i,j \le n-1 \}$ forms a basis for $T(\zeta)$. Moreover,
$\Ld=(\sum_{i=1}^{n} g^i)x^{n-1} \in \int^l_{T(\zeta)}$ and $\ld=\delta_{x^{n-1}} \in \int^r_{T(\zeta)^*}$ form a pair of normalized integral for $T(\zeta)$, i.e., $\ld(\Ld)=1$. 
\begin{enumerate} [label=\rm (\roman{*}), itemsep=3pt, leftmargin=*]
\item
Consider the lens spaces $L(7, 1)$ and $L(7, 2)$. As is explained in the proof of Theorem \ref{t:genus_1}, since $H^1(L(7,1),\BZ/2\BZ)=H^1(L(7,2),\BZ/2\BZ)=0$, the spin classes of all framings of $L(7,1)$ are equal, which means any two framings of $L(7,1)$ are different only by their Hopf degrees. The same is true for $L(7,2)$. In particular, for any finite-dimensional Hopf algebra $H$, the Kuperberg invariant for $L(7, 1)$ with any framing is equal to $K(L(7, 1), \frm_L, H)$ up to a root of unity, and the invariant for $L(7, 2)$ with any framing equals $K(L(7, 2), \frm_R, H)$ up to some root of unity. By Theorem \ref{t:KInd} and Proposition \ref{p:duality}, we have
\begin{align*}
K(L(7, 1), \frm_L, H)=&\lambda(\Lambda_{(7)}\Lambda_{(6)}\Lambda_{(5)}\Lambda_{4}\Lda{3}\Lda{2}\Lda{1})\,,\\
K(L(7, 2), \frm_R, H)=&\lambda S(\Lambda_{(5)}S^2(\Lda{3})\Lda{1}\Lambda_{(6)}S^2(\Lda{4})\Lda{2}\Lambda_{(7)})\,,
\end{align*}
where $\ld \in H^*$, $\Ld \in H$ is a pair of normalized integrals. With the help of the \texttt{GAP} algebra system, we obtain that $K(L(7, 1), \frm_L, T(\zeta_7))= -42\zeta_7-35\zeta_7^2-28\zeta_7^3-21\zeta_7^4-14\zeta_7^5-7\zeta_7^6$, while $K(L(7, 2), \frm_R, T(\zeta_7))=0$, where $\zeta_7$ is a primitive 7-th root of unity. Consequently, the Kuperberg invariants of $L(7, 1)$ and $L(7, 2)$ for any choices of framings cannot be equal to each other. Therefore, the Kuperberg invariant can distinguish $L(7, 1)$ and $L(7, 2)$ up to homeomorphism, although they are homotopic (see, for example, \cite{Rol76}). 
\item
Moreover, the Kuperberg invariant can also distinguish spin classes of framings of a manifold. For example, for any finite-dimensional Hopf algebra $H$, we have
\begin{align*}
&K(L(4, 1), \frmR, H)=\ld S(\Lda{3} S^2(\Lda{2}) \Lda{1}\Lda{4})\\
&K(L(4, 1), \frmL, H)=\ld (\Lda{4}\Lda{3}\Lda{2}\Lda{1})\,,
\end{align*}
where $\ld \in H^*$ and $\Ld \in H$ form a pair of normalized integrals. Let $H = T(i)$, where $i= \sqrt{-1}$. By direct computation, we have $K(L(4,1), \frmR, H)=0$ and $K(L(4,1), \frmL, H)=8(1-i)$. Consequently, $\frmR$ and $\frmL$ cannot be in the same spin class because otherwise, the corresponding Kuperberg invariant could only be different by a multiple of a root of unity. In particular, the discussion here is consistent with the proof of Theorem \ref{t:genus_1}, where we showed that the framings $\frmR$ and $\frmL$ of the lens space $L(n,k)$ have different spin classes for any positive even $n$.
\end{enumerate}
\end{exmp}

\section{Technical Lemmas}\label{sec:technical}
In this section, we provide the proofs for the technical lemmas that we used for our major results. 
\subsection{Lemmas on Hopf algebras}
Let $H$ be a finite-dimensional Hopf algebra over a field $\kk$ with antipode $S$ and counit $\e$. We continue to assume that $\ld \in \int^r_{H^*}$ and $\Ld \in \int^l_H$ are normalized integrals for $H$.

\begin{lemma}\label{l:t3} 
Let $n > k >0$ be coprime integers such that $(n-k)$ is odd, and let $k_0 = \tfrac{n-k-1}{2}$. Let $Y' = \ls \circ m \circ \s_{(n,-k)}$, and $\{c_\ell\}$ the sequence defined in Definition \ref{d:cp}.  For $x, w_\ell\in H$ with $1\leq \ell \leq k_0$, we have 
\begin{equation*}
Y'\Biggl(\bigotimes_{\ell=1}^{k_0}S^{2c_\ell}(w_\ell x_{(\ell)}\Lda{\ell})\o \bigotimes_{\ell=k_0+1}^{n}S^{2c_\ell}(\Lda{\ell})\Biggl)
= Y' \Biggl(\bigotimes_{\ell=1}^{k_0}S^{2c_\ell}(S^2(x_{(\ell)})w_\ell\Lda{\ell}))\o \bigotimes_{\ell=k_0+1}^n S^{2c_\ell}(\Lda{\ell})\Biggl)\,.
\end{equation*}
\end{lemma}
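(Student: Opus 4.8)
The plan is to reduce the identity to moving the coproduct pieces of $x$ one position at a time and to carry out each move with the integral identities of Theorem \ref{t:t2}. Since $S^{2c_\ell}$ is an algebra automorphism for each $\ell$, I would first expand every affected factor: for $1\le \ell\le k_0$ the $\ell$-th slot on the left is $S^{2c_\ell}(w_\ell)\,S^{2c_\ell}(x_{(\ell)})\,S^{2c_\ell}(\Lda{\ell})$, whereas on the right it is $S^{2c_\ell+2}(x_{(\ell)})\,S^{2c_\ell}(w_\ell)\,S^{2c_\ell}(\Lda{\ell})$. Thus the content of the lemma is that, inside $Y'=\ls\circ m\circ \s_{(n,-k)}$, the element $S^{2c_\ell}(x_{(\ell)})$ may be slid from the right of $S^{2c_\ell}(w_\ell)$ to its left at the cost of one extra $S^2$, and that this may be done simultaneously for all $\ell\le k_0$. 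The extra power $S^2$ is not produced by brute force but is bookkept by the sequence $\{c_\ell\}$: by Definition \ref{d:cp}(i) one has $c_{\ol{\ell+k}}=c_\ell+1$ precisely on the range $1\le \ell\le k_0$, which is the reason the moved pieces each acquire exactly one extra $S^2$.

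To decouple the pieces of $x$ from one another, I would interpolate. For $0\le j\le k_0$ let $\Phi(j)$ denote the value of $Y'$ applied to the tensor in which the first $j$ slots already have $x_{(\ell)}$ moved to the front (as $S^{2c_\ell+2}(x_{(\ell)})$) while the remaining slots are as on the left-hand side; then $\Phi(0)$ is the left side and $\Phi(k_0)$ is the right side, and it suffices to prove $\Phi(j-1)=\Phi(j)$ for each $j$. In such a single step the coproduct $\Delta^{k_0}(x)$ is held fixed and only the placement of the one tensor factor $x_{(j)}$ inside its slot changes, so the coupling through the coproduct of $x$ no longer interferes. For the single slide I would use that, after applying $m\circ\s_{(n,-k)}$, the block occupying slot $j$ is immediately preceded in the product (slot $i$ is preceded by slot $\ol{i+k}$) by the block of slot $\ol{j+k}$, which terminates in the integral piece $S^{2c_{\ol{j+k}}}(\Lda{\ol{j+k}})$; pushing $x_{(j)}$ to the front of its block places it directly against this neighbouring integral piece, at which point the integral identity Theorem \ref{t:t2}(iii), together with the trace property (ii), transfers it across while generating the required $S$-power. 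Since (iii) is free of the distinguished grouplike $\alpha$, no $\alpha$-twist is introduced, consistent with the $\alpha$-free statement of the lemma.

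The main obstacle is the single slide itself. One must commute $x_{(j)}$ past the arbitrary element $w_j$ inside the cyclic $\ls$-trace, and a naive appeal to the twisted cyclicity $\ld(ab)=\ld(S^2(b\lhu\alpha)a)$ of Theorem \ref{t:t2}(i) would leave a spurious $\lhu\alpha$ that cannot survive in the final answer. The point is therefore to route the move through the adjacent integral piece via the $\alpha$-free identity (iii), so that the $S^2$ comes from the $c$-shift $c_{\ol{j+k}}=c_j+1$ rather than from cyclicity, and so that any conjugating tails $S(x_{(\cdot)})$, $S^2(x_{(\cdot)})$ that appear telescope across successive interpolation steps because the moved pieces are consecutive factors of $\Delta^{k_0}(x)$. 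Verifying this cancellation, and checking that the cyclic positions dictated by $\s_{(n,-k)}$ match the ranges in Definition \ref{d:cp}, is the delicate bookkeeping on which the argument turns; everything else reduces to routine use of the antipode axioms and Theorem \ref{t:t2}.
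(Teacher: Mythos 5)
Your guiding intuition is correct on one point — in the paper's argument the extra $S^2$ on each relocated factor does come from the shift $c_{\ol{\ell+k}}=c_\ell+1$ on the range $1\le\ell\le k_0$ — but the architecture you build around it, an interpolation $\Phi(0)=\Phi(1)=\dots=\Phi(k_0)$ in which each step slides a single factor $x_{(j)}$, has a genuine gap. First, the single slide you describe is circular: ``pushing $x_{(j)}$ to the front of its block'' means moving it past the arbitrary element $w_j$, which is precisely the statement to be proven; no identity in Theorem \ref{t:t2} performs that move locally. Second, the tools you permit yourself cannot produce the configuration you need. Identity (iii) only relocates an element that \emph{left}-multiplies a leg $\Lda{\ell}$ to a left-multiplication position on a coproduct-adjacent leg $\Lda{\ell\pm1}$; to land a factor at the front of block $j$ in the \emph{product} order you would need it to \emph{right}-multiply the leg $\Lda{\ol{j+k}}$ ending the preceding block, and the only identities creating right-multiplication positions are (iv) and (ii), both of which carry the $\lhu\alpha$ twist you must avoid, and the cyclic identity (vi), which is inherently global: it moves an element from the head of the \emph{entire} product and distributes its coproduct components as right-multipliers over \emph{all} $n$ legs at once, so it cannot be localized to one slot. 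Third, your closing remark that the conjugating tails only ``telescope across successive interpolation steps'' concedes that a single step is not an identity at all; if tails survive one step, then $\Phi(j-1)\ne\Phi(j)$ as written and the interpolation scheme collapses. A local proof of the step, were it possible, would in fact establish the disentangled statement with $x_{(j)}$ replaced by an independent element $y$ — an $\alpha$-free commutation property of $\ld S(\cdots)$ that is far stronger than the lemma and is not available from Theorem \ref{t:t2}.

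The paper's proof is global, and the cancellation structure shows why it must be. Using (iii) it moves \emph{all} factors $x_{(\ell)}$, $1\le\ell\le k_0$, out of their slots simultaneously, so they reappear as the components of $S^{-1}(x)$ spread over the legs $\Lda{k_0+1},\dots,\Lda{n}$; since $c_{n-k}=c_n=0$, the factor $S^{-1}(x_{(k+1)})$ then heads the product, and a single application of (vi) distributes the components of $x_{(k+1)}$ as right-multipliers over every leg. Reassociating the product — so that the tail of block $\ol{\ell+k}$ becomes the head of block $\ell$, picking up $S^{2\delta(\ell)}$ from the $c$-shift — reduces the lemma to the purely coalgebraic identity
\[
\De^{k_0}(x)\o 1\sot{(n-k_0-1)}
=\bigotimes_{\ell=1}^{k_0}(x_{(k+1)})_{(\ol{\ell+k})}\o
\bigotimes_{\substack{\ell=k_0+1\\ \ell\ne n-k}}^{n}(x_{(k+1)})_{(\ol{\ell+k})}\,S^{\w(\ell)}(x_{(n+1-\ell)})\,,
\]
whose verification uses the antipode and counit axioms on \emph{all} components of $x$ at once — exactly the simultaneous recombination that a one-factor-at-a-time scheme destroys. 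If you want to salvage your approach, you would have to formulate the intermediate quantities with their tails made explicit, at which point you are reproducing the paper's global computation rather than simplifying it.
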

\begin{proof}
Let $Z': H\sot{(n-2)} \to H$ be a $\kk$-linear map such that 
\[Y'(v) =\ld S\Biggl(v_i^{[n-k]} Z'\biggl(\bigotimes\limits_{\substack{\ell=1 \\ \ell \ne n-k}}^{n-1} v_i^{[\ell]}\biggl)v_i^{[n]}\Biggl) = \ld S\bigg(v_i^{[\ol{n-k}]}v_i^{[\ol{n-2k}]}\cdots v_i^{[\ol k]}v_i^{[\ol n]}\biggl)\] 
for all $v \in H\sot{n}$. Then, by Theorem \ref{t:t2} (iii), we have 
\begin{align*}
&Y'\Biggl(\bigotimes_{\ell=1}^{k_0}S^{2c_\ell}(w_\ell x_{(\ell)}\Lda{\ell})\o \bigotimes_{\ell=k_0+1}^{n}S^{2c_\ell}(\Lda{\ell})\Biggl)
\! =\! Y'\Biggl(\bigotimes_{\ell=1}^{k_0}S^{2c_\ell}(w_\ell\Lda{\ell})\o \!\bigotimes_{\ell=k_0+1}^{n}S^{2c_\ell}(S^{-1}(x)_{(\ell-k_0)}\Lda{\ell})\Biggl)\,.
\end{align*}
Since $c_n=c_{n-k}=0$, the right hand side of the preceding equation can be rewritten as follows:
\begin{align*}
&\ld S\Biggl(S^{-1}(x)_{(k_0+1)}\Lda{n-k}Z'\Biggl(\bigotimes_{\ell=1}^{k_0} S^{2 c_\ell}(w_\ell\Lda{\ell})\o\bigotimes_{\substack{\ell=k_0+1 \\ \ell\ne n-k}}^{n-1} S^{2 c_\ell}(S^{-1}(x)_{(\ell-k_0)}\Lda{\ell})\Biggl) S^{-1}(x)_{(n-k_0)}\Lda{n}\Biggl)\\
=&\ \ld S\Biggl(S^{-1}(x_{(k+1)})\Lda{n-k}Z'\Biggl(\bigotimes_{\ell=1}^{k_0} S^{2 c_\ell}(w_\ell\Lda{\ell})\o\bigotimes_{\substack{\ell=k_0+1 \\ \ell\ne n-k}}^{n-1} S^{2 c_\ell}(S^{-1}(x_{(n+1-\ell)})\Lda{\ell})\Biggl) S^{-1}(x_{(1)})\Lda{n}\Biggl)\\
=&\ \ld S  \Biggl(\Lda{n-k} (x_{(k+1)})_{(n-k)} Z'\Biggl(\bigotimes_{\ell=1}^{k_0} S^{2 c_\ell} (w_\ell\Lda{\ell}(x_{(k+1)})_{(\ell)})\o \\
&\qquad\qquad  \bigotimes_{\substack{\ell=k_0+1 \\ \ell\ne n-k}}^{n-1} S^{2 c_\ell}(S^{-1}(x_{(n+1-\ell)})\Lda{\ell} (x_{(k+1)})_{(\ell)})\Biggl)\cdot S^{-1}(x_{(1)})\Lda{n} \Biggl) \quad \text{by Theorem \ref{t:t2} (vi)}\\
=&\ Y' \Biggl(\bigotimes_{\ell=1}^{k_0} S^{2 c_{\ol{\ell+k}}}((x_{(k+1)})_{(\ol{\ell+k})}) S^{2 c_\ell}(w_\ell\Lda{\ell})\o\! \bigotimes_{\ell=k_0+1}^{2k_0} S^{2 c_{\ol{\ell+k}}}((x_{(k+1)})_{(\ol{\ell+k})})S^{2c_\ell}(S^{-1}(x_{(n+1-\ell)})\Lda{\ell} )\\
&\ \o \Lda{n-k} \o \bigotimes_{\ell=2k_0+2}^{n} S^{2c_{\ol{\ell+k}}}((x_{(k+1)})_{(\ol{\ell+k})})S^{2c_{\ell}}(S^{-1}(x_{(n+1-\ell)})\Lda{\ell})  \Biggl)\\
=&\ Y' \Biggl(\bigotimes_{\ell=1}^{k_0} S^{2 c_\ell}\Big(S^{2}((x_{(k+1)})_{(\ol{\ell+k})})w_\ell\Lda{\ell}\Big)\o\! \bigotimes_{\ell=k_0+1}^{2k_0} S^{2c_\ell}\Big(S^{-2}\big((x_{(k+1)})_{(\ol{\ell+k})} S(x_{(n+1-\ell)})\big)\Lda{\ell} \Big) \\
&\qquad \o  \Lda{n-k} \o\!\bigotimes_{\ell=2k_0+2}^{n} S^{2c_\ell}((x_{(k+1)})_{(\ol{\ell+k})}S^{-1}(x_{(n+1-\ell)})\Lda{\ell})  \Biggl)\,.
\end{align*}
To complete the proof, it suffices to show
\[\De^{k_0}(x)  \o 1\sot{(n-k_0-1)} = 
\bigotimes_{\ell=1}^{k_0} (x_{(k+1)})_{(\ol{\ell+k})}   \o \bigotimes_{\substack{\ell=k_0+1\\ \ell \ne n-k}}^n (x_{(k+1)})_{(\ol{\ell+k})}S^{\w(\ell)}
(x_{(n+1-\ell)})\]
where $\w(\ell)=1$ if $k_0+1\le \ell \le 2k_0$ and $\w(\ell)=-1$ if $2k_0+2\le \ell \le n$.  One can rewrite the right hand side as
    \begin{align*}
     & \  \bigotimes_{\ell=k+1}^{k+k_0} (x_{(k+1)})_{(\ell)}  \o \bigotimes_{\ell=k+k_0+1}^{n-1} (x_{(k+1)})_{(\ell)}S(x_{(n+1+k-\ell)}) \o \bigotimes_{\ell=1}^k (x_{(k+1)})_{(\ell)}S^{-1}(x_{(k+1-\ell)}) \\
     =& \  \bigotimes_{\ell=k+1}^{k+k_0} (x_{(2)})_{(\ell)}  \o \bigotimes_{\ell=k+k_0+1}^{n-1} (x_{(2)})_{(\ell)}S(x_{(n+2-\ell)}) \o \bigotimes_{\ell=1}^k (x_{(2)})_{(\ell)}S^{-1}(x_{(1)})_{(\ell)} \\
     =& \  \bigotimes_{\ell=1}^{k_0} (x_{(2)})_{(\ell)}  \o \bigotimes_{\ell=k_0+1}^{n-k-1} (x_{(2)})_{(\ell)}S(x_{(n-k+2-\ell)}) \o \e(x_{(1)})\De^k(1)  \quad\text{ by the antipode condition} \\
     =& \ \De^{k_0} (x_{(1)})  \o \De^{n-k-k_0-1} (x_{(2)}S(x_{(3)})) \o \De^k(1) = \De^{k_0}(x) \o 1^{\o(n-k_0-1)}. 
    \end{align*}
    Here, the second last equality follows from the counit condition and Sweedler's notation, and the last equality is a 
    consequence of the  antipode condition.
\end{proof}

\subsection{Lemmas on 2-cocycles}
We state and prove lemmas on 2-cocycles that are relevant in this paper. Let $F=\ff{i}{1} \o \ff{i}{2}\in H\o H$ be a 2-cocycle. Then, by definition, $F^{-1} = \dd{i}{1} \o \dd{i}{2}  \in H^{\op} \o H^{\op}$ is a 2-cocycle of $H^{\op}$. Recall that $u = \ff{i}{1}S(\ff{i}{2})$ an invertible element in $H$ with $u^{-1} = S(\dd{i}{1})\dd{i}{2}$. The antipode of $H^{\op}$ is $S^{-1}$. Let $\bullet$ denote the multiplication of $H^{\op}$. Then
\begin{equation}\label{eq:op_trick}
    \dd{i}{1} \bullet S^{-1}(\dd{i}{2}) = S^{-1} (\dd{i}{2}) \dd{i}{1} = S^{-1}(u^{-1})\,.
\end{equation}
For elaboration of the properties of these 2-cocycles, we  continue to use these notations and introduce the reverse tensor  and insertion operators $(-)\rev$ and $I_j(-, -)$ as follows. Let $m, n \ge 1$ be integers. For $v= v^{[1]}_i \ot \cdots \ot v^{[n]}_i \in V\sot{n}$, $v\rev : =v^{[n]}_i \ot \cdots \ot v^{[1]}_i$. For  $w \in V^{\ot m}$, we define the $j$-th insertion operator of $w$, $I_j(w, -): V^{\ot n} \to V^{\ot (n+m)}$, as 
 \begin{equation}\label{eq:def-insert}I_j(w,v) := v^{[1]}_i \ot \cdots \ot v^{[j-1]}_i \ot w \ot v^{[j]}_i \ot \cdots \ot v^{[n]}_i \text{ for all $v \in V^{\ot n}$}\,.
 \end{equation}
The cases when $j=1$ or $n+1$ are simply the juxtapositions: $I_1(w,v) = w \ot v$, $I_{n+1}(w,v) = v \ot w$ for any $v \in V^{\ot n}$.

The following lemma is essential to the proof of the gauge invariance of the Kuperberg invariants discussed in the previous section, and its Statement (i) is a generalization of Lemma \ref{lem:F-n-plus-1}, which was proved in \cite{KMN12}. Recall the Convention \ref{conv:k0} and our definition of $F_n$ in \eqref{eq:F-n-plus-1}: $F_1= 1$, $F_2=F$, $F_{n+1}=(1\o F_n)(\id\o \Delta^{n})(F)$ for $n\geq 2$. 
\begin{lemma}\label{lem:trick-3}
Suppose $F_n = f^{[1]}_i  \o \cdots \o f_i^{[n]}$, $F_n^{-1} =  d^{[1]}_i \o \cdots \o d_i^{[n]}$, $u= f^{[1]}_i S(f^{[2]}_i)$, and $Q = u S(u^{-1})$. 
\begin{enumerate}[label={\rm (\roman*)}]
\item 
For any integers $m$, $n \ge 1$, we have
\begin{equation}\label{eq:insert}
F_{m+n} =  I_{j}(F_m, 1^{\ot n}) \cdot (\id^{\o (j-1)} \o \Delta^{m} \o \id^{\o (n+1-j)})(F_{n+1}) \quad \text{for all }j \in \{1, \dots, n+1\}.
\end{equation} 
Equivalently, we have $F^{-1}_{m+n} = (\id^{\o (j-1)} \o \Delta^{m} \o \id^{\o (n+1-j)})(F^{-1}_{n+1}) \cdot I_{j}(F^{-1}_m, 1^{\ot n})$ for all $j \in \{1, \dots, n+1\}$.

\item 
For any $m$, $n \ge 0$, we have $F_{m+n} = (F_m \o F_n)\cdot (\D^{m} \o \D^{n})(F)$. 

\item 
For any $n \ge 2$, and $n-1 \ge m \ge 1$, we have 
\begin{eqnarray}
I_{m}(1, F_{n-2})&=&\left(\bigotimes_{s=1}^{m-1} \ff{i}{s} \right) \o S (\ff{i}{m}) u^{-1} \ff{i}{m+1} \o \left(\bigotimes_{s=m+2}^{n} \ff{i}{s}\right) \,,\label{eq:reduce_f}\\
I_m(1, F^{-1}_{n-2})&=&\left(\bigotimes_{s=1}^{m-1} \dd{i}{s} \right) \o \dd{i}{m} u S(\dd{i}{m+1}) \o \left(\bigotimes_{s=m+2}^{n} \dd{i}{s}\right)\,.\label{eq:reduce_d}
\end{eqnarray}

\item 
For any $n \ge 1$, we have
  \begin{equation}
      1 \o u\sot{n}\! =\! \ff{i}{1} \o \bigotimes_{\ell=2}^{n+1} \ff{i}{\ell}S(\ff{i}{2n+3-\ell}) 
    \ =\  F_{n+1} \cdot(\ff{j (1)}{1} \o \bigotimes_{\ell=2}^{n+1} \ff{j (\ell)}{1} S(\ff{j}{n+3-\ell}))\,, \label{eq:u-f-Sf-1}
    \end{equation}
     \begin{equation}
     1 \o (S^{-1}(u^{-1}))\sot{n} = \dd{i}{1} \o \bigotimes_{\ell=2}^{n+1}S^{-1}(\dd{i}{2n+3-\ell})  \dd{i}{\ell}
    \ =\  (\dd{j (1)}{1} \o \bigotimes_{\ell=2}^{n+1} S^{-1}(\dd{j}{n+3-\ell})\dd{j (\ell)}{1} )F_{n+1}^{-1}\,,\label{eq:u-f-Sf-2}
  \end{equation} 
\begin{equation}
    u\sot{n} =  \bigotimes_{\ell=1}^n \ff{i}{\ell}S(\ff{i}{2n+1-\ell}) = F_n \Big(\bigotimes_{\ell=1}^{n} \ff{j (\ell)}{1} S(\ff{j}{n+2-\ell})\Big)  =\Big(\bigotimes_{\ell=1}^{n} \ff{j}{n+1-\ell} S(\ff{j\,(\ell)}{n+1})\Big) S\sot{n}(F_n)
   \,, \label{eq:u-f-Sf-3} 
\end{equation}
\begin{equation}  \label{eq:u-f-Sf-4}
    (u^{-1})\sot{n} \!= \! \bigotimes_{\ell=1}^n S(\dd{i}{\ell}) \dd{i}{2n+1-\ell} =  S\sot{n}(F_n^{-1})\Big(\!\bigotimes_{\ell=1}^{n}  S(\dd{j (\ell)}{1})\dd{j}{n+2-\ell} \Big)\!=\!\Big(\!\bigotimes_{\ell=1}^{n} S(\dd{j}{n+1-\ell}) \dd{j\,(\ell)}{n+1}\Big)F_n^{-1}.
\end{equation}
\item  For any $n \ge 1$, 
\begin{equation}\label{eq:f-Du-u-Sd}
\D^n(u) = F_n^{-1} \cdot u\sot{n} \cdot (S\sot{n} (F_n^{-1}))\rev\,.
\end{equation}
Consequently, we have 
\begin{equation}\label{eq:Sdd-vd}
\bigotimes_{\ell=1}^{n}S(d^{[1]}_{j\,(n+1-\ell)})d^{[\ell+1]}_{j}
 =\bigotimes_{\ell=1}^{n}S(\ff{j}{n+1-\ell})u^{-1}    
 =
\bigotimes_{\ell=1}^{n} u^{-1}_{(\ell)}d^{[\ell]}_j,
\end{equation}
\begin{equation}\label{eq:fSf-dv}
\bigotimes_{\ell=1}^{n}\ff{j\,(\ell)}{1}S(\ff{j}{n+2-\ell}) = \bigotimes_{\ell=1}^{n} \dd{j}{\ell} u = \bigotimes_{\ell=1}^{n} u_{(\ell)} S(\ff{j}{n+1-\ell}),
\end{equation}
\begin{equation}\label{eq:fSf-vSd}
\bigotimes_{\ell=1}^{n}\ff{j}{n+1-\ell}S(\ff{j\,(\ell)}{n+1}) = \bigotimes_{\ell=1}^{n} uS(\dd{j}{\ell}) = \bigotimes_{\ell=1}^{n} \ff{j}{n+1-\ell} u_{(n+1-\ell)}\,.
\end{equation}

and
\begin{equation}\label{eq:f-DQ-Q-Sf}
F_n \D^{(n)}(Q) = Q\sot{n} \cdot (S^2)\sot{n} (F_n),
\end{equation}
\end{enumerate}
\end{lemma}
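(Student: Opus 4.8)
The whole lemma is a bookkeeping exercise resting on two structural facts about the family $\{F_n\}$: the defining recursion \eqref{eq:F-n-plus-1} and the 2-cocycle condition \eqref{eq:2-cocycle}. The plan is to establish parts (i)--(v) in order, since each later display is built from the earlier ones. I would prove (i) by induction on $m$ for fixed $n$: the base case $m=1$ is immediate because $F_1 = 1$ and $\D^{1} = \id$, so \eqref{eq:insert} reduces to $F_{n+1}=F_{n+1}$, and the inductive step peels off one more comultiplication using the recursion \eqref{eq:F-n-plus-1} (and its mirror from Lemma \ref{lem:F-n-plus-1}), with the 2-cocycle condition guaranteeing that the two groupings of the inserted factor agree. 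The factorization (ii), $F_{m+n}=(F_m\o F_n)(\D^{m}\o\D^{n})(F)$, then follows either by specializing the index $j$ in (i) or by a parallel induction; I would treat (ii) as the main workhorse, since it expresses that $F$ ``distributes'' over the iterated comultiplications and is exactly what is needed to peel factors apart in (iv) and (v).

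Next I would derive (iii) and (iv) from (i)/(ii). The reduction formulas \eqref{eq:reduce_f} and \eqref{eq:reduce_d} come from applying the counit $\e$ to a single tensor slot of (i) after using coassociativity together with the normalization $(\e\o\id)F=(\id\o\e)F=1$; the two adjacent legs that get merged collapse into a factor of $u^{-1}$ (resp.\ $u$) precisely through $u=\ff{i}{1}S(\ff{i}{2})$ and $u^{-1}=S(\dd{i}{1})\dd{i}{2}$. For (iv) I would induct on $n$, using (ii) to split off one factor at each stage and then the antipode axiom $m\circ(S\o\id)\circ\D=\e\cdot 1$ (and its $S^{-1}$-variant coming from viewing $F^{-1}$ as a 2-cocycle of $H^{\op}$) to manufacture each copy of $u$ or $u^{-1}$. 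The four displays \eqref{eq:u-f-Sf-1}--\eqref{eq:u-f-Sf-4} are the same computation read in four directions (product versus convolution inverse, left versus right antipode), so once one is carried out the others follow formally by applying $S$, $S^{-1}$, or passing to $H^{\op}$.

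Part (v) is where the target lives. The comultiplication formula \eqref{eq:f-Du-u-Sd} is obtained by applying $\D^{n}$ to $u=\ff{i}{1}S(\ff{i}{2})$, using that $\D^{n}$ is an algebra homomorphism and that $\D^{n}\circ S=(S\sot{n})\circ(-)\rev\circ\D^{n}$ (the antipode reverses both product and coproduct), and then identifying the result through (ii)/(iv); the identities \eqref{eq:Sdd-vd}--\eqref{eq:fSf-vSd} are direct rearrangements of \eqref{eq:f-Du-u-Sd} combined with the displays of (iv). For the final equation \eqref{eq:f-DQ-Q-Sf}, $F_n\D^{(n)}(Q)=Q\sot{n}\cdot(S^2)\sot{n}(F_n)$, I would write $\D^{n}(Q)=\D^{n}(u)\,\D^{n}(S(u^{-1}))$ since $Q=uS(u^{-1})$, substitute \eqref{eq:f-Du-u-Sd} for $\D^{n}(u)$ and the analogous expression for $\D^{n}(S(u^{-1}))=(S\sot{n})\circ(-)\rev(\D^{n}(u^{-1}))$ with $\D^{n}(u^{-1})=(\D^{n}(u))^{-1}$, and then let the interior $F_n^{-1}$'s and their $S$-images telescope, the antipode axioms producing the cancellations that leave $Q\sot{n}$ on the left and $(S^2)\sot{n}(F_n)$ on the right after multiplying through by $F_n$.

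The conceptual content here is modest; the genuine difficulty is index management. Every occurrence of $S$ or $S^2$ reverses the order of both the tensor factors and the internal products, so at each step one must track exactly which summation index ($i$ or $j$) and which Sweedler leg lands in which slot, and the $(-)\rev$ operators must be threaded consistently through the inductions. I expect the single hardest step to be the inductive verification underlying (iv) and (v), where a factor of $u$ has to be synthesized from $\ff{i}{m}$ and $\ff{i}{m+1}$ straddling a comultiplication: there the 2-cocycle condition \eqref{eq:2-cocycle}, the antipode axioms, and the counit normalization of $F$ must all be invoked simultaneously and in the correct order, and a misordering is the most likely source of error.
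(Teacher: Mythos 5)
Your overall architecture coincides with the paper's: (ii) is obtained by specializing the insertion position in (i); (iii) and (iv) are derived from (i)/(ii) together with the counit normalization, the definition of $u$, and the antipode axiom, with the remaining displays of (iv) obtained formally via $S$, $S^{-1}$ and $H^{\op}$; and (v) follows by applying $\D^n$ to $u = \ff{i}{1}S(\ff{i}{2})$ and $Q=uS(u^{-1})$, using that $\D^n$ is an algebra map, that $(\D^n\circ S)(x)=S\sot{n}\big((\D^n(x))\rev\big)$, and then telescoping against \eqref{eq:f-Du-u-Sd} -- this is exactly the paper's computation.

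The gap is in (i), and it matters because every later part rests on it. Your induction is on $m$ for fixed $n$, with base case $m=1$; but that base case is the tautology $F_{n+1}=F_{n+1}$ and carries no information, and the first genuine step, from $m=1$ to $m=2$, is already the full generalized cocycle identity
\[
F_{n+2} \;=\; I_j(F,1^{\ot n})\cdot(\id^{\o(j-1)}\o\D\o\id^{\o(n+1-j)})(F_{n+1})
\quad\text{for all $n$ and all positions $j$.}
\]
This cannot be produced by ``peeling off one comultiplication'' from the trivial case: if you attempt the step by writing $F_{m+1}=(F_m\o 1)(\D^m\o\id)(F)$ (or its mirror) inside the inserted block, or by expanding $F_{m+1+n}$ via the ambient recursion, the computation in either route reduces precisely to the displayed $m=2$ identity, which is not supplied by the inductive hypothesis -- the argument is circular. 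The raw cocycle condition \eqref{eq:2-cocycle} gives the interior-position statement only for $n=1$; propagating it to all $n$ and all $j$ requires an induction on $n$ (the number of ambient tensor slots), handling positions $j\le n+1$ with the mirror recursion of Lemma \ref{lem:F-n-plus-1} and position $j=n+2$ with the defining recursion \eqref{eq:F-n-plus-1} -- which is exactly the induction the paper runs. So your toolkit is correct, but the induction must be set up on $n$, not on $m$; as written, part (i) of your plan does not close, and with it the foundation for (ii)--(v).
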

 \begin{proof}
 (i) Fix $m \ge 1$. We proceed to prove the \eqref{eq:insert} by induction on $n$. For $n = 1$, and $j=1$ or $2$. When $j = 1$, \eqref{eq:insert} follows from Lemma \ref{lem:F-n-plus-1}, and when $j = 2$, \eqref{eq:insert} follows from the definition of $F_{m+1}$.

Suppose \eqref{eq:insert} holds for all positive integers $k\le n$. For any positive integer $j\le n+1$,  by Lemma \ref{lem:F-n-plus-1}, coassociativity of $\De$ and the induction hypothesis, we have
\begin{align*}
& F_{m+n+1} = (F_{m+n} \o 1) \cdot (\D^{m+n} \o \id)(F) \\
=& \left[\left((1^{\o (j-1)} \o F_m \o 1^{\o (n+1-j)}) \cdot (\id^{\o (j-1)} \o \D^{m} \o \id^{\o (n+1-j)})(F_{n+1})\right) \o 1\right] \cdot (\D^{m+n} \ot \id)(F)\\
=& (1^{\o (j-1)} \o F_m \o 1^{\o (n+2-j)}) \cdot (\id^{\o (j-1)} \o \D^{m} \o \id^{\o (n+2-j)})(F_{n+1} \ot 1) \\
\qquad & \cdot (\id^{\o (j-1)} \o \D^{m} \o \id^{\o (n+2-j)})(\D^{n+1} \ot \id)(F)\\
=& (1^{\o (j-1)} \o F_m \o 1^{\o (n+2-j)}) \cdot (\id^{\o (j-1)} \o \D^{m} \o \id^{\o (n+2-j)})[(F_{n+1} \ot 1) \cdot (\D^{n+1} \ot \id)(F)]\\
=& (1^{\o (j-1)} \o F_m \o 1^{\o (n+2-j)}) \cdot (\id^{\o (j-1)} \o \D^{m} \o \id^{\o (n+2-j)})(F_{n+2})  \,.
\end{align*}
For $j = n+2$, by definition and induction hypothesis, we can perform similar calculation as above
\begin{align*}
F_{m+n+1} =&\ (1 \o F_{m+n}) \cdot (\id \ot \D^{m+n})(F)\\
=&\ (1^{\o (n+1)} \o F_m) \cdot (\id^{\o (n+1)} \o \D^{m})(1\o F_{n+1}) \cdot (\id \o \D^{m+n})(F)\\
=&\ (1^{\o (n+1)} \o F_m) \cdot (\id^{\o (n+1)} \o \D^{m}) (F_{n+2}) \quad\text{by the coassociativity.}
\end{align*}
Therefore, \eqref{eq:insert} holds for $n+1$. 

Statement (ii) follows immediately from (i).

(iii) When $n=2$, $m$ can only be 1, and the statement follows from 
\begin{align*}
S(f_i^{[1]}) u^{-1} f_i^{[2]} 
= 
S(f_i^{[1]}) S(d_j^{[1]}) d_j^{[2]}f_i^{[2]} = m\circ (S \ot \id)(F^{-1} \cdot F) = 1\,.
\end{align*}

For any $x \in H$,  the term $F_0\D^{0}(x)$, if it appears in the following deductions, is considered as an empty tensorand. Now let $n\geq 3$ and apply (ii) repeatedly. Then we have 
\begin{align*}
F_{n}=&\ \Bigl(F_{m-1} \o 1^{\o 2} \o F_{n-m-1}\Bigl)\cdot (\D^{m-1} \o \id^{\o 2} \o \D^{n-m-1})(F_4)\\
=&\ \Bigl(F_{m-1} \o 1^{\o 2} \o F_{n-m-1}\Bigl)\cdot \Bigl((\D^{m-1} \o \id^{\o 2} \o \D^{n-m-1})(\ff{i}{1} \o \ff{j}{1} \ff{i\,(1)}{2} \o \ff{j}{2} \ff{i\,(2)}{2} \o \ff{i}{3})\Bigl)\\
=&\ \Bigl(F_{m-1} \cdot \D^{m-1}(\ff{i}{1})\Bigl) \o \Bigl( \ff{j}{1} \ff{i\,(1)}{2} \o \ff{j}{2} \ff{i\,(2)}{2} \Bigl) \o \Bigl(F_{n-m-1}\cdot \D^{n-m-1}(\ff{i}{3})\Bigl)\,.
\end{align*}
Therefore, 
\begin{align*}
&f_i^{[1]} \o \cdots \o S(f_i^{[m]})u^{-1}f_i^{[m+1]} \o \cdots \o f_i^{[n]}\\
=&\ \Bigl(F_{m-1} \cdot \D^{m-1}(f^{[1]}_i)\Bigl) \o S(\ff{i\,(1)}{2})\ff{i\,(2)}{2} \o \Bigl(F_{n-m-1}\cdot \D^{n-m-1}(\ff{i}{3})\Bigl) \quad\text{by the case $n=2$}\\
=&\ (F_{m-1} \cdot \D^{m-1}(f^{[1]}_i)) \o 1 \o (F_{n-m-1} \cdot \D^{n-m-1}(f^{[2]}_{i})) = I_m(1, F_{n-2})\,.
\end{align*}
The second equality can be proved similarly or using $H^\op$ in the beginning remark and \eqref{eq:op_trick}. 

(iv) We prove the first equality of \eqref{eq:u-f-Sf-1} by induction on $n$. For $n = 1$, by definition and (i), we have 
\[\ff{i}{1} \ot \ff{i}{2}S(\ff{i}{3}) 
= \ff{i}{1} \ot \ff{j}{1}\ff{i(1)}{2}S(\ff{j}{2}\ff{i(2)}{2}) = \ff{i}{1}\e(\ff{i}{2}) \o \ff{j}{1}S(\ff{j}{2}) = 1 \ot u\,.\]
Now assume  first equality of \eqref{eq:u-f-Sf-1} holds for any positive integer $n$. By a similar calculation as above, we have
\begin{align*}
\ff{i}{1} \o \left(\bigotimes_{\ell=2}^{n+2} \ff{i}{\ell}S(\ff{i}{2n+5-\ell})\right) = &\  \ff{i}{1} \o \left(\bigotimes_{\ell=2}^{n+1} \ff{i}{\ell}S(\ff{i}{2n+5-\ell})\right) \o \ff{i}{n+2}S(\ff{i}{n+3})\\
=\ \ff{i}{1} \o \left(\bigotimes_{\ell=2}^{n+1} \ff{i}{\ell}S(\ff{i}{2n+4-\ell})\right) & \o \ff{j}{1} \ff{i(1)}{n+2} S(\ff{j}{2}\ff{i(2)}{n+2}) \quad \text{ by (i)} \\
=\ \ff{i}{1} \o \left(\bigotimes_{\ell=2}^{n+1} \ff{i}{\ell}S(\ff{i}{2n+3-\ell})\right)  &\o u = 1 \o u\sot{n} \o u = 1 \o u^{\o (n+1)}
\end{align*}
by the induction hypothesis. The second equality above is a consequence of  the fact that $F_{2n+1}=(\id^{\o (n+1)} \o \e \o \id\sot{n})(F_{2n+2}) $ by the definition of $F_{2n+1}$ and (i).  The second equality of \eqref{eq:u-f-Sf-1} follows immediately from (ii) that $F_{2n+1} = (F_{n+1} \o 1\sot{n})\cdot (\D^{n+1} \o \id \sot{n})(F_{n+1})$. The first and the second equalities of \eqref{eq:u-f-Sf-3} are obtained by applying $\e \o \id\sot{n}$ to  \eqref{eq:u-f-Sf-1}. The third equality of \eqref{eq:u-f-Sf-3} follows from $F_{2n} = (1\sot{n} \o F_n)(\id\sot{n} \o \Delta^{n})(F_{n+1})$ and the first equality by reversing the tensor.

By the beginning remark of this subsection and \eqref{eq:op_trick}, \eqref{eq:u-f-Sf-3} implies
\begin{align*}
    (S^{-1}(u^{-1}))\sot{n} &=\  \bigotimes_{\ell=1}^{n}S^{-1}(\dd{i}{2n+1-\ell})  \dd{i}{\ell}
    \ =\  \bigg(\bigotimes_{\ell=1}^{n} S^{-1}(\dd{j}{n+2-\ell})\dd{j (\ell)}{1}\bigg) \cdot F_{n}^{-1}\\
    =&\ (S^{-1})^{\o n}(F_n^{-1}) \bigg(\bigotimes_{\ell=1}^n S^{-1}(\dd{j (\ell)}{n+1}) \dd{j}{n+1-\ell}\bigg)\,.
\end{align*}
Equation \eqref{eq:u-f-Sf-4} follows from applying $S\sot{n}$ to these equations.

(v) Note that $\D^n(u) = \bigotimes_{\ell=1}^n \ff{j\,(\ell)}{1}S(\ff{j\, (n+1-\ell)}{2})$. Thus, we have
\begin{align*}
    \D^n(u) \cdot  (S\sot{n}(F_n))\rev \ = &\ 
    \bigotimes_{\ell=1}^n \ff{j\,(\ell)}{1}S\Bigl(\ff{j\, (n+1-\ell)}{2}\Bigl) S\Bigl(\ff{i}{n+1-\ell}\Bigl)\ = \
    \bigotimes_{\ell=1}^n \ff{j\,(\ell)}{1}S\Bigl(\ff{i}{n+1-\ell}\ff{j\, (n+1-\ell)}{2}\Bigl) \\
    = &\ \bigotimes_{\ell=1}^n \ff{j\,(\ell)}{1}S\Bigl(\ff{j}{n+2-\ell}\Bigl) \quad \text{ since } F_{n+1} = (1 \o F_n)\cdot (\id \o \D^n)(F) \text{ by(i)} \\
    = &\ F_n^{-1} u^n \quad \text{ by \eqref{eq:u-f-Sf-3}}\,. 
\end{align*}
This implies \eqref{eq:f-Du-u-Sd}. Now, we take inverse of both sides of Equation \eqref{eq:f-Du-u-Sd} to obtain
\[\D^n(u^{-1}) F_n^{-1} = (S\sot{n}(F_n))\rev \cdot (u^{-1})\sot{n}\,,\]
which is the second equality of \eqref{eq:Sdd-vd}. Apply the reverse tensor operator to \eqref{eq:u-f-Sf-4}, we find
\[(u^{-1})\sot{n} = (S\sot{n}(F_n^{-1}))\rev \cdot \Biggl(\bigotimes_{\ell=1}^n S(\dd{j\ (n+1-\ell)}{1})\dd{j}{\ell+1}\Biggl)\,,\]
and the first equality of \eqref{eq:Sdd-vd} follows. The equations \eqref{eq:fSf-dv} and \eqref{eq:fSf-vSd} are obtained similarly from \eqref{eq:f-Du-u-Sd} and \eqref{eq:u-f-Sf-3}.

Finally, we show how to derive \eqref{eq:f-DQ-Q-Sf} from \eqref{eq:f-Du-u-Sd}. Since $Q = uS(u^{-1})$, we have 
\[\begin{split}
& F_n\cdot \D^{n}(Q) = F_n \cdot\D^{n}(u) \cdot \D^{n}(S(u^{-1})) = u\sot{n} \cdot \bigotimes_{\ell=1}^{n} S(\dd{i}{n+1-\ell}) \cdot \D^{n}(S(u^{-1}))\\ 
=&\ u\sot{n} \cdot S\sot{n} \left(u^{-1}_{(n)}\dd{i}{n} \ot \cdots \ot u^{-1}_{(1)}\dd{i}{1} \right) = u\sot{n} \cdot S\sot{n} \left(S\sot{n}(F_n) \cdot (u^{-1})\sot{n}\right) \\
=&\ Q\sot{n} \cdot (S^2)\sot{n}(F_n)\,,
\end{split}\]
where the second last equality is obtained from applying the reverse tensor operator on both sides of \eqref{eq:f-Du-u-Sd}, and last equality follows from that $S$ is an antihomomorphism of algebras.
\end{proof}

\begin{remark}
When $n = 2$, Equation \eqref{eq:f-Du-u-Sd} of (v) is well-known. See, for example, \cite[(2.17)]{Maj95}, \cite[(5)]{AEGN02} or \cite[Lem.~5.1]{KC17}.
\end{remark}

\begin{lemma}\label{l:t1}
Let $F \in H\sot{2}$ be a 2-cocycle of $H$. For any integer $n>0$ and any $\kk$-linear map $Y: H\sot{(n-1)} \to H$, we have
\begin{align*}
     \ld\left(S(\Ld_{(2)}) S_F\circ Y\circ \De_F^{n-1}(\Ld_{(1)})\right) & = \ld  S\left(m\circ (\id\o Y \o \id)\left((1\o F_n)(1 \o \De^n(\Ld))(F_n^{-1} \o 1) \right)\right) \\
      & = \ld  S\left(\dd{i}{1} Y\left(\ff{j}{1} \Ld_{(1)}\dd{i}{2}  \o \cdots \o (\ff{j}{n-1} \Ld_{(n-1)}\dd{i}{n})\right) \ff{j}{n}\Ld_{(n)}\right) \,.
\end{align*}
\end{lemma}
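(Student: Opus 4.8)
The plan is to prove that both the first and the third expressions in the statement equal the single ``conjugated'' quantity
\[
C_n(Y) := \ld\, S\!\left(Y\!\left(\De_F^{n-1}(u\,\Lda{1}\,u^{-1})\right)\Lda{2}\right),
\]
where $\De(\Ld)=\Lda{1}\o\Lda{2}$. The equality of the second and third expressions is only a notational unpacking: multiplying $1\o F_n$, $1\o\De^n(\Ld)$ and $F_n^{-1}\o 1$ slotwise in $H\sot{(n+1)}$ produces $\dd{i}{1}\o(\ff{j}{1}\Lda{1}\dd{i}{2})\o\cdots\o(\ff{j}{n-1}\Lda{n-1}\dd{i}{n})\o(\ff{j}{n}\Lda{n})$, and applying $m\circ(\id\o Y\o\id)$ recovers the third line.

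First I would reduce the first expression to $C_n(Y)$. Writing $S_F(h)=uS(h)u^{-1}$ from \eqref{eq:S_F} and using that $S$ is an algebra anti-automorphism, the first expression equals $\ld\,S\big(S^{-1}(u^{-1})\,Y(\De_F^{n-1}(\Lda{1}))\,S^{-1}(u)\,\Lda{2}\big)$. Applying Theorem \ref{t:t2}(iii) to the identity $\Lda{1}\o S^{-1}(u)\Lda{2}=u\Lda{1}\o\Lda{2}$ (through the map that sends the first leg by $Y\circ\De_F^{n-1}$, the second by $\id$, and then multiplies) absorbs $S^{-1}(u)$ and yields $\ld\,S\big(S^{-1}(u^{-1})\,Y(\De_F^{n-1}(u\Lda{1}))\,\Lda{2}\big)$. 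Then Theorem \ref{t:t2}(vi), with $a=S^{-1}(u^{-1})$ and the operator $h\mapsto Y(\De_F^{n-1}(uh))$, moves $a$ inside and replaces $\Lda{1}$ by $\Lda{1}S(a)=\Lda{1}u^{-1}$, producing exactly $C_n(Y)$. This reduction is uniform in $n$.

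The substance of the proof is to identify the third expression with $C_n(Y)$. Here I would expand $\De_F^{n-1}(u\Lda{1}u^{-1})=F_{n-1}\,\De^{n-1}(u)\,(\Lda{1}\o\cdots\o\Lda{n-1})\,\De^{n-1}(u^{-1})\,F_{n-1}^{-1}$ via Lemma \ref{lem:F-n-plus-1}, and then simplify the two outer factors by the coherence identities of Lemma \ref{lem:trick-3}(v): \eqref{eq:f-Du-u-Sd} gives $F_{n-1}\De^{n-1}(u)=u\sot{(n-1)}\,(S\sot{(n-1)}(F_{n-1}^{-1}))\rev$, while the companion identity \eqref{eq:Sdd-vd} rewrites $\De^{n-1}(u^{-1})F_{n-1}^{-1}$. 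The task is then to convert this $F_{n-1}$-dressing together with the flanking $u^{\pm 1}$ into the $F_n$-dressing $\ff{j}{\ell}\Lda{\ell}\dd{i}{\ell+1}$ with the two boundary legs $\dd{i}{1}$ and $\ff{j}{n}\Lda{n}$; the recursions of Lemma \ref{lem:trick-3}(i) relating $F_{n-1}$ and $F_n$, together with the reduction formulas \eqref{eq:reduce_f} and \eqref{eq:reduce_d} (which collapse a $u^{\pm1}$ sandwiched between adjacent legs of $F_n$), carry out this conversion, after which Theorem \ref{t:t2}(iii),(vi) reabsorb $\dd{i}{1}$ and $\ff{j}{n}$ into the integral. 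The mechanism is transparent when $n=2$: Theorem \ref{t:t2}(iii) absorbs the outer $\ff{j}{2}$ and leaves the combination $\ff{j}{1}S(\ff{j}{2})=u$, and Theorem \ref{t:t2}(vi) then absorbs $\dd{i}{1}$ and leaves $S(\dd{i}{1})\dd{i}{2}=u^{-1}$, turning the third expression directly into $\ld S(Y(u\Lda{1}u^{-1})\Lda{2})=C_2(Y)$.

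I expect the main obstacle to be this final matching for general $n$. In contrast to the case $n=2$, the legs $\ff{j}{1},\dots,\ff{j}{n}$ of $F_n$ (and likewise $\dd{i}{1},\dots,\dd{i}{n}$) form a single coupled summation and cannot be absorbed one leg at a time as independent elements, so one must feed them in through the $F_n$-recursion while simultaneously tracking the antipodes generated inside $S(Y(\cdots))$ and by \eqref{eq:reduce_f}/\eqref{eq:reduce_d}. Organizing the argument as an induction on $n$—peeling off the outermost leg of $F_n$ at each stage and invoking the coassociativity-type recursion of Lemma \ref{lem:trick-3}(i)—seems the most reliable way to keep this bookkeeping under control.
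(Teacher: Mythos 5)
Your proposal is correct in outline and is a genuine reorganization of the paper's argument rather than a reproduction of it, but the step you single out as the main obstacle is not one, and the machinery you reserve for it (\eqref{eq:f-Du-u-Sd}, \eqref{eq:Sdd-vd}, \eqref{eq:reduce_f}, \eqref{eq:reduce_d}, plus an induction on $n$) is unnecessary. The paper uses no intermediate $C_n(Y)$: it keeps $u=\ff{j}{1}S(\ff{j}{2})$ and $u^{-1}=S(\dd{i}{1})\dd{i}{2}$ split into their cocycle halves, moves only $S^{-1}(\ff{j}{1})$ by Theorem \ref{t:t2}(iii) and only $S^{-1}(\dd{i}{2})$ by Theorem \ref{t:t2}(vi), so the leftover halves $\dd{i}{1}$ and $\ff{j}{2}$ become exactly the boundary legs, and one application of Lemma \ref{lem:F-n-plus-1}/Lemma \ref{lem:trick-3}(i) identifies $\dd{i}{1}\o\De_F^{n-1}(\ff{j}{1}\Lda{1}\dd{i}{2})\o\ff{j}{2}\Lda{2}$ with $(1\o F_n)(1\o\De^n(\Ld))(F_n^{-1}\o 1)$ --- a single pass. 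Your meet-in-the-middle route also closes, and in one step rather than by induction: starting from the third expression and regrouping $\De^n(\Ld)=(\De^{n-1}\o\id)\De(\Ld)$, absorb $\ff{j}{n}$ with Theorem \ref{t:t2}(iii) and $\dd{i}{1}$ with Theorem \ref{t:t2}(vi) (both identities are linear in each entry of the coupled data, so summing over the cocycle indices is legitimate, exactly as in your $n=2$ check); the argument of $Y$ then becomes
\[
\Bigl[\bigl(\ff{j}{1}\o\cdots\o\ff{j}{n-1}\bigr)\,\De^{n-1}\bigl(S(\ff{j}{n})\bigr)\Bigr]\,
\De^{n-1}(\Lda{1})\,
\Bigl[\De^{n-1}\bigl(S(\dd{i}{1})\bigr)\,\bigl(\dd{i}{2}\o\cdots\o\dd{i}{n}\bigr)\Bigr],
\]
with all products taken in $H\sot{(n-1)}$. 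Writing the $j$-indexed copy as $F_n=(F_{n-1}\o 1)(\De^{n-1}\o\id)(F)$ (Lemma \ref{lem:F-n-plus-1}) and the $i$-indexed copy as $F_n^{-1}=(\id\o\De^{n-1})(F^{-1})\,(1\o F_{n-1}^{-1})$ (invert the instance of Lemma \ref{lem:trick-3}(i) that inserts $F_{n-1}$ in the second slot), the two brackets collapse to $F_{n-1}\De^{n-1}(u)$ and $\De^{n-1}(u^{-1})\,F_{n-1}^{-1}$ respectively, so the argument of $Y$ equals $F_{n-1}\De^{n-1}(u\Lda{1}u^{-1})F_{n-1}^{-1}=\De_F^{n-1}(u\Lda{1}u^{-1})$; you land on $C_n(Y)$ exactly as in your $n=2$ computation, with the recursion doing all the bookkeeping you were worried about. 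In the end both proofs rest on the same three tools --- the formula $S_F=uS(\cdot)u^{-1}$, the absorption identities of Theorem \ref{t:t2}(iii),(vi), and the $F_n$-recursion: the paper's half-splitting makes the lemma a one-pass computation, while your symmetric reduction costs one extra absorption but isolates the conceptually pleasant intermediate $C_n(Y)=\ld S\bigl(Y(\De_F^{n-1}(u\Lda{1}u^{-1}))\Lda{2}\bigr)$, which exhibits the twisted invariant as the untwisted one with the integral conjugated by $u$.
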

\begin{proof} Recall that $S_F(h) = u S(h) u^{-1}$ with $u=\ff{j}{1}S(\ff{j}{2})$ and $u^{-1}=S(\dd{i}{1})\dd{i}{2}$. Thus, we have
 \begin{equation*}
\begin{split}
& \ld \left(S(\Ld_{(2)}) S_F(Y\circ\Delta_F^{n-1}(\Ld_{(1)}))\right) = \ld \left(S(\Ld_{(2)}) u \cdot S(Y\circ\Delta_F^n(\Ld_{(1)}))\cdot  u^{-1}\right)\\
= & \ld \left(S(\Ld_{(2)}) \ff{j}{1}S(\ff{j}{2}) \cdot S(Y\circ\Delta_F^{n-1}(\Ld_{(1)})) \cdot S(\dd{i}{1})\dd{i}{2} \right)\\
= & \ls S\left( S^{-1}(\dd{i}{2}) \dd{i}{1} \cdot Y(\Delta_F^{n-1}(\Ld_{(1)})) \ff{j}{2} S^{-1}(\ff{j}{1})\Ld_{(2)} \right)\\
  = & \ld S \left( \dd{i}{1} \cdot Y(\Delta_F^{n-1}(\ff{j}{1}\Ld_{(1)}\dd{i}{2}))\ff{j}{2}\Ld_{(2)} \right)  \text{ by Theorem \ref{t:t2} (iii) and (vi).}
\end{split}
\end{equation*}
 By \eqref{eq:D_F(h)} and Lemma \ref{lem:trick-3} (i), we have 
\begin{equation*}
  \begin{split}
&\ \dd{i}{1} \ot (\Delta_F^{n-1}(\ff{j}{1}\Ld_{(1)}\dd{i}{2})) \ot \ff{j}{2} \Ld_{(2)} =  \dd{i}{1} \ot \biggl(\bigotimes_{m=1}^{n-1}\ff{p}{m} \ff{j\, (m)}{1}\Ld_{(m)} \dd{i\,(m)}{2}\dd{q}{m} \biggl)\ot \ff{j}{2}\Ld_{(n)}\\
= &\ \dd{i}{1} \ot \biggl(\bigotimes_{m=1}^{n-1}\ff{j}{m} \Ld_{(m)} \dd{i}{m+1} \biggl)\ot \ff{j}{n}\Ld_{(n)} = (1 \ot F_n) \left( 1 \ot \Delta^n(\Ld) \right)  \cdot (F_{n}^{-1} \ot 1) 
  \end{split}
\end{equation*}
by Lemma \ref{lem:F-n-plus-1}. The statement follows by applying $\ls\circ m\circ (\id\o Y \o \id)$ to these expressions.
\end{proof}

\section{Gauge invariance of \texorpdfstring{$\tilde\nu_{n, k}(H)$}{}}
In this section, we introduce another gauge invariant $\tilde\nu_{n,k}(H)$ for finite-dimensional Hopf algebras $H$, which are similar to $K(L(n,k), \frm, H)$, but not directly obtained from the Kuperberg invariants of $3$-manifolds. It is unclear whether or how   $\tilde\nu_{n,k}(H)$ and $\nu_{n,k}(H)$ are related. 

\begin{definition}\label{d:shifted_sp} 
Let $H$ be a  finite-dimensional Hopf algebra $H$. For any a coprime integers $n, k$ with $n > 1$, we define the ``shuffled'' Sweedler power 
\[\tP^{(n,k)} = m \circ \s_{(n,k)} \circ \D^{n-1}\,,\] 
and
\[\tnu_{n,k}(H) := \Tr(S \circ \tP^{(n, k)})\,.\]
\end{definition}
Note that 
\begin{align*}
	\tP^{(n,k)}(x)= \sum\limits_{(x)} x_{(\ol{k})}x_{(\ol{2k})}\cdots x_{(\ol{(n-1)k})} \quad\text{ for }x \in H\,.
\end{align*}
In particular, $\tP^{(n,1)} = P^{(n,1)}$, and so $\tnu_{n,1}(H) = \nu_{n,1}(H)$.

It follows from Radford's trace formula (Theorem \ref{t:t2} (v)) that  
\[\tnu_{n, k}(H)
=\sum \ld ( S(\Ld_{(2)}) S(\tP^{(n,k)}(\Ld_{(1)}))) = \sum \ld S(\Ld_{(\ol{k})}\Ld_{(\ol{2k})}\cdots \Ld_{(\ol{(n-1) k})} \Ld_{(n)})\]
for any normalized integrals $\ld \in \int_{H^*}^r$ and $\Ld \in \int_{H}^l$ for $H$. Thus, $\nu_{n,1}(H)=\tnu_{n,1}(H) = \Tr(S \circ P^{(n)})$, which is equal to the higher indicators $\nu^{\rm KMN}_n(H)$ defined in \cite{KMN12}. Therefore, both $\nu_{n,k}(H)$ and $\tnu_{n,k}(H)$ are generalizations of $\nu^{\rm KMN}_n(H)$. We will show that $\tnu_{n,k}(H)$ is also a gauge invariant of $H$  as follows.

\begin{thm}
Let $H$ be a finite-dimensional Hopf algebra over any field, and $F \in H \ot H$ a 2-cocycle of $H$. Then for any pair of coprime integers $n, k$ with $n>1$, we have
\[\tnu_{n,k}(H_F) = \tnu_{n,k}(H)\,.\]
\end{thm}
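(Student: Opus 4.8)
The plan is to follow the strategy of Theorem \ref{t:Lnk-fR}, exploiting that the argument is lighter here because $\tP^{(n,k)}=m\circ\s_{(n,k)}\circ\De^{n-1}$ carries no internal powers of the antipode. Since $H_F=H$ as $\kk$-algebras, the multiplication $m$ is unchanged, so $\tP_F^{(n,k)}:=m\circ\s_{(n,k)}\circ\De_F^{n-1}$ and $X:=S_F\circ\tP_F^{(n,k)}$ are genuine $\kk$-linear endomorphisms of the vector space $H$. The first observation is that $\Tr(X)$ is intrinsic to the vector space and does not depend on which Hopf structure we use to evaluate it; I would therefore compute it with the Radford trace formula for $H$ itself (Theorem \ref{t:t2}(v)), obtaining
\[
\tnu_{n,k}(H_F)=\Tr\bigl(S_F\circ\tP_F^{(n,k)}\bigr)
=\ld\Bigl(S(\Lda2)\,S_F\bigl(m\circ\s_{(n,k)}\circ\De_F^{n-1}(\Lda1)\bigr)\Bigr),
\]
where $\ld\in\int^r_{H^*}$ and $\Ld\in\int_H^l$ are a pair of normalized integrals for $H$; in particular no cointegral of $H_F$ is needed.

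This expression is exactly the left-hand side of Lemma \ref{l:t1} with $Y=m\circ\s_{(n,k)}$. Applying that lemma replaces the twisted data by untwisted data decorated with the cocycle $F$:
\[
\tnu_{n,k}(H_F)=\ld S\!\left(\dd{i}{1}\,\bigl(m\circ\s_{(n,k)}\bigr)\!\Bigl(\bigotimes_{\ell=1}^{n-1}\ff{j}{\ell}\Lda{\ell}\dd{i}{\ell+1}\Bigr)\,\ff{j}{n}\Lda{n}\right).
\]
Writing out the permuted product with $\s_{(n,k)}(t)=\ol{tk}$ and $\ol{nk}=n$, the integral factors occur in the order $\Lda{\ol k}\Lda{\ol{2k}}\cdots\Lda{\ol{(n-1)k}}\Lda n$, which is precisely the order in the target formula $\tnu_{n,k}(H)=\ld S\bigl(\Lda{\ol k}\Lda{\ol{2k}}\cdots\Lda{\ol{(n-1)k}}\Lda n\bigr)$ recorded after Definition \ref{d:shifted_sp}. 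Hence the whole problem reduces to showing that the interspersed cocycle factors collapse under $\ld S(-)$: between $\Lda{\ol{tk}}$ and $\Lda{\ol{(t+1)k}}$ there sits a ``connector'' $\dd{i}{\ol{tk}+1}\ff{j}{\ol{(t+1)k}}$, preceded at the front by $\dd{i}{1}\ff{j}{\ol k}$, and as $t$ runs over $\{1,\dots,n-1\}$ these exhaust all legs of $F_n^{-1}$ and $F_n$ exactly once.

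The final and decisive step is the elimination of these connectors. They do \emph{not} cancel pairwise — already the case $k=1$ reproduces the nontrivial identity $\nu_n(H_F)=\nu_n(H)$ of \cite{KMN12} — so I would remove them by the same machinery as in Theorem \ref{t:Lnk-fR}, but without the $S^{2c_i}$ (hence no $Q$) complications: repeatedly use Theorem \ref{t:t2}(vi) and Corollary \ref{c:t2} to commute each $F$-leg past the integral factors, absorbing the resulting comultiplications into $\De^n(\Ld)$, and then telescope the $F_n$, $F_n^{-1}$ components using the cocycle identities of Lemma \ref{lem:trick-3} (notably \eqref{eq:reduce_f}, \eqref{eq:u-f-Sf-3}, \eqref{eq:fSf-dv}) together with $u^{-1}=S(\dd{i}{1})\dd{i}{2}$ and $\dd{i}{1}uS(\dd{i}{2})=1$. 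The main obstacle is purely organizational: because $\s_{(n,k)}$ is a nontrivial permutation, one must track carefully which tensor leg of $F_n$ and $F_n^{-1}$ interacts with which leg of $\De^n(\Ld)$, and in what order the connectors may be discharged so that the $u$-corrections introduced by moving one connector are exactly cancelled by the next. Once this bookkeeping is carried out, every cocycle factor disappears and the right-hand side becomes $\ld S\bigl(\Lda{\ol k}\cdots\Lda{\ol{(n-1)k}}\Lda n\bigr)=\tnu_{n,k}(H)$, completing the proof.
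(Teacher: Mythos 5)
Your setup coincides with the paper's own first half, and it is correct: since the trace of a linear endomorphism of the underlying vector space is intrinsic, you may evaluate $\Tr(S_F\circ\tP_F^{(n,k)})$ with the Radford trace formula (Theorem \ref{t:t2}(v)) using normalized integrals $\ld,\Ld$ of $H$ itself, then Lemma \ref{l:t1} with $Y=m\circ\s_{(n,k)}$ produces
\[
\tnu_{n,k}(H_F)=\ls\Bigl(\dd{i}{1}\,Y\bigl(\ff{j}{1}\Lda{1}\dd{i}{2}\o\cdots\o\ff{j}{n-1}\Lda{n-1}\dd{i}{n}\bigr)\ff{j}{n}\Lda{n}\Bigr),
\]
and your structural observation that the integral legs already occur in the target order $\Lda{\ol k}\Lda{\ol{2k}}\cdots\Lda{\ol{(n-1)k}}\Lda{n}$, with each leg of $F_n$ and of $F_n^{-1}$ appearing exactly once as a connector, is also right. (One small omission: the statement allows arbitrary coprime $k$, so you should record the trivial reduction to $2\le k<n$ via $\tnu_{n,k}=\tnu_{n,\ol k}$, the case $k=1$ being \cite{KMN12}.)

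The genuine gap is the final step, which you declare to be ``purely organizational'' and do not carry out: that cancellation \emph{is} the theorem, and it is not routine. Moreover, the mechanism you propose --- commuting each $F$-leg past the integral factors one at a time and letting the $u$-corrections cancel against ``the next'' connector --- is not how the collapse works, and there is no evidence it closes up in that form; each such local move via Theorem \ref{t:t2}(i) or (vi) creates $S^2$-twisted and comultiplied copies of cocycle legs whose disappearance is precisely what has to be proven. What actually works is a two-stage global argument: first, a \emph{single} application of Theorem \ref{t:t2}(vi) transports the leading factor $\dd{i}{1}\ff{j}{k}$ around the cyclic word, planting the comultiplication legs of $S(\dd{i}{1}\ff{j}{k})$ at every position at once; \eqref{eq:Sdd-vd} then converts each pair $S(\dd{i\,(\cdot)}{1})\dd{i}{\cdot+1}$ into $S(\ff{t}{\cdot})u^{-1}$, and repeated use of \eqref{eq:reduce_f} kills all connectors at positions $p\ge k+1$. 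The surviving connectors at positions $p<k$ need a second, different pass: Lemma \ref{lem:trick-3}(i) and Theorem \ref{t:t2}(iii) move the legs $\ff{j\,(p)}{1}$ to positions $\ge k$ as $S^{-1}$-factors, Theorem \ref{t:t2}(vi) (after factoring $Y$ through an insertion operator) brings them back, the antipode condition removes them, and only then does \eqref{eq:u-f-Sf-3} (equivalently \eqref{eq:fSf-dv}) give $\ff{j\,(p)}{1}S(\ff{j}{k+1-p})=\dd{j}{p}u$, so that $\dd{j}{p}u\cdot u^{-1}\ff{i}{p}=\dd{j}{p}\ff{i}{p}$ and the remaining cocycle legs multiply to $F^{-1}_{\bullet}F_{\bullet}=1$. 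None of these moves, their order, or the intermediate corrections are verified in your write-up; as it stands, the decisive part of the argument is a promissory note rather than a proof.
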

\begin{proof}
By definition, $\tnu_{n,k}(H) = \tnu_{n, \ol{k}}(H)$ for all $k$ coprime to $n$, and the gauge invariance of $\tnu_{n,1}$ is proved in \cite{KMN12}. Therefore, we assume without loss of generality that $2 \le k <n$.
It follows from Radford's trace formula (Theorem \ref{t:t2} (v)) that
\[\tnu_{n,k}(H_F) = \Tr(S_F \circ \tP^{(n,k)}_F) = \ld(S(\Ld_{(2)})(S_F\circ \tP^{(n,k)}_F(\Ld_{(1)}))) = \ld(S(\Ld_{(2)})(S_F\circ Y(\Delta_F^{n-1}\Ld_{(1)})))\]
for any normalized integrals $\ld \in \int^r_{H^*}$ and $\Ld \in \int^l_H$ for $H$, where $Y = m \circ \s_{(n,k)}$. Let  $F_n= f^{[1]}_j \ot \cdots \ot f^{[n]}_j$ and  $F_n^{-1}= d_i^{[1]} \o \cdots\o d^{[n]}_i$. By Lemma \ref{l:t1}, 
\begin{align*}
&\ \tnu_{n,k}(H_F) \ =\  \ls\Bigl(\dd{i}{1} Y\left(\ff{j}{1} \Ld_{(1)}\dd{i}{2}  \o \cdots \o \ff{j}{n-1} \Ld_{(n-1)}\dd{i}{n}\right) \ff{j}{n}\Ld_{(n)}\Bigl) \\
=&\ \ls  \biggl(\dd{i}{1}\ff{j}{k} \Lda{k} \prod_{p=2}^{n} d^{[\ol{1+(p-1)k}]}_i f^{[\ol{pk}]}_j \Ld_{\left(\ol{pk}\right)}\biggl)\stackrel{(\dagger)}{=}\  \ls \biggl( \Ld_{(k)} \, \prod_{p=2}^{n} S(d^{[1]}_i f^{[k]}_j)_{(\ol{(p-1)k})} d^{[\ol{1+(p-1)k}]}_i f^{[\ol{pk}]}_j \Ld_{(\ol{pk})} \biggl) \\
=&\  \ls \circ Y\biggl(  \bigg(\bigotimes_{p=1}^{k-1} S(d^{[1]}_i f^{[k]}_j)_{(\ol{p-k})} \dd{i}{\ol{p-k}+1} \ff{j}{p} \Lda{p} \bigg)\o \Lda{k}\o \bigotimes_{p=k+1}^{n} S(d^{[1]}_i f^{[k]}_j)_{(p-k)} \dd{i}{p-k+1} \ff{j}{p} \Lda{p} \biggl)\\
=&\  \ls \circ  Y\biggl(\bigl(\bigotimes_{p=1}^{k-1} S(\ff{j\ (k-p)}{k}) S(\ff{i}{k-p})u^{-1} \ff{j}{p} \Lda{p} \bigl)\o \Lda{k}\o \bigotimes_{p=k+1}^{n} S(\ff{j\ (\ol{k-p})}{k}) S(\ff{i}{\ol{k-p}}) u^{-1} \ff{j}{p} \Lda{p}\biggl)
\end{align*}
where the third equality ($\dagger$) follows from Theorem \ref{t:t2} (vi), and the last equality is a consequence of \eqref{eq:Sdd-vd} of Lemma \ref{lem:trick-3} (v). Therefore, we have $\tnu_{n,k}(H_F)$ is equal to
\begin{align*}
&\  \ls \circ  Y\biggl(\bigl(\bigotimes_{p=1}^{k-1}  S(\ff{i}{k-p}\ff{j\ (k-p)}{k})u^{-1} \ff{j}{p} \Lda{p} \bigl)\o \Lda{k}\o \bigotimes_{p=k+1}^{n}  S(\ff{i}{\ol{k-p}}\ff{j\ (\ol{k-p})}{k}) u^{-1} \ff{j}{p} \Lda{p}\biggl) \\
=&\  \ls \circ  Y\biggl(\bigl(\bigotimes_{p=1}^{k-1}  S(\ff{j}{2k-p-1})u^{-1} \ff{j}{p} \Lda{p} \bigl)\o \Lda{k}\o \bigotimes_{p=k+1}^{n}  S(\ff{j}{n+2k-p-1}) u^{-1} \ff{j}{p+n-2} \Lda{p}\biggl)\,.
\end{align*}

Now, we apply \eqref{eq:reduce_f} of Lemma \ref{lem:trick-3} (iii) to the last expression repeatedly starting from $p=k+1$, and  obtain 
\begin{align*}
&\ \tnu_{n,k}(H_F)\  = \ \ls \circ  Y\biggl(\bigotimes_{p=1}^{k-1}  S(\ff{j}{2k-p-1})u^{-1} \ff{j}{p} \Lda{p}  \o \bigotimes_{p=k}^{n}  \Lda{p}\biggl) \\
=& \ \ls \circ  Y\biggl(\bigotimes_{p=1}^{k-1}  S(\ff{j}{k+1-p})u^{-1} \ff{i}{p}\ff{j\, (p)}{1} \Lda{p}  \o \bigotimes_{p=k}^{n}  \Lda{p}\biggl) \text{ by Lemma \ref{lem:trick-3} (i)} \\
=& \ \ls \circ  Y\biggl(\bigotimes_{p=1}^{k-1}  S(\ff{j}{k+1-p})u^{-1} \ff{i}{p}\Lda{p}  \o \bigotimes_{p=k}^{n}  S^{-1}(\ff{j\, (n+1-p)}{1})\Lda{p}\biggl)  \text{ by Theorem \ref{t:t2} (iii).}
\end{align*}
Let $Y': H^{\o (n-2)} \to H$ be the linear map $Y'(w) = Y(I_k(1,w) \o 1)$ for $w \in H\sot{n-2}$, then
\[Y\bigg(\bigotimes_{p=1}^{n} v^{[p]}\bigg) = v_i^{[k]}Y'\bigg(\bigotimes_{p=1}^{k-1} v_i^{[p]} \o \bigotimes_{p=k+1}^{n-1} v_i^{[p]}\bigg) v_i^{[n]}\] 
for all $v\in H\sot{n}$. Then  $\tnu_{n,k}(H_F)$ is equal to
\begin{align*}
&\ \ls \biggl(S^{-1}(\ff{j\,(\ol{1-k})}{1})\Lda{k}Y'\Bigl(\bigotimes_{p=1}^{k-1}  S(\ff{j}{k+1-p})u^{-1} \ff{i}{p}\Lda{p}  \o \bigotimes_{p=k+1}^{n-1}  S^{-1}(\ff{j\, (\ol{1-p})}{1})\Lda{p}\Bigl)S^{-1}(\ff{j\, (1)}{1})\Lda{n}\biggl) \\ 
=& \ \ls \biggl(\Lda{k} \ff{j\, (n)}{1}Y'\Bigl(\bigotimes_{p=1}^{k-1}  S(\ff{j}{k+1-p})u^{-1} \ff{i}{p}\Lda{p} \ff{j\, (n-k+p)}{1} \o \bigotimes_{p=k+1}^{n-1}  S^{-1}(\ff{j\, (\ol{1-p})}{1})\Lda{p}\ff{j\, (n-k+p)}{1}\Bigl)\\
& \qquad\qquad \cdot S^{-1}(\ff{j\, (1)}{1})\Lda{n}\biggl) \text{ by Theorem \ref{t:t2} (vi)}\\
=& \ {\ls\circ Y\biggl(\bigotimes_{p=1}^{k-1}  \ff{j\, (n-k+\ol{p-k})}{1}S(\ff{j}{k+1-p})u^{-1} \ff{i}{p}\Lda{p}  \o \Lda{k}\o \bigotimes_{p=k+1}^{n} \ff{j\,(n-k+\ol{p-k})}{1} S^{-1}(\ff{j\, (n+1-p)}{1})\Lda{p}\biggl)}\\
=& \ \ls\circ Y\biggl(\bigotimes_{p=1}^{k-1}  \ff{j\, (2n-2k+p)}{1}S(\ff{j}{k+1-p})u^{-1} \ff{i}{p}\Lda{p}  \o \Lda{k}\o \bigotimes_{p=k+1}^{n} \ff{j\,(n-2k+p)}{1} S^{-1}(\ff{j\, (n+1-p)}{1})\Lda{p}\biggl)\,.
\end{align*}
By repeatedly applying the antipode condition starting at $p=k+1$, we find
\begin{align*}
 \tnu_{n,k}(H_F) =& \ \ls\circ Y\biggl(\bigotimes_{p=1}^{k-1}  \ff{j\, (p)}{1}S(\ff{j}{k+1-p})u^{-1} \ff{i}{p}\Lda{p} \o \bigotimes_{p=k}^{n} \Lda{p}\biggl) \\
 =& \ \ls\circ Y\biggl(\bigotimes_{p=1}^{k-1}  \dd{j}{p} u u^{-1} \ff{i}{p}\Lda{p} \o \bigotimes_{p=k}^{n} \Lda{p}\biggl), \quad \text{ by \eqref{eq:u-f-Sf-3} of Lemma \ref{lem:trick-3} (iv),}\\
 =&\   \ \ls\circ Y\biggl(\bigotimes_{p=1}^{n}  \Lda{p} \biggl)\, = \,\tnu_{n,k}(H). \qedhere
\end{align*}
\end{proof}

\section{Kuperberg invariants of genus 2 framed 3-manifolds}\label{sec:Kmn}
So far we have been working with 3-manifolds of genus 1. It is then natural to ask whether there exist framed 3-manifolds of higher genera whose Kuperberg invariants are gauge invariant. In this section, we answer these questions by studying a family of 3-manifolds of genus 2, whose fundamental groups are certain central extensions of triangle groups studied by Milnor in his investigation of the Brieskorn manifolds \cite{Mil75}. 

We start with a brief review on the triangle groups. Let $p$, $q$, $r \ge 2$ be integers. The \emph{full (Schwarz) triangle group} $\Theta^{*}(p, q, r)$ admits the following presentation \cite[Thm.~2.2]{Mil75}
\[\Theta^{*}(p, q, r) \cong \langle a, b, c \mid a^{2} = b^{2} = c^{2} = (ab)^{p} = (bc)^{q} = (ca)^{r} = 1 \rangle\,.\]
Geometrically, $\Theta^{*}(p, q, r)$ is the group generated by reflections along the edges of the triangle with interior angles $\pi/p$, $\pi/q$ and $\pi/r$ in $\BP(p,q,r)$, the 2-dimensional simply-connected Riemannian manifold of constant Gauss curvature. Namely, 
\[\BP(p,q,r) = 
\begin{cases}
  \BS^{2} & \text{ if }\ p^{-1}+q^{-1}+r^{-1} > 1\,,\\
  \BH^{2} & \text{ if }\ p^{-1}+q^{-1}+r^{-1} < 1\,,\\
  \BR^{2} & \text{ if }\ p^{-1}+q^{-1}+r^{-1} = 1\,,
\end{cases}\]
where $\BS^{2}$, $\BH^{2}$ and $\BR^{2}$ stand for the unit sphere, the hyperbolic plane and the Euclidean plane respectively. For simplicity, we write $\BP$ for $\BP(p, q, r)$. By \cite[Cor.~2.5]{Mil75}, the index 2 subgroup $\Theta(p,q,r)$ of $\Theta^{*}(p,q,r)$ consisting of all orientation preserving elements of $\Theta^{*}(p,q,r)$ admits the following presentation  
\[\Theta(p, q,r) = \langle u, v, w \mid u^{p} = v^{q} = w^{r} = uvw = 1 \rangle\,. \]
Denote the connected Lie group of orientation preserving isometries of $\BP$ by $\Isom^{+}(\BP)$. Then we have
\[\Isom^{+}(\BP) = 
  \begin{cases}
    \SO(3) & \text{if }\ \BP = \BS^{2}\,,\\
    \PSL(2,\BR) & \text{if }\ \BP = \BH^{2}\,,\\
    \BR^{2} \rtimes \SO(2) & \text{if }\ \BP = \BR^{2}\,.
  \end{cases}
\]
As is listed in \cite[Tbl.~1]{RV81}, the quotient space $X(p,q,r): = \Theta(p,q,r)\backslash\Isom^{+}(\BP)$ is a compact 3-manifold whose fundamental group, called the \emph{centrally extended triangle group} in \cite{Mil75}, admits the following presentation \cite[Lem.~3.1]{Mil75}
\begin{equation}\label{eq:Gamma} 
\Gamma(p,q,r) := \pi_{1}(X(p,q,r)) \cong \langle x, y, z \mid x^{p} = y^{q} = z^{r} = xyz \rangle \,.
\end{equation}
If we denote the universal covering group of $\Isom^{+}(\BP)$ by $\widetilde{\Isom^{+}}(\BP)$, then we have  
\[X(p,q,r) = \Theta(p,q,r)\backslash \Isom^{+}(\BP) \cong \Gamma(p,q,r)\backslash\widetilde{\Isom^{+}}(\BP)\,,\]
and $\Gamma(p,q,r)$ is the inverse image of $\Theta(p,q,r)$ under the covering map $\widetilde{\Isom^{+}}(\BP) \to \Isom^{+}(\BP)$.

In this section, we study the Kuperberg invariant of 
\[\BM_{m,n} := X(2, m+1, n+1)\]
for any pairs of integers $m \ge 1$, $n \ge 1$. Such a manifold can be obtained by (Dehn) surgery on $\BS^3$ along framed links whose planar diagrams are depicted by either (a), (b), (c) or (d) in Figure \ref{fig:Mmn-link}, where the number near each strand represents the number of twists (or framing) on the corresponding strand. Note that since one can get the framed link diagram (b) by the Kirby (or the Fenn-Rourke) move \cite{Kir78, FR79} from (a), and similarly one can obtain (c) from (a) and (d) from (c), we indeed have framed link representations of the same manifold.
\begin{figure}
    \centering
    \includegraphics[width=0.9\linewidth]{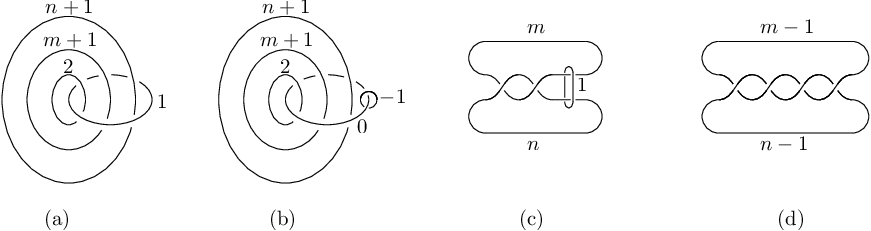}
    \caption{Framed link presentations of $\BM_{m,n}$.}
    \label{fig:Mmn-link}
\end{figure}
\begin{remark}\label{rmk:Mmn}
For completeness, we provide the following remarks for the interested reader.
\begin{enumerate}[label=\rm (\roman{*}), itemsep=3pt, leftmargin=*]
\item 
For nonnegative integers $m$, $n$ such that $mn=0$, the framed link presentations in Figure \ref{fig:Mmn-link} are still valid, and the corresponding 3-manifold can be described as follows. Without loss of generality, assume $n=0$, then in Figure \ref{fig:Mmn-link}(a), the outer vertical $(n+1)$-framed unknot wraps around the 1-framed horizontal unknot, and the handle-slide move implies that we can delete the $(n+1)$-framed unknot and change the framing of the horizontal unknot to 0. Then the cutting property of the 0-framed unknot in Kirby calculus (see, for example, \cite{Kir78, MP94}) implies that we end up with a lens space $L(m+3,1)$. In fact, we can see that the group presentation in \eqref{eq:Gamma} can still be used in this case: Take $p = 2$, $q = m+1$ and $r = n+1 = 1$, then $z^r = xyz$ implies $xy = 1$, and $x^2 = y^{m+1}$ implies that 
\[\langle x, y, z \mid x^{p} = y^{q} = z^{r} = xyz \rangle \cong \langle y \mid y^{m+3} \rangle \cong \BZ/(m+3)\BZ \cong \pi_1(L(m+3,1))\,.\]

\item 
When $m = n = 1$, $\widetilde{\Isom^{+}}(\BP) \cong \SU(2)$, which is  simply the 3-sphere $\BS^3$ topologically. Using \eqref{eq:Gamma}, it is easy to see that $\BM_{1,1} \cong \BS^3/Q_8$, and Figure \ref{fig:Dn-local-3} indeed reduces to Figure \ref{fig:Q8}.

\item For any $m \ge 1$, $n \ge 1$, consider the framed link presentation of $\BM_{m,n}$ given by Figure \ref{fig:Mmn-link}(b). Since the Dehn surgery on $\BS^3$ along a 0-framed unknot results in $\BS^2\times \BS^1$ (see, for example, \cite{Rol76}), we can see that $\BM_{m,n}$ is a Seifert manifold over $\BS^2$ with $4$ singular fibers. Such a manifold is called a \emph{large Seifert manifold} \cite[Sec.~5.3]{Orl72}, and by Thm.~6 in loc.~cit., they are completely determined by their fundamental groups. 
\end{enumerate}    
\end{remark}

It is illustrated in \cite[Prop.\ 7]{Rol76} that one can convert a framed link diagram presentation of the Poincare homology sphere into a Heegaard diagram, and this can be generalized to arbitrary 3-manifolds. We  convert the link diagram Figure \ref{fig:Mmn-link}(d) into a Heegaard diagram of $\BM_{m,n}$, which is shown in Figure \ref{fig:Dn-local-3} (see also \cite{GH07}). 

As before, in Figure \ref{fig:Dn-local-3}, the four black circles are attaching circles indicating the positions of handles. The horizontal green and violet lines represent the lower curves, and the blue and red strands stand for the upper curves. For the discussions below, we also added combings and twist fronts on the Heegaard diagram, where the gray dashed arrows represent the combing $b_1$, and the four base points, colored in gray, are connected by the black twist fronts. This makes Figure \ref{fig:Dn-local-3} a 2-combed Heegaard diagram for $\BM_{m,n}$, denote by $D$.

By an abuse of notations, denote the fundamental group of the 3-manifold that $D$ represents be denoted by $\pi_1(D)$. We show that $\pi_1(D)$ is isomorphic to $\pi_1(\BM_{m,n}) \cong \Gamma(2, m+1, n+1)$, which, by the discussions above, provides an alternative argument that $D$ is indeed a Heegaard diagram of $\BM_{m,n}$. Start by choosing a base point, say, a point slightly below $p_1$ in Figure \ref{fig:Dn-local-3}. Then we choose two generators of $\pi_1(D)$ to be the loops at the base point that winds once around the upper right handle clockwise (denoted by $a$) and the lower right handle anticlockwise (denoted by $b$) respectively. By reading the homotopy class of the two upper curves in terms of these chosen generators,  we can write down the presentation of $\pi_1(D)$ as
\[\pi_1(D) = \langle a, b\mid aba^{-m}b=ab^{-n}ab=1 \rangle \cong \langle a, b\mid a^m = bab, b^n = aba \rangle\,.\]

Now we can compare the above presentation with the presentation in \eqref{eq:Gamma} when $p = 2$, $q = m+1$, and $r = n+1$. More precisely, in this case, we have 
\[\Gamma(2, m+1, n+1) = \langle x, y, z \mid x^2 = y^{m+1} = z^{n+1} = xyz \rangle\,. \]
Therefore, we have $x = yz$, which implies that $y^{m+1} = yzyz$, so $y^m=zyz$. Similarly, $z^{n+1} = yzyz$ implies $z^{n} = yzy$. Therefore, by assigning $y \mapsto a$ and $z \mapsto b$, we can establish an isomorphism of fundamental groups $\pi_1(D) \cong \Gamma(2, m+1, n+1) \cong \pi_1(\BM_{m,n})$ as desired.

\begin{figure}[ht]
    \centering
    \includegraphics[width=300pt]{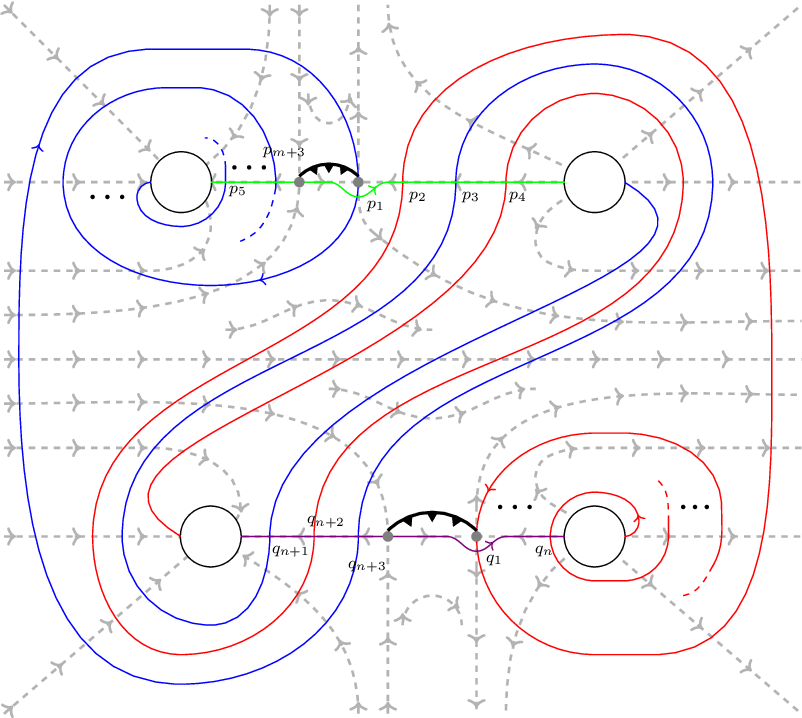}
    \caption{A 2-combed Heegaard diagram for $\BM_{m,n}$.}
    \label{fig:Dn-local-3}
\end{figure}

We list the rotation numbers of $D$ in the tables below. First of all, it is easy to read the rotation numbers of the $I$-points along the lower curves.

\begin{table}[ht]
\begin{tabular}{|c|c|c|c|c|}
\hline
\multirow{3}{*}{$\eta_1$ (green)} & 
$\eta_1 \cap I$ & $p_1$ & $p_2, ..., p_{m+3}$ & Total \\
\cline{2-5}& $\theta$ & $\frac{1}{4}$ & $\frac{1}{2}$ & $\frac{1}{2}$ \\
\cline{2-5}& $\phi$ & $0$ & $0$ & $\frac{1}{2}$  \\
\hline
\multirow{3}{*}{$\eta_2$ (violet)} & 
$\eta_2 \cap I$ & $q_{1}$ & $q_2, ..., q_{n+3}$ & Total \\
\cline{2-5}& $\theta$ & $\frac{1}{4}$ & $\frac{1}{2}$ & $\frac{1}{2}$ \\
\cline{2-5}& $\phi$ & $0$ & $0$ & $\frac{1}{2}$  \\
\hline
\end{tabular}
\caption{}
\label{tbl:Mmn-1}
\end{table}

Then we list the rotation numbers on the upper curve and give the $S$-term in \eqref{eq:PMDH} (note that by design, there is no $T$-term). The $I$-points below are listed in accordance with the orientations of the upper curves, starting from their base points. By the description of the evaluation process of the Kuperberg invariant (below Definition \ref{def:Kup}), this will help us write down the formula for the Kuperberg invariant.

\begin{table}[ht]
\begin{tabular}{|c|c|c|c|c|c|c|c|}
\hline
\multirow{4}{*}{$\mu_1$ (blue)} & 
$\mu_1 \cap I$ & $p_1$ & $p_{m+3}, ..., p_{5}$& $q_{n+1}$ & $p_{3}$ & $q_{n+3}$ & Total \\
\cline{2-8}& $\theta$ & $0$ & $-\frac{3}{4}$ &$-\frac{3}{4}$   &$-\frac{1}{4}$ & $\frac{1}{4}$ & $-\frac{1}{2}$ \\
\cline{2-8}& $S$-term & $S$ & $S^3$ & $S^3$ & $S^2$ & $S$ & \\
\cline{2-8}& $\phi$ & $0$ & $0$ & $0$ & $0$ & $0$ & $\frac{1}{2}$  \\
\hline
\end{tabular}
\caption{}
\label{tbl:Mmn-2}
\end{table}
\begin{table}[ht]
\begin{tabular}{|c|c|c|c|c|c|c|c|}
\hline
\multirow{4}{*}{$\mu_2$ (red)} & 
$\mu_2 \cap I$ & $q_{1}$ & $p_{2}$ & $q_{n+2}$ & $p_{4}$ & $q_{n}, ..., q_{2}$ & Total \\
\cline{2-8}& $\theta$ & $0$ & $\frac{1}{4}$ & $-\frac{1}{4}$ & $-\frac{3}{4}$ & $-\frac{3}{4}$ & $-\frac{1}{2}$ \\
\cline{2-8}& $S$-term & $S$ & $S$ & $S^2$ & $S^3$ & $S^3$ &\\
\cline{2-8}& $\phi$ & $0$ & $0$ & $0$ &$0$& $0$ &$\frac{1}{2}$  \\
\hline
\end{tabular}
\caption{}
\label{tbl:Mmn-3}
\end{table}

The following lemma follows immediately from the tables above.

\begin{lemma}
The 2-combed Heegaard diagram in Figure \ref{fig:Dn-local-3} is a framed Heegaard diagram.\qed
\end{lemma}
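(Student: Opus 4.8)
The plan is to verify the admissibility condition \eqref{eq:adm} directly from the rotation-number data compiled in Tables \ref{tbl:Mmn-1}, \ref{tbl:Mmn-2} and \ref{tbl:Mmn-3}. Recall that a 2-combed Heegaard diagram is by definition a framed Heegaard diagram precisely when $\theta_\mu = -\phi_\mu$ holds for each upper curve $\mu$ and $\theta_\eta = \phi_\eta$ holds for each lower curve $\eta$. Since the diagram $D$ of Figure \ref{fig:Dn-local-3} has exactly two lower curves $\eta_1, \eta_2$ and two upper curves $\mu_1, \mu_2$, the statement reduces to four numerical identities among the total rotation numbers.

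First I would justify the table entries by tracing each curve from its base point, exactly as in the genus-one computations of Proposition \ref{p:S-exponents} and Lemma \ref{lem:fL-diag}. For each lower curve, traveling rightward along $\eta_i$ and then back through the handle, the tangent of $\eta_i$ makes one half-turn relative to $b_1$, giving $\theta_{\eta_i} = \tfrac{1}{2}$; since the twist fronts meet each curve only at its base point and no $I$-point lies on a twist front, the only contribution to $\phi_{\eta_i}$ is the half-sign at the base point, so $\phi_{\eta_i} = \tfrac{1}{2}$. For the upper curves $\mu_1, \mu_2$, I would accumulate the quarter- and three-quarter-turn contributions recorded in the $\theta$-rows as the curve winds through the two pairs of attaching circles in the order dictated by its orientation, arriving at the totals $\theta_{\mu_1} = \theta_{\mu_2} = -\tfrac{1}{2}$; again $\phi_{\mu_j} = \tfrac{1}{2}$ comes solely from the base point, consistent with the vanishing of all $T$-terms.

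With the totals in hand the verification is immediate: $\theta_{\eta_i} = \tfrac{1}{2} = \phi_{\eta_i}$ for $i = 1,2$, and $\theta_{\mu_j} = -\tfrac{1}{2} = -\phi_{\mu_j}$ for $j = 1,2$, so \eqref{eq:adm} holds and $D$ is a framed Heegaard diagram of $\BM_{m,n}$.

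The main obstacle is not the final arithmetic but the correct bookkeeping behind the table entries — in particular tracking the traversal order of the $I$-points along each upper curve as it threads through both handles, and getting the local $\pm\tfrac{1}{4}$ and $\pm\tfrac{3}{4}$ rotation increments (together with the base-point half-contributions) right with the proper sign conventions. Because the upper curves interleave the points of both lower curves and pass through the handles several times, a sign or ordering error in any single local segment would corrupt the total; the safeguard is that the increments must sum to the half-integers forced by admissibility, which I would use as a consistency check at each stage.
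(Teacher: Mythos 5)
Your proposal is correct and follows essentially the same route as the paper: the paper compiles the rotation numbers $\theta$ and $\phi$ for the two lower and two upper curves into Tables \ref{tbl:Mmn-1}, \ref{tbl:Mmn-2} and \ref{tbl:Mmn-3} by tracing each curve from its base point, and the lemma is then immediate from the admissibility condition \eqref{eq:adm}, exactly as you verify with $\theta_{\eta_i} = \phi_{\eta_i} = \tfrac{1}{2}$ and $\theta_{\mu_j} = -\phi_{\mu_j} = -\tfrac{1}{2}$. Your totals agree with the paper's tables, so the check goes through as stated.
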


We can now state the main theorem of this section. Recall the Sweedler power $P^{(n)}$ of a Hopf algebra $H$ is $P^{(n, 1)}: H \to H$, $x \mapsto m\Delta^n(x)$.

\begin{thm}\label{t:genus_2}
Let $H$ be a finite-dimensional Hopf algebra over a field $\kk$. For any positive integers $m, n$, denote the diagram framing of $\BM_{m,n}$ associated to the framed Heegaard diagram in Figure \ref{fig:Dn-local-3} by $\frm_{m,n}$, then we have
\begin{equation}
\begin{split}
\label{eq:Kmn}
&K(\BM_{m,n}, \frm_{m,n}, H) =  \Tr\left((S \o S)\circ \Psi^{m,n}\right) \\
= &\lambda S\left(S^{-2}(\Lambda^2_{(4)})S^{-1}(\Lambda^1_{(2)})\Lambda^2_{(2)}P^{(m)}(\Lambda^1_{(4)})\right)\cdot 
\lambda S\left(P^{(n-1)}(\Lambda^2_{(1)})\Lambda^1_{(3)}S^{-1}(\Lambda^2_{(3)})S^{-2}(\Lambda^1_{(1)})\Lambda^2_{(5)}\right)
\end{split}
\end{equation}
where $\Psi^{m,n}: H^{\otimes 2} \to  H^{\otimes 2}$ is the $\kk$-linear operator defined by
\begin{equation} \label{eq:Psi}
    \Psi^{m, n}(x\otimes y)=S^{-2}(y_{(4)})S^{-1}(x_{(2)})y_{(2)}P^{(m-1)}(x_{(4)})\otimes P^{(n-1)}(y_{(1)})x_{(3)}S^{-1}(y_{(3)})S^{-2}(x_{(1)})
\end{equation}
for all $x$, $y \in H$.
Moreover, $K(\BM_{m,n}, \frm, H)$ above is a gauge invariant of $H$. 
\end{thm}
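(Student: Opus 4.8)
The plan is to follow the template of the genus $1$ argument (Theorems \ref{t:KInd} and \ref{t:Lnk-fR}), but now over $H^{\o 2}$. I would split the proof into two parts: first deriving the closed formula \eqref{eq:Kmn}, and then establishing its invariance under Drinfeld twists.

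For the formula, having recorded (in the lemma preceding the theorem) that Figure \ref{fig:Dn-local-3} is a framed Heegaard diagram, the permutation $\s_D$ and the exponents $s_r$ are read directly off Tables \ref{tbl:Mmn-1}--\ref{tbl:Mmn-3}. Since $\theta(\eta_1)=\theta(\eta_2)=\tfrac12$ and $\theta(\mu_1)=\theta(\mu_2)=-\tfrac12$, equation \eqref{eq:twisted-ld} gives $\Ld_{\theta(\eta_i)}=\Ld$ and $\ld_{-\theta(\mu_j)}=\ld\circ S^{-1}$, and every $T$-term is trivial. Applying $\De^{m+3}$ to $\Ld^1$ and $\De^{n+3}$ to $\Ld^2$, labelling the $I$-points by coproduct factors and concatenating along $\mu_1$ and $\mu_2$ as prescribed after Definition \ref{def:Kup}, the evaluation \eqref{eq:KInv} factors as a product of two terms $\langle\ld\circ S^{-1},W_1\rangle\cdot\langle\ld\circ S^{-1},W_2\rangle$. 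The consecutive blocks $p_{m+3},\dots,p_5$ on $\mu_1$ and $q_n,\dots,q_2$ on $\mu_2$ come from consecutive coproduct factors of $\Ld^1$ and $\Ld^2$ decorated by $S^3=S\circ S^2$; using that $S^2$ is an algebra homomorphism and $S$ an anti-homomorphism, their products collapse into $S^3\bigl(P^{(m-1)}(\cdot)\bigr)$ and $S^3\bigl(P^{(n-1)}(\cdot)\bigr)$. A final application of the Radford trace formula (Theorem \ref{t:t2}(v)) to each factor, run in tandem over $H^{\o 2}$ with integral $\Ld\o\Ld$ and cointegral $\ld\o\ld$, rewrites the product as $\Tr\bigl((S\o S)\circ\Psi^{m,n}\bigr)$ with $\Psi^{m,n}$ as in \eqref{eq:Psi}.

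For gauge invariance I would show $\Tr\bigl((S\o S)\circ\Psi^{m,n}\bigr)=\Tr\bigl((S_F\o S_F)\circ\Psi^{m,n}_F\bigr)$ for any $2$-cocycle $F$, where $\Psi^{m,n}_F$ is built from the twisted structure of $H_F$. Writing $\Psi^{m,n}_F$ through $\De^{\bullet}_F=F_\bullet\De^\bullet F_\bullet^{-1}$ (Lemma \ref{lem:F-n-plus-1}) and $S_F(h)=uS(h)u^{-1}$ from \eqref{eq:S_F}, the twisted Radford trace formula over $H_F^{\o 2}$ expresses the right-hand side as $\ld\o\ld$ evaluated against an element of $H^{\o 2}$ decorated by the boundary factors $F_{m+3},F_{n+3}$ and by $u,u^{-1}$. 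I would then apply the genus $2$ analogue of Lemma \ref{l:t1} to absorb the outermost $F$-factors onto the integrals, and invoke Corollary \ref{c:t2} precisely to transport the $2$-cocycle data through the Sweedler blocks $P_F^{(m-1)},P_F^{(n-1)}$. After this, the computation becomes a bookkeeping of the conjugating elements $Q_s$ of \eqref{eq:Qs} produced by the powers $S_F^{2s}$. As in the proof of Theorem \ref{t:Lnk-fR}, the relations \eqref{eq:u-f-Sf-3}, \eqref{eq:Sdd-vd}, \eqref{eq:fSf-dv} and \eqref{eq:f-DQ-Q-Sf} of Lemma \ref{lem:trick-3}(iv)--(v), together with the antipode identities $\dd{i}{1}uS(\dd{i}{2})=1$ and $u^{-1}=S(\dd{i}{1})\dd{i}{2}$, collapse every surviving $\ff{\bullet}{\bullet}$, $\dd{\bullet}{\bullet}$, $u$, $u^{-1}$ and $Q$ into identities, returning the expression to the untwisted decoration, i.e.\ $\Tr\bigl((S\o S)\circ\Psi^{m,n}\bigr)$.

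The hard part will be the \emph{simultaneous} cancellation across the two tensor factors. Unlike the genus $1$ situation of Theorem \ref{t:Lnk-fR}, which carried a single cyclic structure coming from $\s_{(n,-k)}$, here the upper curve $\mu_1$ threads through $I$-points lying on both lower curves (the points $q_{n+1},q_{n+3}$), and symmetrically for $\mu_2$; consequently the $F$-factors attached to $\Ld^1$ and to $\Ld^2$ are entangled and cannot be cancelled one tensor slot at a time. Tracking the $Q_s$-conjugations generated inside the two Sweedler blocks $P_F^{(m-1)}$ and $P_F^{(n-1)}$ while respecting this entanglement between the two handles is where the bulk of the technical work lies, and is the genuine new ingredient beyond the genus $1$ proof. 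Once the equality of the twisted and untwisted traces is established, the gauge invariance of $K(\BM_{m,n},\frm_{m,n},H)$ is immediate from the characterization of gauge equivalence by Drinfeld twists recorded in Section \ref{sec:alg-prelim}.
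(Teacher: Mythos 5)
Your plan follows essentially the same route as the paper's proof: the formula \eqref{eq:Kmn} is obtained from Tables \ref{tbl:Mmn-1}--\ref{tbl:Mmn-3} and the Radford trace formula exactly as you describe, and gauge invariance is established by applying Lemma \ref{l:t1} (simply twice, once to each tensor factor --- no genus-2 analogue is required), then Lemma \ref{lem:trick-3}(i) to split $F_{m+3}$ and $F_{n+3}$, Corollary \ref{c:t2}(i)--(ii) to transport the cocycle elements across the two entangled factors, and the identities \eqref{eq:Sdd-vd}, \eqref{eq:fSf-dv}, \eqref{eq:fSf-vSd} to collapse everything. One correction of emphasis: since $\Psi^{m,n}$ involves only the fixed powers $S^{-1}$, $S^{-2}$ and the Sweedler powers $P^{(m-1)}$, $P^{(n-1)}$ contain no antipode at all, the paper never invokes the $Q_s$-machinery of \eqref{eq:Qs} --- it substitutes $S_F^{-1}(x)=S^{-1}(u^{-1}xu)$ and $S_F^{-2}(x)=S^{-1}(u)S^{-2}(u^{-1}xu)S^{-1}(u^{-1})$ directly, so the cross-factor entanglement you correctly single out as the crux is handled entirely by Corollary \ref{c:t2}, not by tracking conjugating elements inside the Sweedler blocks (which generate none).
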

\begin{proof}
By Tables \ref{tbl:Mmn-1}, \ref{tbl:Mmn-2} and \ref{tbl:Mmn-3}, we can write down the formula for $K(\BM_{m,n}, \frm_{m,n}, H)$ as follows. As is indicated in the tables, by our choice of orientation, starting from their base points, $\mu_1$ passes $p_1$, $p_{m+3}$, ..., $p_5$, $q_{n+1}$, $p_3$, $q_{n+3}$ and $\mu_2$ passes $q_1$, $p_2$, $q_{n+2}$, $p_4$, $q_n$, ..., $q_2$ sequentially. Moreover, recall that in \eqref{eq:twisted-ld}, we have $\ld_{-\theta(\mu_1)}=\ld_{-\theta(\mu_2)}=\ld_{\tfrac{1}{2}}=\ld\circ S^{-1}$ {and $\Ld_{\frac{1}{2}} = \Ld$.} So 
\[
\begin{split}
&\, K(\BM_{m,n}, \frm_{m,n}, H) \\
=&\, \ld S^{-1}\left(S(\Lda{1}^{1}) S^3(\Lda{m+3}^{1}) \cdots S^3(\Lda{5}^{1}) S^3(\Lda{n+1}^{2}) S^2(\Lda{3}^{1}) S(\Lda{n+3}^{2})\right)\\
&\quad \cdot \ld S^{-1}\left(S(\Lda{1}^2) S(\Lda{2}^{1}) S^2(\Lda{n+2}^{2}) S^3(\Lda{4}^{1}) S^3(\Lda{n}^{2}) \cdots S^3(\Lda{2}^{2}) \right)\\
=&\, \ld \left(\Ld^{2}_{(5)} S(\Ld^{1}_{(3)}) S^{2}(\Ld^{2}_{(3)}) S^{2}(P^{m-1}(\Ld^{1}_{(5)})) \Ld^{1}_{(1)}\right)\cdot 
\ld \left(S^{2}(P^{n-1}(\Ld^{2}_{(2)})) S^{2}(\Ld^{1}_{(4)}) S(\Ld^{2}_{(4)}) \Ld^{1}_{(2)} \Ld^{2}_{(1)}  \right)\\
= &\, (\ld \o \ld)\bigg((S^2 \o S^2)\Psi^{m,n}\Big(\Lda{2}^1 \o \Lda{2}^2\Big) (\Lda{1}^1 \o \Lda{1}^2) \bigg) \\
= &\, \Tr((S \ot S) \circ \Psi^{m,n}) 
\end{split}
\]
by {the second} Radford's trace formula (Theorem \ref{t:t2} (v)). The second equation of \eqref{eq:Kmn} follows immediately from the first trace formula of Theorem \ref{t:t2} (v) for the operator $(S \o S)\circ \Psi^{m,n}$ on $H \o H$, and  this finishes the proof of the the first part of Theorem \ref{t:genus_2}.

Now, we prove the gauge invariance of $K(\BM_{m,n}, \frm_{m,n}, H)$ which will be abbreviated as  $K^{m,n}$ for simplicity. Let $F = \ff{i}{1} \o \ff{i}{2} \in H\o H$ be any 2-cocycle of $H$.  We denote by $\Psi^{m,n}_F: H_F \o H_F \to H_F \o H_F$ the associated operator defined in \eqref{eq:Psi}, $K^{m,n}_F : = K(\BM_{m,n}, \frm_{m,n}, H_F)$ and $F^{-1} = \dd{j}{1}\o \dd{j}{2}$.  In particular, if $\Delta_F(x)=\tilde{x}_{(1)} \o \tilde{x}_{(2)}$ and $\Delta_F(y)=\tilde{y}_{(1)} \o \tilde{y}_{(2)}$, then
\begin{align*}
\Psi_F^{m,n}(x \o y) =&\, S_F^{-2}(\tilde{y}_{(4)})S_F^{-1}(\tilde{x}_{(2)})\tilde{y}_{(2)}P_F^{(m-1)}(\tilde{x}_{(4)})\otimes P_F^{(n-1)}(\tilde{y}_{(1)})\tilde{x}_{(3)}S_F^{-1}(\tilde{y}_{(3)})S_F^{-2}(\tilde{x}_{(1)})\,.
\end{align*}
 By the first Radford’s trace formula of Theorem \ref{t:t2} (v), we have
\begin{align*}
K^{m, n}_F =&\, \Tr((S_{F} \ot S_{F}) \circ \Psi^{m,n}_{F}) \\
            = & \, (\ld \o \ld)\bigg( (S(\Lda{2}^1) \o S(\Lda{2}^2))(S_F\o S_F)\Psi^{m,n}_F(\Lda{1}^1 \o \Lda{1}^2)\bigg) \\
            =&  \, \ld\bigg(S(\Lda{2}^1) S_F \Big(S_F^{-2}(\tilde{\Ld}^2_{(1,4)})S_F^{-1}(\tilde{\Ld}^1_{(1,2)})\tilde{\Ld}^2_{(1,2)}P_F^{(m-1)}(\tilde{\Ld}^1_{(1,4)})\Big)\bigg) \\
            &\  \cdot \ld\bigg(S(\Lda{2}^2) S_F\Big(P_F^{(n-1)}(\tilde{\Ld}^2_{(1,1)})\tilde{\Ld}^1_{(1,3)}S_F^{-1}(\tilde{\Ld}^2_{(1,3)})S_F^{-2}(\tilde{\Ld}^1_{(1,1)})\Big), 
\end{align*}
where $\Delta^4_F(\Lda{1}^i)\o \Lda{2}^i= \tLd{1,1}^i \o \tLd{1,3}^i\o \tLd{1,3}^i\o\tLd{1,4}^i \o \Lda{2}^i$, $i=1,2$.
Now, we apply Lemma \ref{l:t1} to $\Ld^1$ to obtain
\begin{align*}
K_F^{m,n}=&  \, \ls\bigg(\dd{j}{1} S_F^{-2}(\tLd{1,4}^2)S_F^{-1}(\ff{i}{2}\Lda{2}^1 \dd{j}{3})\tLd{1,2}^2\Big(\prod_{w=4}^{m+2} \ff{i}{w} \Lda{w}^1 \dd{j}{w+1}\Big) \ff{i}{m+3}\Lda{m+3}^1\bigg) \\
            &\  \cdot \ld\bigg(S(\Lda{2}^2) S_F\Big(P_F^{(n-1)}(\tLd{1,1}^2)\ff{i}{3}\Lda{1,3}^1\dd{j}{4}S_F^{-1}(\tLd{1,3}^2)S_F^{-2}(\ff{i}{1}\Lda{1}^1 \dd{j}{2})\Big)
\end{align*}
and then to $\Ld^2$ to rewrite the last expression as
\begin{align*}
&\ls \bigg(\!
\dd{j}{1} S_F^{-2}(\ff{p}{n+2} \Ld^2_{(n+2)} \dd{q}{n+3}) S_F^{-1}(\ff{i}{2}\Ld^1_{(2)}\dd{j}{3}) \ff{p}{n} \Ld^2_{(n)} \dd{q}{n+1}
\Big(\prod_{w=4}^{m+2} \ff{i}{w} \Ld_{(w)}^{1} \dd{j}{w+1}\Big) f^{[m+3]}_i\Lambda^1_{(m+3)}
\bigg)\\
& \cdot \lambda S \bigg(
\dd{q}{1}\Big(\prod_{\ell=1}^{n-1} \ff{p}{\ell}\Ld_{(\ell)}^{2} \dd{q}{\ell+1}\Big)
 \ff{i}{3} \Lambda^1_{(3)} \dd{j}{4} S_F^{-1}(\ff{p}{n+1} \Ld^2_{(n+1)} \dd{q}{n+2}) S_F^{-2}(\ff{i}{1}\Lambda^1_{(1)}\dd{j}{2}) \ff{p}{n+3} \Lambda^2_{(n+3)} \bigg)\\
 =&\lambda S\bigg(
\dd{j}{1} S_F^{-2}(\ff{p}{n+2} \Ld^2_{(n+2)} \dd{q}{n+3}) S_F^{-1}(\ff{i}{2}\Ld^1_{(2)}\dd{j}{3}) \ff{p}{n} \Ld^2_{(n)} \dd{q}{n+1}\ff{i}{4}\Ld_{(4)}^{1}
\Big(\prod_{w=5}^{m+3}  \dd{j}{w}\ff{i}{w}\Lambda^1_{(w)}\Big)
\bigg)\\
& \cdot \lambda S \bigg(
\Big(\prod_{\ell=1}^{n-1} \dd{q}{\ell}\ff{p}{\ell}\Ld_{(\ell)}^{2} \Big)
\dd{q}{n} \ff{i}{3} \Lambda^1_{(3)} \dd{j}{4} S_F^{-1}(\ff{p}{n+1} \Ld^2_{(n+1)} \dd{q}{n+2}) S_F^{-2}(\ff{i}{1}\Lambda^1_{(1)}\dd{j}{2}) \ff{p}{n+3} \Lambda^2_{(n+3)} \bigg).
\end{align*}
By Lemma \ref{lem:trick-3} (i), we have 
\[F_{n+3}\!=\!(F_{n-1} \o 1\sot{4})(\Delta^{n-1} \o \id\sot{4})(F_5) \quad\text{and}\quad F_{m+3}\!= \!(1\sot{4}\o F_{m-1})(\id\sot{4} \o \Delta^{m-1})(F_5)\,,\] 
which means $K^{m,n}_F$ is equal to
\begin{align*}
&\lambda S\left(
\dd{j}{1} S_F^{-2}(\ff{p}{4} \Ld^2_{(n+2)} \dd{q}{5}) S_F^{-1}(\ff{i}{2}\Ld^1_{(2)}\dd{j}{3}) \ff{p}{2} \Ld^2_{(n)} \dd{q}{3}\ff{i}{4}\Ld_{(4)}^1
\left(\prod_{w=5}^{m+3}   \dd{j\,(w-4)}{5} \ff{i\,(w-4)}{5}\Lambda^1_{(w)}\right) 
\right)\\
& \cdot \lambda S \left(
\left(\prod_{\ell=1}^{n-1} \dd{q\, (\ell)}{1} \ff{p\, (\ell)}{1}\Ld_{(\ell)}^{2} \right)
\dd{q}{2} \ff{i}{3} \Lambda^1_{(3)} \dd{j}{4} S_F^{-1}(\ff{p}{3} \Ld^2_{(n+1)} \dd{q}{4}) S_F^{-2}(\ff{i}{1}\Lambda^1_{(1)}\dd{j}{2}) \ff{p}{5} \Lambda^2_{(n+3)} \right)\\
=&\lambda S\left(
\dd{j}{1} S_F^{-2}(\ff{p}{4} \Ld^2_{(4)} \dd{q}{5}) S_F^{-1}(\ff{i}{2}\Ld^1_{(2)}\dd{j}{3}) \ff{p}{2} \Ld^2_{(2)} \dd{q}{3}\ff{i}{4}\Ld_{(4)}^1
P^{(m-1)}  \left(\dd{j}{5} \ff{i}{5} \Lambda^1_{(5)}\right)
\right)\\
& \cdot \lambda S \left(
P^{(n-1)} \left(\dd{q}{1} \ff{p}{1}\Lda{1}^2 \right)
\dd{q}{2} \ff{i}{3} \Lambda^1_{(3)} \dd{j}{4} S_F^{-1}(\ff{p}{3} \Ld^2_{(3)} \dd{q}{4}) S_F^{-2}(\ff{i}{1}\Lambda^1_{(1)}\dd{j}{2}) \ff{p}{5} \Lambda^2_{(5)} \right)\,.
\end{align*}
Now, we apply Theorem \ref{t:t2} (iii) to $\dd{j}{5}\ff{i}{5}$ and then $\dd{q}{1}\ff{p}{1}$ in the last expression to obtain 
\begin{align*}
& \lambda S\left(
\dd{j}{1} S_F^{-2}\Big(\ff{p}{4} S^{-1}\big(\dd{q\,(2)}{1} \ff{p\, (2)}{1}\big) \Ld^2_{(4)} \dd{q}{5}\Big) S_F^{-1}\Big(\ff{i}{2}S\big(\dd{j\, (3)}{5} \ff{i\, (3)}{5}\big)\Ld^1_{(2)}\dd{j}{3}\Big) \right.\\
&\qquad \left.\ff{p}{2} S^{-1}\big(\dd{q\,(4)}{1} \ff{p\, (4)}{1}\big) \Ld^2_{(2)} \dd{q}{3}\ff{i}{4}S(\dd{j\, (1)}{5} \ff{i\, (1)}{5})
P^{(m)}  \Big( \Ld^1_{(4)}\Big)
\right)\\
& \cdot \lambda S \left(
P^{(n-1)} \Big(\Lda{1}^2 \Big)
\dd{q}{2} \ff{i}{3} S\big(\dd{j\, (2)}{5} \ff{i\, (2)}{5}\big)\Lda{3}^1 \dd{j}{4} S_F^{-1}\Big(\ff{p}{3} S^{-1}(\dd{q\,(3)}{1} \ff{p\, (3)}{1}) \Ld^2_{(3)} \dd{q}{4}\Big)\right.\\
&\qquad \left. S_F^{-2}\Big(\ff{i}{1}S\big(\dd{j\, (4)}{5} \ff{i\, (4)}{5}\big)\Lda{1}^1\dd{j}{2}\Big) \ff{p}{5} S^{-1}\big(\dd{q\,(1)}{1} \ff{p\, (1)}{1}\big) \Lda{5}^2 \right)\,.
\end{align*}
{Applying Lemma \ref{lem:trick-3} (v)  Equations \eqref{eq:fSf-dv} and \eqref{eq:fSf-vSd} to $\ff{p}{*}$ and $\ff{i}{*}$, we find $K^{m,n}_F$ is equal to}

\begin{align*}
&\lambda S\left(
\dd{j}{1} S_F^{-2}(S^{-1}(\dd{q\,(2)}{1}  \dd{p}{2} u  )\Ld^2_{(4)} \dd{q}{5}) S_F^{-1}(u S(\dd{i}{3}) S(\dd{j\, (3)}{5})\Ld^1_{(2)}\dd{j}{3}) \right.\\
&\qquad\left. S^{-1}(\dd{q\,(4)}{1} \dd{p}{4} u) \Ld^2_{(2)} \dd{q}{3}u S(\dd{i}{1})S(\dd{j\, (1)}{5})
P^{(m)}  \left( \Lda{4}^1\right)
\right)\\
& \cdot \lambda S \left(
P^{(n-1)} \left(\Lda{1}^2 \right)
\dd{q}{2} u S(\dd{i}{2}) S(\dd{j\, (2)}{5})\Lda{3}^1 \dd{j}{4} S_F^{-1}( S^{-1}(\dd{q\,(3)}{1} \dd{p}{3}u) \Ld^2_{(3)} \dd{q}{4})\right.\\
&\qquad\left. S_F^{-2}(u S(\dd{i}{4})S(\dd{j\, (4)}{5})\Lda{1}^1\dd{j}{2})  S^{-1}(\dd{q\,(1)}{1} \dd{p}{1}u) \Lda{5}^2 \right)\\
=&\, \lambda S\bigg(
\dd{j}{1} S_F^{-2}\Big(S^{-1}(\dd{p}{2} u  )\Ld^2_{(4)} \dd{p}{8}\Big) S_F^{-1}\Big(u S(\dd{j}{7})\Ld^1_{(2)}\dd{j}{3}\Big)  S^{-1}(\dd{p}{4} u) \Ld^2_{(2)} \dd{p}{6}u S(\dd{j}{5})
P^{(m)}  \Big( \Lda{4}^1\Big)
\bigg)\\
&  \cdot \lambda S \bigg(
P^{(n-1)} \Big(\Lda{1}^2 \Big)
\dd{p}{5} u S(\dd{j}{6}) \Lda{3}^1 \dd{j}{4} S_F^{-1}\Big( S^{-1}(\dd{p}{3}u) \Ld^2_{(3)} \dd{p}{7}\Big)   S_F^{-2}\Big(u S(\dd{j}{8})\Lda{1}^1\dd{j}{2}\Big)  S^{-1}(\dd{p}{1}u) \Lda{5}^2 \bigg) 
\end{align*}
by applying Lemma \ref{lem:trick-3} (i) to $\dd{q\, (*)}{1}\dd{p}{*}$.  Using Corollary \ref{c:t2} (i) with $x=\dd{j}{1}$ in the preceding expression  to obtain
\begin{align*}
K^{m,n}_F= &\lambda S\bigg(
 S_F^{-2}\Big(S^{-1}(\dd{p}{2} u  )\Ld^2_{(4)} \dd{p}{8}\Big) S_F^{-1}\Big(u S(\dd{j}{7})S^2(\dd{j\,(2)}{1})\Lda{2}^1 S(\dd{j\,(6)}{1})\dd{j}{3}\Big) \\
 & \qquad S^{-1}(\dd{p}{4} u) \Ld^2_{(2)} \dd{p}{6}u S(\dd{j}{5}){S^2(\dd{j\, (4)}{1})}
P^{(m)}  \Big( \Lda{4}^1\Big)
\bigg)\\
\cdot & \lambda S  \Big(P^{(n-1)} \big(\Lda{1}^2 \big)
\dd{p}{5} u S(\dd{j}{6}) S^2(\dd{j\,(3)}{1})\Lda{3}^1 S(\dd{j\,(5)}{1}) \dd{j}{4} S_F^{-1}\Big( S^{-1}(\dd{p}{3}u) \Ld^2_{(3)} \dd{p}{7}\Big) \\
& \qquad S_F^{-2}\Big(u S(\dd{j}{8})S^2(\dd{j\,(1)}{1})\Lda{1}^1 S(\dd{j\,(7)}{1})\dd{j}{2}\Big)  S^{-1}(\dd{p}{1}u) \Lda{5}^2 \bigg) \\
=&\lambda S\bigg(
 S_F^{-2}(S^{-1}(\dd{p}{2} u  )\Ld^2_{(4)} \dd{p}{8}) S_F^{-1}(u S(u^{-1}_{(6)} \dd{j}{6})\Lda{2}^1 u^{-1}_{(2)} \dd{j}{2})  S^{-1}(\dd{p}{4} u) \Ld^2_{(2)} \dd{p}{6}u {S(u^{-1}_{(4)} \dd{j}{4})}
P^{(m)}  \Big( \Lda{4}^1\Big)\bigg)\\
\cdot & \lambda S  \Big(P^{(n-1)} \big(\Lda{1}^2 \big)
\dd{p}{5} u S(u^{-1}_{(5)} \dd{j}{5})\Lda{3}^1 u^{-1}_{(3)} \dd{j}{3} S_F^{-1}\Big( S^{-1}(\dd{p}{3}u) \Ld^2_{(3)} \dd{p}{7}\Big) \\
& \quad S_F^{-2}(u S(u^{-1}_{(7)} \dd{j}{7})\Lda{1}^1 
u^{-1}_{(1)} \dd{j}{1} )S^{-1}(\dd{p}{1}u) \Lda{5}^2 \bigg) \,\text{ by using Lemma \ref{lem:trick-3}  \eqref{eq:Sdd-vd} on $S(\dd{j\,(*)}{1})\dd{j}{9-*}$.}
\end{align*}
It follows from Corollary \ref{c:t2} (i) that the preceding expression is equal to
\begin{align*}
&\lambda S\bigg(S^{-1}(u^{-1})
 S_F^{-2}\Big(S^{-1}(\dd{p}{2} u  )\Ld^2_{(4)} \dd{p}{8}\Big) S_F^{-1}\Big(u S(\dd{j}{6})\Lda{2}^1  \dd{j}{2}\Big)  S^{-1}(\dd{p}{4} u) \Ld^2_{(2)} \dd{p}{6}u S(\dd{j}{{4}})
P^{(m)}  \Big( \Lda{4}^1\Big)
\bigg)\\
\cdot & \lambda S  \bigg(P^{(n-1)} \Big(\Lda{1}^2 \Big)
\dd{p}{5} u S(\dd{j}{5})\Lda{3}^1 \dd{j}{3} S_F^{-1}\Big( S^{-1}(\dd{p}{3}u) \Ld^2_{(3)} \dd{p}{7}\Big)S_F^{-2}\Big(u S(\dd{j}{7})\Lda{1}^1 \dd{j}{1} \Big)S^{-1}(\dd{p}{1}u) \Lda{5}^2 \bigg).
 \end{align*}
Recall that $S_F^{-1}(x) = S^{-1}(u^{-1} x u)$, and so $S_{F}^{-2}(x) = S^{-1}(u)S^{-2}(u^{-1} x u)S^{-1}(u^{-1})$. We have $K^{m,n}_F$ is equal to
\begin{align*}
&\lambda S\bigg(
S^{-2}(u^{-1}S^{-1}(\dd{p}{2} u  )\Ld^2_{(4)} \dd{p}{8} u) S^{-1}(S(\dd{j}{6})\Lda{2}^1\dd{j}{2})S^{-1}(\dd{p}{4}u) \Ld^2_{(2)} \dd{p}{6}u S(\dd{j}{4}) 
P^{(m)}  \Big( \Lda{4}^1\Big)
\bigg)\\
\cdot &\lambda S \bigg(
P^{(n-1)} \Big(\Lda{1}^2 \Big)
\dd{p}{5} u S(\dd{j}{5})\Lda{3}^1  \dd{j}{3} S^{-1}( S^{-1}(\dd{p}{3}u) \Ld^2_{(3)} \dd{p}{7} u)S^{-2}(S(\dd{j}{7})\Lda{1}^1  \dd{j}{1} u)  S^{-1}(\dd{p}{1}) \Lda{5}^2 \bigg)\,.
\end{align*}
Now we apply Corollary \ref{c:t2}  (ii) with $x=S^{-1}(\dd{p}{1})$ to obtain
\begin{align*}
K^{m,n}_F=&\lambda S\bigg(
S^{-2}(u^{-1}S^{-1}(\dd{p}{2} u  ) \dd{p\,(7)}{1}  \Ld^2_{(4)} S(\dd{p\,(1)}{1})\ \dd{p}{8} u) S^{-1}(uS(\dd{j}{6})\Lda{2}^1\dd{j}{2})S^{-1}(\dd{p}{4}) \\
& \qquad \dd{p\,(5)}{1}\Ld^2_{(2)} S(\dd{p\,(3)}{1})\dd{p}{6}u S(\dd{j}{4}) 
P^{(m)}  \Big( \Lda{4}^1\Big)
\bigg)\\
\cdot &\lambda S \bigg(
P^{(n-1)} \Big(\Lda{1}^2 \Big)
S(\dd{p\,(4)}{1})\dd{p}{5} u S(\dd{j}{5})\Lda{3}^1  \dd{j}{3} S^{-1}( S^{-1}(\dd{p}{3}u) \dd{p\,(6)}{1}  \Ld^2_{(3)} S(\dd{p\,(2)}{1}) \dd{p}{7} u)\\
& \qquad S^{-2}(S(\dd{j}{7})\Lda{1}^1  \dd{j}{1} u)  \Lda{5}^2 \bigg)
\end{align*}
which is equal to
\begin{align*}
&\lambda S\bigg(
S^{-2}(u^{-1} \ff{p}{7}   \Ld^2_{(4)} S(\ff{p}{1})) S^{-1}(uS(\dd{j}{6})\Lda{2}^1\dd{j}{2}) S^{-1}(u^{-1}) \ff{p}{5} \Ld^2_{(2)} S(\ff{p}{3}) S(\dd{j}{4}) 
P^{(m)}  \left( \Lda{4}^1\right)
\bigg)\\
\cdot &\lambda S \bigg(
P^{(n-1)} (\Lda{1}^2 )
S(\ff{p}{4}) S(\dd{j}{5})\Lda{3}^1  \dd{j}{3} S^{-1}( \ff{p}{6}   \Ld^2_{(3)} S(\ff{p}{2}) ) S^{-2}(S(\dd{j}{7})\Lda{1}^1  \dd{j}{1} u)  \Lda{5}^2 \bigg)
 \end{align*}
 by Lemma applying \ref{lem:trick-3} (v) Equation \eqref{eq:Sdd-vd} to $S(\dd{p\, (*)}{1})\dd{p}{9-*}$. We now substitute $S^{-2}(u^{-1})=S^{-1}(\dd{i}{1})S^{-2}(\dd{i}{2})$ and  $S^{-2}(u)= S^{-2}(\ff{k}{1})S^{-1}(\ff{k}{2})$, and then apply Corollary \ref{c:t2} (i) with $x=S^{-1}(\dd{i}{1})$ and  (ii) with $x=S^{-1}(\ff{k}{2})$  to obtain
\begin{align*}
K^{m,n}_F\!=&\lambda S\bigg(\!
S^{-2}\Big(\dd{i}{2}\ff{p}{7}  \ff{k\,(7)}{2} \Ld^2_{(4)} S(\ff{k\,(1)}{2}) S(\ff{p}{1})\Big) S^{-1}\Big(S(\dd{j}{6})S(\dd{i\,(6)}{1})\Lda{2}^1\dd{i\,(2)}{1}\dd{j}{2}\Big)  \ff{p}{5} \ff{k\,(5)}{2}\Ld^2_{(2)}S(\ff{k\,(3)}{2}) \\
&\qquad S(\ff{p}{3}) S(\dd{j}{4}) S(\dd{i\,(4)}{1})
P^{(m)}  \Big( \Lda{4}^1\Big) \bigg)\\
\cdot &\lambda S \bigg(
P^{(n-1)} \Big(\Lda{1}^2 \Big) S(\ff{k\,(4)}{2})
S(\ff{p}{4}) S(\dd{j}{5})S(\dd{i\,(5)}{1})\Lda{3}^1 \dd{i\,(3)}{1}  \dd{j}{3} S^{-1}\Big( \ff{p}{6}  \ff{k\,(6)}{2} \Ld^2_{(3)} S(\ff{k\,(2)}{2}) S(\ff{p}{2}) \Big) \\
& \qquad S^{-2}\Big(S(\dd{j}{7})S(\dd{i\,(7)}{1})\Lda{1}^1 \dd{i\,(1)}{1}  \dd{j}{1} \ff{k}{1}\Big)  \Lda{5}^2 \bigg)
\end{align*}
 which can be further simplified by applying Lemma \ref{lem:trick-3} (i) to $\ff{p}{*}\ff{k\, (*)}{2}$ and  $\dd{i\,(*)}{1}\dd{j}{*}$ to
 \begin{align*}
K^{m,n}_F = &\lambda S\bigg(
S^{-2}\Big(\dd{i}{8} \ff{k}{8} \Ld^2_{(4)} S(\ff{k}{2}) \Big) S^{-1}\Big(S(\dd{i}{6})\Lda{2}^1\dd{i}{2}\Big)  \ff{k}{6} \Ld^2_{(2)}S(\ff{k}{4}) S(\dd{i}{4})
P^{(m)}  \Big( \Lda{4}^1\Big) \bigg)\\
&\, \cdot \lambda S \bigg(
P^{(n-1)} \Big(\Lda{1}^2 \Big) 
S(\ff{k}{5}) S(\dd{i}{5})\Lda{3}^1   \dd{i}{3} S^{-1}\Big( \ff{k}{7}   \Ld^2_{(3)} S(\ff{k}{3})\Big) S^{-2}\Big(S(\dd{i}{7})\Lda{1}^1 \dd{i}{1}   \ff{k}{1}\Big)  \Lda{5}^2 \bigg)
\end{align*}
\begin{align*}
=&\, \lambda S\left(
S^{-2}\Big(\dd{i}{8}\ff{k}{8} \Lda{4}^2\Big) S^{-1}(\dd{i}{2}\ff{k}{2})   S^{-1}(\Lda{2}^1) \dd{i}{6}\ff{k}{6}  \Ld^2_{(2)} S(\dd{i}{4}\ff{k}{4}) 
P^{(m)}  \left( \Lda{4}^1\right)
\right)\\
&\ \cdot \lambda S \big(
P^{(n-1)} \Big(\Ld_{(1)}^{2} \Big) S(\dd{i}{5}\ff{k}{5})\Lda{3}^1   \dd{i}{3} \ff{k}{3} S^{-1}(\Lda{3}^2) S^{-1}(\dd{k}{7}\ff{k}{7}) S^{-2}(\Lda{1}^1 \dd{i}{1} \ff{k}{1}) \Lda{5}^2 \big)\\
=&\, \lambda S\bigg(
S^{-2}(\Lda{4}^2 ) S^{-1}(\Lda{2}^1)   \Ld^2_{(2)} 
P^{(m)}  \Big( \Lda{4}^1\Big)
\bigg)
\cdot \lambda S \bigg(
P^{(n-1)} \Big(\Lda{1}^2 \Big) \Lda{3}^1    S^{-1}(\Lda{3}^2)  S^{-2}(\Lda{1}^1 ) \Lda{5}^2 \bigg)\\
=&\, K^{m,n}
\end{align*}
and this completes the proof.
\end{proof}

\bibliographystyle{abbrv}
\bibliography{MyRef}

\begin{thebibliography}{10}

\bibitem{AEGN02}
E.~Aljadeff, P.~Etingof, S.~Gelaki, and D.~Nikshych.
\newblock On twisting of finite-dimensional {H}opf algebras.
\newblock {\em J. Algebra}, 256(2):484--501, 2002.

\bibitem{CW13}
L.~Chang and Z.~Wang.
\newblock {$|Z_{\rm Kup}|=|Z_{\rm Henn}|^2$} for lens spaces.
\newblock {\em Quantum Topol.}, 4(4):411--445, 2013.

\bibitem{KC17}
Q.~Chen and T.~Kerler.
\newblock Integrality and gauge dependence of {H}ennings {TQFT}s.
\newblock {\em J. Pure Appl. Algebra}, 221(11):2689--2751, 2017.

\bibitem{DT94}
Y.~Doi and M.~Takeuchi.
\newblock Multiplication alteration by two-cocycles---the quantum version.
\newblock {\em Comm. Algebra}, 22(14):5715--5732, 1994.

\bibitem{EG98}
P.~Etingof and S.~Gelaki.
\newblock On finite-dimensional semisimple and cosemisimple {H}opf algebras in
  positive characteristic.
\newblock {\em Internat. Math. Res. Notices}, (16):851--864, 1998.

\bibitem{EG02tri}
P.~Etingof and S.~Gelaki.
\newblock On families of triangular {H}opf algebras.
\newblock {\em Int. Math. Res. Not.}, 2002(14):757--768, 2002.

\bibitem{EO04}
P.~Etingof and V.~Ostrik.
\newblock Finite tensor categories.
\newblock {\em Mosc. Math. J.}, 4(3):627--654, 782--783, 2004.

\bibitem{FR79}
R.~Fenn and C.~Rourke.
\newblock On {K}irby's calculus of links.
\newblock {\em Topology}, 18(1):1--15, 1979.

\bibitem{FS1}
G.~Frobenius and I.~Schur.
\newblock {\"U}ber die reellen darstellungen der endlichen gruppen.
\newblock {\em Sitzungsberichte der {K}{\"o}niglich Preussischen Akademie der
  Wissenschaften}, pages 186--208, 1906.

\bibitem{FGSV99}
J.~Fuchs, A.~C. Ganchev, K.~Szlach{\'a}nyi, and P.~Vecserny{\'e}s.
\newblock {$S\sb 4$} symmetry of {$6j$} symbols and {F}robenius-{S}chur
  indicators in rigid monoidal {$C\sp *$} categories.
\newblock {\em J. Math. Phys.}, 40(1):408--426, 1999.

\bibitem{FS03}
J.~Fuchs and C.~Schweigert.
\newblock Category theory for conformal boundary conditions.
\newblock In {\em Vertex operator algebras in mathematics and physics (Toronto,
  ON, 2000)}, volume~39 of {\em Fields Inst. Commun.}, pages 25--70. Amer.
  Math. Soc., 2003.

\bibitem{GH07}
P.~M. Gilmer and J.~M. Harris.
\newblock On the kauffman bracket skein module of the quaternionic manifold.
\newblock {\em J. Knot Theory Ramifications}, 16(1):103--125, 2007.

\bibitem{Hennings1996}
M.~Hennings.
\newblock Invariants of links and {$3$}-manifolds obtained from {H}opf
  algebras.
\newblock {\em J. London Math. Soc. (2)}, 54(3):594--624, 1996.

\bibitem{KMN12}
Y.~Kashina, S.~Montgomery, and S.-H. Ng.
\newblock On the trace of the antipode and higher indicators.
\newblock {\em Israel J. Math.}, 188:57--89, 2012.

\bibitem{KSZ}
Y.~Kashina, Y.~Sommerh\"auser, and Y.~Zhu.
\newblock On higher {F}robenius-{S}chur indicators.
\newblock {\em Mem. Amer. Math. Soc.}, 181(855):viii+65, 2006.

\bibitem{Kas95}
C.~Kassel.
\newblock {\em Quantum groups}, volume 155 of {\em Graduate Texts in
  Mathematics}.
\newblock Springer-Verlag, New York, 1995.

\bibitem{Kauffman1995}
L.~H. Kauffman and D.~E. Radford.
\newblock Invariants of {$3$}-manifolds derived from finite-dimensional {H}opf
  algebras.
\newblock {\em J. Knot Theory Ramifications}, 4(1):131--162, 1995.

\bibitem{Kir78}
R.~Kirby.
\newblock A calculus for framed links in {$S^{3}$}.
\newblock {\em Invent. Math.}, 45(1):35--56, 1978.

\bibitem{Kup91}
G.~Kuperberg.
\newblock Involutory {H}opf algebras and {$3$}-manifold invariants.
\newblock {\em Internat. J. Math.}, 2(1):41--66, 1991.

\bibitem{Kup96}
G.~Kuperberg.
\newblock Noninvolutory {H}opf algebras and {$3$}-manifold invariants.
\newblock {\em Duke Math. J.}, 84(1):83--129, 1996.

\bibitem{Larson1988}
R.~G. Larson and D.~E. Radford.
\newblock Finite-dimensional cosemisimple {H}opf algebras in characteristic $0$
  are semisimple.
\newblock {\em J. Algebra}, 117(2):267--289, 1988.

\bibitem{Larson1988a}
R.~G. Larson and D.~E. Radford.
\newblock Semisimple cosemisimple {H}opf algebras.
\newblock {\em Amer. J. Math.}, 110(1):187--195, 1988.

\bibitem{Les15}
C.~Lescop.
\newblock On homotopy invariants of combings of three-manifolds.
\newblock {\em Canad. J. Math.}, 67(1):152--183, 2015.

\bibitem{LM00}
V.~Linchenko and S.~Montgomery.
\newblock A {F}robenius-{S}chur theorem for {H}opf algebras.
\newblock {\em Algebr. Represent. Theory}, 3(4):347--355, 2000.
\newblock Special issue dedicated to Klaus Roggenkamp on the occasion of his
  60th birthday.

\bibitem{Atfd2}
Z.~Liu, S.~Ming, Y.~Wang, and J.~Wu.
\newblock Alterfold toplogical quantum field theory.
\newblock {\em arXiv preprint arXiv:2312.06477}, 2023.

\bibitem{Maj95}
S.~Majid.
\newblock {\em Foundations of quantum group theory}.
\newblock Cambridge University Press, Cambridge, 1995.

\bibitem{MN05}
G.~Mason and S.-H. Ng.
\newblock Central invariants and {F}robenius-{S}chur indicators for semisimple
  quasi-{H}opf algebras.
\newblock {\em Adv. Math.}, 190(1):161--195, 2005.

\bibitem{MP94}
S.~{Matveev} and M.~{Polyak}.
\newblock {A geometrical presentation of the surface mapping class group and
  surgery}.
\newblock {\em {Commun. Math. Phys.}}, 160(3):537--550, 1994.

\bibitem{Mil75}
J.~Milnor.
\newblock On the {$3$}-dimensional {B}rieskorn manifolds {$M(p,q,r)$}.
\newblock In {\em Knots, groups, and {$3$}-manifolds ({P}apers dedicated to the
  memory of {R}. {H}. {F}ox)}, Ann. of Math. Stud., No. 84, pages 175--225.
  Princeton Univ. Press, Princeton, NJ, 1975.

\bibitem{Montgomery1993}
S.~Montgomery.
\newblock {\em Hopf algebras and their actions on rings}, volume~82 of {\em
  CBMS Regional Conference Series in Mathematics}.
\newblock Published for the Conference Board of the Mathematical Sciences,
  Washington, DC, 1993.

\bibitem{Mon00}
S.~Montgomery.
\newblock Representation theory of semisimple {H}opf algebras.
\newblock In {\em Algebra---representation theory ({C}onstanta, 2000)},
  volume~28 of {\em NATO Sci. Ser. II Math. Phys. Chem.}, pages 189--218.
  Kluwer Acad. Publ., Dordrecht, 2001.

\bibitem{NS07}
S.-H. Ng and P.~Schauenburg.
\newblock Frobenius-{S}chur indicators and exponents of spherical categories.
\newblock {\em Adv. Math.}, 211(1):34--71, 2007.

\bibitem{NS08}
S.-H. Ng and P.~Schauenburg.
\newblock Central invariants and higher indicators for semisimple quasi-{H}opf
  algebras.
\newblock {\em Trans. Amer. Math. Soc.}, 360(4):1839--1860, 2008.

\bibitem{Nichols1989}
W.~D. Nichols and M.~B. Zoeller.
\newblock A {H}opf algebra freeness theorem.
\newblock {\em Amer. J. Math.}, 111(2):381--385, 1989.

\bibitem{Orl72}
P.~Orlik.
\newblock {\em Seifert manifolds}.
\newblock Lecture Notes in Mathematics, Vol. 291. Springer-Verlag, Berlin-New
  York, 1972.

\bibitem{Radford1976}
D.~E. Radford.
\newblock The order of the antipode of a finite dimensional {H}opf algebra is
  finite.
\newblock {\em Amer. J. Math.}, 98(2):333--355, 1976.

\bibitem{Rad90}
D.~E. Radford.
\newblock The group of automorphisms of a semisimple {H}opf algebra over a
  field of characteristic {$0$} is finite.
\newblock {\em Amer. J. Math.}, 112(2):331--357, 1990.

\bibitem{Rad94}
D.~E. Radford.
\newblock The trace function and {H}opf algebras.
\newblock {\em J. Algebra}, 163(3):583--622, 1994.

\bibitem{RV81}
F.~Raymond and A.~T. Vasquez.
\newblock {$3$}-manifolds whose universal coverings are {L}ie groups.
\newblock {\em Topology Appl.}, 12(2):161--179, 1981.

\bibitem{Rei33}
K.~Reidemeister.
\newblock Zur dreidimensionalen {T}opologie.
\newblock {\em Abh. Math. Sem. Univ. Hamburg}, 9(1):189--194, 1933.

\bibitem{Rol76}
D.~Rolfsen.
\newblock {\em Knots and links.}
\newblock Publish or Perish, Inc., Berkeley, Calif., 1976.

\bibitem{Sav12}
N.~Saveliev.
\newblock {\em Lectures on the topology of 3-manifolds}.
\newblock De Gruyter Textbook. Walter de Gruyter \& Co., Berlin, revised
  edition, 2012.
\newblock An introduction to the Casson invariant.

\bibitem{Sch96}
P.~Schauenburg.
\newblock Hopf bi-{G}alois extensions.
\newblock {\em Comm. Algebra}, 24(12):3797--3825, 1996.

\bibitem{Shi15}
K.~Shimizu.
\newblock On indicators of {H}opf algebras.
\newblock {\em Israel J. Math.}, 207(1):155--201, 2015.

\bibitem{Sin33}
J.~Singer.
\newblock Three-dimensional manifolds and their {H}eegaard diagrams.
\newblock {\em Trans. Amer. Math. Soc.}, 35(1):88--111, 1933.

\bibitem{Sweedler1968}
M.~E. Sweedler.
\newblock {Cohomology of algebras over Hopf algebras}.
\newblock {\em Trans. Amer. Math. Soc.}, 133:205--239, 1968.

\bibitem{Taft1971}
E.~J. Taft.
\newblock The order of the antipode of finite-dimensional {H}opf algebra.
\newblock {\em Proc. Nat. Acad. Sci. U.S.A.}, 68:2631--2633, 1971.

\end{thebibliography}

\end{document}